\documentclass[11pt,leqno]{article}

\setlength{\textheight}{8.0in}
\setlength{\textwidth}{6.5in}
\oddsidemargin 0in
\evensidemargin 0in

\usepackage{amsmath,amsthm, amsfonts,amssymb, epsfig}
\usepackage[usenames]{color}
\usepackage[section]{placeins}
\usepackage{enumitem}
\usepackage{mathscinet}

\usepackage[all]{xy}
\usepackage{psfrag}

\usepackage{hyperref}

%%%%%%%%%%%%%%%%%%%%%%%%%%%%%%
% Theorem style declarations %
%%%%%%%%%%%%%%%%%%%%%%%%%%%%%%

%\theoremstyle{plain}
\newtheorem{theorem}{Theorem}[section]
\newtheorem{corollary}[theorem]{Corollary}
\newtheorem{lemma}[theorem]{Lemma}
\newtheorem{proposition}[theorem]{Proposition}
\newtheorem{hproposition}[theorem]{Heuristic Proposition}
\newtheorem{quasi-theorem}[theorem]{Quasi-Theorem}
\newtheorem{conjecture}[theorem]{Conjecture}

\theoremstyle{definition}
\newtheorem{definition}[theorem]{Definition}

\newtheorem{question}[theorem]{Question}
\newtheorem{remark}[theorem]{Remark}
\newtheorem{caveat}[theorem]{Caveat}
\newtheorem{example}[theorem]{Example}

\newtheorem{outline}[theorem]{Outline}
\newtheorem{construction}[theorem]{Construction}

%%%%%%%%%%%%%%%%%%%%%%%%%%%%%%%%%%%%%%%%%
% End of the theorem style declarations %
%%%%%%%%%%%%%%%%%%%%%%%%%%%%%%%%%%%%%%%%%

%%%%%%%%%%%%%%%%%%%%%%%%%%%%%%%%%%%%%%%%%
% Commands and Operators  %
%%%%%%%%%%%%%%%%%%%%%%%%%%%%%%%%%%%%%%%%%

\def\cA{\mathcal A}\def\cB{\mathcal B}\def\cC{\mathcal C}\def\cD{\mathcal D}
\def\cE{\mathcal E}\def\cF{\mathcal F}\def\cH{\mathcal H}

\def\cM{\mathcal M}\def\cO{\mathcal O}\def\cP{\mathcal P}
\def\cT{\mathcal T}
\def\cU{\mathcal U}\def\cW{\mathcal W}\def\cX{\mathcal X}

\def\fC{\mathfrak C}

\newcommand{\aff}{\mathbb{A}}

\newcommand{\B}{\cB}
\newcommand{\Bu}{\cB^{\phi}}
\newcommand{\Bc}{\Cone_{\omega}}
\newcommand{\Bpre}{\Bu_{\mathrm{pre}}}
\newcommand{\bnr}{\mathrm{bnr}}
\newcommand{\chamber}{\overline{\fC}}
\newcommand{\cartan}{\mathfrak t}
\newcommand{\cc}{{\mathbb C}}
\newcommand{\cHlin}{\cH_{\mathrm{lin}}}
\newcommand{\cHsph}{\cH_{\mathrm{sph}}}

\newcommand{\euc}{\mathbb{E}}

\newcommand{\g}{\mathfrak g}
\newcommand{\hu}{h^{\phi}}
\newcommand{\hw}{h^{\omega}}
\newcommand{\hpre}{h^{\mathrm{pre}}}
\newcommand{\hull}{\mathrm{hull}}

\newcommand{\Met}{\mathrm{Met}}

\newcommand{\nuu}{\overrightarrow{\nu}}

\newcommand{\phii}{\tilde{\phi}}
\newcommand{\pp}{{\mathbb P}}
\newcommand{\rr}{\mathbb{R}}
\newcommand{\sll}{\mathfrak{sl}}
\newcommand{\trans}{T}
\newcommand{\Vectc}{\mathrm{Vect}_{\mathbb{C}}}
\newcommand{\vecd}{\overrightarrow{d}}
\newcommand{\waff}{W_{\mathrm{aff}}}
\newcommand{\wsph}{W_{\mathrm{sph}}}
\newcommand{\wlin}{W_{\mathrm{lin}}}

\DeclareMathOperator{\Aff}{Aff}

\DeclareMathOperator{\Char}{char}
\DeclareMathOperator{\Cone}{Cone}
\DeclareMathOperator{\id}{id}

\DeclareMathOperator{\GL}{GL}
\DeclareMathOperator{\Hom}{\mathrm{Hom}}
\DeclareMathOperator{\re}{Re}
\DeclareMathOperator{\Shend}{\underline{End}}
\DeclareMathOperator{\Rep}{Rep}
\DeclareMathOperator{\SL}{SL}
\DeclareMathOperator{\SU}{SU}
\DeclareMathOperator{\Sym}{Sym}
\DeclareMathOperator{\tot}{tot}

%%%%%%%%%%%%%%%%%%%%%%%%%%%%%%%%%%%%%%%%%
% End of commands %%%%%%%%
%%%%%%%%%%%%%%%%%%%%%%%%%%%%%%%%%%%%%%%%% 

\begin{document}

\title{Harmonic Maps to Buildings and Singular Perturbation Theory}
\author{Ludmil Katzarkov, Alexander Noll, Pranav Pandit and Carlos Simpson}
\date{\today}
\maketitle

\begin{abstract}
The notion of a universal building associated with a point in the Hitchin base is introduced. This is a building equipped with a harmonic map from a Riemann surface that is initial among harmonic maps which induce the given cameral cover of the Riemann surface. In the rank one case, the universal building is the leaf space of the quadratic differential defining the point in the Hitchin base.

The main conjectures of this paper are: (1) the universal building always exists; (2) the harmonic map to the universal building controls the asymptotics of the Riemann-Hilbert correspondence and the non-abelian Hodge correspondence; (3) the singularities of the universal building give rise to Spectral Networks; and (4) the universal building encodes the data of a 3d Calabi-Yau category whose space of stability conditions has a connected component that contains the Hitchin base.

The main theorem establishes the existence of the universal building, conjecture (3), as well as the Riemann-Hilbert part of conjecture (2), in the case of the rank two example introduced in the seminal work of Berk-Nevins-Roberts on higher order Stokes phenomena. It is also shown that the asymptotics of the Riemann-Hilbert correspondence is always controlled by a harmonic map to a certain building, which is constructed as the asymptotic cone of a symmetric space.

\end{abstract}
\tableofcontents

\section{Introduction}

The theory of Nonabelian Hodge structures grew out of the work of many people, including Hitchin, Donaldson, Corlette, Deligne and Simpson. This theory, which connects  the moduli space of representations of the fundamental group of a smooth projective variety with the moduli space of Higgs bundles via a big twistor family, has led to many geometric applications. To list a few:

\begin{enumerate}

\item Proof of  the fact that $\SL (N,\mathbb{Z})$ cannot be the fundamental group of a smooth projective variety for $N > 2$ (see \cite{Carlos-Higgs-Local}).  

\item Proof of the Shafarevich Conjecture for smooth projective varieties with 
faithful linear representations of their fundamental groups \cite{Ludmil-Ramachandran, EKPR}.

\end{enumerate}

The latter result makes essential use of the Gromov-Schoen \cite{Gromov-Schoen}  theory of harmonic maps to buildings. The main ingredients in the proof are the construction of  a spectral covering  associated with a harmonic map to a  building, the use of  factorization theorems introduced first in \cite{Ludmil-factorization}, and an exhaustion function using the distance function on the building. These ingredients are all based on mimimizing the effect of the singularities of the harmonic map. 

The approach of this paper goes in the opposite direction, having as its motivating goal the  construction of a category out of the singularities of the harmonic map to the building. 
This approach is part of a bigger foundational program initiated by Kontsevich -- developing the theory of Stability Hodge Structrures. The hope is that
the moduli space of stability conditions 
can be included in a twistor type of family with a ``Dolbeault'' type
space as the zero fiber.  This zero fiber should be the
moduli space of complex structures.

Now we recall the foundations of Nonabelian Hodge theory on a Riemann surface. Let $X$ be a compact Riemann surface, $x_0\in X$. The moduli space of local systems on $X$ comes in various different incarnations: 

\begin{itemize}
\renewcommand{\labelitemi}{\textemdash}
\item the character variety, or \emph{Betti} moduli space 
$$
M_B:= Hom(\pi_1(X,x_0),SL_r)/SL_r;
$$

\item Hitchin's moduli space of Higgs bundles which we call the \emph{Dolbeault} moduli space
$$
M_{Dol}=\{ (\cE,\varphi )\} / \mbox{S-equiv};
$$
\item and the \emph{de Rham} moduli space of vector bundles with integrable algebraic connection 
$$
M_{DR}= \{ (\cE,\nabla )\} / \mbox{S-equiv}.
$$

\end{itemize}

The statement that these all parametrize local systems gives topological homeomorphisms
$$
M_B^{\rm top} \cong M_{Dol}^{\rm top} \cong M_{DR}^{\rm top},
$$
the equivalence between $M_B$ and $M_{DR}$ being furthermore complex analytic. These are not however isomorphisms
of algebraic varieties, and the algebraic moduli problems which they solve are very different. 

These moduli spaces are all noncompact, in fact $M_B$ is an affine variety. We may therefore think of choosing algebraic
compactifications, and it becomes an interesting question to understand the asymptotic nature of the
homeomorphisms as we approach the boundary. 

In the case of $M_{Dol}$ we have furthermore the \emph{Hitchin fibration}
$$
M_{Dol}\rightarrow {\mathbb A}^N
$$
which is a map of algebraic varieties, and we can choose a compatible compactification. Thanks to the $\cc^{\ast}$-action
$t:(\cE,\varphi )\mapsto (\cE,t\varphi )$, there is in fact a very natural compactification. Let
$M_{Dol}^{\ast}$ denote the complement of the nilpotent cone, that is to say the inverse image of ${\mathbb A}^N-\{ 0\}$.
Then there is an orbifold compactification $\overline{M}_{Dol}$ with
$$
\overline{M}_{Dol} = M_{Dol} \sqcup (M_{Dol}^{\ast}/\cc ^{\ast} ).
$$
More precisely, let $(M_{Dol}\times {\mathbb A}^1)^{\ast}$ be the complement of the nilpotent cone in $M_{Dol}\times \{ 0\}$,
then 
$$
\overline{M}_{Dol} = (M_{Dol}\times {\mathbb A}^1)^{\ast}/\cc ^{\ast}.
$$
This compactification was discussed by Hausel in \cite{Hausel-compactification}. It follows the general method of Bialynicki-Birula \cite{Birula-Swiecicka-quotients, Birula-Swiecicka-moment}. The Hitchin fibration extends to a map 
$$
\overline{M}_{Dol}\rightarrow \widetilde{{\mathbb P}}^N
$$
towards a weighted projective space compactifying the Hitchin base ${\mathbb A}^N$. It is weighted, because $\cc ^{\ast}$ acts on the
different terms in the characteristic polynomials of Higgs fields by different weights. 

There is a natural family of moduli spaces constituting a deformation relating $M_{Dol}$ and $M_{DR}$, and using this family
itself we can also construct a natural compactification of the family, following what was said above with \cite{Birula-Swiecicka-quotients, Birula-Swiecicka-moment}. We get 
$$
\overline{M}_{Hod} \rightarrow {\mathbb A}^1
$$
whose fiber over $0$ is $\overline{M}_{Dol}$ and whose fibers over $\lambda \neq 0$ are always $\overline{M}_{DR}$. 
The family of divisors at infinity is nicely behaved, for example in cases where the moduli space is smooth then these divisors
are smooth if we consider the compactification as an orbifold. (The orbifold points correspond to ``cyclic Higgs bundles''.)
Furthermore, the divisors at infinity are all the same, independently of $\lambda$: 
$$
\overline{M}_{DR} - M_{DR} = M_{Dol}^{\ast}/\cc ^{\ast}.
$$
In this way, even though $M_{DR}$ doesn't have a Hitchin fibration, its structure at infinity looks very similar to the structure at
infinity of $M_{Dol}$. Limiting points are identified with non-nilpotent Higgs bundles. 

The identification between limiting points at infinity of $M_{DR}$ and non-nilpotent Higgs bundles can be made very concrete in the
following way. One easy way to write down a family of connections going out to infinity is to choose a vector bundle $\cE$,
a Higgs field $\varphi$, and an initial connection $\nabla _0$ on $\cE$. Then we may consider the family of connections on $\cE$,
depending on a complex parameter $t\in \cc$, defined by
\begin{equation}
\label{mainnabla}
\nabla _t = \nabla _0  + t \varphi .
\end{equation}
If $(\cE,\varphi )$ is a semistable Higgs bundle and not in the nilpotent cone (i.e. the spectral curve of $\varphi$ is not concentrated at 
the zero-section), then this family of connections has for limit, as $t\rightarrow \infty$, the point in
the divisor at infinity 
$$
(\cE,\varphi )\in M_{Dol}^{\ast}/\cc ^{\ast}.
$$
In particular, the family does indeed go to infinity, i.e. its limit cannot be a point inside $M_{DR}$. 
Furthermore, we obtain a curve which is transverse to the divisor at infinity. This condition fixes, in some sense, the order of the
parameter $t$ in a canonical way. 

A more general family of connections might look like $(\cE_t, \nabla _t)$ defined for $t$ in a disc around $\infty$, and
it will have limiting point $(\cE,\varphi )$ if the vector bundles with $t^{-1}$-connection $(\cE_t,t^{-1}\nabla _t)$ converge
to $(\cE,\varphi )$ in the moduli space $M_{Hod}$ of vector bundles with $\lambda$-connection. In this case, the resulting map from 
the disc to $\overline{M}_{DR}$ will be transverse to the divisor at infinity, because we used the normalization $t^{-1}$. 

The family of connections considered by Gaiotto-Moore-Neitzke (\cite{GMN-WKB, GMN-Spectral-Networks}) fits into the more general situation of the previous paragraph.
Indeed, they start with a harmonic bundle $(\cE,\partial , \overline{\partial}, \varphi , \varphi ^{\dagger})$ and consider
the holomorphic bundle $\cE_t = (\cE,\overline{\partial} + t^{-1}\varphi ^{\dagger})$ which converges to $(\cE,\overline{\partial})$,
together with the connections $\nabla _t = \partial + t\varphi $. So, with a varying family of bundles, their
connections have for limit the Higgs bundle underlying the harmonic bundle $(\cE,\overline{\partial}, \varphi )$. 

In what follows, we will mostly be refering to the basic situation of a family of connections such as \eqref{mainnabla},
but everything should apply equally well in the general situation. For such a family of connections, we can formulate
the \emph{Riemann-Hilbert WKB problem}. Let $\rho _t : \pi _1(X,x_0)\rightarrow SL_r(\cc )$ be the monodromy 
representations\footnote{Here and throughout the paper we will be assuming that our connections have trivial determinant bundle and correspondingly
the Higgs fields have trace zero, so the structure group is $SL_r$.} of
$\nabla _t$. More generally, if $P,Q\in \widetilde{X}$ are two points on the universal cover joined by a unique homotopy class of
paths, let $T_{PQ}(t):\cE_P \rightarrow \cE_Q$ be the transport matrix for the connection $\nabla _t$. Then we would like to know:
\begin{itemize}
\renewcommand{\labelitemi}{\textemdash}
\item \emph{what is the asymptotic behavior of the matrices $\rho _t(\gamma )$ or $T_{PQ}(t)$ as a function of $t$?}
\end{itemize}

This is just a reformulation of the classical ``WKB problem'' in our present language. In terms of moduli spaces,
we are asking, for our algebraic curve $\cc \rightarrow M_{DR}$ in the de Rham moduli space, 
what is the asymptotic behavior of the corresponding 
holomorphic curve $\cc \rightarrow M_B$ in the Betti moduli space? This is a first approximation towards fully understanding the
Riemann-Hilbert transformation from $M_{DR}$ to $M_B$ in a neighborhood of the divisor at infinity. 

We could also look at the Hitchin moduli space $M_{Dol}$. Here, we have some very natural curves going out to infinity
given by the $\cc ^{\ast}$-action. Namely, consider the family of Higgs bundles $(\cE,t\varphi )$ for $t\in \cc$ going to infinity. 
Solving the Hitchin equations, we obtain a family of harmonic metrics $h_t$ on $\cE$, and corresponding flat connections $\nabla _t$.
We can again let $\rho _t: \pi _1(X,x_0)\rightarrow SL_r(\cc )$ be the monodromy 
representations and $T_{PQ}(t)$ be the transport matrices, and formulate the \emph{Hitchin WKB problem:}

\begin{itemize}
\renewcommand{\labelitemi}{\textemdash}
\item \emph{what is the asymptotic behavior of $\rho _t$ and $T_{PQ}(t)$ as $t\rightarrow \infty$ in Hitchin's situation?}
\end{itemize}

These two flavors of WKB are the subject matter of our paper. 

\subsection{Structure at infinity---some general discussion}

Before getting to the more specific content, let us provide some motivation. Consider the most basic example:
let $X={\mathbb P}^1$ with $4$ orbifold points,  and consider connections of rank $r=2$. Then,

\begin{itemize}
\renewcommand{\labelitemi}{\textemdash}

\item the character variety $M_B$ is the classic \emph{Fricke-Klein} cubic surface minus a triangle of lines, and

\item $M_{DR}$ is the \emph{space of initial conditions for Painlev\'e VI}. 

\end{itemize}

Arinkin and others have given several different
points of view on its structure, for example one can say that it is $\pp^1\times \pp^1$ blown up $8$ times at $4$ points on the
diagonal, minus some stuff. The same type of description holds for $M_{Dol}$. This is discussed in some detail in the paper \cite{LoraySaitoSimpson}. 

For $M_B$ we obtain a compactification $\overline{M}_B$ such that the divisor at infinity is a triangle of $\pp^1$'s. 
In particular, its \emph{incidence complex} is a real triangle, which
has the homotopy type of a circle. Notice that for $M_B$ the choice of compactification depended on the choice of some ``cluster-type'' coordinates
depending on picking three loops in $\pp ^1-4$ points. Different choices of loops will give different but birationally equivalent compactifications. 
Stepanov's theorem (see below) (\cite{Stepanov-dual-complex, Stepanov-resolution, Thuillier-toroidal}) says that the homotopy types of the incidence complexes are always the same. In this situation the invariance is easily
understood by hand. 

For $M_{Dol}$, the Hitchin fibration is particularly simple in this case: 
\[
\xymatrix{
J \ar[r] & M_{Dol} \ar[d]  \\
&  {\mathbb A}^1 \\
}
\]
with fiber an elliptic curve $J$, on which the monodromy acts by $-1$. 
The divisor at infinity in $\overline{M}_{Dol}$  is $J/\pm 1$
which is $\pp^1$ with four orbifold double points. These orbifold double points lead to the $-2$-curves in the compactification
described previously by blowing up $\pp ^1\times \pp ^1$ eight times. 

Let $\overline{N}_B$ and $\overline{N}_{Dol}$ denote small neighborhoods of the divisors at infinity, and let 
$N_B$ and $N_{Dol}$ denote their intersections with $M_B$ and $M_{Dol}$ respectively. These have well-defined homotopy types,
and the Hitchin correspondence yields a well-defined identification of homotopy types $N_B\sim N_{Dol}$.

We have conjectured that there is a relationship between the incidence complex of the divisor at infinity for $M_B$, and
the sphere at infinity in the Hitchin base for $M_{Dol}$. If 
$$
\overline{M}_{B} -M_{B} = \bigcup _{i} D_i,
$$
and define a simplicial complex with one $n$-simplex for each connected component
of $D_{i_0}\cap \cdots \cap D_{i_n}$. This is the \emph{incidence complex}. Stepanov and Thuillier show that
its homotopy type is independent of the choice of compactification, so we denote 
the realization by $|{\rm Step}(M_B)|$.
We have a map, well-defined up to homotopy, $N_B \rightarrow |{\rm Step}(M_B)|$. 

On the Hitchin side, the Hitchin fibration gives us a map to the sphere at infinity in the Hitchin base
$$
N_{Dol} \rightarrow S^{2N-1}.
$$

\begin{conjecture}
There is a homotopy-commutative diagram 
$$
\begin{array}{ccc}
N_{Dol} & \stackrel{\sim}{\rightarrow} & N_B \\
\downarrow & & \downarrow \\
S^{2N-1} & \stackrel{\sim}{\rightarrow} & |{\rm Step}(M_B)| .
\end{array}
$$
\end{conjecture}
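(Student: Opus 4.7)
The structural half of the diagram is essentially given: the right vertical arrow $N_B \to |{\rm Step}(M_B)|$ is the nerve map attached to any toroidal compactification (invariant up to homotopy by Stepanov-Thuillier), the left vertical arrow $N_{Dol} \to S^{2N-1}$ is the Hitchin fibration followed by radial projection, and the top horizontal arrow is the restriction of the non-abelian Hodge homeomorphism $M_{Dol}^{\rm top} \cong M_B^{\rm top}$ to neighborhoods of infinity. The entire content of the conjecture therefore lies in (i) constructing the bottom arrow $S^{2N-1} \to |{\rm Step}(M_B)|$ canonically and (ii) proving homotopy commutativity of the square. Both steps rely on the universal building and harmonic-map-to-building picture that is the main theme of the paper.

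\textbf{Constructing the bottom map.} For a normalized Hitchin base point $b \in S^{2N-1}$, invoke Conjecture (1) to produce a universal building $\mathcal{B}_b$ equipped with its initial harmonic map $h_b: \widetilde{X} \to \mathcal{B}_b$. By Conjecture (3), the singular/folding locus of $h_b$ determines a spectral network $\mathcal{W}_b$ on $X$. The combinatorial data of $\mathcal{W}_b$ selects a Fock-Goncharov type cluster chart on $M_B$, and the charts obtained as $b$ varies through the generic stratum fit together into a cluster atlas whose attached toroidal compactification realizes $|{\rm Step}(M_B)|$. The map sends $b$ to the face of this complex indexed by the set of cluster coordinates that diverge in the asymptotic direction singled out by $\mathcal{W}_b$: generically a vertex, with higher-dimensional faces appearing along walls in the Hitchin base where $\mathcal{W}_b$ degenerates. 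Independence of the resulting homotopy class from auxiliary choices is then a Stepanov-Thuillier argument applied to different compatible spectral-network atlases.

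\textbf{Commutativity.} Given $p \in N_{Dol}$ represented by a Higgs bundle $(\mathcal{E},\varphi)$ with Hitchin direction $b(p)$, form the one-parameter family $\nabla_t = \nabla_0 + t\varphi$ (or, if one prefers genuine Hitchin WKB, the Gaiotto-Moore-Neitzke family) and let $p_t \in M_B$ be the monodromy representation. The Riemann-Hilbert part of Conjecture (2), established in the paper's main theorem for the BNR example, asserts that the asymptotics of the transport matrices $T_{PQ}(t)$ are governed by the harmonic map $h_{b(p)}$; quantitatively, the leading logarithmic growth of each cluster coordinate of $p_t$ along the atlas defined by $\mathcal{W}_{b(p)}$ equals a distance in $\mathcal{B}_{b(p)}$ between images of the relevant endpoints. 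Consequently the curve $p_t$ approaches precisely the face of $|{\rm Step}(M_B)|$ assigned to $b(p)$ by the bottom map, producing the homotopy between the two composites in the square.

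\textbf{Main obstacle.} The substantive difficulty is that essentially every ingredient of the construction is itself conjectural at the level of generality required: Conjecture (1) gives the universal building only for the BNR example in the present paper, Conjecture (3) relating its singularities to spectral networks is open in general, and Conjecture (2) is proved only on the Riemann-Hilbert side, again for BNR. Beyond these foundational issues, the delicate analytic step will be a uniform WKB estimate — matching the analytic exponential rates of growth of monodromies with the algebraic orders of vanishing of cluster coordinates — that behaves continuously across the wall-and-chamber structure of the Hitchin base where $\mathcal{W}_b$ jumps; the compatibility of these jumps with cluster mutations in $M_B$ is what ultimately ensures that the bottom map, and hence the commuting square, descends to a well-defined homotopy class.
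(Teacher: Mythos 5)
The statement you are addressing is presented in the paper as an open conjecture, not a theorem; the authors give no proof, offering only the motivation that it holds in the Painlev\'e~VI example ($\mathbb{P}^1$ with four orbifold points) and citing Komyo's verification $|{\rm Step}(M_B)| \cong S^3$ for $\mathbb{P}^1$ minus five points. There is therefore no paper proof against which to check your argument, and your write-up is more accurately a programmatic sketch — which, to your credit, you acknowledge in the final paragraph.

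As a sketch, the route is consistent with the paper's philosophy: the surrounding text explicitly invokes cluster-type coordinates on $M_B$, Kontsevich--Soibelman wall-crossing, and the correspondence between valuations on $\mathcal{O}_{M_B}$ and Hitchin-base directions via harmonic maps to buildings, all of which appear in your outline. But the genuine gaps are substantial and not filled by the paper. First, your construction of the bottom map and the commutativity argument are conditional on the paper's Conjectures (1)--(3) in full generality, whereas the paper establishes them only in the BNR example, and Conjecture (2) only on the Riemann--Hilbert side there. Second, the ``uniform WKB estimate'' you flag — matching analytic exponential growth rates of monodromy with algebraic orders of vanishing of cluster coordinates, continuously across walls where the spectral network jumps — is the central missing analytic input for the whole program; nothing in the paper supplies it, and even the local WKB estimate (Proposition~\ref{local-WKB}) is proved only for noncritical paths, with no uniformity as one approaches a wall. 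Third, Stepanov--Thuillier invariance gives you a well-defined homotopy type of $|{\rm Step}(M_B)|$, but not by itself a canonical comparison map from $S^{2N-1}$: your face-assignment via spectral networks would still need to be shown to extend continuously over the non-generic strata of the Hitchin base (reducible or singular spectral curves, where the universal building and spectral network are not defined by the paper's constructions) and to glue to a continuous map rather than a stratum-by-stratum combinatorial rule. Until those three issues are resolved, this remains, as the paper presents it, a conjecture.
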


The main motivation is that it holds in the first example above. This statement may also be viewed as a 
version of the ``$P=W$'' conjecture of Hausel \emph{et al} \cite{Hausel-et-al-PW}, relating
Leray stuff for the Hitchin fibration to weight stuff on the Betti side. 

A. Komyo  has shown explicitly $|{\rm Step}(M_B)| \cong S^3$ for the case of $\pp ^1-5$ points
\cite{Komyo}. Kontsevich has proposed a general type of argument saying that in many cases $M_B$ are ``cluster varieties'', hence
log-Calabi-Yau, from which it follows that the incidence complex is a sphere.

One can furthermore hope to have a more geometrically precise description of the 
relationship between $N_{Dol}$ and $N_B$. One should note that it will interchange ``small'' and ``big'' subsets.
Indeed, in all examples that we know of, 
the neighborhood of a single vertex of the divisor at infinity in $\overline{N}_B$ corresponds to a whole chamber in $S^{2N-1}$
and hence in $N_{Dol}$. 

It isn't clear whether one might be able to get a fully homotopy-theoretic proof of the above conjecture, for example by
applying the kinds of techniques that Hausel and his co-workers have introduced for their cohomological $P=W$ conjecture.
Otherwise, we will need to have a more precise geometric description. Wentworth has been able to identify 
a transformation between dense subsets of spheres on both sides of this picture. Of course, in order to get a hold of
the full homotopy type, one would need to get a precise geometric description everywhere, and this seems for now to be fairly far away. 

We might proceed by looking at big sets on one side, corresponding to small sets on the other. One of the main considerations in this direction
comes from the work of Kontsevich and Soibelman \cite{Kontsevich-Soibelman-WCS}. They develop a 
picture where vertices or $0$-dimensional pieces of the
Betti divisor at infinity, correspond to \emph{chambers} in $S^{2N-1}$, and
$1$-dimensional pieces of the divisor at infinity in  correspond to \emph{walls} in $S^{2N-1}$ or equivalently ${\mathbb A}^N$. 
Their \emph{wall-crossing formulas} express the change of cluster coordinate systems as we go along these one-dimensional pieces.  

In the present paper, we will be discussing what happens at the opposite end of the range of dimensions, where we expect: 

\begin{itemize}
\item[] divisor components in $\overline{M}_B$
$\longleftrightarrow$ single directions in the Hitchin base.
\end{itemize}

Now, divisor components correspond to \emph{valuations} of the coordinate ring ${\mathcal O}_{M_B}$. However, there are also non-divisorial valuations. 
We expect more generally that all valuations correspond to directions in the Hitchin base, which in turn correspond to spectral curves
$\Sigma \subset T^{\ast}X$ (up to scaling). This is what happens in classical Thurston theory. 

Furthermore, valuations correspond to harmonic maps to buildings. Indeed if $K_v$ is the valued field corresponding to a valuation
on ${\mathcal O}_{M_B}$, then the map
$$
\pi _1(X,x_0)\rightarrow SL_r({\mathcal O}_{M_B})
$$
composes with ${\mathcal O}_{M_B} \subset K_v$ to give 
$$
\pi _1(X,x_0) \rightarrow SL_r(K_v)
$$
hence an action of $\pi _1$ on the Bruhat-Tits building. One can then take the Gromov-Schoen harmonic map. 

In the situation of Gromov-Schoen harmonic maps, we have already known for many years the correspondence:

\begin{itemize}
\item[] harmonic maps to buildings $\longleftrightarrow$ spectral curves\vspace*{.3cm}
\end{itemize}
Indeed, a harmonic map has a differential which is the real part of a multivalued holomorphic form defining a spectral curve. 

We would like to understand the correspondence with the differential equations picture at the same time, and furthermore to
understand the relationship with the \emph{spectral networks} which have recently been introduced by \textbf{Gaiotto-Moore-Neitzke}. 

\subsection{The $SL_2$ case and trees}

Thurston's Teichmuller theory involves a lot of discussion of questions closely related to the WKB problem, particularly
for the group $SL_2$. In that case, the spectral curve of a Higgs field corresponds exactly to a quadratic differential.
The quadratic differential defines a foliation, the leaf space of which is an $\rr$-tree. We get in this way to harmonic maps
towards $\rr$-trees (see \cite{Daskalopoulos-Dostolgou-Wentworth}). One may interpret classical rank two WKB theory as describing the asymptotic behavior of the transport
matrix $T_{PQ}(t)$ for a connection of the form \eqref{mainnabla}, by the distance transverse to the foliation. This picture has
strongly motivated what we will be doing.

\subsection{WKB and harmonic maps to buildings}

In the higher rank case $r\geq 3$, it is natural for the reasons explained above, to look for a relationship between
the WKB problems and harmonic mappings to eucildean buildings. This should generalize the picture we have relating
WKB problems for $SL_2$ and harmonic mappings to trees. 

Recall that $X$ is a Riemann surface, $x_0\in X$, $\cE\rightarrow X$ a vector bundle of rank $r$ with $\bigwedge ^r\cE \cong \cO _X$,
and 
$$
\varphi : \cE\rightarrow \cE\otimes \Omega ^1_X
$$
a Higgs field with ${\rm Tr}(\varphi )=0$. Let 
$$
\Sigma \subset T^{\ast}X \stackrel{p}{\rightarrow} X
$$
be the spectral curve, which we assume to be reduced.

We have a tautological form 
$$
\phi \in H^0(\Sigma , p^{\ast}\Omega ^1_X)
$$
which is thought of as a multivalued differential form. Locally we write
$$
\phi = (\phi _1,\ldots , \phi _r), \;\;\;\; \sum \phi _i = 0.
$$
The assumption that $\Sigma$ is reduced amounts to saying that $\phi _i$ are distinct.

Let $D=p_1+\ldots + p_m$ be the locus over which $\Sigma$ is branched, and $X^{\ast}:= X-D$. 
The $\phi _i$ are locally well defined on $X^{\ast}$. 

We have distinguished two kinds of WKB problems associated to this set of data. \\

\noindent \textbf{(1)} The \textbf{Riemann-Hilbert} or \textbf{complex WKB} problem:\\

Choose a connection $\nabla _0$ on $\cE$ and set 
$$
\nabla _t:= \nabla _0 + t\varphi 
$$
for $t\in \rr _{\geq 0}$. Let 
$$
\rho _t : \pi _1(X,x_0)\rightarrow SL_r(\cc )
$$
be the monodromy representation. We also choose a fixed metric $h$ on $\cE$.

From the flat structure which depends on $t$ we get a family of maps
$$
h_t: \widetilde{X} \rightarrow SL_r(\cc )/SU_r
$$
which are $\rho _t$-equivariant. We would like to understand the asymptotic behavior of
$\rho _t$ and $h_t$ as $t\rightarrow \infty$.

\begin{definition}
For $P,Q\in \widetilde{X}$, let $T_{PQ}(t): \cE_P\rightarrow \cE_Q$
be the transport matrix of $\rho _t$. Define the \emph{WKB exponent} 
$$
\nu_{PQ} := \limsup _{t\rightarrow \infty} \frac{1}{t} \log \| T_{PQ}(t)\| 
$$
where $\| T_{PQ}(t)\| $ is the operator norm with respect to $h_P$ on $\cE_P$ and
$h_Q$ on $\cE_Q$. 
\end{definition}

\textbf{Gaiotto-Moore-Neitzke} \cite{GMN-WKB} consider a variant on the Riemann-Hilbert WKB problem,
associated with a harmonic bundle $(\cE,\overline{\partial} , \varphi , \partial , \varphi ^{\dagger})$ setting
$$
d_t:= \partial + \overline{\partial} + t\varphi + t^{-1} \varphi ^{\dagger}
$$
which corresponds to the holomorphic flat connection $\nabla _t= \partial + t\varphi $
on the holomorphic bundle $(\cE,\overline{\partial} + t^{-1}\varphi ^{\dagger})$. We expect this to have the same
behavior as the complex WKB problem. \\

\noindent \textbf{(2)} The \textbf{Hitchin WKB problem}:\\

Assume $X$ is compact, or that we have some other control over the behavior at infinity. Suppose
$(\cE,\varphi )$ is a stable Higgs bundle. Let $h_t$ be the Hitchin Hermitian-Yang-Mills metric on $(\cE,t\varphi )$
and let $\nabla _t$ be the associated flat connection. Let $\rho _t: \pi _1(X,x_0)\rightarrow SL_r(\cc )$
be the monodromy representation.

Our family of metrics gives a family of \emph{harmonic maps} 
$$
h_t: \widetilde{X} \rightarrow SL_r(\cc )/SU_r
$$
which are again $\rho _t$-equivariant. 

We can define $T_{PQ}(t)$ and $\nu_{PQ}$ as before, here using $h_{t,P}$ and $h_{t,Q}$
to measure $\| T_{PQ}(t)\| $.

Gaiotto-Moore-Neitzke explain that $\nu_{PQ}$ should vary as a function of $P,Q\in X$, in a way
dictated by the \emph{spectral networks}. We would like to give a geometric framework.

\begin{remark} In the complex WKB case, one can view $T_{PQ}(t)$ in terms of Ecalle's \emph{resurgent
functions}. The Laplace transform 
$$
{\mathcal L}T_{PQ}(\zeta ):= \int _0^{\infty} T_{PQ}(t)e^{-\zeta t} dt
$$
is a holomorphic function defined for $|\zeta | \geq C$. It admits an analytic continuation having infinite, but
locally finite, branching. 
\end{remark}

One can describe the possible locations of the branch points, and this description
seems to be compatible with the discussion of Gaiotto-Moore-Neitzke, however in the present paper we look in a different direction. 
 
Namely, we would like to relate their description of WKB exponents, to harmonic mappings to buildings.  
The basic philosophy is that a WKB problem determines a valuation on ${\mathcal O}_{M_B}$ by looking at the exponential 
growth rates of functions applied to the points $\rho _t$. Therefore, $\pi _1$ should act on a Bruhat-Tits building 
and we could try to choose an
equivariant harmonic map 
following Gromov-Schoen.

Recently, Anne Parreau \cite{Parreau-compactification, Parreau-norms} has developed a very useful version of this theory, based on work of Kleiner-Leeb \cite{Kleiner-Leeb}. 
Parreau's work concentrated on the asymptotic behavior of the monodromy representations $\rho _t$,
but by thinking of the fundamental \emph{groupoid} we can extend this to the transport
functions $T_{PQ}(t)$. Look at our maps $h_t$ as being maps into a symmetric space with distance rescaled: 
$$
h_t: \widetilde{X} \rightarrow \left( SL_r(\cc )/SU_r, \frac{1}{t}d \right) . 
$$

Kleiner and Leeb, and Parreau,  now take a ``Gromov limit'' of the symmetric spaces with their rescaled distances, and show that it will be a building
modelled on the same affine space ${\mathbb A}$ as the $SL_r$ Bruhat-Tits buildings. 

The limit construction depends on the choice of \emph{ultrafilter} $\omega$, and the limit is denoted $\Cone_{\omega}$. 
We get a map 
$$
h_{\omega}: \widetilde{X} \rightarrow \Cone_{\omega}, 
$$
equivariant for the limiting action $\rho _{\omega}$ of $\pi _1$ on $\Cone_{\omega}$ which was the subject of \cite{Parreau-compactification}. 

In this situation, the 
main point for us is that we can write
$$
d_{\Cone_{\omega}}\left( h_{\omega} (P),h_{\omega}(Q)\right) = 
$$
$$
\lim _{\omega} \frac{1}{t} d_{SL_r\cc /SU_r} \left( h_t(P),h_t(Q)\right) .
$$

There are several distances on the building, and these are all related by the above
formula to the corresponding distances on $SL_r\cc /SU_r$. 

\begin{itemize}

\item The Euclidean distance $\leftrightarrow$ Usual distance on $SL_r\cc /SU_r$

\item Finsler distance $\leftrightarrow$ $\log$ of operator norm

\item Vector distance $\leftrightarrow$ dilation exponents

\end{itemize}

We are most interested in the \emph{vector distance}. In the affine space
$$
{\mathbb A} = \{ (x_1,\ldots , x_r)\in \rr^r , \;\; \sum x_i = 0\} \cong \rr ^{r-1}
$$
the vector distance is translation invariant, defined by 
$$
\overrightarrow{d}(0,x):= (x_{i_1}, \ldots , x_{i_r})
$$
where we use a Weyl group element to reorder so that $x_{i_1}\geq x_{i_2}\geq \cdots \geq x_{i_r}$. 

In $\Cone_{\omega}$, any two points are contained in a common apartment, so we can use the
vector distance defined as above in that apartment to define the vector distance in the building. 

The ``dilation exponents'' in $SL_r\cc /SU_r$ may be discussed as follows:
put
$$
\overrightarrow{d}(h,k):= (\alpha_1,\ldots , \alpha _r)
$$
where 
$$
\| e_i\| _k = e^{\alpha _i} \| e_i\| _h
$$
with $\{ e_i\}$ a simultaneously $h$ and $k$ orthonormal basis.

In terms of transport matrices,
$$
\alpha_1 = \log \| T_{PQ}(t)\| .
$$
We can furthermore get a hold of the other dilation exponents, by using exterior powers:
$$
\alpha _1+\ldots + \alpha _k = \log \| \bigwedge ^k T_{PQ}(t)\| ,
$$
using the transport matrix for the induced connection on $\bigwedge ^k\cE$. In this way, intuitively we can restrict
to mainly thinking about $\alpha _1$, which was the ``Finsler metric''. 

\begin{remark} 
For $SL_r\cc/SU_r$ we are only interested in these metrics ``in the large'' as they pass to 
the limit after rescaling. 
\end{remark}

Our rescaled distance becomes
$$
\frac{1}{t} \log \| T_{PQ}(t)\| .
$$
Define the \emph{ultrafilter exponent} 
$$
\nu ^{\omega}_{PQ}:= \lim _{\omega} \frac{1}{t} \log \| T_{PQ}(t)\| .
$$

This should be compared with the exponent $\nu _{PQ}$ considered in Definition 1.2.

\begin{proposition}
We have
$$
\nu ^{\omega}_{PQ}\leq \nu_{PQ}.
$$
Furthermore, they are equal in some cases:
\begin{enumerate}[label = \emph{(\alph*)}]
\item  for any fixed choice of $P,Q$, there exists a choice of ultrafilter $\omega$ such that 
$\nu ^{\omega}_{PQ} = \nu_{PQ}$. 

\item If $\limsup _{t} \ldots = \lim _t \ldots $ then it is the same as $\lim _{\omega} \ldots $, so $\nu ^{\omega}_{PQ}=\nu _{PQ}$.  This will apply in particular for the local WKB case to be seen below. 
\end{enumerate}
\end{proposition}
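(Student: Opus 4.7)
The plan is to view the proposition as an essentially formal statement about the comparison of $\limsup$ with ultralimit along a non-principal ultrafilter, applied to the scalar function $f(t) := \frac{1}{t}\log\|T_{PQ}(t)\|$; the geometric meaning of $f$ plays no role, only that $f$ is bounded on $[1,\infty)$. Boundedness above follows from Gr\"onwall's inequality applied to the ODE governing the transport for $\nabla_t = \nabla_0 + t\varphi$, yielding $\|T_{PQ}(t)\| \leq e^{Ct}$ with $C$ depending on the chosen path, while boundedness below follows from $\det T_{PQ}(t) = 1$ (since we are in $SL_r$), which forces the operator norm to be at least $1$ and hence $f \geq 0$. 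Thus $\lim_\omega f(t)$ exists as a real number for any non-principal ultrafilter $\omega$, and the statement makes sense.

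For the main inequality $\nu^{\omega}_{PQ} \leq \nu_{PQ}$, I would argue directly from the definition of $\limsup$: for every $\varepsilon > 0$ there is $T > 0$ such that $f(t) < \nu_{PQ} + \varepsilon$ for all $t \geq T$. Any non-principal ultrafilter $\omega$ contains the tail $\{t : t \geq T\}$, so $\nu^{\omega}_{PQ} = \lim_\omega f(t) \leq \nu_{PQ} + \varepsilon$, and letting $\varepsilon \to 0$ yields the claim.

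For part (a), fix $P,Q$ and set $L := \nu_{PQ}$. By definition of $\limsup$, for every $\varepsilon > 0$ and every $N \in \NN$ the set
$$
A_{\varepsilon, N} := \{ t \geq N : |f(t) - L| < \varepsilon \}
$$
is nonempty, and the collection $\{A_{\varepsilon,N}\}$ has the finite intersection property since $A_{\varepsilon_1,N_1} \cap A_{\varepsilon_2,N_2} \supseteq A_{\min(\varepsilon_1,\varepsilon_2),\, \max(N_1,N_2)}$. By the ultrafilter lemma, extend this family to a non-principal ultrafilter $\omega$; then $A_{\varepsilon,0} \in \omega$ for every $\varepsilon > 0$, which is exactly the statement $\lim_\omega f = L$, i.e.\ $\nu^{\omega}_{PQ} = \nu_{PQ}$. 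For part (b), if $\lim_{t\to\infty} f(t)$ exists and equals $L$, then for each $\varepsilon > 0$ the set $\{t : |f(t) - L| < \varepsilon\}$ contains a tail of $[0,\infty)$ and so lies in every non-principal ultrafilter, forcing $\lim_\omega f = L = \nu_{PQ}$ regardless of the choice of $\omega$.

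There is no serious obstacle here: once boundedness of $f$ is recorded, the proposition reduces to two elementary features of ultralimits -- that every ultralimit of a bounded real-valued function is dominated by its $\limsup$, and that it coincides with the ordinary limit whenever the latter exists -- combined with one invocation of the ultrafilter lemma to produce the witness $\omega$ in (a). The role of the proposition in the paper is conceptual rather than technical: it identifies $\nu^{\omega}_{PQ}$ as a reasonable ``building-theoretic'' replacement for $\nu_{PQ}$, honest in the cases of most interest for WKB.
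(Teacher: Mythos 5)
Your proof is correct and follows essentially the same route as the paper, which (very tersely) says: the inequality holds because ultralimits are bounded by $\limsup$, (b) is clear, and (a) follows by "subordinating the ultrafilter to the condition of having a sequence calculating the $\limsup$ for that pair $P,Q$." Your filter-base construction via the sets $A_{\varepsilon,N}$ is just an explicit rendering of that subordination, and the preliminary observation that $f$ is bounded (upper bound by Gr\"onwall, lower bound from $\det T = 1$ forcing $\|T\|\geq 1$) is a reasonable bit of bookkeeping the paper omits.
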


The inequality is due to the fact that the ultrafilter limit is less than the $\limsup$. Part (b) is clear, and part (a)
holds by subordinating the ultrafilter to the condition of having a sequence calculating the $\limsup$ for
that pair $P,Q$. 

It isn't \emph{a priori} clear whether we can choose the ultrafilter so that equality holds for all pairs of points $P$ and $Q$. 
Part (b) would in fact apply in the complex WKB case, for generic angles, if we knew that the Laplace transform ${\mathcal L}T_{PQ}(\zeta )$ didn't
have essential singularities. This is true, so one can show that (b) holds, for generic angles, in some cases where the spectral curve decomposes into a union
of sections, i.e. $\phi_i$ are single-valued.

\begin{theorem}[``Classical WKB'']\label{classical-WKb}
\label{localWKB}
Suppose $\gamma : [0,1]\rightarrow \widetilde{X}^{\ast}$ is \emph{noncritical} path i.e. 
$\gamma^{\ast} \re \phi _i$ are distinct for all $t\in [0,1]$. Reordering we may assume
$$
\gamma^{\ast} \re \phi _1 > \gamma^{\ast} \re \phi _2 > \ldots > \gamma^{\ast} \re \phi _r .
$$
Then, for the complex WKB problem we have
$$
\frac{1}{t} \overrightarrow{d}\left( h_t(\gamma (0)), h_t(\gamma (1)\right) \sim (\alpha _1,\ldots , \alpha _r)
$$
where 
$$
\alpha_i = \int _{0}^1 \gamma ^{\ast}\re \phi _i .
$$
\end{theorem}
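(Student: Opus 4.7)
The strategy is to reduce to a classical WKB estimate for an ODE with a large parameter. Since $\gamma$ lies in $\widetilde{X}^{\ast}$, the spectral cover $\Sigma$ is unbranched over $\gamma$, and after choosing a labeling of the sheets the pullbacks $\gamma^{\ast}\phi_i$ become globally single-valued smooth $1$-forms on $[0,1]$ with $\gamma^{\ast}\re\phi_1>\cdots>\gamma^{\ast}\re\phi_r$ pointwise by hypothesis. I would select a smooth frame $(e_1(s),\ldots,e_r(s))$ of $\gamma^{\ast}\cE$ simultaneously diagonalizing $\gamma^{\ast}\varphi$; in this frame, the parallel transport equation for $\nabla_t = \nabla_0 + t\varphi$ along $\gamma$ takes the form
$$
\frac{d\psi}{ds} + \bigl(t\Phi(s) + A(s)\bigr)\psi(s) = 0,
$$
where $\Phi(s)\,ds = \mathrm{diag}(\gamma^{\ast}\phi_1,\ldots,\gamma^{\ast}\phi_r)$ and $A(s)$ is the smooth, $t$-independent matrix of $\nabla_0$ in this frame.

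The main analytic step is the classical complex WKB theorem applied to this ODE. The noncritical hypothesis is precisely the condition that no Stokes line is crossed along $\gamma$, so the standard theorem (cf.\ Wasow's \emph{Asymptotic Expansions for Ordinary Differential Equations}) produces a fundamental system of solutions
$$
\psi_i(s,t) = \exp\!\left(-t\!\int_0^s \gamma^{\ast}\phi_i \;-\; \!\int_0^s A_{ii}(u)\,du\right)\bigl(e_i(s) + r_i(s,t)\bigr),
$$
with $r_i(s,t) = O(1/t)$ uniformly on $[0,1]$. Concretely, after conjugating out the diagonal exponential one sets up a Picard iteration whose off-diagonal kernel entries are $A_{ij}(s)\exp\bigl(t\!\int_0^s \gamma^{\ast}(\phi_j-\phi_i)\bigr)$; the noncritical ordering of real parts allows one to integrate each such entry from the endpoint that makes its kernel uniformly bounded, yielding convergence of the iteration. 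It follows that the transport matrix $T_{PQ}(t)$ factors asymptotically as $C_1(t)\,D(t)\,C_0(t)^{-1}$, where $D(t)$ is the diagonal matrix with entries $\exp(-t\int_0^1 \gamma^{\ast}\phi_i)$ and $C_0(t), C_1(t)$ are uniformly bounded with invertible limits.

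Once this asymptotic form is in hand, the remaining work is bookkeeping. The change-of-frame matrices between the diagonalizing frame $(e_i)$ and the fixed $h$-orthonormal frames at $P$ and $Q$, together with $C_0(t)$ and $C_1(t)$, contribute only $O(1)$ to $\log\|\wedge^k T_{PQ}(t)\|$ for each $k$ and therefore wash out after dividing by $t$. The asymptotic top singular value of $\wedge^k T_{PQ}(t)$ is thus determined by the product of the $k$ largest diagonal entries of $D(t)$, and its limit yields $\sum_{i=1}^k \int_0^1 \gamma^{\ast}\re\phi_i$ once the ordering and sign are matched to the conventions defining $h_t$. Using the identifications $\alpha_1=\log\|T_{PQ}(t)\|$ and $\alpha_1+\cdots+\alpha_k=\log\|\wedge^k T_{PQ}(t)\|$ recalled in the introduction, one extracts each $\alpha_i$ and obtains $\alpha_i=\int_0^1 \gamma^{\ast}\re\phi_i$.

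The hard part is the uniform WKB remainder estimate in the middle paragraph. The Picard-iteration kernel has entries that are exponentially large in one direction of integration and exponentially small in the other, depending on the sign of $\re(\phi_j-\phi_i)$; arranging the iteration so that each off-diagonal component is integrated from the endpoint that tames its kernel, and verifying uniform convergence as $t\to\infty$, is the essential content of classical complex WKB. The remaining ingredients---the smooth diagonalization of $\varphi$ along $\gamma$, and the relationship between the vector distance and singular values of the transport matrix and its exterior powers---are either elementary or are recalled just before the theorem statement.
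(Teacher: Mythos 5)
Your proposal is correct, but it takes a genuinely different route from the paper.  You invoke the classical large-parameter diagonalization theorem (Levinson/Wasow) to get a uniform fundamental system on the whole interval $[0,1]$ at once: diagonalize $\varphi$ along $\gamma$, conjugate out the exponential, and run a Picard iteration in which each off-diagonal kernel entry $A_{ij}(s)\exp\bigl(t\!\int_0^s\gamma^{\ast}(\phi_j-\phi_i)\bigr)$ is integrated from the endpoint that makes it decay; the constant ordering of $\re\gamma^{\ast}\phi_i$ guaranteed by the noncritical hypothesis is exactly what makes this one-sided integration available on all of $[0,1]$, and the $O(1/t)$ remainder plus exterior powers then give every $\alpha_i$.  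The paper instead proves only a \emph{short-range} estimate (Proposition~\ref{local-WKB}): after reducing to $a_{11}=0$ it controls the ratios $G_i=F_i/F_1$ via a differential inequality for $M(x)=\sum_{i\geq 2}|G_i|^2$, which yields the two-sided bound $\tfrac12 e^{\alpha t}\leq\|T(x)\|\leq C_r e^{\alpha t}$ only on a small interval $[0,c]$ with $c$ independent of $t$.  The paper then extends to arbitrary noncritical paths not by iterating the analysis but by passing to the asymptotic cone and using the building axioms (the argument in the proof of Theorem~\ref{cone-wkb}, modeled on Proposition~\ref{phi-maps-non-crit}): successive short segments land in opposed sectors at each intermediate point, so the whole path lies in a single apartment, and vector distances add.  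Your approach is cleaner if one is willing to cite the classical literature and proves the global estimate in one shot; the paper's is self-contained, slightly more robust (it tolerates off-diagonal entries growing like $t^{1/2}$, with an eye toward the Hitchin WKB problem where no flat diagonalizing frame exists), and, more importantly, it is engineered to produce the additional building-theoretic output---that $h^\omega\circ\gamma$ lands in the Finsler convex hull of its endpoints---which you do not get directly from the Wasow statement but which is needed downstream in the paper.

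Two small imprecisions to note.  First, your gloss that the noncritical hypothesis is ``precisely the condition that no Stokes line is crossed'' is not quite right: noncriticality is the pointwise condition $\langle\re(\phi_i-\phi_j),\gamma'(t)\rangle\neq 0$, which means $\gamma$ is never \emph{tangent} to a spectral-network curve; a noncritical path can still cross such curves transversally.  What matters for the Picard iteration is that the \emph{ordering} of $\re\gamma^{\ast}\phi_i$ is constant, which is what noncriticality actually gives.  Second, in the final bookkeeping you should be explicit that $\sum_i\phi_i=0$ forces $\sum_i\alpha_i=0$, so the claimed vector lands in the $\SL_r$ model apartment; this is needed to match the conventions of the vector distance you appeal to.
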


\begin{corollary}
At the limit, we have
$$
\overrightarrow{d}_{\omega} \left( h_t(\gamma (0)), h_t(\gamma (1)\right) = (\alpha _1,\ldots , \alpha_r).
$$
\end{corollary}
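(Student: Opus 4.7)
The plan is to deduce the corollary directly from Theorem \ref{classical-WKb} combined with Proposition 1.3(b). The proof of Theorem \ref{classical-WKb} will be a classical WKB analysis on the noncritical path $\gamma$: after a $t$-dependent diagonalizing gauge transformation adapted to the distinct real parts $\re \phi_i$, the transport matrix $T_{\gamma(0)\gamma(1)}(t)$ takes a form like $(I+O(t^{-1}))\,\mathrm{diag}(e^{t\int_0^1 \gamma^{\ast}\phi_i})\,(I+O(t^{-1}))$. In particular each rescaled log of a singular value of $T_{\gamma(0)\gamma(1)}(t)$ has a \emph{genuine limit} as $t \to \infty$, not merely a $\limsup$. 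Hence the hypothesis of Proposition 1.3(b) is satisfied, and the ordinary limit agrees with the ultrafilter limit.

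To get all components of the vector distance, not just $\alpha_1$, I would apply the same reasoning to the induced connections on the exterior powers $\bigwedge^k \cE$ for $k=1,\ldots,r-1$. The Higgs field $\bigwedge^k \varphi$ has spectral eigenforms equal to the $k$-fold sums $\phi_{i_1}+\cdots+\phi_{i_k}$, and under the noncritical ordering $\re\phi_1>\cdots>\re\phi_r$ the unique eigenform of largest real part is $\phi_1+\cdots+\phi_k$. The path $\gamma$ remains noncritical for $\bigwedge^k \cE$ by the same strict inequalities, so Theorem \ref{classical-WKb} applies to this rank-$\binom{r}{k}$ system and yields
$$\lim_{t\to\infty} \frac{1}{t}\log \Bigl\| \bigwedge\nolimits^k T_{\gamma(0)\gamma(1)}(t)\Bigr\| \;=\; \int_0^1 \gamma^{\ast}\re(\phi_1+\cdots+\phi_k) \;=\; \alpha_1+\cdots+\alpha_k.$$
Since these cumulative sums are exactly the coordinates determining the vector distance (as recalled in the discussion of dilation exponents before the corollary), and since each limit exists in the ordinary sense, Proposition 1.3(b) upgrades them to $\omega$-limits. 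By the definition of the asymptotic cone $\Bc$, the quantity $\vecd_{\omega}(h_t(\gamma(0)),h_t(\gamma(1)))$ is precisely the componentwise ultrafilter limit of $\tfrac{1}{t}\vecd(h_t(\gamma(0)),h_t(\gamma(1)))$ in $SL_r(\cc)/SU_r$, so we conclude $(\alpha_1,\ldots,\alpha_r)$, as claimed.

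The main obstacle is confirming that Theorem \ref{classical-WKb} really produces a limit rather than only a $\limsup$, so that Proposition 1.3(b) can be invoked. This requires the WKB expansion to be uniform on the compact interval $[0,1]$; uniformity follows from the strict noncritical gaps $\re\phi_i-\re\phi_{i+1}>0$, which are bounded below on $[0,1]$ by compactness and yield spectral separation of the leading exponentials. Granting this—and it is essentially what Theorem \ref{classical-WKb}'s proof produces—the corollary is immediate.
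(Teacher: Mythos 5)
Your proof is correct and follows the paper's implicit argument: Theorem 1.4 provides a genuine ordinary limit for the rescaled vector distance, so Proposition 1.3(b) (together with Parreau's Proposition 2.20 identifying the cone vector distance with the $\omega$-limit of rescaled vector distances) yields the corollary immediately. The exterior-power discussion re-derives content that is logically part of the proof of Theorem 1.4 itself rather than something needed to deduce the corollary from its statement, but that reasoning is sound and does no harm.
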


The above theorem has been stated for the complex or Riemann-Hilbert WKB problem. It should also extend directly
to the variant considered by Gaiotto-Moore-Neitzke. We also conjecture that the same local estimate should hold in Hitchin's case. 

\begin{conjecture}
The same should be true for the Hitchin WKB problem.
\end{conjecture}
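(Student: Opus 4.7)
The plan is to leverage the asymptotic \emph{decoupling} of the Hitchin harmonic metric $h_t$ on $(\cE,t\varphi)$ as $t\to\infty$ over the regular locus $X^{\ast}=X\setminus D$. On $X^{\ast}$ the Higgs field $\varphi$ admits a smooth eigenline decomposition $\cE|_{X^{\ast}}\cong\bigoplus_{i=1}^r L_i$ with $\varphi$ acting as multiplication by $\phi_i$ on $L_i$. The pivotal analytic input would be that on any compact $K\Subset X^{\ast}$ one has
$$
h_t \;=\; h_t^{\mathrm{dec}} \;+\; O(e^{-ct})
$$
uniformly, where $h_t^{\mathrm{dec}}=\bigoplus_i h_{t,i}$ is the direct-sum ``limiting configuration'' metric on the eigenlines (with respect to which $\varphi$ is automatically normal). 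Such exponential decoupling of $h_t$ is expected on general grounds and has been established in a range of rank-two and rank-$r$ settings by Mochizuki, Mazzeo--Swoboda--Weiss--Witt, Fredrickson and Collier--Li. I would take it as the working hypothesis and reduce the conjecture to it.

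Granting the decoupling, the Hitchin flat connection $\nabla_t=\partial_{h_t}+\bar\partial+t\varphi+t\varphi^{\dagger_{h_t}}$ splits on a tube around $\gamma([0,1])$ as $\nabla_t=\nabla_t^{\mathrm{dec}}+R_t$, where $\nabla_t^{\mathrm{dec}}$ is genuinely diagonal in the eigenframe and $R_t$ has matrix entries of size $O(e^{-ct})$ uniformly. On each line $L_i$ the diagonal connection reduces to $d+\partial\log h_{t,i}+t\bigl(\phi_i+\overline{\phi_i}\bigr)$, whose scalar transport along $\gamma$ has log-modulus asymptotic to a constant multiple of $t\int_0^1\gamma^{\ast}\re\phi_i$, once the $h_{t,i}$-norms at the two endpoints are compared (this comparison contributes only an $O(1)$ term, killed upon dividing by $t$). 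This computation delivers the conjectured vector distance for the decoupled model.

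The remaining step is to propagate the estimate from $\nabla_t^{\mathrm{dec}}$ to the genuine Hitchin connection $\nabla_t$. Because $\gamma$ is noncritical, the diagonal entries of $\nabla_t^{\mathrm{dec}}$ along $\gamma$ have strictly ordered real parts, so the linear ODE is of \emph{dominated diagonal} type in the sense of Levinson; a Dyson-series / variation-of-parameters argument then shows that the full transport matrix $T_{\gamma(0)\gamma(1)}(t)$ differs from the diagonal one by an operator whose $(i,j)$-entry carries an extra factor $e^{-ct}$. Reading off singular values with respect to $h_{t,\gamma(0)}$ and $h_{t,\gamma(1)}$ yields the desired vector-distance asymptotics, in exact analogy with the Riemann--Hilbert case of Theorem~\ref{localWKB}.

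The genuine obstacle is the decoupling step itself: establishing $h_t-h_t^{\mathrm{dec}}=O(e^{-ct})$ in $C^k$ on neighbourhoods of $\gamma$, uniformly in $t$, for \emph{every} reduced spectral curve is a delicate nonlinear PDE problem, sensitive to the behaviour of solutions of Hitchin's equation near the branch locus $D$. The available results cover generic situations, rank two, and the ``limiting configuration'' picture near the ends of $\cM_{Dol}$, but a statement with explicit exponential rates valid over all of $X^{\ast}$ and for all reduced $\Sigma$ is, to the best of my knowledge, still open. Once such a quantitative decoupling is in hand, the two comparison steps above are essentially routine adaptations of the local analysis already carried out for the complex WKB case.
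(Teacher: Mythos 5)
The statement you are proving is left as a conjecture in the paper precisely because the key analytic input is missing: the authors say only that it ``would involve estimates on the asymptotic behavior of the harmonic metric for $(\cE,t\varphi)$ as $t\rightarrow\infty$.'' Your proposal correctly identifies that missing input — exponential decoupling $h_t=h_t^{\mathrm{dec}}+O(e^{-ct})$ on compacta of $X^{\ast}$ — and your two propagation steps (the diagonal model computation, then a Levinson/Dyson-series comparison exploiting the strict ordering of $\gamma^{\ast}\re\phi_i$ along a noncritical path) are sensible and run parallel to the paper's own proof of the local estimate in the Riemann--Hilbert case (Proposition \ref{local-WKB}). But as you yourself acknowledge, the decoupling estimate is \emph{assumed}, not proven, and is not currently available uniformly for every reduced spectral curve; so what you have is a conditional reduction of one open statement to another, not a proof. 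That is the genuine gap, and it is exactly the gap the paper flags.

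A secondary point you should not leave vague: the phrase ``a constant multiple of $t\int_0^1\gamma^{\ast}\re\phi_i$'' hides a normalization that matters for the literal statement of the conjecture. The Hitchin flat connection is $\partial_{h_t}+\bar\partial+t\varphi+(t\varphi)^{\dagger_{h_t}}$, so in the decoupled model the scalar transport on $L_i$ has log-modulus $\sim 2t\int_\gamma\re\phi_i$ (both $t\phi_i$ and its adjoint $t\overline{\phi_i}$ contribute), whereas the complex WKB connection $\nabla_0+t\varphi$ gives $t\int_\gamma\re\phi_i$. You must either show that the endpoint comparison of the $h_{t,i}$-norms cancels one of these contributions (it does not: on compacta it is $O(1)$, as you note), or accept that the Hitchin dilation exponents come out as $2\alpha_i$ and the conjecture holds only after rescaling $t\mapsto 2t$ or the building metric by $1/2$. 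Pinning down this factor is necessary before the statement ``the same should be true'' can be asserted verbatim.
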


This would involve estimates on the asymptotic behavior of the harmonic metric for $(\cE,t\varphi )$ as $t\rightarrow \infty$.

We now give the main corollary of this statement. It is a corollary of the
theorem, in the complex WKB case, and would be a corollary of the conjecture in the Hitchin case.

\begin{corollary} 
\label{localWKBlongrange}
If $\gamma : [0,1]\rightarrow \widetilde{X}^{\ast}$ is any noncritical path, then 
$h_{\omega}\circ \gamma$ maps $[0,1]$ into a single apartment, and the vector distance which determines
the location in this apartment is given by the integrals: 
$$
\overrightarrow{d}_{\omega} \left( h_t(\gamma (0)), h_t(\gamma (1)\right) = (\alpha _1,\ldots , \alpha _r).
$$
\end{corollary}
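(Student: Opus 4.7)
The plan is to derive the corollary from Theorem \ref{classical-WKb} together with a structural fact about Finsler geodesics in Euclidean buildings. The numerical formula is immediate: for any subinterval $[s_1,s_2] \subseteq [0,1]$, the restriction $\gamma|_{[s_1,s_2]}$ is again noncritical with the same ordering $\gamma^*\re\phi_1 > \cdots > \gamma^*\re\phi_r$, so applying the theorem to this restriction yields
$$\vecd_\omega\bigl(h_\omega(\gamma(s_1)),\, h_\omega(\gamma(s_2))\bigr) \;=\; \Bigl(\int_{s_1}^{s_2}\gamma^*\re\phi_1,\ldots,\int_{s_1}^{s_2}\gamma^*\re\phi_r\Bigr),$$
using that the $\sim$ in the theorem is an honest limit and therefore coincides with the $\omega$-limit defining $\vecd_\omega$. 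Specializing to $s_1=0$ and $s_2=1$ already gives the displayed formula in the corollary, so the remaining content is the single-apartment assertion.

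For any finite partition $0=s_0<s_1<\cdots<s_n=1$, set $P_i := h_\omega(\gamma(s_i))$. Additivity of $\int_{s_i}^{s_{i+1}}$ over the partition, combined with the identity above, gives
$$\vecd_\omega(P_0,P_n) \;=\; \sum_{i=0}^{n-1}\vecd_\omega(P_i,P_{i+1}),$$
with every summand lying in the same open Weyl chamber of $\A$, namely the one prescribed by the fixed ordering of $\re\phi_1,\ldots,\re\phi_r$. This is precisely the statement that $P_0,\ldots,P_n$ is a Weyl-collinear chain in a regular direction.

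I would then invoke the following structural lemma from the Kleiner--Leeb/Parreau theory of Euclidean buildings: a finite family of points in $\Cone_\omega$ whose successive vector distances lie in a common open Weyl chamber, and whose total vector distance equals the sum of the successive ones, lies on a single straight segment inside a single apartment; moreover the regularity of the direction forces this apartment to be the unique one containing any two of the points along their regular geodesic. Applied to our chain, all $P_i$ lie in the unique apartment $\cA$ containing $P_0$ and $P_n$. Since the partition was arbitrary and $\cA$ depends only on $P_0$ and $P_n$, every $h_\omega(\gamma(s))$ for $s\in[0,1]$ belongs to $\cA$.

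The main obstacle is the Euclidean-building lemma just invoked, which is the geometric heart of the argument: concatenating consecutive Weyl-straight segments into a single apartment depends critically on the open-chamber (regular) hypothesis provided by noncriticality of $\gamma$; for chamber-wall directions the configuration could genuinely branch across distinct apartments. Granting that lemma, which is available in the asymptotic-cone setting of $\Cone_\omega$ through Parreau's work, the rest of the proof is just integration.
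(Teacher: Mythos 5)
Your proposal is essentially the paper's argument: both derive the result from the additivity of vector distances (your Theorem~\ref{classical-WKb}/Corollary applied to subintervals) plus the building-theoretic fact that additivity of vector distances forces points into a common apartment. The paper, however, phrases this more economically with the three-point version of the fact: for any $s\in[0,1]$, applying the theorem to $[0,s]$ and $[s,1]$ gives $\vecd_\omega(P_0,h_\omega(\gamma(s)))+\vecd_\omega(h_\omega(\gamma(s)),P_1)=\vecd_\omega(P_0,P_1)$, which puts $h_\omega(\gamma(s))$ in the Finsler convex hull of $\{P_0,P_1\}$, hence in \emph{any} apartment containing both endpoints (at least one exists by axiom (B1)). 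That phrasing avoids the chain-lemma you invoke and, more importantly, sidesteps the uniqueness claim in your last step. In fact, two points in a building are generically contained in many apartments (even when their vector displacement is regular --- think of two points in a tree); what is unique is the Finsler convex hull, not the ambient apartment, so the clause ``the unique apartment $\cA$ containing $P_0$ and $P_n$'' is not correct as stated. Your argument is easily repaired by replacing ``unique apartment'' with ``any fixed apartment containing $P_0$ and $P_1$'' and observing that all intermediate points lie in the (unique) Finsler convex hull, which sits inside that fixed apartment; after that repair the proof is sound and coincides in substance with the paper's.
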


This just follows from a fact about buildings: if $x,y,z$ are three points with
$$
\overrightarrow{d}(x,y) + \overrightarrow{d}(y,z) = \overrightarrow{d}(x,z) 
$$
then $x,y,z$ are in a common apartment, with $x$ and $z$ in opposite chambers centered at $y$ or 
equivalently, $y$ in the Finsler convex hull of $\{ x,z\}$.

\begin{corollary}
Our map 
$$
h_{\omega} : \widetilde{X}\rightarrow \Cone_{\omega}
$$
is a harmonic $\phi$-map in the sense of Gromov and Schoen. In other words, any point in the complement of a discrete set of points
in $\widetilde{X}$ has a neighborhood which maps into a single apartment, and the map has differential ${\rm Re}\phi $, in particular 
there is no ``folding''. 
\end{corollary}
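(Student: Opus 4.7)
The plan is to reduce everything to Theorem~\ref{localWKB} and Corollary~\ref{localWKBlongrange}, with the discrete singular set being (contained in) the preimage $\Delta \subset \widetilde{X}$ of the branch locus $D$, where two branches $\phi_i,\phi_j$ of the spectral form collide.

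First I would handle a \emph{generic} point $P \in \widetilde{X} \setminus \Delta$, meaning one at which not only are the values $\phi_i(P)$ distinct but so are their real parts. On a small disc $U$ around $P$ the ordering $\re\phi_{i_1} > \cdots > \re\phi_{i_r}$ is constant, so every straight segment inside $U$ is noncritical. Corollary~\ref{localWKBlongrange} then gives, for every pair $Q_1,Q_2 \in U$, a common apartment containing $h_\omega(Q_1)$ and $h_\omega(Q_2)$, with regular vector distance equal to the integral vector $\left(\int_{Q_1}^{Q_2} \re\phi_i\right)_i$. To promote this pairwise statement to a single apartment containing the whole of $h_\omega(U)$, I would pick two auxiliary points $Q_\pm \in U$ on opposite sides of $P$ so that $h_\omega(Q_+), h_\omega(P), h_\omega(Q_-)$ lie on one regular geodesic, which in turn is contained in a necessarily unique apartment $A_0$. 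Because $\re\phi$ is a \emph{closed} multivalued form on $U$, the pairwise integral vector distances are consistent with an honest planar embedding $U \hookrightarrow A_0$ of the form $Q \mapsto h_\omega(P) + \int_P^Q \re\phi$; applying the Finsler convex-hull criterion recalled after Corollary~\ref{localWKBlongrange} to the triples $(h_\omega(Q_-), h_\omega(Q), h_\omega(Q_+))$ then forces $h_\omega(Q) \in A_0$ for every $Q \in U$.

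Next, at a point $P \in \widetilde{X} \setminus \Delta$ where $\re\phi_i(P) = \re\phi_j(P)$ for some $i \neq j$ while the $\phi_i$ themselves remain distinct (a Stokes/turning point, not a branch point), I would take a sequence of generic points $P_n \to P$, apply the first step to each, and use continuity of $h_\omega$ together with uniqueness of the apartment along a short regular geodesic to show that the apartments $A_0^{(n)}$ stabilize to a single apartment $A_P$ containing $h_\omega$ of an entire neighborhood of $P$. The image simply crosses a wall of $A_P$ as $Q$ crosses the locus $\re\phi_i = \re\phi_j$. Once a neighborhood of $P$ is known to map into a single apartment, the vector-distance formula of Theorem~\ref{localWKB}, applied to infinitesimal noncritical segments out of $P$, identifies the differential of $h_\omega$ with $\re\phi$ pointwise.

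The main obstacle is the promotion step from a pairwise statement (a common apartment for any two nearby points) to a two-dimensional one (a common apartment for a whole disc): the building axioms do not give this for free. The key input is that $\re\phi$ is a closed form on simply-connected $U$, so the pairwise integral vector distances assemble into a single Euclidean model of $U$ inside one apartment; \emph{no folding} is precisely the assertion that this planar model is an honest isometric embedding, which follows from regularity of the vector distances and uniqueness of apartments containing a regular geodesic segment.
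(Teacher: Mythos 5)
Your proposal heads in the right general direction (use the noncritical-path / Finsler-convex-hull machinery to force a neighborhood into one apartment, then read off the differential from the integral formula), which is indeed the paper's strategy — the actual proof is Proposition~\ref{cone-map-is-phi-map}, which uses Lemma~\ref{local-non-crit} and an argument parallel to Corollary~\ref{non-crit-regions-to-apts}. But there are two linked problems that amount to a genuine gap.

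First, the assertion in your Step~1 that at a generic point ``the ordering $\re\phi_{i_1}>\dots>\re\phi_{i_r}$ is constant, so every straight segment inside $U$ is noncritical'' is false. Noncriticality is a condition on the \emph{direction} of travel, not on the base point: $\gamma$ is noncritical precisely when the functions $\re\phi_i(\dot\gamma(t))$ are pairwise distinct. At any point away from the branch locus there are $\tbinom{r}{2}$ critical directions (the kernels of the nonzero real $1$-forms $\re(\phi_i-\phi_j)$), and a segment pointing along one of these is critical no matter how ``generic'' the point. Your Step~3 is implicitly trying to repair this, but it is aimed at the wrong target: the locus ``$\re\phi_i(P)=\re\phi_j(P)$'' read literally as an equality of real cotangent vectors is empty off the ramification divisor (it would force $\phi_i(P)=\phi_j(P)$), and if read as the vanishing of one fixed component $\re\,p_i=\re\,p_j$ it is a real curve, not a discrete set, so it cannot be swallowed into the ``discrete singular set'' either.

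Second, even granting some version of the dichotomy, the Step~3 approximation argument has a real hole. The radius of the disc $U_n$ on which your Step~1 applies is controlled by the distance from $P_n$ to the nearest critical locus, and this shrinks to zero as $P_n\to P$. So ``the apartments $A_0^{(n)}$ stabilize to a single apartment $A_P$ containing $h_\omega$ of an entire neighborhood of $P$'' is not justified: you obtain apartments containing images of ever smaller discs, and nothing yet prevents the limiting configuration from folding along the wall that passes through $P$. The paper sidesteps all of this by making no case distinction: at any point $R$ off the ramification divisor one chooses a single direction transversal to \emph{all} the foliations $\re(\phi_i-\phi_j)=0$ (possible because there are only finitely many of them), takes two noncritical boundary paths $\gamma,\gamma'$ bounding a disc $D\ni R$ as in Lemma~\ref{local-non-crit}, sweeps $D$ by a homotopy through noncritical paths, and invokes Theorem~\ref{cone-wkb}(3) to conclude that $h_\omega(D)$ lies in the Finsler convex hull of the common endpoints and hence in one apartment. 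Your Finsler-convex-hull promotion step and the identification $dh_\omega=\re\phi$ from the integral formula are correct once this unified sweeping argument is in place; what is missing is the correct choice of noncritical direction at every non-branch point rather than only at ``generic'' ones, and the removal of the approximation detour.
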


This finishes what we can currently say about the general situation: we get a harmonic $\phi$-map 
$$
h_{\omega} : \widetilde{X}\rightarrow \Cone_{\omega}
$$
depending on choice of ultrafilter $\omega$, with 
$$
\nu ^{\omega}_{PQ}\leq \nu_{PQ},
$$
and we can assume that equality holds for one pair $P,Q$. Also equality holds
in the local case. We expect that one should be able to choose a single $\omega$
which works for all $P,Q$.

The next goal is to analyse harmonic $\phi$-maps in terms of spectral networks. 

The main observation is just to note that the reflection hyperplanes in the building,
pull back to curves on $\tilde{X}$ which are imaginary foliation curves, including
therefore the spectral network curves. 

Indeed, the reflection hyperplanes in an apartment have equations $x_{ij}={\rm const.}$
where $x_{ij}:= x_i-x_j$, and these pull back to curves in $\widetilde{X}$ with
equation ${\rm Re} \phi _{ij} = 0$. This is the equation for the ``spectral network curves'' of
Gaiotto-Moore-Neitzke.

\subsection{The Berk-Nevins-Roberts example}

In order to try to understand the role of the \emph{collision} spectral network curves 
in terms of harmonic mapings to a building, we decided to look closely at a classical example:
it was the original example of Berk-Nevins-Roberts \cite{BNR} which introduced the ``collision phenomenon'' special
to the case of higher-rank WKB problems. 

They seem to be setting $\hbar = 1$, a standard physicist's move. If we undo that, we can say
that they consider a family
of differential equations with large parameter $t$, 
of the form
$$
(\frac{1}{t^3} \frac{d^3}{dx^3}  - \frac{3}{t} \frac{d}{dx} + x ) f = 0. 
$$
When we use the companion matrix we obtain a Higgs field $\varphi$ with spectral curve given by
the equation 
$$
\Sigma \;\; : \;\;\; p^3 - 3 p + x = 0
$$
where $X=\cc$ with variable $x$, and $p$ is the variable in the cotangent direction. It is pictured in real variables in Figure \ref{Real-Spec}.

	\begin{figure}[h]
	\centering
	\includegraphics[width=.5\textwidth]{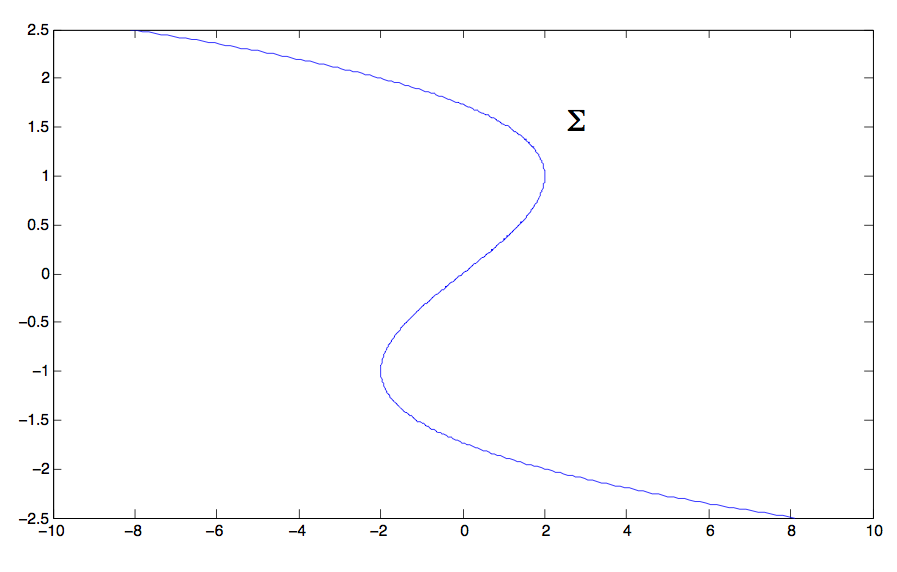}
	\caption{The real points of the BNR spectral cover}
	\label{Real-Spec}
	\end{figure}
 
The differentials $\phi _1,\phi _2$ and $\phi _3$ are of the form $p_idx$ for $p_1,p_2,p_3$
the three lifts of $p$ as a function of $x$. 

Notice that $\Sigma \rightarrow X$ has branch points 
$$
b_r= 2,\;\;\;  b_l = -2.
$$

The imaginary spectral network
is as in the accompanying picture (Figure \ref{BNR-spectral-network}), which is the same as in the Berk-Nevins-Roberts paper \cite{BNR}.

\begin{figure}[h]
\centering
\includegraphics[width=.5\textwidth]{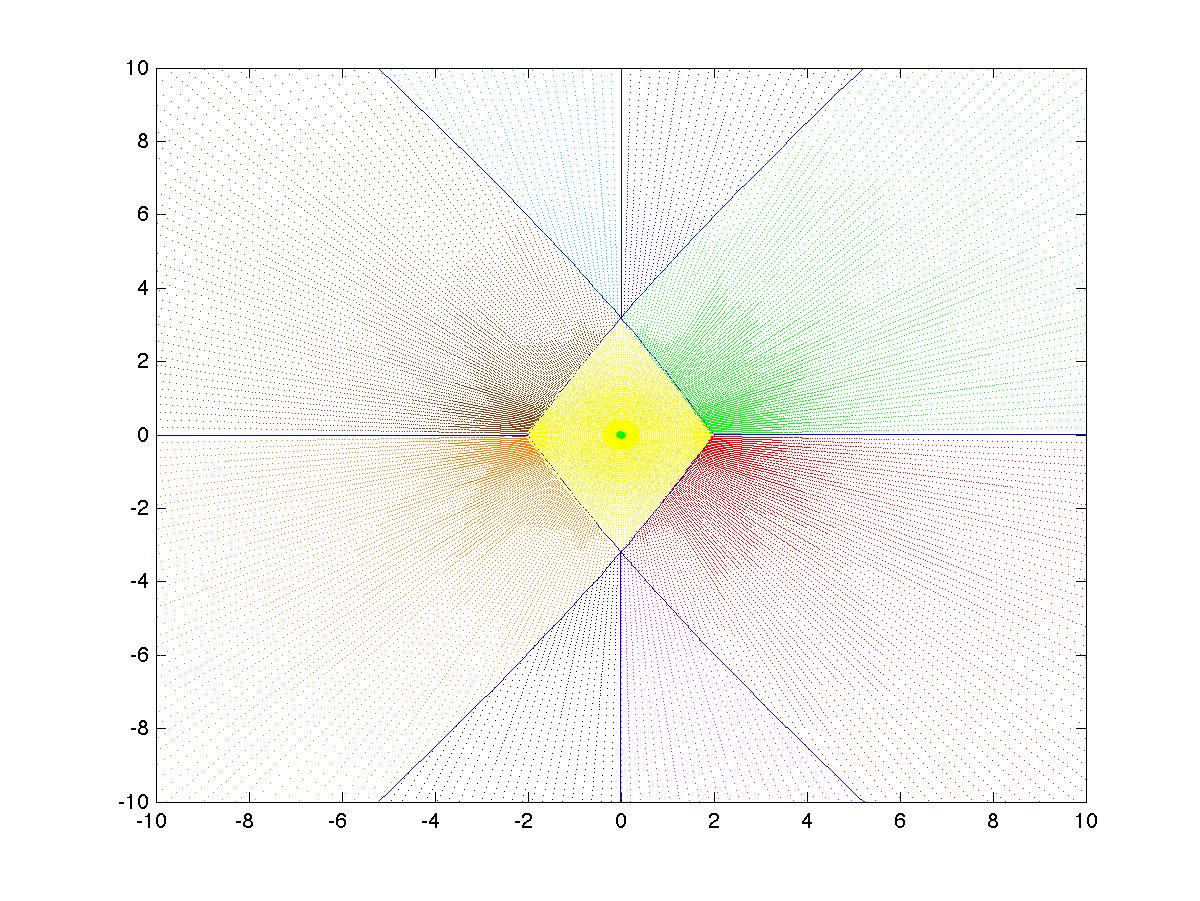}
\caption{The BNR Spectral Network}
\label{BNR-spectral-network}
\end{figure}

Notice the following salient features: 

\begin{itemize}
\item There are two collision points, which in fact lie on the same vertical collision line. 

\item The spectral network curves divide the plane into $10$ regions: 

\item $4$ regions on the outside to the right of the collision line; 

\item $4$ regions on the outside to the left of the collision line;

\item $2$ regions in the square whose vertices are the singularities and the collisions; the two regions
are separated by the interior part of the collision line. 

\end{itemize}

The first and perhaps main step of the analysis is
to say, using the local WKB approximation of Theorem \ref{localWKB} and
Corollary \ref{localWKBlongrange}, we can conclude: 

\begin{proposition}
Each region cut out by the spectral network is mapped into a single Weyl sector in
a single apartment
of the building $\Cone_{\omega}$. 
\end{proposition}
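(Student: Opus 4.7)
The plan is to combine the local WKB result (Theorem~\ref{localWKB}) and its long-range corollary (Corollary~\ref{localWKBlongrange}) with the identification, noted just before this subsection, of reflection walls in the building with the foliation curves $\re\phi_{ij}=0$ on $\widetilde{X}$.

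First I would fix a region $R$ in the complement of the BNR spectral network. Since $R\subset X^*$ avoids the branch points and, as read off from Figure~\ref{BNR-spectral-network}, is simply connected, the three sheets $\phi_1,\phi_2,\phi_3$ of the spectral curve restrict to single-valued holomorphic $1$-forms on $R$, and their real parts are closed, hence exact: $\re\phi_i=df_i$ on $R$ with $\sum f_i=0$. For any two points $P,Q\in R$ I would join them by a smooth path $\gamma$ kept inside $R$; at each point the ``critical'' tangent directions form the union of the three lines $\ker(\re\phi_{ij})$ in the $2$-dimensional tangent plane, so by a generic $C^1$-perturbation I can arrange $\gamma$ to be noncritical. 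Corollary~\ref{localWKBlongrange} then places $h_\omega(P)$ and $h_\omega(Q)$ in a common apartment of $\Cone_\omega$, with vector distance equal to the sorted tuple of integrals $\int_\gamma \re\phi_i$.

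The next step is to upgrade pairwise-common-apartments to a single apartment $A$ containing $h_\omega(R)$. Here I would invoke the harmonic $\phi$-map property: around each $P\in R$ a neighborhood maps into a single apartment with differential $\re\phi$, which has rank $2$ since the $\phi_i$ are pairwise distinct. The image of each local neighborhood is therefore $2$-dimensional inside the apartment, while two distinct apartments of the rank-$2$ euclidean building $\Cone_\omega$ can meet only in a subset of dimension at most $1$; thus the local apartments must agree on overlaps, and connectedness of $R$ forces a single $A$.

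For the Weyl-sector part, I would use that the reflection walls of $A$ pull back (via $h_\omega$) to the leaves $\re\phi_{ij}=0$. Since, by the definition of $R$, no such wall is crossed as $P$ varies inside $R$, the ordering of $f_1(P),f_2(P),f_3(P)$ is independent of $P$. Equivalently, the vector distance $\vecd(h_\omega(P_0),h_\omega(P))$ from any fixed $P_0\in R$ lies in a single Weyl chamber, so $h_\omega(R)$ is contained in a single Weyl sector in $A$ (based, for instance, at $h_\omega$ of a boundary branch point of $R$).

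The hardest step is this last constancy-of-ordering claim: one must rule out a non-spectral-network leaf of some $\re\phi_{ij}=0$ passing through the interior of $R$ and silently flipping the ordering. This has to be checked against the concrete BNR geometry of Figure~\ref{BNR-spectral-network}, using the explicit equation $p^3-3p+x=0$ and tracing the foliation trajectories emanating from $b_l=-2$ and $b_r=2$; the ``collision line'' in the middle of the square is precisely the extra boundary that prevents a mixed ordering in the central square, giving the count of $10$ regions rather than $8$.
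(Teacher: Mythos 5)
The central step of your argument — upgrading ``pairwise common apartments'' to a single apartment via dimension counting — is incorrect, and this is a genuine gap. You assert that two distinct apartments of a rank-two affine building can only meet in a set of dimension at most one. This is false: in any thick building (and $\Cone_\omega$ is thick), two apartments can share an entire half-apartment, or more generally any enclosed (Finsler-convex) subset, which can be full-dimensional. Branching along walls is precisely what makes a building a building, so the local agreement of apartment charts does not follow from the rank of $dh$. The paper's own route to a single apartment is quite different: it first proves (Proposition~\ref{phi-maps-non-crit}) that a single noncritical path lands in one apartment, by an open-closed argument using opposed sector germs and the building axiom (CO) that a pair of opposite sectors determines a unique apartment; it then shows (Corollary~\ref{non-crit-regions-to-apts}) that the union $\Omega_{PQ}$ of all noncritical paths between two fixed boundary points $P,Q$ lands in the Finsler convex hull $[h(P),h(Q)]$, hence in one apartment. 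The BNR regions are then exhausted by such $\Omega_{P_\epsilon Q_\epsilon}$ for suitably chosen $P_\epsilon,Q_\epsilon$ near the branch or collision points, and completeness of the apartment system is invoked to pass to the limit.

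Two secondary issues are also worth flagging. First, your claim that any two points of $R$ can be joined by a noncritical path after a generic $C^1$-perturbation is not true: if $P$ and $Q$ lie on the same leaf of $\re\phi_{ij}=0$, then (since the $\phi_i$ are single-valued and $\re\phi_{ij}$ exact on $R$) any path from $P$ to $Q$ in $R$ satisfies $\int_\gamma\re\phi_{ij}=0$, forcing $\gamma^*\re\phi_{ij}(t_0)=0$ somewhere and killing noncriticality; the paper avoids this by sweeping the region from two carefully chosen endpoints rather than between arbitrary pairs. Second, the rank of $dh$ is not everywhere two — the central region of the BNR example has a fold line (the caustic $[-2,2]$) where the rank drops, which your rank-two premise silently excludes. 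You do correctly identify the leaf-crossing subtlety in the Weyl-sector step, and your description of how walls pull back to the $\re\phi_{ij}=0$ foliations matches the paper; but the single-apartment step needs to be reworked along the lines of Proposition~\ref{phi-maps-non-crit} and Corollary~\ref{non-crit-regions-to-apts} rather than by dimension counting.
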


In particular, the interior square (containing in fact two regions)
maps into a single apartment, with a fold  line along the ``caustic'' joining the two singularities.
The fact that the whole region goes into one apartment comes from an argument with the axioms of the building and special
to the fold line. 

These interior regions, which will be colored ``yellow'', are somewhat special in the present discussion. 
They are the only ones which do not map surjectively onto their corresponding sectors. In this sense, they
represent perhaps most closely the behavior which is necessarily to be expected for even higher rank $r\geq 4$, when
the dimension of the building is strictly larger than the dimension of the Riemann surface. 

However, for the moment, we make use of the fact that many pieces of the Riemann surface map surjectively onto their
corresponding sectors, in order to understand this first example. 

For proofs of statements such as the proposition, we found the paper of Bennett, Schwer and Struyve \cite{Bennett-Schwer}
about axiom systems for buildings, based on Parreau's paper \cite{Parreau-norms}, to be very useful.  

Another special property also holds in this example: 

\begin{lemma}
In this case, the two collision points map to the same point in the building. 
\end{lemma}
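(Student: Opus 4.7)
The plan is to compute the vector distance $\vecd_\omega(h_\omega(c_-),h_\omega(c_+))$ directly inside the single apartment $A$ that contains the image of the entire interior square, as provided by the preceding proposition combined with the remark in the text that the two interior regions share an apartment. Since the BNR spectral curve $p^3-3p+x=0$ is real in $(p,x)$, its associated spectral network is invariant under $x\mapsto\bar x$, so the two collision points form a complex-conjugate pair. Together with the fact that they lie on a common vertical line, this forces $c_\pm=x_0\pm i\beta$ for some real $x_0$ and some $\beta>0$; moreover the imaginary axis is the unique vertical line on which two of the $\re\phi_i$ coincide (for such a line the relevant pair $p_2,p_3$ must be complex conjugates of one another), so $c_\pm=\pm i\beta$.

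First I would promote Corollary~\ref{localWKBlongrange} to a primitive formula on the closed interior square. On each of the two open interior regions the harmonic $\phi$-map property says that $h_\omega$ is an affine map into $A$ with differential $\re\phi$ in a fixed labeling of sheets; since the branch cut of $\Sigma\to X$ lies on the caustic and does not cross the collision line, this labeling extends continuously across the wall separating the two interior regions. Continuity of $h_\omega$ then gives
\[
h_\omega(Q)-h_\omega(P)=\Bigl(\textstyle\int_P^Q\re\phi_1,\ \int_P^Q\re\phi_2,\ \int_P^Q\re\phi_3\Bigr)\in A
\]
for all $P,Q$ in the closed square and any path between them in that square; in particular this applies to the vertical segment from $c_-$ to $c_+$, notwithstanding that this segment lies on a wall.

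Second, I would evaluate the integrals by direct substitution. Parameterizing the segment by $x(y)=iy$ with $y\in[-\beta,\beta]$ and trying $p=iq$ with $q\in\rr$ in the cubic gives $q^3+3q=y$, which has a unique real root $q(y)$; factoring the cubic shows that the remaining two sheets are $p_{2,3}(y)=(-iq(y)\pm r(y))/2$ with $r(y)=\sqrt{3q(y)^2+12}>0$. Since $dx=i\,dy$ on the segment, a short calculation yields
\[
\re\phi_1=-q(y)\,dy,\qquad\re\phi_2=\re\phi_3=\tfrac12 q(y)\,dy.
\]
The defining relation $q^3+3q=y$ is odd in $(q,y)$, so $q(-y)=-q(y)$; hence all three integrals over $[-\beta,\beta]$ vanish and $h_\omega(c_+)=h_\omega(c_-)$.

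The main obstacle I foresee is the first step, namely rigorously pinning down that both interior regions really map into a common apartment and that the primitive formula extends across the collision wall with consistent sheet labeling (no monodromy jump, since the branch cuts sit on the caustic rather than the collision line). Once this common-apartment picture is secured, the lemma reduces to the transparent odd-symmetry observation $q(-y)=-q(y)$, which is itself the analytic manifestation of the complex-conjugation symmetry $x\mapsto\bar x$ of the BNR spectral cover.
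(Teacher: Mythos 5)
Your proposal is correct and follows essentially the same route as the paper's proof of Lemma~\ref{collision-points-equalize}: first establish (via Lemma~\ref{caustic-argument}, equivalently your promotion of Corollary~\ref{localWKBlongrange}) that the closed interior square $R_0$ maps into a single apartment and that there the $\phi$-map is given by the primitive of $(\re\phi_1,\re\phi_2,\re\phi_3)$ in a fixed sheet labeling, and then show that the vector of integrals along a contour joining the two collision points vanishes. The one difference worth noting is that the paper merely asserts ``one checks that this integral is $(0,0,0)$'' and attributes it to the definition of spectral networks together with $\sum_i\phi_i=0$, whereas you carry out the computation in full: parameterize the collision segment by $x=iy$, solve $q^3+3q=y$ for the real sheet, identify $\re\phi_1=-q\,dy$ and $\re\phi_2=\re\phi_3=\tfrac12 q\,dy$, and invoke the odd symmetry $q(-y)=-q(y)$. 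This makes the vanishing transparent and, in particular, supplies the one piece of information that the trace-zero relation together with $\re\phi_{13}=0$ on the collision wall does not give on its own; so your explicit calculation is a genuine improvement in rigor over the paper's sketch, while remaining the same proof in spirit.
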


This is shown by making
a contour integral and using the fact that the interior region goes into a single apartment. 

\begin{corollary}
The sectors in question all correspond to sectors in the building with a single vertex. 
\end{corollary}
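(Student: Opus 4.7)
The plan is to identify the tip of each of the ten Weyl sectors produced by the preceding Proposition with a single common vertex $v \in \Cone_{\omega}$; in view of the preceding Lemma, this $v$ will be the common image of the two collision points, which will also equal the images of the two branch points.

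First, I would use the long-range local WKB statement (Corollary~\ref{localWKBlongrange}) to pin down the tip of each sector. For a region $R$ whose closure contains a critical point $z \in \{b_l, b_r, c_t, c_b\}$, choose a noncritical path $\gamma$ from $z$ into $R$. Then $\overrightarrow{d}_{\omega}(h_{\omega}(z), h_{\omega}(q)) = \left( \int_{\gamma} \re \phi_i \right)$ in the appropriate ordering of the sheets, so $h_{\omega}(q)$ lies in a Weyl chamber opening at $h_{\omega}(z)$. Moreover, the spectral network curves bounding $R$ at $z$ map to walls of the apartment emanating from $h_{\omega}(z)$, which pins the tip of the Weyl sector containing $h_{\omega}(R)$ to be $h_{\omega}(z)$ itself.

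Second, I would promote the Lemma to the statement that all four critical points map to a single vertex $v$. The Lemma itself gives $h_{\omega}(c_t) = h_{\omega}(c_b) =: v$ via a contour integral in the apartment containing the interior square. To bring the branch points into the picture, I would run analogous contour integrals along paths lying in outer regions — each contained in a single apartment by the preceding Proposition — and exploit the $x \mapsto -x$ symmetry of the BNR cubic $p^3 - 3p + x = 0$ together with the fact that the residues of the $\re \phi_i$ around closed loops in each apartment must register as translations by elements of a fixed coroot lattice. Combining these identities with the Lemma forces $h_{\omega}(b_l) = h_{\omega}(b_r) = v$. Once all four critical points map to $v$, every sector has $v$ as its tip by the first step, which proves the corollary.

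The main obstacle is the second step. The Lemma only controls what happens inside the interior square, whereas each branch point also bounds outer regions that live in different apartments. A clean argument needs contours carefully threaded across the fold along the caustic and across apartment boundaries, so that the resulting vectorial identities can be read off within a single apartment; this requires the building-axiom framework of Bennett-Schwer-Struyve to guarantee that walls glue consistently across the picture. Should this extension fail, the corollary would need a narrower reading — asserting only that sectors meeting a given collision point share a vertex — which would follow immediately from the Lemma.
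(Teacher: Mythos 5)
Your second step — forcing $h_{\omega}(b_l) = h_{\omega}(b_r) = v$ — is false and, as it happens, unnecessary. In the BNR example the branch points do \emph{not} map to the same vertex as the collision points. The interior square (MAR1) maps, with a fold along the caustic, onto a triangle whose three distinct corners are $h(b_l)$, $h(b_r)$, and the common image $v$ of the two collision points; the paper's own argument (Lemma \ref{caustic-argument}) explicitly identifies $R_0$ with the intersection of two sectors based at $h(b_l)$ and $h(b_r)$, which is a bounded region, so these three images cannot coincide. Thus the symmetry/contour-integral argument you propose in step two cannot succeed.

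The source of the difficulty is in your first step. You assert that whenever two spectral network curves bounding a region $R$ meet at a critical point $z$, their images are two \emph{distinct} walls emanating from $h_{\omega}(z)$, pinning the tip of the sector to $h_{\omega}(z)$. This reasoning is correct at a collision point, where the incident curves carry distinct labels $(ij)$, $(jk)$, $(ik)$ and hence map to genuinely different reflection hyperplanes through a common vertex. But at a branch point, all three emanating network curves carry the \emph{same} label (the pair of sheets that collide there), so they all map into a \emph{single} wall; near $h_{\omega}(b_r)$ the image of the adjacent exterior region is wrapped to one side of that one wall, and $h_{\omega}(b_r)$ is merely a boundary point on the wall, not the tip of the sector. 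Applied blindly, your criterion would force $h_{\omega}(b_r) = h_{\omega}(c_u)$ (since a sector has a unique vertex), which is exactly the false conclusion that step two is then trying to justify.

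The paper's argument is much shorter and avoids branch points entirely: the preceding analysis (carried out in detail as Lemma \ref{regions-sectors}) shows that each of the eight exterior regions maps to a sector whose vertex is the image of a collision point ($h(c_u)$ for the upper four, $h(c_l)$ for the lower four), and the Lemma $h(c_u) = h(c_l)$ then immediately gives a single common vertex. Your proposed ``narrower reading'' is in fact the correct reading, but your logic there is inverted: the Lemma is not a fallback making a weaker statement true, it is precisely what upgrades ``each exterior sector has its tip at the image of a collision point'' to ``all eight share one tip.'' The branch points play no role.
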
 

In view of the picture of sectors starting from a single vertex, we can switch over from affine buildings to spherical buildings.
An $A_2$ spherical building is just a graph, such that any two points have distance maximum $3$, any two edges are contained
in a hexagon, and there are no loops smaller
than a hexagon.

Our situation corresponds to an octagon: the eight exterior sectors (see Figure \ref{octagon-picture}). In this case, one can inductively construct a spherical building,
by successively completing uniquely each path of length $4$ to a hexagon. It corresponds to completing any adjacent sequence of $4$ distinct sectors,
to their convex hull which is an apartment of $6$ sectors. 

\begin{lemma}
There is a universal $A_2$ spherical building ${\B}^8$ containing an octagon $G\subset {\B}^8$.
For any other $A_2$ spherical building $\B$ and map from an octagon $G\rightarrow \B$ it extends
in a unique way to a map ${\B}^8\rightarrow \B$.  
\end{lemma}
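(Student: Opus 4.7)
The plan is to realize $\B^8$ as the free $A_2$ spherical building generated by the octagon $G$, constructed by iteratively completing paths of length $4$ to hexagons. The universal property will then follow from the uniqueness of such completions inside any target building.

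First I would establish the following key lemma: in any $A_2$ spherical building $\B$, every path of length $4$ with vertices $v_0, v_1, v_2, v_3, v_4$ lies in a unique hexagon. Since the graph is bipartite, $v_0$ and $v_4$ lie in the same class; the diameter-$3$ bound combined with parity forces $d(v_0, v_4) = 2$; and the girth-$6$ condition forces the common neighbor to be unique (two common neighbors would form a $4$-cycle). This common neighbor closes the path into a unique hexagon.

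Next I would construct $\B^8$ inductively. Set $\B_0 := G$, and define $\B_{n+1}$ by adjoining, for each unordered pair $\{v, w\}$ of same-class vertices in $\B_n$ that are connected by a path of length $4$ but have no common neighbor in $\B_n$, a single new vertex together with the two edges making it a common neighbor of $v$ and $w$. Let $\B^8 := \varinjlim_n \B_n$. The critical design choice is to adjoin one vertex \emph{per pair of endpoints}, not per path: distinct paths of length $4$ sharing endpoints must share the same completion vertex, otherwise one would create multiple common neighbors and hence a $4$-cycle. With this convention, bipartiteness and the hexagon completion property are preserved at each step; girth $\geq 6$ is preserved because any new cycle through an adjoined vertex must traverse both of its two new edges and hence closes with a pre-existing path of length $\geq 4$; and diameter $\leq 3$ follows a posteriori, since a shortest path of length $\geq 4$ would be shortcut by the hexagon completion of its initial segment of length $4$.

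The universal property then follows by induction on $n$. Given $f_0 \colon G \to \B$ and an extension $f_n \colon \B_n \to \B$, each newly adjoined completion vertex $u$ for a pair $\{v, w\}$ in $\B_{n+1}$ must map to the unique common neighbor in $\B$ of $f_n(v)$ and $f_n(w)$ guaranteed by the key lemma applied to the image of any path of length $4$ from $v$ to $w$. Hence $f_{n+1}$ is uniquely determined, and existence of the extension is automatic.

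The hard part will be verifying that the inductive construction is genuinely well-defined — concretely, that two different paths of length $4$ sharing endpoints must indeed be completed by the \emph{same} vertex, and that parallel adjunctions at the same stage do not interfere. The key input is that, in any target $A_2$ spherical building, the common neighbor of two same-class vertices at distance $2$ is unique (girth $6$), so the "free" construction at the level of pairs of endpoints is forced on us by the structure of the target category. Once this bookkeeping is handled carefully, the remaining verifications reduce to routine checks of the building axioms.
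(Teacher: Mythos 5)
Your construction coincides with the paper's: build $\B^8$ as the increasing union of graphs $G_0 = G \subset G_1 \subset \cdots$, adjoining at each stage a single new vertex (not one per path) as the common neighbor of each same-color pair at distance $4$, then verify the bipartite/diameter-$3$/girth-$6$ characterization of a generalized $3$-gon to conclude $\B^8$ is an $A_2$ spherical building, and deduce universality from the fact that a length-$4$ path in any $A_2$ spherical building closes uniquely to a hexagon. The paper cites this graph-theoretic characterization to Abramenko--Brown (Prop.\ 4.44) and proves the forcing step via the affine building axioms --- Lemma \ref{attaching-sectors-prelim}/\ref{attaching-sectors} on attaching a sector to a half-apartment, and Corollary \ref{four-sectors} that four successively adjacent sectors at a vertex lie in one apartment --- whereas you prove the same forcing step (your key lemma) directly and more elementarily from bipartiteness, diameter $3$, and girth $6$. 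The two are logically equivalent (spherical link picture vs.\ sectors in the affine cone), so this is a cosmetic rather than mathematical divergence.

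One caution: for the uniqueness of the extension you implicitly apply the key lemma to $f_n(P)$, which requires the image of a length-$4$ path to remain a genuine length-$4$ path in $\B$. This holds when the input $G \to \B$ is an embedding (in the paper, an isometric embedding of pre-buildings; see Proposition \ref{completion-universal-property}), and uniqueness is then correctly claimed on the subcomplex that is actually forced --- the paper's Theorem \ref{BNR-theorem} pins uniqueness down only on $\Bpre$. So when you say ``uniquely determined,'' you should either make the embedding hypothesis explicit or restrict the uniqueness claim to the forced part, since an arbitrary map could collapse a pair $\{v,w\}$ to a single vertex and leave the completion vertex undetermined. With that caveat noted, the argument is sound.
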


The universal ${\B}^8$ is constructed by successively completing any chain of four segments not contained in a hexagon,
to a hexagon. In this process, one adds at most one pair of edges between any two vertices. 

The two sectors which contain the images of the two interior ``yellow'' zones of $X$, correspond to the first two
new segments which would be added. They complete both the upper and the lower sequences of four edges. These two new edges
correspond to a  pair of sectors which will contain the image of the yellow regions. After this, we obtain the following
picture in spherical terminology 
corresponding to the following picture in the affine building: there are two apartments, corresponding to the upper and
lower hexagons of the diagram, glued together along a pair of sectors into which the yellow regions map.

\begin{figure}[h]
\centering
\includegraphics[width=.5\textwidth]{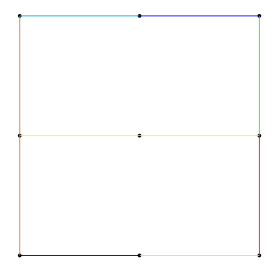}
\caption{The Octagon in the link of the building}
\label{octagon-picture}
\end{figure}

As we continue adding pairs of edges to the diagram to construct ${\B}^8$,
corresponding to adding pairs of sectors to the above picture, the main observation is that opposite edges in the octagon cannot go into a hexagon intersecting the octagon in $5$ segments. 
Rather, they have to go to a twisted hexagon reversing directions. It is here that we see the collision phenomenon.

The inverse image of the apartment corresponding to this twisted hexagon, in $X$, is disconnected. Thus, if 
$P,Q$ are points in the opposite sectors, then the distance in the building is not calculated by any integral of a single $1$-form
from $P$ to $Q$. The $1$-form has to jump when we cross a collision line. This is the collision phenomenon. 

Our main result is as follows.

\begin{theorem}
In the BNR example, there is a \emph{universal building} $\Bu$ together with a harmonic $\phi$-map 
$$
h^{\phi}:X \rightarrow \Bu
$$
such that for any other building $\B$  and harmonic 
$\phi$-map $X\rightarrow \B$ there is a unique factorization 
$$
X\rightarrow \Bu \stackrel{\psi}{\rightarrow} \B.
$$
Furthermore, on the Finsler secant subset of the image of $X$, $\psi$ is an isometry for any
of the distances (this depends on the non-folding property of $\psi$). Hence, 
distances in $\B$ between points in $X$ are the same as the distances in $\Bu$.
\end{theorem}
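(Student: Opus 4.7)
The plan is to exhibit $\Bu$ and $\hu$ explicitly using the detailed analysis developed in the preceding paragraphs, and then verify the universal factorization property and the isometry statement.

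\textbf{Construction of $\Bu$ and $\hu$.} First I would take the affine cone on the universal $A_2$ spherical building $\B^8$ of the preceding lemma, based at a single vertex $v$; call this $\Bu$. The distinguished octagon $G\subset\B^8$ singles out eight sectors of $\Bu$ based at $v$, and the inductive construction of $\B^8$ attaches, at its first stage, one pair of additional sectors completing the two 4-chains along the top and bottom of the octagon. Define $\hu$ by sending both collision points to $v$, each of the eight exterior regions of the BNR Riemann surface into its associated outer sector via the local integration formula $x\mapsto \bigl(\int_{v}^{x}\gamma^{\ast}\re\phi_i\bigr)$ supplied by Corollary~\ref{localWKBlongrange}, and each of the two interior yellow regions into the pair of sectors appearing at the first completion step. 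Along an $ij$-spectral-network curve the two adjacent regions lie in apartments glued along the wall $\re\phi_{ij}=0$, and the reflection across this wall is precisely the $\phi$-map folding rule, so $\hu$ is well defined and harmonic.

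\textbf{Universal factorization.} Given any other harmonic $\phi$-map $g:X\to\B$, I would run the same analysis of $g$ as was carried out for $h_{\omega}$: the proposition shows that each of the ten regions of $X$ lands in a single Weyl sector, and the contour-integral lemma applied to a loop around the interior square forces the two collision points to have the same image $v'\in\B$. The eight exterior sectors then assemble into an octagon $G\hookrightarrow\B$ based at $v'$. By the universal property of $\B^8$ this extends uniquely to a morphism $\B^8\to\B$, which promotes to an affine map $\psi:\Bu\to\B$. The equality $g=\psi\circ\hu$ holds tautologically on the exterior sectors, and on each yellow region both sides lie in a single apartment of $\B$ and have vector differential $\re\phi$, so they must coincide. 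Uniqueness of $\psi$ follows from uniqueness of the extension $\B^8\to\B$ together with the fact that $\hu(X)$ spans $\Bu$ as a union of sectors.

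\textbf{Isometry on the Finsler secant subset.} If $P,Q\in X$ admit a Finsler-convex broken geodesic in $\Bu$ from $\hu(P)$ to $\hu(Q)$, then by the non-folding property of $\psi$ (inherited from the non-folding of $\hu$ and of $g=\psi\circ\hu$) each straight segment of that geodesic stays in a single apartment of $\B$ and contributes the same integral $\int \re\phi_i$ in either building. Summing, the vector distance (and hence the Finsler and Euclidean distances, which are all determined by the vector distance) between $\psi(\hu(P))$ and $\psi(\hu(Q))$ in $\B$ agrees with the distance in $\Bu$.

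\textbf{Main obstacle.} The delicate step is verifying, for an arbitrary harmonic $\phi$-map $g$, that the two collision points must share an image in $\B$ and that the gluing across the collision line produces a genuine \emph{twisted} hexagon rather than a flat one; without this, the octagon in $\B$ would fail to be embedded and $\B^8\to\B$ would not be defined. The contour-integral argument and the disconnectedness of the inverse image of the twisted apartment in $X$ (the essence of the collision phenomenon) have to be carried out uniformly for all $\B$, not merely for the asymptotic-cone building $\Bc$. A secondary technicality is promoting the spherical map $\B^8\to\B$ to an affine map $\psi:\Bu\to\B$ compatibly with the link structure at the single vertex $v'$.
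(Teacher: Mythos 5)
Your proposal follows essentially the same approach as the paper's proof: construct $\Bu$ as the cone over the freely-generated $A_2$ spherical building $\B^8$, use the non-critical path analysis to show each region of $X$ maps to a single Weyl sector based at a common vertex, equalize the collision points by a contour integral, and invoke the universal property of $\B^8$ to obtain the factorization. The paper's organization differs only in that it introduces an intermediate pre-building $\Bpre$, obtained by gluing together apartments for the maximal abelian regions via a formal colimit construction, and then proves two separate universal factorizations --- one for $\hpre : X \to \Bpre$ among harmonic $\phi$-maps, and one for the completion $i : \Bpre \hookrightarrow \Bu$ among isometric embeddings of $\Bpre$ into buildings --- which decomposes the argument into more manageable pieces.

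Both obstacles you flag at the end are in fact resolved by lemmas in the paper rather than being genuine gaps. The collision-point identification is proved for an arbitrary target building $\B$, not just the asymptotic-cone building $\Bc$: once one knows the central yellow region maps into a single apartment, a contour integral of $\re\phii$ from one collision point to the other evaluates to $(0,0,0)$, forcing their images to coincide. The spherical-to-affine promotion is handled by the key auxiliary fact (Corollary \ref{four-sectors}) that in an $A_2$ affine building any four adjacent sectors sharing successive distinct panels and based at a common vertex lie in a single apartment; this lets one extend $\psi$ sector-by-sector, completing each $4$-chain in the image to a hexagon by the two extra sectors of the resulting apartment, and gives uniqueness of the extension on $\Bpre$. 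One small caveat on your uniqueness argument: $\hu(X)$ does not span all of $\Bu$ as a union of sectors (the octagon and two interior sectors fall far short of the infinitely many sectors of $\B^8$), so the claimed uniqueness of $\psi$ should --- and in the paper does --- refer only to the restriction to $\Bpre$, i.e.\ to the image of $X$, exactly as stipulated in the definition of the universal building.
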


Applying  gives: 

\begin{corollary}
In the BNR example, for any pair $P,Q\in X$, the WKB dilation exponent is calculated as the distance in the building $\Bu$, 
$$
\overrightarrow{\nu}_{PQ} = 
\overrightarrow{d}_{\Bu} (h^{\phi}(P), h^{\phi} (Q)). 
$$
\end{corollary}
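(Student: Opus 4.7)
The plan is to deduce the corollary by combining the main theorem (existence, universality and Finsler isometry of $\Bu$) with Proposition 1.3 and the asymptotic-cone construction of Kleiner--Leeb--Parreau. Fix $P, Q \in \widetilde{X}$. Recall that the $k$-th coordinate of the vector exponent $\overrightarrow{\nu}_{PQ}$ is essentially $\limsup_t \frac{1}{t} \log \| \bigwedge^k T_{PQ}(t) \|$, which is the corresponding coordinate of $\frac{1}{t}\overrightarrow{d}(h_t(P), h_t(Q))$ in the symmetric space $SL_r(\cc)/SU_r$.

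First, I would choose an ultrafilter $\omega$ subordinate to a sequence $t_n \to \infty$ along which $\frac{1}{t_n}\overrightarrow{d}(h_{t_n}(P), h_{t_n}(Q))$ converges to $\overrightarrow{\nu}_{PQ}$ in the closed Weyl chamber. This is the vectorial enhancement of Proposition 1.3(a): since convergence of the vector distance in the model apartment amounts to coordinate-wise convergence, a single diagonal subordination handles all wedge-power exponents $\bigwedge^k T_{PQ}(t)$ simultaneously. Second, form the asymptotic cone $\Cone_{\omega}$ and the equivariant harmonic $\phi$-map $h^{\omega}: \widetilde{X} \to \Cone_{\omega}$. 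By construction,
$$
\overrightarrow{d}_{\Cone_{\omega}}\bigl(h^{\omega}(P), h^{\omega}(Q)\bigr) \;=\; \lim_{\omega} \tfrac{1}{t}\,\overrightarrow{d}\bigl(h_t(P), h_t(Q)\bigr) \;=\; \overrightarrow{\nu}_{PQ}.
$$
Third, invoke the universal property of $\Bu$: the harmonic $\phi$-map $h^{\omega}$ factors uniquely as $h^{\omega} = \psi \circ h^{\phi}$ for some $\psi: \Bu \to \Cone_{\omega}$, and the last clause of the main theorem asserts that $\psi$ is an isometry for the vector distance on the Finsler secant subset of $h^{\phi}(X)$. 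Assuming the pair $\{h^{\phi}(P), h^{\phi}(Q)\}$ lies in that subset, one concludes
$$
\overrightarrow{d}_{\Bu}\bigl(h^{\phi}(P), h^{\phi}(Q)\bigr) \;=\; \overrightarrow{d}_{\Cone_{\omega}}\bigl(h^{\omega}(P), h^{\omega}(Q)\bigr) \;=\; \overrightarrow{\nu}_{PQ},
$$
which is the claim.

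The main obstacle is ensuring that every pair $(P,Q)$ really does sit inside the Finsler secant subset on which $\psi$ is guaranteed to preserve the vector distance. For generic $(P,Q)$ in a noncritical regime this is immediate from Theorem \ref{classical-WKb} and the local $\phi$-map picture; the delicate cases are those where the geodesic in $\Bu$ from $h^{\phi}(P)$ to $h^{\phi}(Q)$ must cross a collision wall or the fold between the two apartments of $\Bu$ glued along the yellow sectors. One handles this by the explicit description given earlier of $\Bu$ as two apartments joined across a pair of Weyl sectors: any two points in its image can be connected by a Finsler geodesic made of noncritical segments and collision jumps, each of which is realized as an integral of $\re \phi_i$ and hence lies in the secant subset. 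A secondary subtlety, already noted, is lifting Proposition 1.3(a) from the scalar exponent $\nu_{PQ}$ to the full vector exponent $\overrightarrow{\nu}_{PQ}$; this is resolved by applying 1.3(a) in parallel to each $\bigwedge^k T_{PQ}$ and then refining to a common ultrafilter, which is harmless since there are only finitely many coordinates.
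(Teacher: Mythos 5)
Your proposal follows the paper's own proof line for line: pick an ultrafilter $\omega$ via Proposition 1.3(a), read the ultrafilter exponent off as a vector distance in $\Cone_\omega$, apply the main theorem with $\B = \Cone_\omega$, and invoke the clause that distances in $\B$ between points of $X$ agree with distances in $\Bu$. The route and the key ingredients are the same; you have simply spelled out more of the scaffolding (Parreau's identity, the factorization through $\psi$, the Finsler secant clause).

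The one place your argument goes astray is the vectorial enhancement of Proposition 1.3(a). You assert that ``a single diagonal subordination handles all wedge-power exponents simultaneously,'' and later propose to ``apply 1.3(a) in parallel to each $\bigwedge^k T_{PQ}$ and then refine to a common ultrafilter.'' Neither step is available: a tuple of $\limsup$'s need not be attained along a single subsequence, and two distinct non-principal ultrafilters admit no common proper refinement, since each is a maximal proper filter. So a priori there may be no $\omega$ with $\overrightarrow{\nu}^{\omega}_{PQ} = \overrightarrow{\nu}_{PQ}$, and your justification does not establish it. The correct fix is to not insist on a single $\omega$ at all: since the theorem gives $\overrightarrow{d}_{\Cone_\omega}(h^{\omega}(P),h^{\omega}(Q)) = \overrightarrow{d}_{\Bu}(h^{\phi}(P),h^{\phi}(Q))$ for \emph{every} $\omega$, the ultrafilter vector exponent is $\omega$-independent, so one may apply Proposition 1.3(a) to $\bigwedge^k T_{PQ}$ with a possibly different ultrafilter $\omega_k$ for each $k$ and match the $k$-th partial sums one at a time. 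The paper's sentence ``the same discussion holds for the vector distance and vector WKB exponent'' is compressing exactly this component-by-component argument; your attempted repair replaces a compression with a false step. (Once the identity $\overrightarrow{\nu}_{PQ} = \overrightarrow{d}_{\Bu}(h^{\phi}(P),h^{\phi}(Q))$ is proved, it does follow a posteriori that $\lim_\omega = \limsup_t$ in every component, but that cannot be used as the opening move.)
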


Indeed, for any $P$ and $Q$ there is an ultrafilter $\omega$ such that $\nu ^{\omega}_{PQ}= \nu_{PQ}$. Applying the theorem 
to $\B= \Cone_{\omega}$, and recalling that $\nu ^{\omega}_{PQ}$ is the distance in $\Cone_{\omega}$,
we get that $\nu_{PQ}$ is the distance between the images of $P$ and $Q$ in $\Bu$. The same discussion holds for
the vector distance and vector WKB exponent $\overrightarrow{\nu}_{PQ}$.

The isometry property is not always true. 
There exist examples, such as pullback connections, where we can see that the isometry property is certainly not true. However we conjecture
that it should be true under some genericity hypotheses. 

On the other hand, the universal property doesn't necessarily require having an isometry, and we make the following conjecture
for the general case, together with an isometry statement under genericity hypotheses. 

\begin{conjecture}
For any spectral curve with multivalued differential $\phi$,  
there is a universal harmonic $\phi$-map to a ``building'' or building-like object $\Bu$. 
Furthermore, if the spectral curve $\Sigma$ is smooth and irreducible, and $\nabla _0$ is generic, then
$$
\overrightarrow{\nu}_{PQ} = 
\overrightarrow{d}_{\Bu} (h^{\phi}(P), h^{\phi} (Q)). 
$$
\end{conjecture}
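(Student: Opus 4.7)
The plan is to construct $\Bu$ by assembling local apartments, one per connected region of $\widetilde X$ cut out by the pullback of the spectral network $\{\re \phi_{ij}=0\}$, and then gluing according to identifications forced at walls and at collision loci. Throughout, I would rely on Theorem \ref{classical-WKb} and Corollary \ref{localWKBlongrange}: each non-critical region embeds into a single apartment of $\Cone_\omega$ via the vector integral of $\re \phi$, so the apartment attached to a region is canonically determined once a Weyl ordering is chosen, and the only question is how adjacent apartments are identified.

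The first step is to produce a pre-building $\Bpre$ by taking $\bigsqcup_\alpha \aff_\alpha$, where $\aff_\alpha$ is the model apartment attached to the region $R_\alpha$ with its distinguished ordering of the $\phi_i$, and quotienting by the relations: (i) across a non-critical wall between $R_\alpha$ and $R_\beta$, identify via the unique Weyl reflection fixing the wall, as dictated by the non-folding condition for $\hu$; (ii) around each collision locus, impose the identifications detected by contour integrals of $\re \phi$ over small loops, along the lines of the BNR argument that collapses two collision points to one. Then I would form $\Bu$ by taking the ``building hull'' of $\Bpre$, inductively adding every configuration that the Bennett--Schwer--Struyve axioms force to lie in a common apartment. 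The paper's construction of $\cB^8$ from the octagon is the prototype; in general one must show this inductive completion terminates in a well-defined building-like object (acknowledging the ``building or building-like'' hedge in the conjecture).

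The universal property then follows formally. Given any harmonic $\phi$-map $h : \widetilde X\to \B$, Corollary \ref{localWKBlongrange} produces, on each region, a well-defined apartment-valued piece of $h$ matching the data defining $\aff_\alpha$; the relations (i)--(ii) are preserved by $h$ because they are consequences of intrinsic properties of $\re\phi$, not features peculiar to $\Cone_\omega$. The pieces assemble to a map $\Bpre\to \B$, which extends uniquely to $\Bu\to \B$ because every axiomatic completion step has at most one lift. For the isometry statement under the genericity hypotheses ($\Sigma$ smooth and irreducible, $\nabla_0$ generic), the point is that for each pair $P,Q$ there is an ultrafilter $\omega$ realizing $\nu^\omega_{PQ}=\nu_{PQ}$ by Proposition~1.3(a), and the factorization $\Bu\to \Cone_\omega$ is distance-preserving on the image of $\widetilde X$ by the Finsler-convex-hull argument that works in the BNR case, with genericity ensuring no degenerate cancellations between local and non-local contributions.

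The main obstacle I foresee is step (ii): showing that the collision identifications are globally consistent in arbitrary rank and for arbitrary branching type, and that the resulting $\Bu$ is an honest building. Contour integrals around a collision encode an element of the affine Weyl group, and consistency is the statement that for every combinatorial loop of collisions and non-critical walls the total monodromy of these Weyl-valued data vanishes. In the BNR example this was checked by explicit calculation; in general it should reduce to a residue-type identity for $\phi$ at the singular locus of $\Sigma$ together with a cocycle condition for the spectral-network combinatorics. Verifying this, and separately excluding pathological Laplace-transform singularities that could break the equality $\limsup=\lim_\omega$ needed for the isometry in the generic case, are the substantial analytic and combinatorial difficulties; the failure of isometry in the pullback-connection examples mentioned after the BNR theorem shows that the genericity hypothesis is genuinely needed, and the work is to isolate the precise condition on $\nabla_0$ that rules out such degenerations.
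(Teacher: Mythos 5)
This is stated in the paper as a \emph{conjecture}, and the paper does not give a proof of it. What the paper does prove is the special case of the Berk--Nevins--Roberts spectral curve (Theorem \ref{BNR-theorem}, Proposition \ref{Bpre-universal-property}, Proposition \ref{completion-universal-property}, plus the WKB statement deduced from Theorem \ref{cone-wkb} and Proposition \ref{cone-map-is-phi-map}). Your proposal is therefore best read as a strategy for attacking the conjecture, not as something to compare against a settled argument.

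As such a strategy, it is mostly a faithful extrapolation of the BNR methodology, but there is one substantive mismatch with the paper's actual gluing step. You propose to attach one apartment per connected component of $\widetilde X$ minus the spectral network and identify across walls by Weyl reflections. The paper instead glues one apartment per \emph{maximal abelian region} (a $\phi$-adapted set), which is in general a \emph{union} of several spectral-network regions; in the BNR example, MAR3 and MAR4 each contain four ordinary regions plus the central caustic region $R_0$. The reason this coarser decomposition is used is precisely the phenomenon you downplay: $R_0$ does not surject onto a sector, the map folds across the caustic line, and the fact that all of $R_0$ still lands in a single apartment is \emph{not} a consequence of the non-critical-path lemma alone but requires the harmonicity/subharmonic-distance argument (Lemma \ref{Isihara}) and the careful nested--$\Omega_{P_\epsilon Q_\epsilon}$ exhaustion in the proof of Proposition \ref{Bpre-universal-property}. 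A per-region gluing gives $R_0$ its own apartment and does not by itself recover these identifications; you would need to re-derive, in your finer decomposition, the paper's Lemmas \ref{caustic-argument} and \ref{collision-points-equalize}, which amounts to switching back to something equivalent to the MAR cover. This is exactly the place the paper flags as the hard part of rank $\geq 3$: the yellow regions ``represent perhaps most closely the behavior which is necessarily to be expected for even higher rank $r\geq 4$.''

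Beyond that, your proposal candidly names the genuine open issues: global consistency of the Weyl-valued collision data, termination and ``buildingness'' of the inductive completion (the paper's $\cB^8$ argument uses the diameter-3/girth-6 characterization of $A_2$ spherical buildings, whose higher-rank analogue is not addressed), and the precise genericity needed to upgrade $\nu^\omega_{PQ}\le\nu_{PQ}$ to an equality uniform in $P,Q$. These are precisely what keeps the general statement a conjecture. In short: the proposal is in the right spirit and correctly isolates the obstacles, but as written it silently assumes the hardest lemma of the BNR proof (control over the caustic/yellow regions), and it would need the MAR-level decomposition (or an equivalent) rather than the finer region decomposition to have a chance of going through.
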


It might be necessary to restrict to some kind of target object which is somewhat smaller than a building. 
In the example we treat here, some special largeness properties hold: many of the sectors in the building are images of
regions in the Riemann surface. We don't yet know exactly what kind of building-like object we should look for
as the universal $\Bu$. 

Our universal object should be thought of as
the higher-rank analogue of the \emph{space of leaves of a foliation} which shows up in the $SL_2$ case
in classical Thurston theory.  Notice that for the case of $SL_2$, the universal tree exists: it is just the
space of leaves of the foliation determined by the quadratic differential.  Our goal in this project is
to try to obtain a generalization of the
 ``space of leaves'' picture, to the
higher-rank case. The present paper constitutes a first step in this direction. 

It is hoped that this theory  will later help with stability conditions on categories.

\subsection{Organization, notation and conventions}

This paper is organized as follows:

In $\S$ \ref{Metric-Structures} we introduce the structures and constructions from metric geometry that play a central role in the paper. Buildings are defined in $\S$ \ref{affine-spherical-buildings}, and their relationship to symmetric spaces via the asymptotic cone construction is explained in $\S$ \ref{symmetric-spaces}. We also introduce the notion of a vector valued distance in $\S$ \ref{vector-distance-section}; this notion plays an important role in formulating the WKB problem in $\S$ \ref{WKB}. 

In $\S$ \ref{universal-building-section}, we explain how harmonic maps from a Riemann surface to a building are related to spectral and cameral covers of the Riemann surface. We formulate the notion of a harmonic $\phi$-map here, and define the universal building $\B^{\phi}$ as a harmonic $\phi$-map that is initial among all harmonic $\phi$-maps in a certain sense. $\S$ \ref{phi-map-properties} collects together various useful propositions about the behavior of $\phi$-maps, which play an important role in the WKB analysis of $\S$ \ref{WKB}, as well as in proving the universality of the building constructed in $\S$ \ref{BNR-example}. In $\S$ \ref{gluing-construction} we describe a way of constructing a ``pre-building'' from certain covers of the Riemann surface by ``gluing in an apartment'' for every element of the cover. This construction is used in $\S$ \ref{BNR-example} to construct the universal building. Finally, we conclude this section in $\S$ \ref{spectral-networks-buildings} by formulating the conjecture relating spectral networks to the singularities of the universal building.

$\S$ \ref{Riemann-Hilbert-Hitchin} is devoted to formulating precisely the main WKB problem that we consider, namely the problem of determining the asymptotic behavior of the Riemann-Hilbert correspondence. In $\S$ \ref{local-WKB-section}, we study this analyze this problem for short paths. In the final subsection $\S$ \ref{cone-computes-spectrum} we prove the main theorem relating WKB to buildings, which states that the WKB spectrum for the Riemann-Hilbert problem can be expressed as the distance in a building. 

In $\S$ \ref{BNR-example}, we analyze the example of the spectral cover studied by Berk, Nevins and Roberts in their seminal paper on higher order Stokes phenomena. We prove that a universal building exists in this example, and computes the WKB spectrum. Furthermore, we prove the conjecture relating singularities of the universal building to spectral networks. We refer the reader to Outline \ref{BNR-outline} for a detailed description of the organization of $\S$ \ref{BNR-example}.  

Finally we conclude this subsection by collecting together some notation and conventions that will be used in what follows:

\begin{itemize}

\item[--] Harmonic maps to targets that are singular were introduced in \cite{Gromov-Schoen}. We will make heavy use of the theory of harmonic maps from a Riemann surface to non-negatively curved metric spaces that are more general than those considered in \cite{Gromov-Schoen}. When we speak of a harmonic map from a Riemann surface to a building, we will mean a harmonic map in the sense of \cite{Korevaar-Schoen}.

\item[--] Unless otherwise stated, we will use the term building to mean affine $\mathbb{R}$-building. When we refer to spherical buildings, we will explicity use the adjective ``spherical''.

\item[--] We will typically use the symbols:
\begin{itemize}
\item[-] $X$ for a Riemann surface
\item[-]$\tilde{X}$ for its universal cover
\item[-] $\gamma$ for a path in a Riemann surface or an arrow in a groupoid
\item[-] $\B$ for an affine building
\item[-] $G$ for a complex semisimple Lie group, $\mathfrak{g}$ for its Lie algebra
\item[-] $\phi$ for a point in the Hitchin base, $\Sigma_{\phi}$ for the corresponding cameral cover, $\mathfrak{t}$ for a Cartan subalgebra
\item[-] $\Sigma$ for a spectral cover
\item[-] $P$, $Q$,... for points on a Riemann surface
\item[-] $x$, $y$,... for points in a building
\item[-] $\waff$ for an affine Weyl group, $\wlin$ or $W$ for its spherical part, unless specified otherwise
\item[-] $\cE$ for a holomorphic vector bundle
\item[-] $T$ for the parallel transport operator ($T$ stands for ``transport'')
\item[-] $d$ for ordinary distance, and $\vecd$ for vector valued distance
\item[-] $\nu$ for the WKB exponent, and $\nuu$ for the WKB dilation spectrum
\end{itemize} 

\item[--]  Unless explicity stated otherwise, the complex semisimple Lie group $G$ occuring in the statements of theorems, propositions and lemmas will be assumed to be $\SL_r\cc$ for some $r$. This restriction does not apply to conjectures and definitions.

\item[--] Unless explicity stated otherwise, all affine buildings $\B$ occuring in the statements of Theorems, Propositions and Lemmas (except in $\S$ \ref{Metric-Structures}) will have Weyl group $\waff \simeq W \ltimes \mathbb{R}^{r-1}$ where $W$ is the spherical Weyl group of type $A_{r}$. This restriction does not apply to definitions and conjectures. 

\item[--] In Section \ref{BNR-example} we will in addition assume implicity that $r = 2$ in the previous point.

\end{itemize}

\subsection*{Acknowledgements}

The authors would like to thank D. Auroux, G. Daskalopoulos, A. Goncharov, F. Haiden, M. Kontsevich, C. Mese, A. Neitzke, T. Pantev, M. Ramachandran, R. Schoen and Y. Soibelman for several useful conversations related to the subject matter of this paper. The first, second, and third named authors were funded by NSF DMS 0854977 FRG, NSF DMS 0600800, NSF DMS 0652633 FRG, NSF DMS 0854977, NSF DMS 0901330, FWF P 24572 N25, by FWF P20778 and by an ERC Grant. The last named author's research is supported in part by the Agence Nationale de la Recherche grants ANR-BLAN-08-1-309225 (SEDIGA) and  ANR-09-BLAN-0151-02 (HODAG).

%%%%%%%%%%%%%%%%%%%%%%%%%%%%%%%%%%%%%%%%%%%%%%%%%%%%%%%%%%%%%%%
%%%%%%%%%%%%%%%%%%%%%%%%%%%%%%%%%%%%%%%%%%%%%%%%%%%%%%%%%%%%%%%

\section{Metric Structures}\label{Metric-Structures}
This section collects together the basic notions from metric geometry that will be used in the sequel. It can be skipped or skimmed on a first reading, and referred back to when necessary.

%%%%%%%%%%%%%%%%%%%%%%%%%%%%%%%%%%%%%%%%%%%%%%%%%%%%%%%%%%%%%%

\subsection{Affine and spherical buildings}\label{affine-spherical-buildings}

Buildings were introduced by Tits as a category of objects where every Lie group could be realized as an automorphism group. In the decades that followed, buildings have found numerous applications in various areas of mathematics, ranging from geometric group theory and metric geometry to representation theory and higher Teichm\"uller theory. The reader who is new to buildings will find a leisurely and informal survey in \cite{Everitt}. The survey \cite{Rousseau} treats the $\mathbb{R}$-buildings that will be used in this paper. For a more detailed discussion of buildings, the reader might consult the textbooks \cite{Ronan} and \cite{Abramenko-Brown}. 

The ubiquity of buildings in mathematics is reflected in the plethora of different axiomatizations of buildings, ranging from the purely combinatorial chamber systems of Tits, to the purely geometric axioms of Kleiner-Leeb \cite{Kleiner-Leeb}. The point of view that most directly makes contact with the theme of this paper is that a building is a metric space that is obtained by ``gluing apartments'', with the ``gluing maps'' being given by elements of a reflection group. The apartments are either spheres or Euclidean spaces, and correspondingly there are two types of buildings -- affine and spherical. The relevant notions will be recalled below, closely following the treatment in \cite{Rousseau}. A comparison of various axiom systems can be found in  \cite{Parreau-thesis} and \cite{Bennett-Schwer}. 

Before discussing the formal definitions, the reader is invited to contemplate the following motivating examples:

\begin{figure}[h]
\centering
\includegraphics[width=.5\textwidth]{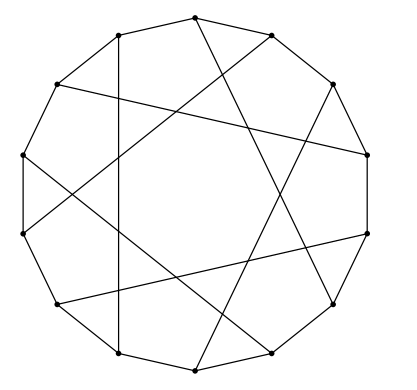}
\caption{The spherical building of $\mathrm{SL}(3, \mathbb{F}_2)$}
\label{SL3F2}
\end{figure}

\begin{example}\label{flag-complex}
\emph{(Flag complexes)}. Let $V$ be a vector space of dimension $n \in \mathbb{N}$ over a field $\mathbb{F}$. Let $W = \mathbb{S}_n$ be the symmetric group on $n$-letters, thought of as a Coxeter group with generating reflections given by the transpositions $(ij)$, with $j = i +1$. Associated to $V$, there is a spherical building $\B(V)$ modelled on $W$, whose \emph{chambers} are given by complete flags $0 = V_0 \subset V_1 \subset ... \subset V_n = V$ in $V$. To any basis $(e_1,...,e_n)$ of $V$, one can associate a flag in the obvious way by letting $V_i = \mathrm{span}_{j \leq i}\{ e_j \}$. The set $A_{\{e_1,...,e_n\}}$ of flags/chambers obtained in this way from the various permutations $(e_{\sigma(1)},...,e_{\sigma(n)})$, $\sigma \in \mathbb{S}_n$ of some given basis $(e_1,...,e_n)$ constitutes an \emph{apartment} in $\B(V)$. 

Let $F_1$ and $F_2$ be flags in a given apartment $A_{\{e_1,...,e_n\}}$ given by bases $(e_{\sigma(1)},...,e_{\sigma(n)})$ and $(e_{\eta(1)},...,e_{\eta(n)})$. Then the element $d(F_1, F_2) := \eta^{-1} \sigma$ gives a ``combinatorial distance'' from $F_1$ to $F_2$ (note that this notion of distance is not symmetric). The building axioms for $\B(V)$ can be understood, roughly, as saying that

\begin{enumerate}
\item given any two flags $F$ and $F'$ in $\B(V)$ there is an apartment $A_{\{e_1,...,e_n\}}$ containing them both.

\item The (combinatorial) distance between $F$ and $F'$ is independent of the apartment in which it is measured.

\end{enumerate}

When $\mathbb{F}$ is a finite field, there are only finitely many chambers in $\B(V)$. Figure \ref{SL3F2} shows a picture of $\B(V)$ for $V$ a 3-dimensional vector space over $\mathbb{F}_2$. The chambers are the (interiors of) edges joining two vertices. The apartments are the embedded hexagons in the picture. Thus, each apartment consists of six chambers/flags. Observe that each apartment is topologically a sphere -- the dimension of this sphere is the \emph{rank} of the building (which is one in this case).
\end{example}

\begin{example}\label{Bruhat-Tits-building}
\emph{(Bruhat-Tits buildings)}. In very crude terms, one may summarize the content of this example by saying that if one replaces the vector space in Example \ref{flag-complex} by a vector bundle on a curve, one obtains an \emph{affine building}. More precisely, let $(\mathbb{K}, \nu)$ be a discrete valuation ring with uniformizing parameter $t$, and let $\cO$ be the valuation ring. For instance, one may take $\mathbb{K} = \mathbb{F}((t))$ with the usual valuation. Let $V$ be a vector space of finite dimension $n$ over $\mathbb{K}$. Recall that a lattice $L \subseteq V$ is a free $\cO$-submodule of rank $n$. The \emph{Bruhat-Tits} building $\B(V, \mathbb{K})$ is an affine building which can be constructed as the geometric realization of a simplicial object whose $k$-simplices are flags of lattices of the form $L_0 \subset L_1 \subset ... \subset L_k \subset t^{-1} L_0$. As in the previous example, the apartments are given by fixing a basis for $\mathbb{K}$, and considering only those lattices ``generated by the given basis''. For more details, the reader is referred to \cite{Abramenko-Brown}. Figure \ref{SL3Q2} shows a picture of the case where $\mathbb{K} = \mathbb{Q}_2$, and $V$ is a $3$-dimensional vector space over $\mathbb{K}$ (the tree extends to infinity in all directions - only a part of it is visible in the picture). 

\end{example}

\begin{figure}[h]
\centering
\includegraphics[width=.5\textwidth]{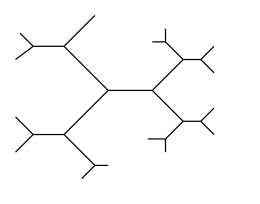}
\caption{The Bruhat-Tits building of $\mathrm{SL}(3, \mathbb{Q}_2)$}
\label{SL3Q2}
\end{figure}

\begin{example}\label{trees}
\emph(Trees). More generally, any tree $\cT$ is a rank 1 affine building. The apartments are embedded copies of the real line. In the case of an ordinary tree (as opposed to an $\mathbb{R}$-tree), the chambers are the connected open intervals in $\cT$ whose closure contains two vertices, as in Example \ref{flag-complex}. 

In the case of $\mathbb{R}$-trees, the set of points where $\cT$ branches can become dense in the tree. Thus, the notion of an edge of the tree does not make sense in this setting. However, it turns out that one can still make sense of the notion of a chamber -- a sort of ``infinitesimal edge'' -- by introducing the notion of filters (see the definitions below). 

The leaf space of a quadratic differential on a Riemann surface will in general be an $\mathbb{R}$-tree. This is one of the motivating examples of buildings from the point of view of (higher) Teichm\"uller theory, which is the point of view that inspired the current work. 
\end{example}

\begin{example}
The asymptotic cone of a symmetric space of non-compact type is an affine building. This example, which is discussed at greater length in Section \ref{symmetric-spaces}, will play an important role in the WKB problem addressed in Section \ref{WKB}. 
\end{example}

We now turn to the formal definition of a building, motivated by the examples above. At the first reading, the reader may want to keep in mind Example \ref{flag-complex}, and replace the word ``facet'' in the definition below by ``chamber''.  A good part of the remainder of this section will be devoted to explaining the precise meaning of the notion of an ``apartment'', and the various other terms that are central to the following definition. Our treatment follows \cite{Rousseau} very closely.

\begin{definition}\label{building}
An \emph{affine building} (resp. \emph{spherical building}) is a triple $(\B, \cF, \cA)$ consisting of (i) a set $\B$, (ii) a collection $\cF$ of filters on $\B$ (called the \emph{facets} of $\B$) and (iii) a collection $\cA$ of subsets $A$ of $\B$ called \emph{apartments}, each endowed with a metric $d_{A}$, satisfying the following axioms:

\begin{itemize}
\item[(B0)] For $A \in \cA$, let $\cF_{A} := \{ \sigma \in \cF | A \in \sigma \}$ be the set of filters contained in $A$. Then for each apartment $A$,  $((A, d_{A}), \cF_{A})$ is isomorphic to a Euclidean (resp. spherical) apartment.

\item[(B1)] For any two  facets $F$ and $F'$ in $\cF$, there is an apartment $A$ containing $\overline{F}$ and  $\overline{F'}$. 

\item[(B2)] For any two apartments $A$ and $A'$, their intersection is a union of facets. For any two facets $F$, $F'$ in $A \cap A'$, there exists an isometry of apartments $A \rightarrow A'$ that carries $\cF_{A}$ to $\cF_{A'}$ and fixes $\overline{F}$ and $\overline{F'}$ pointwise. 
\end{itemize}

\end{definition}

\begin{remark}
It follows from $(B1)$ that any two points in a building $\B$ are contained in a common apartment $A$. Furthermore, it follows immediately from $(B2)$ that there is a well defined distance function $d$ defined on $\B$ which coincides with $d_{A}$ on each apartment. The triangle inequality holds for $d$, although this is not quite as immediate (see e.g. \cite{Rousseau}).  
\end{remark}

We now turn to the definition of apartments, and the various terms used in Definition \ref{building}. Let $\euc$ be a Euclidean space, i.e., a real vector space $V$ together with a non-degenerate inner product $\langle \relbar ,\relbar \rangle$. Let $\aff$ be an affine space over $V$, viewed as a metric space with the natural metric induced by $\euc$.  An \emph{affine hyperplane} in $\aff$ is a codimension one affine subspace. A \emph{reflection} of  $\aff$ is an isometry  $r:\aff \rightarrow \aff$ of order 2 whose fixed point set is an affine hyperplane $H$. Given a hyperplane $H$, there is a unique reflection $r_{H}$ of $\aff$ whose fixed set is $H$.

The group $\Aff(\aff)$ of affine transformations of $\aff$ is isomorphic to $ V \rtimes \GL(V)$. For any subgroup $\waff$ of $\Aff(\aff)$, $\wlin$ denotes its image in $\GL(V)$. A subgroup $\waff$ of the group of affine isometries of $\aff$ is called an \emph{affine reflection group} if it is generated by affine reflections, and $\wlin$ is finite. The group $\wlin$ acts naturally on the unit sphere $S(\euc) := \{ v \in \euc | \langle v, v \rangle = 1 \}$; we may identify $\wlin$ with its image $\wsph$ in the isometry group of $S(\euc)$. $\waff$ is a \emph{linear reflection group} if it has a fixed point, in which case it can be identified with the subgroup $\wlin$ of $GL(V)$.

Let $\waff$ be an affine reflection group, and let $\cH$ be the collection of affine hyperplanes $H$ in $\aff$ with the property that there exists $w$ in $\waff$ such that $H$ is the fixed point set of $\waff$. Denote by $\cHlin$ the collection of vector subspaces $U$ of $V$ such that there exists $H \in \cH$ such that $U = \{ v - w | v, w \in H \}$. Denote by $\cHsph$ the collection of subsets $C \subset S(\euc)$ such that there exists $H \in \cHlin$ such that $C = H \cap S(\euc)$. An \emph{affine Coxeter complex} (resp. \emph{spherical Coxeter complex}) is a pair $(\aff, \waff)$ consisting of a affine Euclidean space $\aff$ (resp. a sphere $S(\euc)$), and an affine reflection group $\waff$ (resp. linear reflection group $\wlin \simeq \wsph$) acting on $\aff$ (resp. $S(\euc)$). Coxeter complexes are the basic building blocks of buildings. An affine Coxeter complex $(\aff, \waff)$ is completely determined by the corresponding set of hyperplanes $\cH$; a similar statement is true for a spherical Coxeter complex, with $\cH$ replaced by the corresponding set of ``spherical hyperplanes'' $\cHsph$ introduced above.

\begin{definition}\label{vector-facets}
Let $\wlin$ be a linear reflection group acting on the Euclidean space $\euc$. We identify $\wlin$ with the spherical reflection group $\wsph$ by restricting its action to the unit sphere. Let $\cHlin$ and $\cHsph$ be as defined in the paragraph above. Elements of $\cHlin$ and $\cHsph$ will be called \emph{walls}. An \emph{open half apartment} is a connected component of the complement of a wall.  Define an equivalence relation on $\euc$ (resp. $S(\euc)$) by saying that $x \sim y$ iff the set of open half apartments and walls containing $x$ coincides with the set of open half apartments and walls containing $y$. 

\begin{enumerate}
\item A vectorial \emph{facet} is an equivalence class for the equivalence relation $\sim$\item The \emph{support} of a facet is the intersection of walls containing it. The dimension of a facet is the dimension of its support as an abstract manifold.
\item A facet maximal with respect to inclusion is called a \emph{Weyl chamber} (or just a \emph{chamber} in the spherical case).
\item A \emph{panel} is a facet whose support is of codimension 1 in $\euc$ (resp. $S(\euc)$. 
\end{enumerate}

\end{definition}

\begin{definition}\label{half-apartment}
Let $(\aff, \waff)$ be an affine Coxeter complex, given by a set of reflection hyperplanes $\cH$ called \emph{walls}. 

\begin{itemize}
\item An \emph{open half apartment} in $\aff$ is a connected component of the complement of a wall. 

\item A \emph{sector} in $\aff$ is a subset of the form $x + \Delta$, where $\Delta$ is a Weyl chamber in $(\euc, \wlin)$. We say that such a sector is \emph{based at $x$}

\item A \emph{germ} of Weyl sectors based at $x$ is an equivalence class of Weyl sectors based at $x$ for the following equivalence relation: $S$ and $S'$ are equivalent if $S \cap S'$ is a neighborhood of $x$ in $S$ (resp. $S'$). The germ of a Weyl sector $S$ at $x$ is denoted $\Delta_xS$. 

\item A \emph{half-apartment} in $\aff$ is the closure of an open half apartment. 

\item An \emph{enclosure} in $\aff$ is the intersection of a collection of  half-apartments. The intersection of enclosures is an enclosure. If $Q \subset \aff$ then the intersection of enclosures containing $Q$ is called its hull, and denote $\hull(Q)$. A subset $Q$ is said to be \emph{enclosed} or \emph{Finsler convex} if it equals its hull. 
\end{itemize}
\end{definition}

Let $(\aff, \waff)$ be an affine Coxeter complex, and let $\cH$ be the set of reflection hyperplanes in $\aff$. The group $\waff$ is a semi-direct product $\waff \simeq \wlin \rtimes T$ where $T$ is a subgroup of the translation group of $\aff$. We say that $(\aff, \waff)$ is \emph{discrete} if $T$ is a discrete subgroup, otherwise we say that $(\aff, \waff)$ is dense. If $(\aff, \waff)$ is discrete, then $\aff^{o}_{\cH} := \aff - \cup_{H \in \cH} H$ is an open set. Its connected components are called \emph{alcoves} or chambers. The closures of alcoves tile the affine space. In the discrete case, the structure of a building can be described completely in terms of these alcoves. However, when $T$ is dense, the set $\aff^{o}_{\cH}$ is very badly behaved; in order to restore the notion of an alcove to its rightful place, we need to introduce the notion of a filter:

\begin{definition}\label{filter}
Let $A$ be a set. A \emph{filter} $\sigma$ on a $A$ is a collection of subsets of $A$ satisfying the following conditions:

\begin{enumerate}
\item If $P$ is in $\sigma$ and $P \subseteq Q$ then $Q$ is in $\sigma$ 
\item If $P$ and $Q$ are in $\sigma$, then so is $P \cap Q$. 

\end{enumerate}

For any subset $Z \subseteq A$, the collection $\sigma_{Z} = \{ Y \subset A | Z \subseteq Y \}$ is a filter. A filter is \emph{principal} if it is of the form $\sigma_{\{a\}}$ for some $a \in A$.  
\end{definition}

\begin{example}\label{nbd-filter}
Let $X$ be a topological space, and $x \in X$. Then the collection $N_{x}$ of subsets of $X$ which contain a neighborhood of $x$ is a filter on $X$, which is different from $\sigma_{ \{ x \} }$ in general. It is called the neighborhood filter of $\{ x \}$. 
\end{example}

\begin{example}\label{Frechet-filter}
Ley $X$ be a set, and let $F := \{ A \subset X | \ X - A$ is finite $\}$. Then $F$ is a filter on $X$, called the Frech\'{e}t filter.  Note that $F$ is not contained in any principal filter. 

\end{example}

The set of filters on a set $A$ forms a sub-poset of the opposite of the  lattice of subsets of the power set $2^{A}$ ordered by inclusion. One writes $\sigma \preceq \sigma'$ and says that $\sigma$ is contained in $\sigma'$, if for all $Z \in \sigma'$ we have $Z \in \sigma$. We will often implicitly identify a subset $Z \subset A$ with the filter $\sigma_{Z}$; in particular, we will say that a filter $\sigma$ is contained in a set $Z$ if $\sigma \preceq \sigma_{Z}$, i.e., if $Z \in \sigma$. A subset $Z \subseteq A$ is the union of a family of filters $\{ \sigma_{\alpha} \}_{\alpha}$ if each $\sigma_{\alpha}$ is contained in $Z$, and for each $x \in Z$ there exists $\alpha$ such that $\sigma_{ \{ x \} } \preceq \sigma_{\alpha} \preceq \sigma_{Z}$. Thus, for example, a subset of a topological space $X$ is open if and only if it is the union of a family of neighborhood filters (see Example \ref{nbd-filter}). The closure $\overline{\sigma}$ of a filter $\sigma$ in a topological space $X$ is the collection of subsets of $X$ that contains the closure of a set in $\sigma$.

\begin{definition}\label{apartment}
Let $x$ be a point in a Coxeter complex $\aff$ modeled on a Eucledian space $\euc$, and let $F$ be a vectorial facet in $\euc$ (Definition \ref{vector-facets}). The \emph{facet} $\sigma_{F,x}$ associated to $(x,F)$ is defined by the following condition: $Z$ belongs to $\sigma_{F,x}$ iff it contains a finite intersection of open half apartments and walls containing $U \cap (x + F)$ for some open neighborhood $U$ of $x$ in $\aff$. Let $\cF_{\aff,W} = \{ \sigma_{F,x} | x \in \aff$, and $F$ is a vectorial facet$\}$. The pair $(\aff, \cF_{(\aff,W)})$ consisting of the metric space $\aff$ together with the collection of filters $\cF_{(\aff, W)}$ is called the \emph{Euclidean apartment} associated to $(\aff, W)$. An abstract metric space $A$ endowed with a collection of filters $\cF$ is called a Euclidean apartment if it is isomorphic to $(\aff, \cF_{(\aff,W)})$ for some affine Coxeter complex $(\aff, \waff)$.

\end{definition}

This concludes our discussion of the terminilogy involved in the definition of a building. Having introduced the objects that we will study, we now introduce the maps between them.

\begin{definition}\label{chamber-system}
A \emph{(generalized) chamber system} is a set $X$ equipped with a family $\cF$ of filters on $X$. Let $(X, \cF)$ and $(X', \cF')$ be generalized chamber systems. A map of sets $f: X\rightarrow X'$ is a morphism of chambers systems if for each filter $\sigma \in \cF$ we have that $f_{\ast}(\sigma) \in \cF'$. Here $f_{\ast}(\sigma) := \{ Q' \subset X' |\  f(Q) \subset Q'$ for some $Q \in \sigma \} $

\end{definition}

\begin{definition}
A \emph{pre-building} $(\B, \cF, \cC)$ is a generalized chamber system $(\B, \cF)$ equipped with a collection $\cC$ of sub-chamber systems called cubicles, each cubicle $C \in \cC$ being equipped with a metric $d_C$, satisfying the following condition: each cubicle $(C,d_C)$ is isomorphic to an enclosure (see Definition \ref{half-apartment}) in  an apartment $\aff$ (as a chamber system and as a metric space). 

\end{definition}

\begin{definition}\label{building-maps}
Let $(\B, \cF, \cA)$ and $(\B, \cF', \cA')$ be pre-buildings. A morphism of generalized chamber systems $f: \B \rightarrow \B'$ is an \emph{isometry of pre-buildings} if it restricts to a distance preserving map $f_{|A}: C \rightarrow \B'$ for every cubicle $C$.

Suppose now that $(\B, \cF, \cA)$ and $(\B', \cF', \cA')$ are buildings. Then $f$ is 

\begin{enumerate}
\item An \emph{isometry of buildings} or \emph{strong morphism of buildings} if it is an isometry of pre- buildings. 

\item A \emph{folding map} or \emph{weak morphism of buildings} if it has the following property: for every apartment $A$ in $\B$ there exists a locally finite collection of hyperplanes $\cH$ such that $f$ restricts to an isometry on the closure of each connected component of $A - \cup_{H \in \cH} H$. 

\end{enumerate}
\end{definition}

Finally, we close this section by collecting together some properties of affine buildings that will be very useful in later sections:

\begin{proposition}\label{properties}
Let $\B$ be an affine building with Weyl group $W$. Then the following hold:

\begin{itemize}

\item[(CO)] Let $S$ and $S'$ be opposite sectors based at a common vertex. Then there is a unique apartment $A$ such that $S \cup S' \subset \B$. 

\item[(SC)] Let $S$ be a sector in $\B$, and let $A_1$ be an apartment. Suppose that $A \cap S$ is a panel in $A_1$. Let $H$ be a wall in $A_1$ that contains  $P$. Then there exist apartments $A_2 \neq A_3$ such that $A_1 \cap A_j$ is a half-apartment  and $H \cup S \subset A_j$ for $j = 2,3$. 

\item[(CG)] Let $x$ be a vertex in an affine building $\B$, and let $S$ and $S'$ be sectors based at $x$. Then there is an apartment $A$ in $\B$ containing $S$ and the germ $\Delta_xS'$. 
\end{itemize}
\end{proposition}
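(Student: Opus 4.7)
The plan is to derive (CO), (SC), and (CG) from the building axioms (B0)--(B2), exploiting rigidity of Euclidean Coxeter complexes together with the filter formalism of Definition \ref{filter}. The general template I will follow is to invoke (B1) to produce candidate apartments containing prescribed facets, and then use the isometries supplied by (B2) to either establish uniqueness or to compare candidates. These three statements are standard consequences of the axiom systems treated in \cite{Parreau-norms} and \cite{Bennett-Schwer}; the role of this proposition is to package them conveniently for later use.

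For (CO), my first step would be to apply (B1) to chambers of $S$ and $S'$ based at the common vertex $v$, obtaining an apartment $A$ containing both chambers. I would then argue that each of $S$ and $S'$ is determined inside any containing apartment by its germ at $v$ (as a translate of a vectorial Weyl chamber in the model Coxeter complex), so in fact $S \cup S' \subset A$. For uniqueness, given another such $A'$, I would use (B2) to obtain an apartment-isometry $\psi \colon A \to A'$ fixing $\overline{S} \cup \overline{S'}$ pointwise; since $S$ and $S'$ are opposite full-dimensional sectors, their union has full affine span, forcing $\psi = \id$ and hence $A = A'$ as subsets of $\B$.

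For (CG), the approach is to approximate the germ $\Delta_x S'$ by chambers of $S'$ accumulating at $x$. For each sufficiently small chamber $F \subset S'$ based at $x$, I would apply (B1) to obtain an apartment containing $\overline{F}$ and a chosen chamber of $S$, and then enlarge via iterated (B1) and (B2) to an apartment $A_F$ containing all of $S$ together with $F$. Comparing these apartments via (B2) as $F$ shrinks toward $x$, the germ filter $\Delta_x S'$ will lie in the common intersection, producing the desired apartment containing $S \cup \Delta_x S'$.

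Part (SC) is where I expect the main obstacle to lie. Let $H^+$ and $H^-$ denote the two closed half-apartments of $A_1$ cut out by $H$, both containing $P = A_1 \cap S \subset H$. For each sign I would pick a chamber of $A_1$ adjacent to $P$ on the $\pm$ side of $H$, apply (B1) to obtain an apartment containing it and $S$, then iterate to produce an apartment $A_j$ with $H^{\pm} \cup S \subset A_j$. Axiom (B2) then forces $A_1 \cap A_j = H^{\pm}$, since the intersection is a Finsler convex union of facets (Definition \ref{half-apartment}) which cannot exceed $H^{\pm}$ without violating $A_1 \cap S = P$. The delicate step is to show $A_2 \neq A_3$: if they coincided, the common apartment would contain $H^+ \cup H^- = A_1$ together with $S$, but apartments are isomorphic to the same model Coxeter complex and hence cannot properly contain one another, forcing $S \subset A_1$, a contradiction. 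In the non-discrete case, the germ of $S$ at $P$ must be interpreted via the filter formalism of Definition \ref{filter}, and the two choices of side of $H$ correspond to genuinely distinct extensions of this germ---this is what ultimately keeps $A_2$ and $A_3$ separate.
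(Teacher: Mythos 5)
The paper does not prove this proposition at all---its ``proof'' is a one-line citation to \cite{Bennett-Schwer} and \cite{Parreau-thesis}. So there is no paper-proof to compare against; the question is whether your sketch would actually work if fleshed out, and here I see a genuine gap that runs through all three parts.

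The gap is in the existence arguments, and it is the same gap each time: axiom (B1) produces an apartment containing two prescribed \emph{facets} (germs of sectors or chambers, in the filter sense), but it does not by itself produce an apartment containing the \emph{full} sectors $S$ and $S'$ out to infinity. In your argument for (CO), after applying (B1) to chambers of $S$ and $S'$ near $v$ you write that ``each of $S$ and $S'$ is determined inside any containing apartment by its germ at $v$.'' This is true but question-begging: it tells you that \emph{if} the apartment $A$ happens to contain $S$, then $S$ is the unique sector in $A$ with that germ---but it does not establish that $A$ contains $S$ rather than some other sector of $\B$ sharing the same germ at $v$ but branching away from $S$ further out. Promoting ``contains the germ'' to ``contains the whole sector'' is precisely the nontrivial content of these statements, and it is what the exchange/retraction machinery in \cite{Parreau-thesis} and \cite{Bennett-Schwer} is for. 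The same issue surfaces in (CG), where shrinking the chamber $F$ toward $x$ and comparing via (B2) does not obviously produce a single apartment containing all of $S$, and in (SC), where the crucial step is compressed into an undefined ``then iterate.'' Your uniqueness argument for (CO) is essentially sound (take facets far out in $S$ and $S'$; the (B2)-isometry fixes enough to force identity), and your contradiction showing $A_2 \neq A_3$ in (SC) is correct given the existence of $A_2$ and $A_3$; it is the constructions themselves that need real work, and that work is nontrivial enough that the paper chose not to reproduce it.
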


\begin{proof}
For the proof, we refer the reader to \cite{Bennett-Schwer}, \cite{Parreau-thesis}. 
\end{proof}

For some of the arguments of this paper, it will be necessary to restrict ourselves to buildings with a ``complete system of apartments''. As the following definition and the remark following it show, this is not a major restriction at all

\begin{definition}\label{complete-system}
Let $(\B, \cF, \cA)$ be an affine building. We will say that $(\B, \cF, \cA)$ is a building \emph{with a complete system of apartments} if for any other system of apartments $\cA'$ with $\cA \subset \cA'$ we have $\cA = \cA'$.
\end{definition}

\begin{remark}
It is easy to show (see \cite{Parreau-thesis, Rousseau}) that any system of apartments $\cA$ is contained in a unique maximal one $\overline{\cA}$. Thus, one does not lose much by restricting to buildings that have a complete system of apartments.  
Let $A \subset \cB$ be a subset of a building $(\cB, \cF, \cA)$ with a complete system of apartments. Suppose that $A$ is isometric to the standard apartment, and every bounded subset of $A$ is contained in an apartment. Then $A \in \cA$. In fact, this gives a characterization of buildings with complete systems of apartments. One can give a stronger characterization: a building has a complete system of apartments if and only if every geodesic is contained in a single apartment \cite{Parreau-thesis}. 
\end{remark}

%%%%%%%%%%%%%%%%%%%%%%%%%%%%%%%%%%%%%%%%%%%%%%%%%%%%%%%%%%%%%%%

\subsection{Asymptotic cones of symmetric spaces are buildings}\label{symmetric-spaces}

A locally symmetric space is a Riemannian manifold $M$ whose Riemannian curvature tensor is covariantly constant. This is equivalent to requiring that for any $p \in M$, there is a self-isometry $s_p$ of a neighborhood of $p$ that fixes $p$, and whose derivative at $p$ is the negative of the identity. We say that $M$ is a symmetric space if $s_p$ extends to a global isometry on $M$. A reference for the theory of symmetric spaces is \cite{Helgason}; a short survey tailored to the needs of this paper is in \cite{Maubon}.

The connected component $G$ of the identity in the isometry group of a Riemannian manifold $M$ is a Lie group. Furthermore, $M$ is a homogenous space for $G$; if $K$ denotes the isotropy group of a point $p \in M$, then we have a diffeomorphism $G/K \simeq M$. 

The action of $s_p$ induces an involution $\sigma$ of $G$ given by $\sigma(g) = s_p g s_p$, and an involution $\mathrm{Ad}(s)$ of the Lie algebra $\g$ of $G$. The decomposition of $\g \simeq \mathfrak{t}+ \mathfrak{p}$ into $+1$ and $-1$ eigenspaces for the action of $\mathrm{Ad}(s)$ is called the Cartan decomposition of $\g$. 

If the restriction of the Killing form to the $-1$ eigenspace $\mathfrak{p}$ is negative definite, then the symmetric space is of non-compact type. Symmetric spaces of non-compact type have negative sectional curvature, and, like affine buildings, are CAT(0) spaces. As we are about to see, the relationship between affine buildings and symmetric spaces runs deeper than that. But first we pause to give an example:

\begin{example}\label{slrsur}
The symmetric space $M = \SL_r\cc / SU_r$ is a symmetric space of non-compact type. It can be identified with the space of hermitian metrics on the vector space $\cc^r$. The corresponding Cartan decomposition is $\mathfrak{sl}_r \cc \simeq \mathfrak{su}_r + \mathfrak{h}$, where $\mathfrak{h}$ is the set of $r \times r$ Hermitian matrices. The exponential map gives a diffeomorphism $\exp: \mathfrak{h} \rightarrow X$. 

\end{example}

Recall that a Riemannian submanifold $N \subset M$ is \emph{totally geodesic} if it is locally convex as a metric subspace. This is equivalent to requiring that any geodesic starting in $N$ and with initial direction in $T_N$ stays in $N$. It is also equivalent to requiring that the Levi-Civita connection on $M$ restricts to the Levi-Civita connection on $N$. 

\begin{definition}
Let $M$ be a symmetric space of non-compact type. A totally geodesic submanifold $N \subset M$ is called a $k$-\emph{flat} if it is isometric to $\mathbb{R}^k$ with its standard Riemannian metric. An \emph{apartment} in $M$ is a flat that is maximal with respect to inclusion.  
\end{definition}

\begin{remark}
Under the exponential map $\exp: \mathfrak{h} \rightarrow M$ of Example \ref{slrsur}, apartments $A \subset M$ correspond to maximal abelian subalgebras $\mathfrak{a} \subset \mathfrak{h}$

\end{remark}

A symmetric space of non-compact type, endowed with its set of apartments, enjoys a weak form of the axioms of an affine building. Compare the following with Definition \ref{building}:

\begin{proposition}[\cite{Maubon}]\label{sym-spaces-apts}
Let $M$ be a Riemannian symmetric space. Then the set $\cA$ of apartments in $M$ satisfies the following axioms:
\begin{enumerate}
\item[(b1)] For any two points $p$ and $q$ in $M$, there is an apartment containing $p$ and $q$.
\item[(b2)] For any two apartments $A$ and $A'$, their intersection is a closed convex set of both. Furthermore, there is an isometry $A \rightarrow A'$ fixing $A \cap A'$. 
\end{enumerate}
\end{proposition}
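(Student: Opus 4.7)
The plan is to reduce both axioms to standard structural facts about symmetric spaces of non-compact type: the bijection, via the exponential map at a base point $p$, between apartments through $p$ and maximal abelian subalgebras of $\mathfrak{p}\subset\mathfrak{g}$, and the transitivity properties of the isotropy group $K=K_p$ on these subalgebras. Throughout, I would use that $M=G/K$ is CAT(0) (non-positive sectional curvature), so geodesics between any two points are unique and totally geodesic submanifolds are automatically convex.

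For axiom (b1), I would first observe that by homogeneity of $M$ under $G$, it suffices to show that for any $p\in M$ and any $q\in M$ there is an apartment containing both. Let $\gamma:[0,1]\to M$ be the unique geodesic from $p$ to $q$, and let $v\in T_pM\cong\mathfrak{p}$ be its initial velocity. Any vector $v\in\mathfrak{p}$ lies in some maximal abelian subalgebra $\mathfrak{a}\subset\mathfrak{p}$ (extend $\RR v$ to a maximal abelian subspace). Then $A:=\exp_p(\mathfrak{a})$ is a maximal flat containing $p$, and since $\mathfrak{a}$ is abelian the one-parameter subgroup $t\mapsto\exp_p(tv)$ lies entirely in $A$; hence $q=\exp_p(v)\in A$. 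So $A$ is the required apartment.

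For axiom (b2), convexity of $A\cap A'$ in each of $A$ and $A'$ is immediate: each apartment is a totally geodesic, hence convex, subspace of the CAT(0) space $M$, so the intersection of two convex subsets of a CAT(0) space is convex in both. The substantive content is producing an isometry $\psi:A\to A'$ fixing $A\cap A'$ pointwise. If $A\cap A'=\emptyset$ one picks any point $p\in A$, maps it to the nearest point $p'\in A'$ (using CAT(0) to get uniqueness), and uses transitivity of $G$ to reduce to a common base point. Assuming $p\in A\cap A'$, both $A$ and $A'$ correspond to maximal abelian subalgebras $\mathfrak{a},\mathfrak{a}'\subset\mathfrak{p}$; moreover the convex set $A\cap A'$ corresponds (via $\exp_p^{-1}$) to an abelian subspace $\mathfrak{b}\subset\mathfrak{a}\cap\mathfrak{a}'$. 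The key structural input I would invoke is that the centralizer $Z_K(\mathfrak{b})$ of $\mathfrak{b}$ in $K$ acts transitively on the set of maximal abelian subalgebras of $\mathfrak{p}$ containing $\mathfrak{b}$; this is the standard fact that any two maximal tori of a connected reductive group containing a given torus are conjugate under the centralizer. Picking $k\in Z_K(\mathfrak{b})$ with $\mathrm{Ad}(k)\mathfrak{a}=\mathfrak{a}'$, the induced isometry $\psi:A\to A'$ fixes $\exp_p(\mathfrak{b})\supseteq A\cap A'$ pointwise.

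The main obstacle is the last step: verifying that $A\cap A'$ really is contained in the exponential image of a common subspace and invoking the right transitivity statement. In particular one must check that $A\cap A'$, a priori only a closed convex subset of each Euclidean apartment, lies inside a single affine subspace through $p$, which uses that both $A$ and $A'$ are flat so the geodesic between any two points of $A\cap A'$ is an affine segment in each. Once this is in place, the conjugacy result for maximal abelian subalgebras containing a fixed abelian subalgebra (which I would cite from \cite{Helgason}) closes the argument.
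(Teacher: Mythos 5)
The paper does not prove this proposition; it is quoted from Maubon's survey \cite{Maubon} without proof, so there is no in-paper argument to compare your proposal against. Your proof follows the standard Lie-theoretic route and is sound, and the gap you flag at the end actually closes more cleanly than you fear. Fix $p \in A \cap A'$ and write $A = \exp_p(\mathfrak{a})$, $A' = \exp_p(\mathfrak{a}')$. Since $M$ is of non-compact type, $\exp_p : \mathfrak{p} \to M$ is a global diffeomorphism. For $q \in A \cap A'$ write $q = \exp_p(v)$ with $v \in \mathfrak{a}$ and also $q = \exp_p(v')$ with $v' \in \mathfrak{a}'$; injectivity of $\exp_p$ forces $v = v'$, so $v \in \mathfrak{a} \cap \mathfrak{a}'$. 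The reverse inclusion is immediate, hence $A \cap A' = \exp_p(\mathfrak{a} \cap \mathfrak{a}')$ is exactly the exponential image of the linear abelian subspace $\mathfrak{b} := \mathfrak{a} \cap \mathfrak{a}'$; there is no danger of $A\cap A'$ being a more general convex set not arising from a subspace. One then invokes the conjugacy of maximal abelian subspaces of $\mathfrak{p}$ containing $\mathfrak{b}$ under $Z_K(\mathfrak{b})$ exactly as you propose (this follows by applying the usual Cartan conjugacy to the reductive centralizer $\mathfrak{z}_{\mathfrak{g}}(\mathfrak{b})$ with its induced Cartan decomposition). Two minor remarks: when $A \cap A' = \emptyset$ the fixing condition is vacuous and any isometry between the two flats works, so the nearest-point projection digression is unnecessary; and the proposition as stated in the paper silently assumes the symmetric space is of non-compact type, which your proof needs (CAT(0), unique geodesics, and the global diffeomorphism property of $\exp_p$ all fail in the compact case).
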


This similarity between the behavior of the apartments in symmetric spaces and buildings suggests a deep relationship between the two. This is indeed the case, as is made manifest by the following beautiful theorem of Kleiner and Leeb:

\begin{theorem}[Kleiner-Leeb, \cite{Kleiner-Leeb}]\label{cone-is-building}
Fix an ultrafilter $\omega$ on $\mathbb{N}$. Let $M$ be a non-empty symmetric space of non-compact type. Then for any sequence $p = \{p_n\}_{n \in \mathbb{N}}$ of base-points in $M$, and any family of scale factors $\mu = \{\mu_{n}\}_{n \in \mathbb{N}}$, the asymptotic cone $\Cone_{\omega}(M, p, \mu)$ is a thick affine building with a complete system of apartments. Furthermore, if the Coxeter complex associated to $M$ is $(\aff, \waff)$, then $\Cone_{\omega}(M, p, \mu)$ is modelled on $(\aff, \waff)$. 
\end{theorem}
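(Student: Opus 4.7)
The plan is to build the affine building structure on $\Cone_{\omega}(M, p, \mu)$ directly from its construction as an ultralimit of the rescaled spaces $(M, p_n, \mu_n d)$, transporting each piece of structure from $M$ through the ultrafilter. First I would record that, since $M$ is a symmetric space of non-compact type and therefore CAT$(0)$, the ultralimit $\Cone_{\omega}(M)$ is a complete CAT$(0)$ geodesic metric space, and that the metric in the cone is given by $d([x_n],[y_n]) = \lim_{\omega} \mu_n d(x_n,y_n)$. This gives all the underlying metric structure for free.

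The apartments are then defined as follows: to each sequence $\{A_n\}$ of maximal flats in $M$ for which $\mu_n d(p_n, A_n)$ is bounded, one associates the ultralimit $\lim_{\omega} A_n \subseteq \Cone_{\omega}(M)$. Each $A_n$ is isometric to the model Euclidean space $\aff$, so the ultralimit is also isometric to $\aff$; collecting these gives the system $\cA$. The Weyl group $\wlin$ acts on each flat $A_n$, so by choosing (via the ultrafilter) an element of $\wlin$ for each $n$ we obtain isometries of limit apartments; rescaling the discrete translation group $T_0$ by $\mu_n$ produces a dense translation group in the limit, so the model Coxeter complex on each apartment is $(\aff, \wlin \ltimes \mathbb{R}^{r-1})$, matching the statement. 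The facet structure on each limit apartment is defined using the filter formalism of Definition 2.15, where a facet $\sigma_{F,x}$ is the filter generated by intersections of half-apartments in this dense hyperplane arrangement. Axiom (B1) is then immediate: given $x = [x_n]$ and $y = [y_n]$, one applies Proposition 2.16(b1) for each $n$ to pick a maximal flat $A_n$ through $x_n$ and $y_n$, and the limit $\lim_{\omega} A_n$ is an apartment containing both $x$ and $y$.

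For axiom (B2), given two apartments $A = \lim_{\omega} A_n$ and $A' = \lim_{\omega} A'_n$, one invokes Proposition 2.16(b2) at each index to obtain an isometry $\varphi_n \colon A_n \to A'_n$ fixing $A_n \cap A'_n$, and then takes the ultralimit $\varphi = \lim_{\omega} \varphi_n$, which is an isometry $A \to A'$ fixing $A \cap A' = \lim_{\omega}(A_n \cap A'_n)$ pointwise; that this limit intersection is a union of facets follows from the filter definition applied to the dense hyperplane system. Thickness is a consequence of the abundance of flats through any regular direction in $M$: given a germ of half-apartment in the cone, one can perturb the defining sequence of flats to produce three pairwise-limit-distinct flats sharing that germ. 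Finally, completeness of the system of apartments follows from the fact that any geodesic in $\Cone_{\omega}(M)$ lifts to an ultralimit of geodesics in $M$, each of which extends to a maximal flat, so every bi-infinite geodesic in the cone lies in some $A \in \cA$; this is the characterization noted in Remark 2.10.

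The main obstacle I anticipate is not the verification of (B1) and (B2) themselves but showing that the dense Coxeter structure on each limit apartment is canonically well-defined and independent of the sequence $\{A_n\}$ chosen to represent the apartment. Concretely, one must prove that if two sequences of flats $\{A_n\}$ and $\{A'_n\}$ yield the same subset of $\Cone_{\omega}(M)$, then their induced hyperplane arrangements coincide in the limit — this requires a careful analysis of the asymptotic behavior of singular tangent directions and of how $\wlin$-orbits on Cartan subalgebras of $\mathfrak{g}$ interact with the ultralimit. It is precisely at this step that the non-discreteness of the limit Coxeter complex forces one to work with filters rather than alcoves, and where the Kleiner–Leeb theory of spherical buildings at infinity enters to identify the Weyl chambers of the limit apartments with the chambers of the Tits boundary $\partial_{\mathrm{Tits}} M$.
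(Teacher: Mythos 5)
The paper does not prove this statement at all: it is quoted verbatim as Kleiner--Leeb's theorem, with the subsequent remark explaining that the phrase ``complete system of apartments'' is Parreau's translation between the Kleiner--Leeb axioms and the filter-based axioms of Definition \ref{building}. So there is no in-paper argument to compare against; you are effectively reconstructing the Kleiner--Leeb proof from scratch. That is fine, but your reconstruction has a genuine gap at the most delicate point.

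The problem is in your verification of axiom (B2). You write $A \cap A' = \lim_{\omega}(A_n \cap A'_n)$ and then take an ultralimit of isometries $\varphi_n$ that fix $A_n \cap A'_n$ pointwise. This equality is false in general: the intersection of two limit apartments can be strictly larger than the ultralimit of the intersections. Already in rank one ($M = \mathbb{H}^2$, cone an $\mathbb{R}$-tree), two disjoint geodesics that share an ideal endpoint have $A_n \cap A'_n = \emptyset$ for all $n$, yet their ultralimits share an entire ray. In that case your $\varphi_n$ are unconstrained on $A_n \cap A'_n$ (vacuously), so $\lim_{\omega}\varphi_n$ has no reason to fix $A \cap A'$ at all. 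This is precisely why the Kleiner--Leeb proof cannot proceed axiom by axiom through ultralimits of Riemannian structure; they instead work with the Tits boundary $\partial_{\mathrm{Tits}} M$, show the ultralimit of the Tits geometry is a spherical building, establish a structure theorem about convex subsets and parallel sets in the cone, and only then deduce that the ultralimits of flats form a compatible atlas. You correctly flag at the end that the well-definedness of the limit Coxeter structure is the hard part, but the (B2) step you present as routine actually contains the same difficulty, and the route through $\lim_\omega(A_n \cap A'_n)$ does not close it. A similar caveat applies to your thickness argument (``perturb the defining sequence of flats''): for this to work one needs a quantitative statement about how many pairwise-distinct limit half-apartments share a given limit wall, and this again rests on the spherical building at infinity, not on a naive perturbation of the $A_n$.
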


\begin{remark}
The reader who consults \cite{Kleiner-Leeb} will not find the term ``complete system of apartments'' there. The reason is that Kleiner and Leeb use a different axiomatization of buildings from the one given in Definition \ref{building}. Parreau (\cite{Parreau-thesis}) has shown that a building in the sense of Kleiner and Leeb is precisely the same thing as a building in the sense of Definition \ref{building} with a complete set of apartments.
\end{remark}

In the remainder of this subsection, we will briefly outline the notion of asymptotic cones. Our treatment will be superficial, the primary purpose being to fix notation. For a detailed discussion of this circle of ideas, the reader is referred to \cite{Gromov-metric-structures-revised, Gromov-metric-structures}.  

Loosely speaking, the asymptotic cone of a metric space $(X,d)$ is what one obtains by ``looking at $X$ from infinity'', ignoring all of its fine-grained local structure. For a convex subset $X \subset \mathbb{R}^n$ with the induced metric, it is fairly straighforward to make this idea precise. Let $p \in X$, and consider the family of subsets $X_t := (1/t)X = \{q \in \mathbb{R}^n |\  t(q -p) \in X \}$ for $t \in \mathbb{R}$. For $t_1 \leq t_2$ we have $X_{t_2} \subset X_{t_1}$, so this is a nested family of subsets of $\mathbb{R}^n$. The asymptotic cone of $X$ with respect to $p$ is defined to be the intersection $\Cone(X,p) = \cap_{t} X_{t}$. 

\begin{example}
Let $f: \mathbb{R} \rightarrow \mathbb{R}$ be the function given by $f(x) = \exp(-1/x^2)$ for $x \leq 0$, and $f(x) = 0$ for $x \geq 0$. Let $X := \{ (x,y) \in \mathbb{R}^2 |\  y \geq f(x) \}$, and let $p = (0,0)$. Then the asymptotic cone $\Cone(X,p)$ is the right upper quadrant $\{ (x,y) |\  x \geq 0, y \geq 0 \}$. Note that the tangent cone at $p$ is the entire upper half-plane. 
\end{example}

When $(X,d)$ is not a convex subset of $\mathbb{R}^n$, the sequence of metric spaces $(X, (1/n)d)$, $n \in \mathbb{N}$, cannot in general be realized as a nested sequence of subsets of some metric space. However, it is still possible, roughly speaking, to define the asymptotic cone as certain limit of this sequence in the ``metric space of all of metric spaces''. The relevant notion of distance between metric spaces is the Gromov-Hausdorff distance. Before considering the precise definition, we give one  more example:

\begin{example}
Consider the lattice $\mathbb{Z}^{2}$ with generators given by the standard basis vectors, and viewed as a metric space with the word metric. That is, $d((m,n),(m',n')) = |m - m'| + |n - n'|$. It can be viewed as a subspace of the metric space $(\mathbb{R}^2,d_{\Cone})$ with the metric $d_{\Cone}((x,y),(x',y')) = |x - x'| + |y - y'|$. Then the asymptotic cone of $(\mathbb{Z}^{2}, d)$ with respect to any base-point is  $(\mathbb{R}^2,d_{\Cone})$. Intuitively, as we rescale the metric on $\mathbb{Z}^2$, the points on the lattice get closer together in the plane, until eventually they fill out the entire plane.  

\end{example}

In order to define the asymptotic cone in general, one needs to take Gromov-Hausdorff limits of sequences of metric spaces that do not always converge. The natural thing to do is to pass to a convergent subsequence. The reader might profitably think of an ultrafilter as a black-box that picks out such a convergent subsequence for us. Serendepitously, we have already introduced the notin of a filter in the previous subsection, so we can define ultrafilters with minimal work:

\begin{definition}\label{ultrafilter}
( \emph{Ultrafilters and ultralimits}). An \emph{ultrafilter} $\sigma$ on a set $A$ is a filter on $A$ that is maximal with respect to inclusion. Let $\omega$ be a \emph{non-principal ultrafilter} on a set $A$, i.e., an ultrafilter that is not principal (see Definition \ref{filter} for the notion of a principal filter). Then we say that a  family of points $\{x_a\}_{a \in A}$ in a topological space $X$ has $\omega$-limit $x$, and write $\lim_{\omega} x_{a} = x$ if for each neighborhood $U$ of $x$, the set $\{ a \in A | \ x_{a} \in U \}$ belongs to $\omega$. 
\end{definition}

The typical examples of interest will be $A = \mathbb{N}$, and $A = \mathbb{R}$. By Zorn's lemma, there exist ultrafilters on any set $A$. Furthermore, there exists a maximal filter containing the Frech\'{e}t filter on any set (Example \ref{Frechet-filter}) --- such a filter is necessarily a non-principal ultrafilter. We summarize some of the basic facts that we will need in the form of a proposition, and refer the reader to \cite{Comfort, Leinster} for a more thorough discussion of ultrafilters:

\begin{proposition}\label{ultralimit-properties}
Let $\omega$ be a non-principal ultrafilter on $\mathbb{N}$. Let $(X,d)$ be a metric space, and let $\{ x_n \}_{n \in \mathbb{N}}$ be a sequence in $X$.  Then we have the following:
\newcounter{saveenum}
\begin{enumerate}
\item If $\{ x_n \}_{n \in \mathbb{N}}$ is a bounded sequence in $X$, then it has an $\omega$-limit in $X$.

\item If $\{x_n \}$ is a convergent sequence (in the ordinary sense)  then $\lim_{\omega} x_n$ exists, and we have $\lim_{\omega} x_n = \lim_{n \to \infty} x_n$. 

\setcounter{saveenum}{\value{enumi}}
\end{enumerate} 

Let $\tilde{\omega}$ be a non-principal ultrafilter on $\mathbb{R}$ and let $f: \mathbb{R} \rightarrow X$ be a bounded map. Then

\begin{enumerate}
\setcounter{enumi}{\value{saveenum}}
\item  $\lim_{\omega} f \leq \limsup_{t \to \infty} f(t)$.
\item If $\lim_{t \to \infty} f(t)$ exists, then we have $\lim_{t \to \infty} f(t) = \lim_{\omega} f = \limsup_{t \to \infty} f(t)$.

\end{enumerate}
\end{proposition}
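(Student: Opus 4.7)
The plan is to prove the four parts in order, using only the defining property of an ultrafilter (the dichotomy $A \in \omega$ xor $A^c \in \omega$, closure under finite intersection, closure under supersets). For (1), note that the conclusion as stated requires $(X,d)$ to be proper (closed bounded sets are compact), a hypothesis I take as implicit since otherwise the claim is false (e.g.\ a bounded rational sequence approaching an irrational in $X = \mathbb{Q}$). Assuming properness, let $K$ be the closure of the range of $\{x_n\}$, which is compact. For each integer $k \geq 1$, cover $K$ by finitely many closed balls of radius $2^{-k}$; since $\mathbb{N} \in \omega$ and the cover induces a finite partition of the index set, the ultrafilter dichotomy selects a unique ball $B^{(k)}$ with $I_k := \{n : x_n \in B^{(k)}\} \in \omega$. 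Choosing the $B^{(k)}$ to be nested by intersecting successively, their centers form a Cauchy sequence converging to some $x \in K$, and I would then verify that $x$ is the $\omega$-limit: for any $\varepsilon > 0$, pick $k$ with $2^{-k} < \varepsilon/2$; then $B^{(k)}$ lies inside the ball of radius $\varepsilon$ around $x$, so $\{n : d(x_n, x) < \varepsilon\} \supset I_k \in \omega$.

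Part (2) rests on the elementary lemma that every non-principal ultrafilter on $\mathbb{N}$ contains the Fr\'echet filter: if some cofinite complement $\mathbb{N} \setminus F$ were excluded, the dichotomy would force $F \in \omega$, and iterating the dichotomy on the finitely many singletons in $F$ would yield $\{n\} \in \omega$ for some $n$, contradicting non-principality. Granting this, if $x_n \to x$ in the ordinary sense and $U$ is any neighborhood of $x$, then $\{n : x_n \in U\}$ is cofinite, hence in $\omega$, so $\lim_\omega x_n = x$.

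For (3) and (4), the ordering symbol $\leq$ forces the codomain to be $\mathbb{R}$, and I read $\tilde{\omega}$ as a non-principal ultrafilter on $\mathbb{R}$ refining the filter of neighborhoods of $+\infty$, so that $[T,\infty) \in \tilde{\omega}$ for every $T \in \mathbb{R}$. Setting $L := \limsup_{t \to \infty} f(t)$ and choosing, for given $\varepsilon > 0$, some $T$ with $f|_{[T,\infty)} < L + \varepsilon$, the set $\{t : f(t) < L + \varepsilon\}$ contains $[T,\infty)$ and therefore lies in $\tilde{\omega}$; this yields $\lim_\omega f \leq L + \varepsilon$, and letting $\varepsilon \to 0$ proves (3). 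Applying the same argument to $-f$ yields the dual bound $\lim_\omega f \geq \liminf_{t \to \infty} f(t)$, and when the ordinary limit exists both bounds collapse to it, giving (4).

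The only real obstacle is pinning down the tacit hypotheses: properness of $X$ in part (1), and the precise meaning of a ``non-principal ultrafilter on $\mathbb{R}$'' in parts (3)--(4), where one needs $\tilde{\omega}$ to capture behavior at infinity rather than concentrating near a finite point. Once these are fixed, the arguments are entirely soft: part (1) is the standard fact that every ultrafilter on a compact Hausdorff space converges, while the remaining parts reduce to the trivial inclusion of the Fr\'echet filter in any non-principal ultrafilter plus the definition of $\limsup$.
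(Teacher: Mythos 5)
Your proof is correct. Note that the paper does not actually prove this proposition --- it is stated as a summary of ``basic facts'' with a pointer to the references on ultrafilters --- so there is no argument of the authors' to compare against; what you give is the standard one (ultrafilters on compact spaces converge, via a finite-partition/dichotomy argument for (1); the Fr\'echet filter is contained in every non-principal ultrafilter for (2); disjointness of $\{f<L+\varepsilon\}$ and $\{f>L+\varepsilon\}$ within the filter for (3)--(4)). Your two caveats are genuine and correctly diagnosed: part (1) as literally stated fails for a general metric space (your $\mathbb{Q}$ example works, since a non-principal ultrafilter contains no finite set), and is rescued by properness, which holds for the symmetric spaces the paper actually applies it to; and in parts (3)--(4) the ultrafilter $\tilde{\omega}$ must refine the filter of neighborhoods of $+\infty$ (i.e.\ contain every $[T,\infty)$), since an arbitrary non-principal ultrafilter on $\mathbb{R}$ could concentrate near a finite point, which is indeed the tacit convention in the paper's later use of $\lim_{\omega}$ for $t\to\infty$. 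The only cosmetic looseness is the phrase ``the cover induces a finite partition of the index set'' in (1) --- the balls overlap, so one should first disjointify them --- but you clearly know this and the argument goes through.
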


 A \emph{family of scale factors} $\{ \mu_{n}\}_{n \in \mathbb{N}}$ (resp. $\{\mu_t\}_{t \in \mathbb{R}}$) is a sequence (resp. family) of positive real numbers such that $\mu_{n} \to \infty$ as $n \to \infty$ (resp. $\mu_t \to \infty$ as $t \to \infty$). 

\begin{definition}\label{asymptotic-cone}
Let $(X,d)$ be a metric space, let  $\{ p_n \}$ be a sequence of points in $X$ and let  $\{ \mu_{n}\}_{n \in \mathbb{N}}$ be a family of scale factors. Fix a non-principal ultrafilter $\omega$ on $\mathbb{N}$. The asymptotic cone $\Cone_{\omega} := \Cone_{\omega}(X, \{p_n\}, \{ \mu_n \})$ is the metric space associated to the pre-metric space $\Cone'_{\omega}$:

\begin{itemize}
\item[-] the points of $\Cone'_{\omega}$ are sequences $\{ x_n \}$ in $X$ such that $(1/n)d(x_n, p_n)$ is bounded.

\item[-] The distance is given by:

\[
d_{\Cone_{\omega}}(\{x_n\}, \{y_n\}) = \lim_{\omega} \frac{1}{n} d(x_n, y_n)
\]

\end{itemize}

\end{definition}

Suppose that  $(X,d)$ is a symmetric space of non-compact type. Let $d_n$ denote the metric $(1/n)d$. Suppose that for each $n$ we have a totally geodesic embedding $f_n: \aff \rightarrow X$ sending $0$ to $p_n$. Here $\aff$ is an apartment whose rank equals the rank of the symmetric space. Then we have an induced map $[f]: \aff \rightarrow \Cone_{\omega}(X,p, \mu)$, $[f](a) = \{ f_n(a) \}$. These maps define the apartments in $\Cone_{\omega}$ in Theorem \ref{cone-is-building}.

%%%%%%%%%%%%%%%%%%%%%%%%%%%%%%%%%%%%%%%%%%%%%%%%%%%%%%%%%%%%%%

\subsection{Refined distance functions}\label{vector-distance-section}

Let $M = G/K$ be a symmetric space. If $M$ is of rank $1$, then the length  is a complete invariant of a geodesic segment, upto the action of $G$. However, in the case of higher rank symmetric spaces, a more refined invariant is needed to distinguish geodesic segments. 

Let $(\aff, \waff)$ be a Coxeter complex, with $\aff$ modelled on a Euclidean vector space $\euc$. Let $\wsph$ be the spherical part of $\waff$. The subtraction map $\aff \times \aff \rightarrow \euc$ descends to a natural map:

\[
\aff \times \aff \rightarrow (\aff \times \aff)/\waff \rightarrow \euc / \wsph
\]

$\euc/ \wsph$ can be identified with a fundamental domain for the action of $\wsph$ on $\euc$: so we get a distance function $\vec{d}: \aff \times \aff \rightarrow \chamber$ with values in the fundamental Weyl chamber. 

\begin{definition}\label{vector-distance}
Let $X$ be a symmetric space or an affine building, and let $x$, $y$ be points in $X$. In light of the definition of buildings (Definition \ref{building}) and the properties of apartments in symmetric spaces (Proposition \ref{sym-spaces-apts}), we know that there exists an apartment $A \subset X$ such that $x$, $y \in A$. Define the \emph{vector valued distance} $\vec{d}(x,y)$ to be the vector valued distance computed in this apartment $A$. It follows from Def \ref{building} and Prop \ref{sym-spaces-apts} that this distance is independent of the apartment it is computed in. 

\end{definition}

When $M = G/K$ is a symmetric space, the vector valued distance has a familiar algebraic interpretation: it is the Cartan projection. Recall that there is the Cartan decomposition $G = K A^{+} K$. The Cartan projection is just the natural map $\log: A^{+} \rightarrow \chamber$ (see e.g. \cite{Helgason}). There is a special case of great interest to us in this paper, where this has an even more elementary interpretation:

\begin{example}
Let $M = \SL_r\cc/SU_r$. Then the Cartan decomposition is just the singular value decomposition of a complex matrix: every $T \in \SL_r\cc$ can be written $T = U D V$, where $U$ and $V$ are unitary, and $D$ is diagonal with real entries. If $A, B$ are two elements of $M$ and $A = TB$, then $\vec{d}(A,B)$ is the vector of logarithms of the diagonal entries of $D$. 

\end{example}

The following proposition says that the vector distance is well-behaved under passage to the asymptotic cone. It will play an important role in Section \ref{WKB}. 

\begin{proposition}[Parreau, \cite{Parreau-compactification}]\label{limit-vector-distance}
Let $M$ be a Riemannian symmetric space of non-compact type, and let $\vec{d}$ denote its vector distance function. Let $\Cone_{\omega}$ denote the affine building obtained from $M$ by passing to the asymptotic cone with respect to some family of base-points and scale factors, and an ultrafilter $\omega$ on $\mathbb{N}$. Let $[x_n]$ and $[y_n]$ be two points in $\Cone_{\omega}$, represented by sequences $\{x_n\}$ and $\{y_n\}$ in $M$. Then we have

\[
\vec{d}_{\Cone_{\omega}}([x_n],[y_n])  = \lim_{\omega} \vec{d}(x_n, y_n)
\]

\end{proposition}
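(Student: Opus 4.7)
The plan is to realize both sides as coordinates in a single apartment of $\Cone_\omega$ obtained as a Kleiner-Leeb ultralimit of apartments in $M$. I will interpret the right-hand side with the rescaling by the scale factors $\mu_n$ implicit, in parallel with the scalar convention $d_{\Cone_\omega}([x_n],[y_n]) = \lim_\omega (1/\mu_n) d(x_n,y_n)$.

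Setting $v_n := \vec{d}(x_n, y_n) \in \chamber$, one has $|v_n| = d(x_n, y_n)$, so $(1/\mu_n)v_n$ is a bounded sequence in the closed subset $\chamber \subset \euc$; by Proposition \ref{ultralimit-properties}(1) its $\omega$-limit exists in $\chamber$, so the right-hand side is well defined. For each $n$, Proposition \ref{sym-spaces-apts}(b1) yields an apartment $A_n \subset M$ containing $x_n$ and $y_n$, presented as a totally geodesic isometric embedding $f_n: \aff \to M$ with $f_n(0) = x_n$. Since $v_n$ already lies in the fundamental chamber $\chamber \subset \euc$, I post-compose $f_n$ with a single element of the finite group $\wsph$ if necessary so that $a_n := f_n^{-1}(y_n)$ lies in $\chamber$; by construction $a_n = v_n$.

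The Kleiner-Leeb machinery underlying Theorem \ref{cone-is-building} then gives a totally geodesic isometric embedding $[f]: \aff \to \Cone_\omega$ defined on any bounded subset by $[f](a) = \{f_n(a)\}$, whose image is an apartment $A_\omega \subset \Cone_\omega$. Since $|a_n|/\mu_n$ is bounded, the point $a_\infty := \lim_\omega (1/\mu_n) a_n \in \chamber$ is defined, and $[f](0) = [x_n]$, $[f](a_\infty) = [y_n]$, so $A_\omega$ contains both points. Within the apartment $A_\omega$, identified isometrically with $\aff$, the vector distance is just the $\wsph$-orbit representative of the translation vector $a_\infty - 0 = a_\infty \in \chamber$. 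Thus
\[
\vec{d}_{\Cone_\omega}([x_n],[y_n]) \;=\; a_\infty \;=\; \lim_{\omega} \tfrac{1}{\mu_n}\vec{d}(x_n, y_n),
\]
independence from the choice of $A_n$ being guaranteed by Definition \ref{vector-distance} together with axiom (B2) of Definition \ref{building}.

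The main subtlety to handle is ensuring that the Weyl-chamber choice is coherent across the sequence: for the identification $A_\omega \cong \aff$ to carry the correct $\wsph$-structure, each $a_n$ must lie in the same fundamental chamber. This is automatic once we arrange $a_n \in \chamber$ at every $n$, since a single Weyl element suffices per index. The uniform bound $|v_n| = d(x_n, y_n)$ together with boundedness of the rescaled scalar distance provides the control needed for the ultralimit of apartments to converge to an honest apartment in $\Cone_\omega$ rather than degenerating, which is the one place where Theorem \ref{cone-is-building} does real work for us.
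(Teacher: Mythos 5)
The paper does not actually supply a proof here: Proposition~\ref{limit-vector-distance} is stated as a citation to Parreau's \cite{Parreau-compactification} (where it is her Proposition~4.4). So there is no ``paper's own proof'' to compare against literally; what you have done is reconstruct Parreau's argument, and your strategy --- pick apartments $A_n\ni x_n,y_n$ in $M$, pass to the ultralimit apartment $A_\omega$ in $\Cone_\omega$, and read off both sides of the identity as a single translation vector in $A_\omega\cong\aff$ --- is exactly the right one, and is the one Parreau uses. The normalization by $\wsph$ to keep $a_n$ in the fixed fundamental chamber, the identification $|v_n|=d(x_n,y_n)$ guaranteeing the ultralimit exists in the closed cone $\chamber$, and the appeal to (B2)/Definition~\ref{vector-distance} for apartment-independence are all correctly placed.

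There is, however, one technical slip that you inherit from the paper's own informal description of the apartments in $\Cone_\omega$. With the formula $[f](a)=\{f_n(a)\}$ taken literally, the map $[f]$ is \emph{not} an isometric embedding into $\Cone_\omega$: since $f_n$ is an honest isometry of $\aff$ into $M$, one has $\tfrac1{\mu_n}d\bigl(f_n(a),f_n(b)\bigr)=\tfrac1{\mu_n}|a-b|\to 0$, so every point of $\aff$ maps to the basepoint $[x_n]$. In particular the step ``$[f](a_\infty)=[y_n]$'' fails: $d_{\Cone_\omega}\bigl([f_n(a_\infty)],[f_n(a_n)]\bigr)=\lim_\omega\tfrac1{\mu_n}|a_\infty-a_n|=|a_\infty|$, which is nonzero in general. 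The correct construction of the ultralimit apartment rescales inside: take $[f](a):=\bigl\{f_n(\mu_n a)\bigr\}$, so that $[f]$ scales distances by $\mu_n/\mu_n=1$, and then $d_{\Cone_\omega}\bigl([f](a_\infty),[y_n]\bigr)=\lim_\omega\bigl|a_\infty-\tfrac1{\mu_n}a_n\bigr|=0$ precisely by your definition of $a_\infty$. With this one correction the rest of your argument goes through verbatim, and the conclusion $\vec d_{\Cone_\omega}([x_n],[y_n])=a_\infty=\lim_\omega\tfrac1{\mu_n}\vec d(x_n,y_n)$ is correct. (You were right to insert the missing $1/\mu_n$ on the right side of the stated identity; the same rescaling needs to appear inside the parametrization of the limit apartment.)
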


%%%%%%%%%%%%%%%%%%%%%%%%%%%%%%%%%%%%%%%%%%%%%%%%%%%%%%%%%%%%%%%
%%%%%%%%%%%%%%%%%%%%%%%%%%%%%%%%%%%%%%%%%%%%%%%%%%%%%%%%%%%%%%%

\section{Spectral Covers and Buildings}\label{Spectral-Covers-and-Buildings}

The asymptotics of the differential equations studied in this paper are controlled by a multi-valued holomorphic 1-form associated to the differential equation. The Riemann surface of this 1-form is called the spectral curve, and defines a ramified cover (the spectral cover) of the space on which the differential equation is defined.

%%%%%%%%%%%%%%%%%%%%%%%%%%%%%%%%%%%%%%%%%%%%%%%%%%%%%%%%%%%
\subsection{The universal building}\label{universal-building-section}

The purpose of this subsection is to (i) recall the construction of a spectral cover starting from a harmonic map to a building, and (ii) formulate the notion of a universal buildings associated to a spectral cover (Definition \ref{Bu-def}) and formulate a conjecture regarding its existence (Conjecture \ref{Bu-exists}). 

Let $\varphi$ be a regular semisimple endomorphism of a complex vector space $V$ of dimension $n$. One can alternatively describe $\varphi$ in terms of its spectral data:  a family $\{L_1,...,L_n\}$ of lines (the eigenlines of $\varphi$) in $V$, each line $L_i$ being decorated with a complex number $\lambda_i$ (the corresponding eigenvalue). The notion of a spectral cover (see e.g. \cite{Donagi-spectral, Carlos-Higgs-Local, BNR-spectral}) is obtained by allowing the endomorphism to vary in a family $\{\varphi_{x}\}_{x \in X}$ where $X$ is a complex manifold, and allowing the endomorphisms to take values in some ``coefficient object'' $K$. Much of what we say in this subsection makes sense for varieties of arbitrary dimension, arbitrary reductive groups $G$, and very general coefficient objects $K$.  For the sake of simplicity and clarity, we restrict our exposition to the case where $X$ is of dimension $1$, $G = \SL_r\cc$ and $K$ is a line bundle,  leaving the obvious generalizations to the reader. 

\begin{definition}\label{spectral-cover-Higgs}
Let $K$ be a holomorphic line bundle on a smooth complex algebraic curve $X$.  
\begin{enumerate}
\item A \emph{K-valued spectral cover} $\phi$ of $X$ is the data of a finite ramified cover $\pi: \Sigma \rightarrow X$ together with a morphism $i: \Sigma \hookrightarrow \tot(K)$ realizing $\Sigma$ as a closed subscheme of the total space $\tot(K)$.
\item A \emph{K-valued Higgs coherent sheaf} on $X$ is a coherent sheaf $\cE$ on $X$ together with a section $\varphi$ of the sheaf $\Shend{\cE} \otimes K$ satisfying the condition $\varphi \wedge \varphi = 0$.
\end{enumerate} 
When $K$ is not specified, it is to be understood that $K = \omega_{X}$, the holomorphic cotangent bundle of $X$. A spectral cover $\phi$ is \emph{smooth} if $\Sigma$ is a smooth. We  will denote by $p: \tot(T^{\vee}_{X}) \rightarrow X$ the natural projection, and by $\lambda$ the restriction to $\Sigma$ of the tautological (Liouville) section $\alpha \in H^{0}(\tot(T^{\vee}_{X}), p^{\ast}(T^{\vee}_{X}))$. 
\end{definition}

A spectral cover $\phi$ can be thought of as a ``multi-valued 1-form'' on $X$. We may suggestively write ``$\phi = \{\phi_1,...,\phi_r\}$'', where $r$ is the degree of $\pi: \Sigma \rightarrow X$, to emphasize this point of view. Just as one associates to a linear operator on a vector space its eigenvalue spectrum, one can associate to a Higgs field $\varphi$ a multi-valued 1-form which plays the role of the eigenvalue spectrum. More, precisely to a $K$-valued Higgs bundle $(\cE, \varphi)$, one can associate its characteristic polynomial $\Char \varphi := \det(s \id  - \varphi) = \lambda^{r} + a_1 s^{r-1} + ...+a_r$. The coefficient $a_k$ of this polynomial is a section of the line bundle $K^{\otimes k}$. Note that if $(\cE, \varphi)$ is an $\SL_r$-Higgs bundle, then the sum the eigenforms is zero: $a_1 = 0$. The zeroes of the characteristic polynomial define a multi-valued homolorphic 1-form $\phi= \{ \phi_1,..., \phi_r \}$, which is the \emph{spectral cover associated to the Higgs field} $\varphi$. More precisely, we have the following:

\begin{definition}\label{associated-spectral-cover}
Let $(\cE, \varphi)$ be a $K$-valued rank $r$ Higgs bundle on a smooth complex algebraic curve $X$. Let $\lambda \in H^0( \tot(K), \pi^{\ast} K)$ be the tautological section.
\begin{enumerate}

\item The \emph{characteristic polynomial} of $(\cE, \varphi)$ is the section $\Char \varphi \in H^0( \tot(K), \pi^{\ast} K^{\otimes r})$ defined by the following condition: for any local section $s$ of $\pi^{\ast}K^{\vee}$, we have $(\Char \varphi, s^{\otimes r} ) = \det( \id_{\pi^{\ast} \cE} \otimes s - \pi^{\ast}\varphi \otimes s)$ as functions on $\tot(K)$. Here $(\relbar, \relbar)$ is the natural pairing between $K$ and $K^{\vee}$. 

\item The \emph{spectral cover associated to the Higgs field} $(\cE, \varphi)$ is the subscheme of $\tot(K)$ defined as the zero locus $Z( \Char \varphi )$ of the characteristic polynomial. 

\end{enumerate}

\end{definition}

Throughout the rest of this section, we let $G$ be the semisimple complex algebraic group $\mathrm{SL}_r\cc$, and let $W$ denote its Weyl group, which is the symmetric group on $r$-letters. Fix a Cartan subalgebra $\cartan \subset \g = \sll_r$. We will restrict ourselves to a discussion of $G$-Higgs bundles. We have seen that to a $G$-Higgs bundle one can associate a point in the vector space $\oplus_{i=2}^r H^0(X, K^{\otimes i})$; this vector space is called the \emph{Hitchin base} and the map is called \emph{Hitchin map}. 

The adjoint action of $G$ on $\g$ restricts to an action of $W$ on $\cartan$. According to Chevalley's theorem, $\cc[\cartan]^W$ is a polynomial algebra $\cc[\sigma_2,...,\sigma_r]$ on $r-1$ generators, and is naturally isomorphic to $\cc[\g]^G$. This is true for any semisimple lie group of rank $r -1 $; in our case, $G = \SL_r$, and this is just Newton's theorem on symmetric functions, and the polynomials $\sigma_i$ are the symmetric polynomials. More precisely if we identify $\cartan$ with the subset $\{\sum_{i=1}^r x_i = 0\}$ in $\cc^r$ with coordinates $(x_1,...,x_r)$, then $\sigma_k = \sum x_{i_1}...x_{i_k}$. 

The $\sigma_i$'s give a well-defined map of spaces over $X$,
$\sigma := (\sigma_2,...,\sigma_r): \tot(K \otimes \cartan) \rightarrow \tot (\oplus_{i=2}^{r} K^{\otimes i})$, realizing the target as the quotent of the total space of the vector bundle $K \otimes \cartan$ by the action of the Weyl group. This immediately leads to the following

\begin{definition}\label{cameral}
Let $X$ be a smooth complex algebraic curve, $K$ a line bundle on $X$, and let $\phi \in \oplus_{i=2}^{r}H^0(X, K^{\otimes i})$ be a point in the corresponding Hitchin base. The \emph{cameral cover} associated to $\phi$ is the cover $\pi_{\phi}: \Sigma_{\phi} \rightarrow X$ defined by the following pullback square:

\[
\xymatrix{
\Sigma_{\phi} \ar[r]^{\phii} \ar[d]_{\pi_{\phi}} & \tot(K \otimes \cartan) \ar[d]^{\sigma} \\
X \ar[r]^(0.3){\phi} & \tot( \oplus_{i=2}^r K^{\otimes i}) \\
}
\] 

The \emph{tautological section} on the cameral cover is the section $\phii \in H^{0}(\Sigma_{\phi}, \pi_{\phi}^{\ast}(K \otimes \cartan))$ defined by the commutative diagram above. If we fix a linear coordinate system $(x_1,...,x_r)$ on $\cartan$ as above, then we may identify $\bar{\phi}$ with a sequence of $r$ differential forms $\phii = (\phii_1,..., \phii_r)$ such that $\sum_{i=1}^r \phii_i = 0$. 
\end{definition}

\begin{remark}
The cameral cover $\Sigma_{\phi}$ is generically a $W$-Galois cover of $X$: its function field is a $W$-Galois extension of the function field of $X$. If the point $\phi$ in the Hitchin base is associated to a spectral cover $\Sigma$, then the function field of $\Sigma_{\phi}$ is the Galois closure of the function field of $\Sigma$ (as extensions of the function field of $X$). 
\end{remark}

We have seen that to any Higgs bundle $(\cE, \varphi)$ on $X$ one can associate a point $\phi$ in the Hitchin base which parametrizes cameral covers of $X$. Now we will describe another mathematical object that produces a point $\phi$ in the Hitchin base --- namely, a $\pi_{1}(X,x)$ equivariant harmonic map $h: \tilde{X} \rightarrow \B$ from the universal cover of $X$ to an affine building. This construction was first described in \cite{Ludmil-thesis}.

\begin{definition}\label{phi-map}
Let $\B$ be a rank $r-1$ affine building with Weyl group $W_{\mathrm{aff}} \simeq W \ltimes T$ where $T$ is a subgroup of the group of the group of translations of $\mathbb{R}^r$, and let $\{f: \aff \rightarrow \B\}_{f \in\cA}$ be an atlas for $\B$. A differential $k$-form $\eta$ on an open subset $U \subset \B$ is a collection $\{\eta_f\}_{f \in \cA}$ of differential $k$-forms on $f^{-1}(U) \subset \aff \simeq \mathbb{R}^{r-1}$ such that $(g^{-1} \circ f)^{\ast} \eta_{g} = \eta_{f}$ on $f^{-1}(g(\aff) \cap U)$. Denote by $T^{\vee}_{\B}$ the sheaf of $\mathbb{R}$-vector spaces on $\B$ whose sections over an open set $U$ are the differential $1$-forms, and by $T^{\vee}_{\B, \cc}$ its complexification. 
\end{definition}

Let $\{x_1,...x_r\}$ be linear coordinate functions on $\aff$ whose zero loci give the reflection hyperplanes for the action of the Weyl group $W$. Let $dx_1,...,dx_r$ denote the differentials of the coordinate functions $x_i$, viewed as sections of the complexified tangent bundle of $\aff$. As above, let $(\sigma_{2},...,\sigma_{r})$ be generators for the polynomial algbera $\cc[\cartan]^{W}$ (the symmetric polynomials). Then, for each $2 \leq k \leq r$, $\sigma_k(dx_1,...,dx_r)$ is invariant under the action of the affine Weyl group, and therefore the ``local differentials'' $\{\sigma_k(dx_1,...,dx_r) \}_{f \in \cA}$ define a section $\xi_k \in H^{0}(\B, \Sym^{k}(T^{\vee}_{\B, \cc}))$. 

Now let $X$ be a Riemannian manifold, and suppose that we are given a map $h: X \rightarrow \B$. Recall that a point $x \in X$ is $h$-\emph{regular} if there is a neighborhood $U$ of $x$ in $X$ such that $h(U)$ is contained in a single apartment $A$. Let $X_{\mathrm{reg}} \subset X$ denote the set of $h$-regular points. Then for any complex differential form $\eta$ on $\B$, there is a well-defined pullback $h^{\ast} \eta$ which is a section of the complexified cotangent bundle of $X$. To define the pullback, let $\{U_{\alpha}\}_{\alpha}$ be an open cover of $X_{\mathrm{reg}}$, such that for every $\alpha$, there exists a chart $f_{\alpha}: \aff \rightarrow \B$ with $h(U) \subset f(\aff)$. Then $h^{\ast}\eta$ is uniquely determined by the requirement that $(h^{\ast}\eta)_{|U_{\alpha}} = (f_{\alpha}^{-1} \circ h)^{\ast}(\eta_{f_{\alpha}})$ for all $\alpha$. 

\begin{lemma}
Let $X$ be a smooth complex algebraic curve, and suppose that $h: X \rightarrow \B$ is a \emph{harmonic} map to a building. Let $\xi_{k}$ be the harmonic symmetric tensor on $\B$ given locally by $\xi_{k} = \sigma_{k}(dx_1,...,dx_r)$ (see the previous paragraph), where $\sigma_1,...,\sigma_r$ are the standard symmetric polynomials. Then there is a \emph{unique} holomorphic section $\phi_k \in H^{0}(X, \omega_{X}^{\otimes k})$ that restricts to $h^{\ast}(\xi_k)$ on $X_{\mathrm{reg}}$. 
\end{lemma}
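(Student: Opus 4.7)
\emph{Overall strategy.} The plan is to construct $\phi_k$ locally on $X_{\mathrm{reg}}$ by taking $(1,0)$-parts of pulled-back affine coordinates and assembling them via $\sigma_k$, to check compatibility under change of apartment by $W$-invariance of $\sigma_k$, and finally to extend holomorphically across the closed complement $S := X \setminus X_{\mathrm{reg}}$ by Riemann-type removability. Uniqueness will then be immediate from density of $X_{\mathrm{reg}}$.

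\emph{Local construction.} For each $x \in X_{\mathrm{reg}}$ there exist a neighborhood $U$ and a chart $f_{\alpha} : \aff \to \B$ in the atlas of $\B$ with $h(U) \subset f_{\alpha}(\aff)$. I would set $h_{\alpha} := f_{\alpha}^{-1} \circ h : U \to \aff$ and $u_i := x_i \circ h_{\alpha}$. Because $h$ is harmonic in the sense of Korevaar--Schoen and the coordinate functions $x_i$ are affine-linear, each $u_i$ will be a classical harmonic real-valued function on $U$, with $\sum_i u_i = 0$ inherited from the apartment relation $\sum_i x_i = 0$. On a Riemann surface, the $(1,0)$-derivative of a harmonic function is a holomorphic $1$-form (locally $\partial u_i = f_i'(z)\,dz$ for a holomorphic $f_i$ with $u_i = \re f_i$), so the local expression
\[
\phi_k^{(\alpha)} := \sigma_k(\partial u_1, \ldots, \partial u_r)
\]
defines a holomorphic section of $\omega_X^{\otimes k}$ on $U$, and agrees with the $(k,0)$-component of the pullback $h^{\ast}(\xi_k)|_U$.

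\emph{Gluing via $W$-invariance.} If $f_{\beta}$ is another admissible chart, possibly after shrinking $U$, then the transition $f_{\beta}^{-1} \circ f_{\alpha}$ belongs to the affine Weyl group $\waff \simeq \wlin \ltimes T$. Translations in $T$ are annihilated by exterior differentiation, so the induced action on $(dx_1, \ldots, dx_r)$ is by an element of the spherical part $\wlin$, i.e.\ a permutation of the $dx_i$. Since $\sigma_k$ is a symmetric polynomial, one gets $\phi_k^{(\alpha)} = \phi_k^{(\beta)}$ on the overlap, and the local pieces patch to a global holomorphic section
\[
\phi_k^{\mathrm{reg}} \in H^0\bigl( X_{\mathrm{reg}}, \omega_X^{\otimes k} \bigr)
\]
whose underlying complex symmetric $k$-tensor is the $(k,0)$-part of $h^{\ast}(\xi_k)$.

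\emph{Extension and main obstacle.} The remaining, and most delicate, step is to extend $\phi_k^{\mathrm{reg}}$ across $S$. The plan is to invoke the regularity theory of Gromov--Schoen and Korevaar--Schoen for harmonic maps from a Riemann surface into an NPC target, which applies to an affine $\mathbb{R}$-building: it yields that the energy density $|dh|^2$ is locally bounded on $X$, and that the set of points where $h$ fails to be locally smooth into a single apartment has Hausdorff codimension at least two, hence is discrete in the real surface $X$. The coefficients of $\phi_k^{\mathrm{reg}}$ in a local holomorphic chart are polynomial expressions in the $|\partial u_i|$, so they are dominated by the energy density and thus bounded in a punctured neighborhood of each point of $S$. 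Riemann's removable singularity theorem for holomorphic sections of the line bundle $\omega_X^{\otimes k}$ then produces the desired extension $\phi_k \in H^0(X, \omega_X^{\otimes k})$, and uniqueness follows from density of $X_{\mathrm{reg}}$. The hard part will be precisely this last step: everything else is a direct consequence of classical complex analysis combined with the $W$-invariance of the elementary symmetric polynomials, but to control $\phi_k^{\mathrm{reg}}$ near $S$ one genuinely needs the Korevaar--Schoen/Gromov--Schoen regularity to bound both the size of $S$ and the energy density of $h$ in a neighborhood of it.
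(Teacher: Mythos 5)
Your proof follows the same route as the paper's (which is itself only a sketch, deferring details to a thesis): take the $(1,0)$-parts of the pulled-back affine coordinates, use $W$-invariance of $\sigma_k$ to glue, and extend across the singular set using codimension-two smallness. Where you add value is in making explicit two points the paper elides: (i) the gluing argument — transitions between charts lie in $\waff$, differentiation kills the translation part, and the spherical part permutes the $dx_i$, so $\sigma_k$-invariance gives a well-defined holomorphic section on $X_{\mathrm{reg}}$; and (ii) the removable-singularity step — the bare statement ``singularities are codimension $2$'' does not by itself give extension on a curve (a holomorphic $1/z$ is perfectly singular), so you correctly invoke Korevaar--Schoen interior Lipschitz regularity to bound the energy density, hence the coefficients of $\phi_k^{\mathrm{reg}}$, and then apply Riemann's removable singularity theorem. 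These are exactly the right fillings, so I would regard this as the same proof, rigorized.
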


\begin{proof}
We will just sketch the idea of the proof; for the details, the reader is referred to \cite{Ludmil-thesis}. Since $h$ is a harmonic map, the complexified pullbacks $h^{\ast}(dx_i)$ are locally defined harmonic 1-forms on $X_{\mathrm reg}$ whose $(1,0)$ parts are holomorphic. Thus, $h^{\ast}(\xi_k)$ is a holomorphic sections of $\Sym^{k}(\Omega^{1}_{X})$. Since the singularities of a harmonic map are in codimension 2, these holomorphic sections extend to all of $X$. 
\end{proof}

\begin{definition}\label{phi-map}
Let $X$ be a smooth complex algebraic curve, and let $\phi = (\phi_2,...,\phi_r) \in \oplus_{i=2}^r H^{0}(X,\omega_{X}^{\otimes i})$ be a point in the $\mathrm{SL}_r\cc$ Hitchin base. Let $\B$ be an affine building with Weyl group $W_{\aff} \simeq W \ltimes T$ where $W$ is the Weyl group of $\mathrm{SL}_r\cc$, and $T$ is a group of translations. A $\pi_{1}(X,x)$-equivariant harmonic map $h: \tilde{X} \rightarrow \B$ is a \emph{harmonic $\phi$-map} if $\pi^{\ast} \phi_k$ coincides with $h^{\ast}\xi_k$ on $X_{\mathrm{reg}}$ for all $k$. 
\end{definition}

Thus far, we have seen two constructions of cameral covers. The first is the cameral cover associated to a Higgs bundle. The second is the one just described, which associates a cameral cover to a harmonic map to a building. One of the main ideas of this paper is to pass from a (family of) Higgs bundles to a harmonic map taking values in a building by ``reversing'' the second construction. It is easy to see that there can be many harmonic maps with the same underlying cameral cover;  we might hope to associate a harmonic map to a cameral cover by picking out one that is minimal in some sense. First we need a preliminary definition:

\begin{definition}\label{image-in-building}
Let $X$ be a Riemann surface, let $(\B, \cF, \cA)$ be a building, and $h: X \rightarrow \B$ be a map. The image of $h$, written $\mathrm{im}(h)$ is the pre-building $\cup_{A \in \cA} \hull(A \cap h(X))$ with the natural pre-building structure induced from $\B$.

\end{definition}

\begin{definition}\label{Bu-def}
Let the notation be as in Definition \ref{phi-map}. A map $\hu: \tilde{X} \rightarrow \Bu$ is a \emph{universal harmonic $\phi$-map} if it is a harmonic $\phi$-map and enjoys the follows property: given any building $\B$ with a complete system of apartments (and with the same vectorial Weyl group), and a harmonic $\phi$-map $h: \tilde{X} \rightarrow \B$ 
\begin{enumerate}

\item there exists a folding map of buildings (see Definition \ref{building-maps}) $\psi: \Bu \rightarrow \B$ such that the following diagram commutes:
\[
\xymatrix{
\tilde{X} \ar[r]^{\hu} \ar[dr]_{h} & \Bu \ar[d]^{\psi} \\
& \B \\
}
\]

\item If $\psi': \Bu \rightarrow \B$ is another morphism such that $\psi \circ \hu = h$, then $\psi_{|\mathrm{im}(h)} = \psi'_{|\mathrm{im}(h)}$. 

\end{enumerate}
\end{definition}

\begin{remark}\label{Bu-for-trees}
When $r=2$, points in the Hitchin base are given by quadratic differentials $\phi$ on $X$. The leaf space of the induced foliation on $\tilde{X}$ is an $\mathbb{R}$-tree $T^{\phi}$. It is easy to see that in this case the natural quotient map $\hu: \tilde{X} \rightarrow  T^{\phi}$ is the universal harmonic $\phi$-map. 
\end{remark}

\begin{remark}\label{Bu-in-simple-cases}
If the cameral cover is totally decomposed, then the universal building is just a single apartment. In the neighborhood of a simple branch point (see Figure \ref{SpecBranch}), the universal building is the product of a trivalent vertex with an apartment of one dimensional lower. 
\end{remark}

We can now formulate one of the main conjectures of this paper:

\begin{conjecture}\label{Bu-exists}
Let $X$ be a Riemann surface, and let $\phi$ be a smooth spectral cover of $X$. Then there exists a universal $\phi$-map $\hu: \tilde{X} \rightarrow \Bu$.
\end{conjecture}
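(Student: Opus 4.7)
The plan is to construct $\Bu$ by a direct gluing procedure from the local WKB data provided by the tautological differential $\phii$ on the cameral cover $\pi_\phi\colon \Sigma_\phi \to X$, and then to verify universality using the rigidity of $\phi$-maps supplied by Corollary~\ref{localWKBlongrange}. The construction will pass through an intermediate pre-building $\Bpre$, following the scheme outlined in $\S\ref{gluing-construction}$, before being completed to an honest building.

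First I would work locally. On any simply-connected open subset $U \subset \widetilde{X}$ away from the preimage of the discriminant, the pullback of the cameral cover trivializes into $r$ ordered sheets, and $\phii$ splits as an ordered tuple $(\phii_1,\ldots,\phii_r)$ with $\sum\phii_i = 0$. After fixing a base point $P_0 \in U$, this yields a local chart
$$ f_U\colon U \longrightarrow \aff, \qquad P \longmapsto \Bigl(\int_{P_0}^{P}\re\phii_1,\ldots,\int_{P_0}^{P}\re\phii_r\Bigr), $$
into the model apartment $\aff = \{\sum x_i = 0\}$. Corollary~\ref{localWKBlongrange} asserts that any harmonic $\phi$-map must agree with such a chart on every noncritical patch, up to a choice of apartment in the target; this is the rigidity that will force universality.

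Next I would assemble these local charts into the pre-building $\Bpre$ as in $\S\ref{gluing-construction}$: glue one apartment for each homotopy class of path (equivalently, each consistent sheet-ordering) in the complement of the locus where reflection hyperplanes $\re\phii_{ij}=0$ meet the branch divisor, identifying two apartments along the maximal region where their integrals coincide. Branch points of $\Sigma_\phi \to X$ should contribute the trivalent vertices envisioned in Remark~\ref{Bu-in-simple-cases}, while the crossings of pairs of hyperplanes should force the apartment structure around caustics. By construction one has a tautological $\phi$-map $\hpre\colon \widetilde{X}\to\Bpre$, and any other $\phi$-map $h\colon\widetilde{X}\to\B$ factors through $\hpre$ because, on each noncritical patch, $h$ and $\hpre$ agree with the integral chart up to an isometry of an apartment in $\B$; the gluing relations defining $\Bpre$ are precisely the minimal ones compatible with these identifications.

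Finally, the pre-building $\Bpre$ must be promoted to a genuine affine building $\Bu$ with a complete system of apartments. I would do this by transfinite iteration of the building axioms: whenever two sectors in the current object share a common germ or panel but do not yet lie in a common apartment, adjoin a new apartment along their enclosure as dictated by (CO), (SC), (CG) of Proposition~\ref{properties}. Each such step is canonical, so the extension $\psi\colon\Bu\to\B$ required by Definition~\ref{Bu-def} exists at every stage by the corresponding axioms in $\B$, and its uniqueness on $\mathrm{im}(h)$ is automatic because $\mathrm{im}(h) \subset \psi(\Bpre)$ where $\psi$ is already rigidified.

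The hard part will be controlling this completion process in the presence of collisions. Naively adjoining apartments may force identifications that conflict with the rigid gluing data already laid down by $\Bpre$, reflecting the collision phenomenon that makes the BNR example nontrivial. In $\S\ref{BNR-example}$ this is handled combinatorially via the octagon in the spherical link, where opposite edges are forced into a twisted hexagon rather than a flat one; in general rank and for more elaborate spectral networks, a recursive nesting of collisions could obstruct the existence of a completion satisfying all building axioms simultaneously. This is precisely the point at which one may need to enlarge the target category beyond ordinary affine buildings, as anticipated by the authors in the remarks following Conjecture~\ref{Bu-exists}.
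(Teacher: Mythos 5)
The statement you are addressing is Conjecture~\ref{Bu-exists}, which the paper does \emph{not} prove; it is labelled a conjecture and left open. The paper establishes it only in two cases: trivially for $r=2$ (Remark~\ref{Bu-for-trees}, where $\Bu$ is the leaf space of the quadratic differential) and, at considerable length, for the single Berk--Nevins--Roberts example with $r=3$ in $\S\ref{BNR-example}$ (Theorem~\ref{BNR-theorem}). Your outline follows the same two-stage scheme the paper uses in that example---glue local integral charts into a pre-building $\Bpre$, then complete $\Bpre$ to a building---so in spirit this is the paper's approach, not a new one.

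However, your proposal is not a proof, and you say as much yourself. The final paragraph identifies the obstruction (``recursive nesting of collisions could obstruct the existence of a completion'') and offers no way around it; that obstruction is exactly why this remains a conjecture. Two concrete gaps are worth naming explicitly. First, your claim that any $\phi$-map factors through $\hpre$ because ``the gluing relations defining $\Bpre$ are precisely the minimal ones compatible'' is the hard content of Proposition~\ref{gluing-proposition}, which holds only under the nontrivial hypothesis that each element of the cover maps into a single apartment. Verifying that hypothesis for regions crossed by caustics required, in the BNR case, the subharmonicity argument of Lemma~\ref{Isihara} together with Lemma~\ref{caustic-argument} and the $\epsilon$-limiting argument in the proof of Proposition~\ref{Bpre-universal-property}; your sketch, which appeals only to noncritical patches agreeing with integral charts ``up to an isometry of an apartment in $\B$,'' skips this entirely. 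Second, your transfinite iteration of (CO), (SC), (CG) from Proposition~\ref{properties} is a local prescription with no argument that the process terminates or yields a globally consistent object satisfying the building axioms. The paper's completion in the BNR example (Construction~\ref{BNR-Bu-construction}) works only because of the special coincidence, Lemma~\ref{collision-points-equalize}, that both collision points map to the same vertex, reducing the entire global problem to a free $A_2$ spherical building generated by one octagonal link. There is no evident analogue of that reduction in general, which is precisely why the authors stop at the conjecture.
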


The remark above shows that the conjecture is true in the rank 1 case (i.e., when $r = 2$). Some evidence supporting the conjecture when $r \geq 3$ will be presented in Section \ref{BNR-example}, where an example of universal building in the rank 2 case is constructed.

%%%%%%%%%%%%%%%%%%%%%%%%%%%%%%%%%%%%%%%%%%%%%%%%%%%%%%%%%%%%%%%%%

\subsection{Some properties of $\phi$-maps}\label{phi-map-properties}

In order to understand the behavior of $\phi$-maps, it is useful to develop criteria for when a given region $\Omega$ is carried into a single apartment by a $\phi$-map. In this subsection, we develop such a criterion (Proposition \ref{phi-maps-non-crit} and Corollary \ref{non-crit-regions-to-apts}) which will be used heavily in Section \ref{BNR-example}. The notion of a \emph{non-critical path} is the key ingredient in what follows. 

\begin{definition}\label{non-critical}
Let $X$ be a Riemann surface and let $\phi = (\phi_2,...,\phi_r) \in \oplus_{k=2}^{r}H^0(X, \omega_{X}^{\otimes k})$. Let $\gamma: [0,1] \rightarrow X$ be a smooth path, and let $a_k: [0,1] \rightarrow \mathbb{C}$ be the function $a_k(t) = (\gamma^{\ast}(\phi_k),\partial_t^{\odot k})$. Then we say that $\gamma$ is a $\phi$-\emph{non-critical} path if the real parts of the roots of the polynomial $\sum_k a_k(t) z^k$ are distinct for every $t \in [0,1]$. 
\end{definition}

\begin{remark}
Let $\pi: \Sigma \rightarrow X$ be the spectral cover defined by $\phi$. We can think of $\pi$ as a multi-valued differential form $\{\lambda_1,..., \lambda_r\}$ on $X$. Then asking that $\gamma$ be non-critical is equivalent to requiring the real $1$-forms $\re \gamma^{\ast} \lambda_1$,...,$\re \gamma^{\ast} \lambda_r$ to be to distinct. We may re-order them, and assume without loss of generaility that  $(\re \gamma^{\ast} \lambda_1, \partial_t) > ... > (\re \gamma^{\ast} \lambda_r, \partial_t)$. 
\end{remark}

\begin{lemma}\label{non-critical-opposite-sectors}
Let $X$ be a Riemann surface, and let $\phi \in \oplus_{k=2}^r H^0(X, \omega_{X}^{\otimes k})$. Let $h: X \rightarrow \B$ be a harmonic $\phi$-map, and let $X_{\mathrm{reg}}\subset X$ denote the locus where $h$ is regular. Let $\gamma$ be a non-critical path in $X_{\mathrm{reg}}$, and let $s \in (0,1)$. Then there exist $\epsilon > 0$, and sectors $S^+$ and $S^-$ in $\B$ based at $x := h(\gamma(s))$ such that 

\begin{enumerate}

\item The germs $\Delta_x S^+$ and $\Delta_x S^-$ are opposed.

\item  $h(\gamma((s-\epsilon, s]) \subset S^-$ and $h(\gamma([s,s+ \epsilon)) \subset S^+$. 

\end{enumerate}
\end{lemma}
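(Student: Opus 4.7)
The plan is to exploit the fact that in a neighborhood of $\gamma(s)$ the map $h$ takes values in a single apartment $A$, and to show that the image of $\gamma$ near $s$ moves in a direction lying in a definite open Weyl chamber based at $x := h(\gamma(s))$: the portion $t>s$ into one chamber and the portion $t<s$ into its antipodal partner.

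First I would set up coordinates. Since $\gamma(s)\in X_{\mathrm{reg}}$, choose a simply connected neighborhood $U$ of $\gamma(s)$ in $X$ and an apartment $A\subset\B$ with $h(U)\subset A$. Identify $A$ with the model apartment $\aff$ and fix a linear coordinate system $(x_1,\ldots,x_r)$ on the underlying Euclidean space $\euc$, with $\sum x_i=0$ and walls $\{x_i=x_j\}$. Because $U$ lies in $X_{\mathrm{reg}}$ (hence away from the branch locus of the cameral cover), the tautological section admits single-valued holomorphic branches $\phii_1,\ldots,\phii_r$ on $U$. The real pullbacks $h^{\ast}(dx_i)$ are harmonic $1$-forms on $U$ whose $(1,0)$-parts are holomorphic; comparing elementary symmetric functions using the $\phi$-map condition $h^{\ast}\xi_k=\phi_k$ shows that, after relabeling the branches, $h^{\ast}(dx_i)=2\,\re\phii_i$ on $U$.

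Next I would invoke continuity. By non-criticality of $\gamma$ at $s$, the real numbers $(\re\gamma^{\ast}\phii_i,\partial_\tau)_{\gamma(s)}$ are pairwise distinct. After a further permutation of the branches and a choice of $\epsilon>0$ small enough that $\gamma((s-\epsilon,s+\epsilon))\subset U$, we may assume the strict ordering
$$
(\re\gamma^{\ast}\phii_1,\partial_\tau)_{\gamma(\tau)} \;>\; (\re\gamma^{\ast}\phii_2,\partial_\tau)_{\gamma(\tau)} \;>\;\cdots\;>\; (\re\gamma^{\ast}\phii_r,\partial_\tau)_{\gamma(\tau)}
$$
holds for all $\tau\in(s-\epsilon,s+\epsilon)$. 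Let $C^+\subset\euc$ be the closed vectorial Weyl chamber $\{y_1\geq y_2\geq\cdots\geq y_r\}$, let $C^-:=-C^+$, and define the closed sectors $S^\pm:=x+C^\pm$ based at $x$. By construction the germs $\Delta_x S^+$ and $\Delta_x S^-$ are opposed.

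Finally I would integrate. Working in the affine chart on $A$ supplied by the $x_i$, the fundamental theorem of calculus gives
$$
(h\circ\gamma)(t)-x \;=\; 2\int_s^t \Bigl((\re\gamma^{\ast}\phii_1,\partial_\tau)_{\gamma(\tau)},\,\ldots,\,(\re\gamma^{\ast}\phii_r,\partial_\tau)_{\gamma(\tau)}\Bigr)\,d\tau
$$
for all $t\in(s-\epsilon,s+\epsilon)$. Strict monotonicity of the Riemann integral, applied to each successive difference $y_i-y_{i+1}$, shows that this vector lies in $C^+$ when $t\geq s$ and in $C^-$ when $t\leq s$, yielding $h(\gamma([s,s+\epsilon)))\subset S^+$ and $h(\gamma((s-\epsilon,s]))\subset S^-$ as required. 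I expect the only delicate point to be the labeling step in the first paragraph: extracting the pointwise identification $h^{\ast}(dx_i)=2\,\re\phii_i$ from the a priori weaker $\phi$-map condition, which only constrains the elementary symmetric functions of the pullbacks. On the simply connected $U$, one can make the required permutation-choice globally, after which the remainder is a direct computation.
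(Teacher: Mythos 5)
Your proof follows the same route as the paper's: pass to an apartment chart near $\gamma(s)$, relabel the coordinates so that the non-criticality hypothesis gives a strict ordering of the pulled-back real coordinate one-forms on a small interval, and then integrate to conclude that the image of $\gamma$ moves into opposite (closed) Weyl sectors based at $x$. The extra care you take about the relabeling step (recovering $h^{\ast}(dx_i)$ up to permutation from the elementary-symmetric-function constraint, and using connectedness of $U$ to make the permutation constant) is exactly what the paper achieves by precomposing the chart with an element of the vectorial Weyl group, and the normalization factor of $2$ in $h^{\ast}(dx_i)=2\re\phii_i$ versus the paper's $\re\lambda_i$ is a harmless convention difference that does not affect the sign comparison.
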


\begin{proof}
Since $\gamma(s)$ is a regular point, there exists a neighborhood $U$ of $\gamma(s)$, a chart $f: \aff \rightarrow \B$ (where $\aff$ is the standard apartment on which $\B$ is modelled), and a commutative diagram

\[
\xymatrix{
U \ar[r]^{\tilde{h}} \ar[d] & \aff \ar[d]^{f} \\
X \ar[r]_{h} & \B \\
}
\]
with $\tilde{h}$ a differentiable map, and $f(a) = x$ for some $a \in \aff$.  Let $(x_1,...,x_r)$ be the standard coordinates on the apartment $\aff$, and let $\{ \lambda_1,..., \lambda_r \}$ be the roots of the characteristic polynomial defined by $\phi$. Since $h$ is a $\phi$-map, we may choose $f$ (precomposing with an element of the vectorial Weyl group if necessary) such that $\tilde{h}^{\ast}(dx_i) = \re \lambda_i$.  By continuity, there exists an $\epsilon > 0$ such that $\gamma(J) \subset U$ where $J := (s - \epsilon, s + \epsilon)$.

Let $\euc$ be the Euclidean space on which $\aff$ is modelled. For any $y \in \aff$, we can identify the tangent space $T_y \aff$ with $\euc$. Let $\cHlin$ be the set of reflection hyperplanes in $\euc$ for the vectorial part of the Weyl group of $\B$. Identifying $\euc$ with $\aff$ via the map $v \mapsto a + v$, we may think of the $x_i$ as coordinates on $\euc$. With respect to these coordinates, $\cH$ is the set of hyperplanes $\{ \re (x_i - x_j) = 0 \}$. 

The condition that  $\gamma_{|J}$ is $\phi$-non-critical is equivalent to the condition that, for all $t \in J$, $(\tilde{h} \circ \gamma)_{\ast} (\partial_t) \notin H$ for any $H \in \cHlin$. Since $J$ is connected and $\psi$ is continuous, this implies that $\psi(J)$ is contained in a single Weyl chamber in $\mathfrak{C}$ in $\euc$, i.e., in a single connected component of $\euc - \cup_{H \in \cHlin}H$. Let $\mathfrak{C}^{\mathrm{op}} = \{ v | -v \in \mathfrak{C} \}$ be the opposite chamber. We have the corresponding sectors based at $a$: $S^+  := a + \mathfrak{C}$ and $S^- = a + \mathfrak{C}^{\mathrm{op}}$. 

Let $f_i = x_i \circ \tilde{h} \circ \gamma: J \rightarrow \mathbb{R}$. Then the conclusion of the previous paragraph says that, after re-ordering coordinates if necessary, we may assume that $f_1'(t) > ...> f_r'(t)$, where $f_i'$ is the derivative of $f_i$. From the formula

\[
f_i(t) = x_i(a) + \int_s^t f_i'(t) dt
\]

we see that $(\tilde{h} \circ \gamma) (t) \in S^+$ (resp. $(\tilde{h} \circ \gamma (t)) \in S^-$) for all $t \in [s, s+\epsilon)$ (resp. $t \in (s - \epsilon,s]$). 
\end{proof}

In the course of proving Lemma \ref{non-critical-opposite-sectors} we actually proved the following statement:

\begin{lemma}
Let $X$, $\phi$ be as in Lemma \ref{non-critical-opposite-sectors}. Let $J \subset \mathbb{R}$ be an interval (open, closed or half-open), and let $\gamma: J \rightarrow X_{\mathrm{reg}}$ be a non-critical path. Suppose that there is an apartment $A \subset \B$ such that $\gamma(J) \subset A$.  Let $s \in J$, $x = h(\gamma(s))$, and let $J^- = \{ t \in J | \ t \leq s\}$, and $J^+ = \{ t \in J | \  t \geq s\}$. Then

\begin{enumerate}\label{non-crit-convexity}
\item There are a pair of opposite sectors $S^+$, $S^-$ in $A$ such that $\gamma(J^-) \subset S^-$, and $\gamma(J^+) \subset S^+$. 

\item For any $t_0 \in J^-$ and $t_1 \in J^+$ we have 

\[
\vecd(y,z) = \vecd(y,x) + \vecd(x, z)
\]

where $y = h(\gamma(t_0))$ and $z = h(\gamma(t_1))$. That is, $x$ is in the Finsler convex hull of $y$ and $z$. 
\end{enumerate}
\end{lemma}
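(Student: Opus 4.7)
The proof will parallel the argument of Lemma \ref{non-critical-opposite-sectors}, but now leverages the global hypothesis that $h\circ\gamma$ lands in a single apartment $A$ (the statement ``$\gamma(J)\subset A$'' must be read as $h(\gamma(J))\subset A$). First I would pick a chart $f\colon\aff\to \B$ with $f(\aff)=A$, so that $\tilde h := f^{-1}\circ h\circ \gamma\colon J\to \aff$ is a well-defined smooth lift. Writing $(x_1,\dots,x_r)$ for the standard coordinates on the model apartment $\aff$ and $f_i(t) := x_i(\tilde h(t))$, the $\phi$-map property gives $f_i'(t)=\bigl(\re \gamma^{\ast}\lambda_i,\partial_t\bigr)$ up to the action of the vectorial Weyl group $W$. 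After precomposing $f$ with a suitable element of $W$, non-criticality on the connected interval $J$ lets me arrange the strict ordering $f_1'(t)>f_2'(t)>\cdots>f_r'(t)$ uniformly for all $t\in J$.

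Next, set $a=\tilde h(s)$ and let $\mathfrak{C}\subset\euc$ be the open Weyl chamber $\{x_1>x_2>\cdots>x_r\}$, with $\mathfrak{C}^{\mathrm{op}}=-\mathfrak{C}$ the opposite chamber. Define $S^+=f(a+\overline{\mathfrak{C}})$ and $S^-=f(a+\overline{\mathfrak{C}^{\mathrm{op}}})$; these are, by construction, opposite sectors based at $x=h(\gamma(s))$. For any $t\in J^+$, the fundamental theorem of calculus and the uniform ordering of the $f_i'$ yield
\[
f_i(t)-f_i(s) \;=\; \int_s^t f_i'(u)\,du, \qquad \bigl(f_1(t)-f_1(s)\bigr)\ge\cdots\ge\bigl(f_r(t)-f_r(s)\bigr),
\]
so $\tilde h(t)-a\in\overline{\mathfrak{C}}$, i.e.\ $h(\gamma(t))\in S^+$. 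The identical argument on $J^-$ yields $h(\gamma(J^-))\subset S^-$. This proves part~(1).

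For part~(2), fix $t_0\in J^-$ and $t_1\in J^+$ with images $y$ and $z$. All three points $y,x,z$ lie in the single apartment $A$, so by the building axiom $(B2)$ their pairwise vector distances may be computed in $A$. Write $y=f(a+v_-)$ with $v_-\in\overline{\mathfrak{C}^{\mathrm{op}}}$ and $z=f(a+v_+)$ with $v_+\in\overline{\mathfrak{C}}$ as established above. Since $\overline{\mathfrak{C}}$ is a convex cone and both $-v_-$ and $v_+$ belong to it, their sum $v_+-v_- = v_+ + (-v_-)$ also lies in $\overline{\mathfrak{C}}$; identifying $\overline{\mathfrak{C}}$ with the fundamental chamber $\chamber$ then gives
\[
\vecd(y,x)=-v_-,\qquad \vecd(x,z)=v_+,\qquad \vecd(y,z)=v_+-v_-,
\]
from which the additivity $\vecd(y,x)+\vecd(x,z)=\vecd(y,z)$ is immediate. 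Equivalently, $x$ lies in the Finsler convex hull of $\{y,z\}$.

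There is no serious obstacle here; the argument is essentially the integration of the local, derivative-level statement of Lemma \ref{non-critical-opposite-sectors} to a global statement along $\gamma$. The only point that requires a little care is that strict inequalities on the $f_i'$ relax to the \emph{closed} chamber $\overline{\mathfrak{C}}$ after integration, which is exactly what is needed to handle the endpoints $t=s$ and to invoke convexity of the cone in the additivity computation.
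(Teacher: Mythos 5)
Your proof is correct and follows essentially the same route as the paper's. For part (1) you re-run the integration argument from the proof of Lemma \ref{non-critical-opposite-sectors}, which the paper simply cites rather than repeating; for part (2) both you and the paper reduce to a single apartment, observe that $z-x$ and $x-y$ lie in the same closed Weyl chamber (by part (1) and the opposedness of $S^\pm$), use convexity of that chamber cone to place $z-y$ in it as well, and then invoke linearity of the Weyl group action to conclude additivity of $\vecd$. The only cosmetic difference is that you normalize the chart so the relevant chamber is the fundamental one $\overline{\mathfrak{C}}$, whereas the paper keeps a general chamber $\mathfrak{C}'$ and carries along the Weyl element $w$ mapping $\mathfrak{C}'$ to $\mathfrak{C}$; the content is identical.
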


\begin{proof}
Only the second statement needs proof. It follows from first statement. Let $\mathfrak{C}'$ be the Weyl chamber in $\euc$ that contains $z -x$. Since $S^+$ and $S^-$ are opposed, we have that $x - y \in \mathfrak{C}'$. It follows that $(z - y) = (z - x) + (x - y) \in \mathfrak{C}'$. Let $w$ be the element of the spherical Weyl group that carries $\mathfrak{C}'$ to the fundamental Weyl chamber $\mathfrak{C}$. Then, by definition of the vector distance, we have $\vecd(x,z) = w(z-x)$, $\vecd(y,x) = w(x-y)$ and $\vecd(y,z) = w(y - z)$. Since  $w(z - y) = w(z - x) + w(x - y)$, the claim follows.
\end{proof}

We can deduce a sort of partial converse to Lemma \ref{non-critical-opposite-sectors}.

\begin{lemma}\label{opposedness}
Let $X$, $\phi$ be as in Lemma \ref{non-critical-opposite-sectors}. Let $J \subset \mathbb{R}$ be an interval (open, closed or half-open), and let $\gamma: J \rightarrow X_{\mathrm{reg}}$ be a non-critical path. Let $s \in J$, and let $J^- = \{ t \in J | \ t \leq s\}$, and $J^+ = \{ t \in J | \  t \geq s\}$. Suppose that there exist sectors $S^-$ and $S^+$ based at $x := h(\gamma(s))$ such that $h(\gamma(J^-)) \subset S^-$ and $h((\gamma(J^+)) \subset S^+$. Then the germs $\Delta_x S^-$ and $\Delta_x S^+$ are opposed.
\end{lemma}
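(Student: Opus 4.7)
The plan is to reduce the given data to the setup of Lemma \ref{non-critical-opposite-sectors} and then use the non-critical condition to pin down the sector germs inside a single apartment. Throughout I assume $s$ lies in the interior of $J$; when $s$ is an endpoint one of $J^\pm$ is just $\{s\}$ and the hypothesis on the corresponding $S^\pm$ is vacuous, so there is nothing to prove.

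First I would apply Lemma \ref{non-critical-opposite-sectors} at the point $s$ to produce a number $\delta>0$ and sectors $T^-,T^+$ based at $x=h(\gamma(s))$ with \emph{opposed} germs $\Delta_xT^-$ and $\Delta_xT^+$, satisfying $h(\gamma((s-\delta,s]))\subset T^-$ and $h(\gamma([s,s+\delta)))\subset T^+$. It then suffices to show that $\Delta_xS^+=\Delta_xT^+$ and $\Delta_xS^-=\Delta_xT^-$, since opposedness of $\Delta_xT^-,\Delta_xT^+$ will transfer to $\Delta_xS^-,\Delta_xS^+$.

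Next, to prove $\Delta_xS^+=\Delta_xT^+$, I would invoke Proposition \ref{properties}(CG) to produce an apartment $A\subset\B$ that contains the sector $T^+$ together with the germ $\Delta_xS^+$. Choose a sector $S'\subset A$ based at $x$ representing that germ, so that $S'$ and $S^+$ coincide on some neighborhood $U$ of $x$. Shrinking $\delta$ if necessary we may assume $h(\gamma([s,s+\delta)))\subset U$, and hence $h(\gamma([s,s+\delta)))\subset S^+\cap U=S'\cap U\subset S'\cap T^+\subset A$. Thus a nontrivial initial piece of the curve $h\circ\gamma$ issuing from $x$ sits inside both $T^+$ and $S'$ in the common apartment $A$.

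Finally I would finish inside $A$ using the non-critical condition. In the apartment the two sectors read $T^+=x+\mathfrak{C}_T$ and $S'=x+\mathfrak{C}_{S'}$ for Weyl chambers $\mathfrak{C}_T,\mathfrak{C}_{S'}$ of the vectorial Weyl group. By Definition \ref{non-critical} the right-hand tangent vector $v$ to $h\circ\gamma$ at $t=s$ avoids every vectorial reflection hyperplane, so $v$ lies in the \emph{interior} of a unique Weyl chamber $\mathfrak{C}$. The inclusion of the curve in $x+\mathfrak{C}_T$ and $x+\mathfrak{C}_{S'}$ near $x$ forces $v\in\overline{\mathfrak{C}_T}\cap\overline{\mathfrak{C}_{S'}}$, and because $v$ sits in the open chamber $\mathfrak{C}$ this forces $\mathfrak{C}_T=\mathfrak{C}_{S'}=\mathfrak{C}$; in particular $S'$ and $T^+$ agree on a neighborhood of $x$, i.e.\ $\Delta_xS^+=\Delta_xS'=\Delta_xT^+$. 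The same reasoning with $S^-,T^-$ and the left-hand tangent vector gives $\Delta_xS^-=\Delta_xT^-$, completing the proof. The only real obstacle is the apartment-finding step, but Proposition \ref{properties}(CG) handles it directly; after that the argument reduces to an elementary calculation inside a Euclidean apartment.
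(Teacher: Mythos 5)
Your proof is correct, and it takes a genuinely different (if closely related) route from the one in the paper. The paper applies (CG) once, to produce a single apartment $A$ containing the full sector $S^+$ and the germ $\Delta_x S^-$; then, noting that the curve on an interval around $s$ is carried into $A$, it invokes Lemma \ref{non-crit-convexity} inside $A$ to conclude that the two sectors containing the two halves of the curve must be the opposite pair. You instead invoke Lemma \ref{non-critical-opposite-sectors} first to manufacture an auxiliary opposed pair $T^\pm$, and then reduce the claim to $\Delta_x S^\pm = \Delta_x T^\pm$, which you establish by applying (CG) to each pair $(T^+,\Delta_x S^+)$, $(T^-,\Delta_x S^-)$ separately and then running an explicit tangent-cone argument in the resulting apartment. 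The underlying mechanism is the same in both: the non-critical condition forces the one-sided tangent of $h\circ\gamma$ at $x$ to lie in the interior of a unique Weyl chamber, and hence any sector based at $x$ containing a small piece of the curve has a uniquely determined germ. Your version makes this uniqueness argument fully explicit, which the paper leaves implicit in its appeal to Lemma \ref{non-crit-convexity}; the cost is two applications of (CG) rather than one.

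One small caveat in your handling of the edge case: when $s$ is an endpoint of $J$, the hypothesis on the sector for the degenerate side is not merely ``vacuous with nothing to prove''---it is satisfiable by an arbitrary sector, so the conclusion as literally stated cannot hold. The honest remark is that the lemma is only meaningful, and is only ever applied, when $s$ lies in the interior of $J$. This is a wording issue and does not affect the substance of your argument.
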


\begin{proof}
According to Proposition \ref{properties}, there exists an apartment $A \subset \B$ containing $S^+$ and the germ $\Delta_xS^-$. Let $S_A^-$ be the sector in $A$ whose germ at $x$ is $\Delta_xS^-$. Then, by definition of germs, $S_A^- \cap S^-$ is an open neighborhood of $x$ in $S^-$. It follows that there exists $\epsilon > 0$ such that $h(\gamma(s-\epsilon, s]) \subset S_A^-$. By Lemma \ref{non-crit-convexity}, $S_A^-$ and $S^+$ must be opposite sectors in $A$. Since $\Delta_xS_A^- = \Delta_xS^-$, it follows that $\Delta_xS^-$ and $\Delta_xS^+$ are opposed. 
\end{proof}

We can now prove the main proposition of this section:

\begin{proposition}\label{phi-maps-non-crit}
Let $X$ be a Riemann surface, and let $\phi \in \oplus_{k=2}^r H^0(X, \omega_{X}^{\otimes k})$. Let $h: X \rightarrow \B$ be a harmonic $\phi$-map, and let $X_{\mathrm{reg}}\subset X$ denote the locus where $h$ is regular. Let $I = [0,1]$, and let $\gamma: I \rightarrow X_{\mathrm{reg}}$ be a non-critical path. Then there exists an apartment $A \subset \B$ such that $\mathrm{im} (h \circ \gamma) \subset A$. 
\end{proposition}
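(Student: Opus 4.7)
The strategy is a standard connectedness argument along the interval $[0,1]$: set
\[
T := \{ s \in [0,1] \mid h(\gamma([0,s])) \text{ is contained in a single apartment of } \B \},
\]
and show that $T$ is nonempty, open, and closed, hence equals $[0,1]$. Nonemptiness is automatic, since by $h$-regularity of $\gamma(0)$ a neighborhood of $\gamma(0)$ has image in one apartment, so $[0,\varepsilon)\subset T$ for some $\varepsilon>0$.

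For the openness and closedness steps, the key tool is the ``apartment-merging'' procedure obtained by combining Lemma~\ref{non-crit-convexity} with axiom (CO) of Proposition~\ref{properties}. Suppose $s\in T$ with $h(\gamma([0,s]))\subset A$, and choose $s'>s$ small enough that there is an apartment $A'$ with $h(\gamma([s,s']))\subset A'$ (possible by regularity at $\gamma(s)$). Since $\gamma$ is non-critical and $\gamma([0,s])\subset A$, Lemma~\ref{non-crit-convexity} applied with $J=[0,s]$ and endpoint $s$ yields a sector $S^{-}\subset A$ based at $x:=h(\gamma(s))$ with $h(\gamma([0,s]))\subset S^{-}$. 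Applying the same lemma to $J=[s,s']\subset A'$ produces a sector $S^{+}\subset A'$ based at $x$ with $h(\gamma([s,s']))\subset S^{+}$. Both sectors are determined by the Weyl chamber in the vectorial apartment specified by the distinct real parts $(\re\lambda_1,\ldots,\re\lambda_r)(\gamma'(s))$ (taken in opposite directions), so their germs $\Delta_x S^{-}$ and $\Delta_x S^{+}$ are opposed. By axiom (CO) there is then a single apartment $A''\supset S^{-}\cup S^{+}$, which therefore contains all of $h(\gamma([0,s']))$, proving $s'\in T$.

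For closedness one runs the same merging step: if $s_n\nearrow s^{*}$ with $s_n\in T$ via apartments $A_n$, pick $n$ large enough that $\gamma([s_n,s^{*}])$ lies in a regular neighborhood whose image lies in some apartment $A^{*}$, and perform the argument above at $\gamma(s_n)$ in place of $\gamma(s)$, obtaining an apartment containing $h(\gamma([0,s^{*}]))$. Hence $s^{*}\in T$, $T$ is closed, and by connectedness $T=[0,1]$.

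The main obstacle, and the reason the whole argument goes through, is verifying that the germs of the two sectors produced on either side of a regular point are genuinely \emph{opposed} (not merely distinct), so that axiom (CO) applies to produce the single apartment $A''$. This rests precisely on the non-criticality hypothesis: it guarantees that the tangent vector $\gamma'(s)$ determines a single open vectorial Weyl chamber, which in turn forces the sectors $S^{-}$ and $S^{+}$ constructed via Lemma~\ref{non-crit-convexity} to lie in opposite vectorial chambers based at $x$. Once this opposedness is in hand, (CO) does all the geometric work, and the induction/connectedness argument on $[0,1]$ concludes.
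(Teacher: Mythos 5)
Your overall strategy (a connectedness/supremum argument on $[0,1]$, with ``apartment merging'' via axiom (CO) at each step) is exactly the strategy of the paper; the paper phrases it as a contradiction at $t_0 = \sup K$ rather than open$+$closed, but that is cosmetic. The one real issue is the step you yourself flag as ``the main obstacle'': showing that the germs $\Delta_x S^-$ and $\Delta_x S^+$ are opposed.

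Your justification — that $\gamma'(s)$ determines a single open vectorial Weyl chamber and this ``forces the sectors $S^-$ and $S^+$ to lie in opposite vectorial chambers based at $x$'' — is not yet an argument, because $S^-$ lives in $A$ while $S^+$ lives in $A'$, and a priori there is no common apartment in which to interpret the phrase ``opposite vectorial chambers.'' Opposedness of germs in a building is an intrinsic building-theoretic condition (longest Weyl distance in the link at $x$), not something one can read off from tangent data in two separate charts. The paper fills this gap with Lemma~\ref{opposedness}: one first uses axiom (CG) of Proposition~\ref{properties} to find an apartment $A''$ containing $S^+$ and the germ $\Delta_x S^-$; taking the sector $S^-_{A''}\subset A''$ with that germ, a small initial piece of $h\circ\gamma$ just before $s$ lands in $S^-_{A''}$, so that $A''$ contains the image of the path on both sides of $s$; then Lemma~\ref{non-crit-convexity} (applied inside $A''$) forces $S^-_{A''}$ and $S^+$ to be opposite, hence the germs are opposed. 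Your sketch would become a complete proof once you insert this (CG)-plus-Lemma~\ref{non-crit-convexity} step, either by citing Lemma~\ref{opposedness} directly or by reproducing its argument.
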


\begin{proof}
Define a subset $K : = \{ t \in I |$ there exists an apartment $A \subset \B$ such that $h(\gamma([0,t])) \subset A \} \subset I$.  We must prove that $K = I$. First, note that $K \neq \emptyset$: there exists an apartment containing the point $h(\gamma(0))$, by the axiom (B2) for buildings, so we have $0 \in K$. Furthermore, note that if $t \in K$, and $s \leq t$, then $ s \in K$. We conclude that $K$ is a non-empty interval. Let $t_0 = \sup K$, and let $Q := \gamma(t_0)$. We must prove that $t_0 = 1$. We will do so by contradiction. So assume that $t_0 < 1$. 

Since $\gamma(I) \subset X_{\mathrm{reg}}$, there is a neighborhood $U_Q$ of $Q$ and an apartment $A_Q \subset \B$ such that $h(U_Q) \subset A_Q$. Passing to a smaller open set if necessary, we may assume that $U_Q \cap \gamma(I) = \gamma(J)$ for some interval $J$ containing $t_0$.

Since $t_0 = \sup K$, and $J$ is an open neighborhood of $t_0$, there exists $t_1 \in J \cap K$. Let $P = \gamma(t_1)$. By definition of $K$, there exists an apartment $A_P \subset \B$ such that $h(\gamma([0,t_1])) \subset A_P$. Applying Lemma \ref{non-crit-convexity}, we conclude that there is a sector $S_P^-$ in $A_P$ based at $x := h(P)$ such that $h(\gamma[0,t_1]) \subset S_P^-$. 

Since  $h(\gamma(J)) \subset A_Q$, we can apply Lemma \ref{non-crit-convexity} again to conclude that there exist opposed sectors $S^+_Q$ and $S^-_Q$ in $A_Q$ such that $h(\gamma(J^-)) \subset S_Q^-$ and $h(\gamma(J^+)) \subset S_Q^+$. Here $J^+ = \{t \in J | \ t \geq t_1 \}$ and $J^- = \{t \in J | \ t \leq t_1 \}$. Note that the sectors $S^+_Q$ and $S^-_Q$ are based at $x := h(P)$. 

From Lemma \ref{opposedness}, it follows that the germs $\Delta_xS_P^-$ and $\Delta_xS_Q^+$ are opposed. Property (CO) (Proposition \ref{properties}) of buildings states that there is a unique apartment containing a pair of opposed sectors in a building. Let $A$ be the unique apartment in $\B$ containing $S_P^-$ and $S_Q^+$. 

Since $J^+$ contains an open neighborhood of $t_0$ in $[0,1]$, and $t_0 < 1$,  there exists $t_2 > t_1$ such that $t_2 \in J^+$, and hence there exists $t_2 > t_0$ such that $h(\gamma([t_1, t_2]) \subset S_Q^+$. Since $h(\gamma([0,t_1])) \subset S_P^-$, and $h (\gamma ([t_1, t_2])) \subset S_Q^+$, we have that $h(\gamma([0,t_2]) \subset A$. Hence, $t_2 \in K$. But $t_2 > t_0 := \sup K$ by construction, so we have a contradiction.
\end{proof}

\begin{corollary}\label{non-crit-regions-to-apts}
Let $X$, $\phi$ and $h$ be as in the statement of the propostion. Let $P$ and $Q$ be points in $X_{\mathrm{reg}}$, and $\cP_{PQ}$ be the set of $\phi$-non-critical paths in $X_{\mathrm{reg}}$ starting at $P$ and ending at $Q$. Let $\Omega_{PQ} := \cup_{\gamma \in \cP_{PQ}} \mathrm{im} \gamma$ be the union of the images of elements of $\cP_{PQ}$. Then $h(\Omega_{PQ})$ is contained in the Finsler convex hull $[h(P), h(Q)]_{\mathrm{Fins}}$ of $h(P)$ and $h(Q)$. In particular, there exists an apartment containing $h(\Omega_{PQ})$. 
\end{corollary}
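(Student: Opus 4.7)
The plan is to handle each path separately using the tools just developed, and then assemble the conclusions. First I would fix an arbitrary $\gamma \in \cP_{PQ}$ and invoke Proposition~\ref{phi-maps-non-crit} to produce an apartment $A_\gamma \subset \B$ with $h(\gamma([0,1])) \subset A_\gamma$. Inside this apartment I would then apply the second part of Lemma~\ref{non-crit-convexity}, taking $J = [0,1]$, $t_0 = 0$, $t_1 = 1$, and letting the marked parameter $s = t$ range over $[0,1]$. This yields the identity
\[
\vecd(h(P), h(Q)) \;=\; \vecd(h(P), h(\gamma(t))) + \vecd(h(\gamma(t)), h(Q))
\]
for every $t \in [0,1]$, which is precisely the statement that $h(\gamma(t))$ lies in the Finsler convex hull of $\{h(P), h(Q)\}$ as computed inside $A_\gamma$.

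The next step is to check that the Finsler convex hull of $\{h(P), h(Q)\}$, which a priori could depend on the apartment used to compute it, is an intrinsic subset of $\B$. This is where the building axiom (B2) enters: given any two apartments $A, A'$ both containing $h(P)$ and $h(Q)$, there is an isometry $A \to A'$ fixing the facets containing these two points pointwise, and hence carrying the intersection-of-half-apartments description of the hull (Definition~\ref{half-apartment}) in $A$ onto the corresponding description in $A'$. Thus $[h(P), h(Q)]_{\mathrm{Fins}}$ is unambiguously defined as a subset of $\B$, and the previous paragraph gives $h(\gamma([0,1])) \subset [h(P), h(Q)]_{\mathrm{Fins}}$ for every $\gamma \in \cP_{PQ}$. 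Taking the union over all such $\gamma$ gives the desired containment $h(\Omega_{PQ}) \subset [h(P), h(Q)]_{\mathrm{Fins}}$.

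For the final sentence, axiom (B1) supplies an apartment $A_0 \subset \B$ containing both $h(P)$ and $h(Q)$; since the Finsler convex hull of $\{h(P), h(Q)\}$ is by construction an enclosure in $A_0$, it is contained in $A_0$, and hence so is $h(\Omega_{PQ})$. The main (and essentially only nontrivial) point to be careful about is the intrinsic character of the Finsler convex hull invoked in the middle step, since the apartments $A_\gamma$ can vary quite wildly with $\gamma$; everything else is a straightforward packaging of Proposition~\ref{phi-maps-non-crit} and Lemma~\ref{non-crit-convexity}.
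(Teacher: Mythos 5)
Your proof is correct and follows essentially the same route as the paper: invoke Proposition~\ref{phi-maps-non-crit} to place the image of each non-critical path in a single apartment, then apply Lemma~\ref{non-crit-convexity} to see that each image point lies in the Finsler convex hull of $h(P)$ and $h(Q)$, and finally observe that this hull sits inside any apartment containing the two endpoints. The only stylistic difference is that you make the apartment-independence of $[h(P),h(Q)]_{\mathrm{Fins}}$ explicit via axiom (B2), whereas the paper simply states that the hull is contained in the intersection of all apartments containing both points; this is a harmless (and arguably helpful) amount of extra care.
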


\begin{proof}
Let $R \in \Omega_{PQ}$. By definition of $\Omega_{PQ}$, there exists a $\phi$-non-critical path $\gamma$ starting at $P$ and ending at $Q$ such that $R = \gamma(s)$ for some $s \in I$. By the proposition, there is an apartment $A$ such that $h(\gamma(I)) \subset A$. So we can apply Lemma \ref{non-crit-convexity} to conclude that $h(R) \in [h(P), h(Q)]_{\mathrm{Fins}}$. We know that $[h(P), h(Q)]_{\mathrm{Fins}}$ is contained in the intersection of all apartments that contain both $h(P)$ and $h(Q)$. Since this is a non-empty intesection by the axiom (B1) for buildings, the proof is complete. 
\end{proof}

\begin{lemma}\label{local-non-crit}
Let $h: X \rightarrow \B$ be a harmonic $\phi$-map, and let $X_{\mathrm{reg}}\subset X$ be the set of points at which $h$ is regular.  Let $Z$ denote the ramification divisor of the spectral cover defined by $\phi$. Then for any $R \in X-Z$ there exists a pair of points $P$, $Q$, and two paths $\gamma$ and $\gamma'$ such that

\begin{enumerate}
\item $\gamma$ and $\gamma'$ start at $P$ and end at $Q$.
\item $\gamma^{-1} \circ \gamma'$ bounds a region $D$ containing $R$ that is toplogically isomorphic to a disc.
\item  $\gamma$ and $\gamma'$ are $\phi$-non-critical and contained in $X_{\mathrm{reg}}$.
\end{enumerate}
\end{lemma}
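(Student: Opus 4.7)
The plan is to construct $\gamma$ and $\gamma'$ as the two halves of the boundary of a small smoothed parallelogram around $R$ whose sides are everywhere transverse to all of the critical foliations. First I would select a coordinate disc $U\subset X$ containing $R$ with $U\cap Z = \emptyset$ and $U\setminus\{R\}\subset X_{\mathrm{reg}}$; the latter is possible because the set of non-regular points of a harmonic $\phi$-map is a discrete subset of $X$ \cite{Gromov-Schoen, Korevaar-Schoen}. Since $Z\cap U = \emptyset$, the spectral cover splits on $U$ into $r$ distinct sheets, giving holomorphic $1$-forms $\lambda_1,\ldots,\lambda_r$ on $U$, and each difference $\lambda_{ij}:=\lambda_i-\lambda_j$ is a nonvanishing holomorphic $1$-form on $U$. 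The set of critical tangent directions at $p\in U$ is the finite subset $\Delta_p\subset\PP(T_p U)$ consisting of the (at most $\binom{r}{2}$) real lines $\ker\re\lambda_{ij}(p)$, and the assignment $p\mapsto\Delta_p$ is continuous.

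Next I would choose two admissible tangent directions at $R$. The finite set $\Delta_R$ partitions $\PP(T_R U)\cong S^1/\pm 1$ into finitely many open arcs; pick one such open arc $A$ and two distinct classes $[e_1],[e_2]\in A$ close enough that the sub-arc $A'\subset A$ joining them has length less than $\pi/2$, so that $A'$ is the short arc from $[e_1]$ to $[e_2]$ in $\PP(T_R U)$. Since $[e_1]\neq[e_2]$, the representatives $e_1,e_2\in T_R U$ are linearly independent. By continuity of $p\mapsto\Delta_p$, after shrinking $U$ I may assume that $A'\cap\Delta_p=\emptyset$ for every $p\in U$, using the coordinate chart to identify nearby tangent spaces.

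With this choice in place, I form the closed parallelogram $D\subset U$ centered at $R$ with sides of length $2\epsilon$ in direction $e_1$ and length $2\delta$ in direction $e_2$, for $\epsilon,\delta>0$ small enough that $D\subset U$. Let $P:=R-\epsilon e_1-\delta e_2$ and $Q:=R+\epsilon e_1+\delta e_2$ be opposite vertices. Define $\gamma$ to traverse $P\to R+\epsilon e_1-\delta e_2\to Q$ and $\gamma'$ to traverse $P\to R-\epsilon e_1+\delta e_2\to Q$, and then smooth each corner by a $C^\infty$ arc whose tangent direction sweeps through the short arc joining the incoming and outgoing directions. On each straight side the tangent direction has class in $\{[e_1],[e_2]\}\subset A$, hence is non-critical at every point of $U$; at each rounded corner the class of the tangent sweeps through $A'$, which is disjoint from $\Delta_p$ everywhere on $U$. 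Therefore both paths are $\phi$-non-critical, they lie in $\partial D\subset U\setminus\{R\}\subset X_{\mathrm{reg}}$, and $\gamma^{-1}\circ\gamma'$ bounds $D$, which is homeomorphic to a disc and contains $R$ in its interior.

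The main delicacy is the corner smoothing: one must ensure that every intermediate tangent direction occurring while rounding a corner stays in a single component of the complement of $\Delta_p$ at every point $p$ visited. This is arranged by choosing $[e_1]$ and $[e_2]$ in a single open arc of $\PP(T_R U)\setminus\Delta_R$ at small angular distance, invoking continuity of $p\mapsto\Delta_p$, and shrinking $\epsilon,\delta$ as needed so that $\Delta_p$ remains a small perturbation of $\Delta_R$ throughout $D$.
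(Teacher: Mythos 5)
Your argument is correct and follows the same basic plan as the paper: use discreteness of the singular set of the harmonic map, split the spectral cover near $R\notin Z$ into sheets $\lambda_1,\dots,\lambda_r$, and enclose $R$ in a small disc bounded by two $\phi$-non-critical paths. The paper, however, realizes this differently: it takes a single short non-critical path $\gamma_0$ through $R$ (transverse to all the foliations $\ker\re\lambda_{ij}$) and defines $\gamma,\gamma'$ as parallel offsets $\exp_{\gamma_0(t)}(\pm\epsilon\cX)$ along a normal vector field $\cX$, then asserts these bound the required disc. That construction is a bit loose: as written, two normal offsets of $\gamma_0$ do not automatically share the endpoints $P=\gamma_0(-1)$ and $Q=\gamma_0(1)$, so some unstated modification (caps, or tapering $\cX$ to zero at the ends) is needed. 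Your parallelogram-with-rounded-corners construction avoids this ambiguity: by choosing two distinct directions $[e_1],[e_2]$ lying in the same open arc $A'$ of $\PP(T_RU)\setminus\Delta_R$, and shrinking $U$ so that $A'$ remains non-critical throughout, the two boundary paths from the corner $P$ to the opposite corner $Q$ have tangent directions confined to $A'$ both on the straight sides and while sweeping through the rounded elbows. This makes the shared-endpoint condition and the non-criticality at the corners explicit. In short: same idea, but your version is a cleaner and more explicit execution of it.
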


\begin{proof}
First of all, note that the singular set $X - X_{\mathrm{reg}}$ is given by the zeroes of the Hopf differential (see e.g. \cite{Daskalopoulos-Wentworth}). Since the Hopf differential on a Riemann surface is holomorphic, the singular set is discrete.

Now let $R \in X -Z$. Since $R$ is not a ramification point, there exists a neighborhood $U$ of $R$ on which we can write $\phi = (\phi_1, \ldots, \phi_r)$. Let $\cF_{ij}$ be the foliation on $U$ defined by $\re (\phi_i - \phi_j) = 0$. Then we can find a short curve $\gamma_0: [-1,1] \rightarrow X$ that is everywhere transversal to each $\cF_{ij}$, and such that $\gamma_0(0) = R$.  (E.g., take any vector $v$ in $T_RX$ that is not in any of the foliations. Fix a Riemannian metric on $X$. Then there is an $\epsilon > 0$ such that for $|t| < \epsilon$ the path $t \mapsto \exp(tv)$ has the required property.)

Now let $P = \gamma_0(-1)$ and $Q = \gamma_0(1)$. Let $\cX$ be a vector field along $\gamma_0(I)$ that is everywhere normal to $d\gamma(\partial_t)$. Then for $\epsilon > 0$ small enough the paths $\gamma(t) = \exp_{\gamma_0(t)}(\epsilon \cX(\gamma_0(t)))$ and  $\gamma'(t) = \exp_{\gamma_0(t)}(-\epsilon \cX(\gamma_0(t)))$ are non-critical, and satisfy the requirements $1.$, $2.$ and $3.$. (Here we have used the fact that the singularities are discrete). 
\end{proof}

\begin{remark}
The proof of the previous lemma relied on having control over the behavior of the singular set of the harmonic map. In our case, the proof is considerably simplified by the fact that the source is a Riemann surface. However, strong restrictions on the singular sets of harmonic maps hold in much greater generality. See, for example, \cite{Sun} for the case of harmonic maps from Riemannian manifolds to $\mathbb{R}$-trees. 

\end{remark}

\begin{lemma}\label{Isihara}
Let $h: X \rightarrow \B$ be a harmonic map from a Riemann surface $X$ to an affine building. Let $D \subset X$ be a closed disc in $X$, and $A$ be an apartment in $\B$. Suppose that $h(\partial D) \subset A$. Then $h(D) \subset A$.
\end{lemma}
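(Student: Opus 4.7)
The plan is to exploit the CAT(0) geometry of the affine building $\B$ together with a Korevaar--Schoen/Ishihara type subharmonicity argument. The target building $\B$ is a complete CAT(0) space, and an apartment $A \subset \B$, being isometric to a Euclidean affine space and closed in $\B$ by the building axioms, is a closed convex subset. Consequently the distance function
\[
f : \B \longrightarrow \mathbb{R}_{\geq 0}, \qquad f(y) := d_{\B}(y, A),
\]
is a $1$-Lipschitz convex function on $\B$ (its restriction to any apartment containing a geodesic from a given point to its nearest-point projection onto $A$ is the usual convex Euclidean distance to a convex set). In particular $f$ vanishes precisely on $A$.

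Next I would invoke the Ishihara-type principle for harmonic maps into non-positively curved metric spaces: if $h \colon X \to Y$ is a harmonic map from a Riemann surface to a CAT(0) space and $f \colon Y \to \mathbb{R}$ is a continuous convex function, then $f \circ h$ is subharmonic on $X$. In the smooth-target case this is Ishihara's classical theorem; for maps into general NPC targets it is contained in the Korevaar--Schoen framework \cite{Korevaar-Schoen} under which $h$ is defined here, and it applies in particular when $Y = \B$ and $f$ is the distance to a closed convex set. Thus $f \circ h \colon X \to \mathbb{R}_{\geq 0}$ is a non-negative continuous subharmonic function on $X$.

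By hypothesis $h(\partial D) \subset A$, so $f \circ h$ vanishes identically on $\partial D$. The maximum principle for subharmonic functions on the closed disc $D$ then gives
\[
\sup_{D} (f \circ h) \;\leq\; \sup_{\partial D} (f \circ h) \;=\; 0,
\]
and since $f \circ h \geq 0$ pointwise we conclude $f \circ h \equiv 0$ on $D$. Equivalently $h(D) \subset A$, as required.

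The main thing to verify carefully is the subharmonicity step: one must check that $f = d_\B(\,\cdot\,, A)$ really is convex on $\B$ (not just along geodesics that stay in a single apartment) and that the Korevaar--Schoen energy-theoretic definition of harmonicity used in the paper yields the precomposition-subharmonicity of convex Lipschitz functions. Both are standard in the NPC setting: convexity of the distance to a convex set follows from the CAT(0) inequality applied to nearest-point projections, and subharmonicity of $f \circ h$ follows from the variational characterization of harmonic maps together with the chain-rule estimate for Lipschitz post-compositions in \cite{Korevaar-Schoen}. Once these are in hand, the maximum principle argument above closes the proof.
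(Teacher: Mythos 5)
Your proof is correct and follows essentially the same route as the paper's: both define $f = d_\B(\,\cdot\,,A)$, note that convexity of $f$ (from $\B$ being CAT(0) and $A$ convex) together with harmonicity of $h$ gives subharmonicity of $f\circ h$, and conclude by the maximum principle since $f\circ h$ vanishes on $\partial D$. The only cosmetic difference is in the references cited for the subharmonicity step (you point to Korevaar--Schoen/Ishihara; the paper points to Farb--Wolf for the tree case).
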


\begin{proof}
Consider the function $f: \B \rightarrow \mathbb{R}$ which associates to a point $X$ its distance from the apartment $A$. i.e.,   $f(x) = d(x,A)$. Since $\B$ is a non-positively curved space, and $A$ is a convex subset, the function $f$ is convex. Since $h$ is harmonic, and $f$ is convex, $f \circ h$ is a subharmonic function on $X$ (see \cite{Farb-Wolf} for a proof in the tree case). It vanishes on $\partial D$, since $h(\partial D) \subset A$. It follows that $f$ vanishes on $D$, i.e., that $h(D) \subset A$.

\end{proof}

\begin{proposition}\label{phi-map-singularities}

Let $h: X \rightarrow \B$ be a harmonic $\phi$-map, and let $X_{\mathrm{reg}}\subset X$ be the set of points at which $h$ is regular.  Let $Z$ denote the ramification divisor of the spectral cover defined by $\phi$. Then $X - Z \subset X_{\mathrm{reg}}$. 
\end{proposition}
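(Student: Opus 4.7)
The plan is to combine the three preceding lemmas (\ref{local-non-crit}, \ref{non-crit-regions-to-apts}, and \ref{Isihara}) in a direct way: surround a point $R \in X - Z$ by a disc whose boundary consists of two $\phi$-non-critical paths, use the $\phi$-non-criticality to force the boundary into a single apartment, then use convexity of the distance-to-apartment function to push the whole disc into the same apartment.

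More precisely, I would proceed as follows. Fix $R \in X - Z$. Apply Lemma \ref{local-non-crit} to obtain points $P, Q \in X_{\mathrm{reg}}$ and $\phi$-non-critical paths $\gamma, \gamma' \subset X_{\mathrm{reg}}$ from $P$ to $Q$ such that $\gamma^{-1} \circ \gamma'$ bounds a topological disc $D$ containing $R$. Since $\gamma, \gamma' \in \cP_{PQ}$ in the notation of Corollary \ref{non-crit-regions-to-apts}, we have $\mathrm{im}(\gamma) \cup \mathrm{im}(\gamma') \subset \Omega_{PQ}$, and so
\[
h(\mathrm{im}(\gamma) \cup \mathrm{im}(\gamma')) \subset [h(P), h(Q)]_{\mathrm{Fins}}.
\]
By the axiom (B1) for buildings, the Finsler convex hull $[h(P), h(Q)]_{\mathrm{Fins}}$ is contained in a common apartment $A$ of $\B$. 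Thus $h(\partial D) \subset A$, and Lemma \ref{Isihara} then gives $h(D) \subset A$. Since $D$ contains a neighborhood of $R$ mapped into the single apartment $A$, the point $R$ is regular for $h$, i.e. $R \in X_{\mathrm{reg}}$.

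There is no serious obstacle here, but one point deserves care: Lemma \ref{local-non-crit} already delivers the paths inside $X_{\mathrm{reg}}$ (avoiding the discrete singular set given by the Hopf differential), so the above argument is not circular, even though $R$ itself is only known \emph{a priori} to avoid the ramification divisor $Z$ rather than the singular set. Apart from verifying that the interior of the disc $D$ supplied by Lemma \ref{local-non-crit} really is an open neighborhood of $R$ (which is part of its topological-disc conclusion), the argument is a clean synthesis of the three preceding lemmas.
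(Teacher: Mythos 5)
Your proposal is correct and follows essentially the same path as the paper's proof: Lemma~\ref{local-non-crit} supplies the disc bounded by two $\phi$-non-critical paths, Corollary~\ref{non-crit-regions-to-apts} puts the boundary into a single apartment, and Lemma~\ref{Isihara} pushes the whole disc in. The only differences are cosmetic — you spell out the intermediate Finsler-hull step and add a helpful note on why the argument is not circular — but the underlying synthesis is the same.
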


\begin{proof}
Let $R \in X - Z$. Then by  Lemma \ref{local-non-crit}, there is a disc $D$ containing $R$ such that $\partial D = \gamma^{-1} \circ \gamma'$, where $\gamma, \gamma': I \rightarrow X_{\mathrm{reg}}$ are $\phi$-non-critical paths. Applying Corollary \ref{non-crit-regions-to-apts}, we see that there is an apartment $A$ in $\B$ such that $h(\partial D) \subset A$. Applying Lemma \ref{Isihara}, we conclude that $h(D) \subset A$. This completes the proof.

\end{proof}

\subsection{A gluing construction}\label{gluing-construction}

Remark \ref{Bu-in-simple-cases} suggests an approach to constructing the universal building: since we know the universal building for simple building blocks from which the Riemann surface is glued together, we might try to produce the universal building by gluing together the universal buildings for each of these pieces. The construction we are about to describe is motivation by this idea, and will be used to in Section \ref{BNR-example} (and specifically in \S \ref{MAR-section}) as a step in the construction of universal building in a particular example. Since the construction itself is of much wider applicability, we decided to include it here. The input for this construction is an open cover that is ``adapted'' to a fixed cameral cover $\phi$ of the curve $X$. The output is a pre-building, which should be the image $\mathrm{im}(\hu)$ (Definition \ref{image-in-building}) of $X$ in the universal $\phi$-building $\Bu$.

\begin{definition}\label{adapted}
Let $\pi_{\phi}: \Sigma_{\phi} \rightarrow X$ be a cameral cover of a smooth algebraic curve over $\cc$ with Weyl group $\mathbb{S}_r$. A subset $U \subset X$ is $\phi$-adapted if it is simply-connected and if $\Sigma_{\phi}$ decomposes over $U$, i.e.,
\[
\Sigma_{\phi} \times_{U} X \simeq W \times U
\]
where $W$ is the Weyl group.  A locally finite cover $\{ U_{\alpha} \}_{\alpha}$ of $X$ is $\phi$-adapted if all pair-wise intersections of elements of the cover are $\phi$-adapted, and the interiors of the $U_{\alpha}$ cover the complement of the ramification locus of $\phi$. 
\end{definition}

On the cameral cover, we have a well defined section $\phii \in H^{0}(\Sigma_{\phi}, \mathfrak{t} \otimes \Omega^1_{\Sigma_{\phi}})$, which we may think of as a sequence of forms $\phii = (\phii_{1},..., \phii_r)$. For any $P \in \Sigma_{\phi}$ the map
\[
Q \mapsto \int_{P}^{Q} \phii
\]
is only well defined upto periods. However, if $U \subset X$ is $\phi$-adapted, then for each $P \in U$ we have a well defined map $\theta_P: \pi^{-1}_{\phi}(U) \rightarrow \cc^r$ given by
\[
Q \mapsto \int_{\gamma} \phii
\]

where $\gamma: I \rightarrow \Sigma_{\phi}$ is a path (automatically unique upto homotopy) with $\pi(\gamma(0)) = P$ and $\gamma(1) = Q$. Let $\psi_{P} = \re (\theta_P)$. Since $\sum_{i} \phii_i = 0$, we have that $\psi_P$ factors through the subset $\aff \subset \mathbb{R}^r$ defined by $\aff  = \{ (x_1,...,x_r) | \ \sum_{i} x_i = 0 \}$. We equip $\aff$ with the structure of an apartment for the affine Weyl group $W \ltimes \mathbb{R}^r$, where $W$ is the Weyl group of $\SL_r\cc$.

\begin{construction}\label{gluing-pre-building}
Let $\pi_{\phi}: \Sigma_{\phi} \rightarrow X$ be a cameral cover of a smooth algebraic curve over $\cc$. Let $\mathfrak{U}_{\phi}$ denote the category whose objects are $\phi$-adapted sets in $X$, and morphisms are inclusions. We are going to define a functor $\cT$ on $\mathfrak{U}_{\phi}$ taking values in the category of generalized chamber systems. On objects, $\cT$ is defined by the formula

\[
\cT(U) \simeq \left (\coprod_{P \in \pi^{-1}_{\phi}(U)} \{P \} \times \hull \psi_P(U) \right)/ \sim
\]

where $\sim$ is given by two types of relations:

\begin{enumerate}
\item $(P,v) \sim (wP,  w^{-1} v)$ for all $v$, and for all $P \in \pi^{-1}_{\phi}(U)$, and all $w \in W$. 
\item $(P, v) = (Q, \psi_{\pi_{\phi}(P)}(Q) + v)$  whenever $P$ and $Q$ are in the same connected component of $\pi^{-1}_{\phi}(U)$. 

\end{enumerate}

If $U \hookrightarrow V$ is an inclusion, then there is an induced inclusion $\pi^{-1}_{\phi}(U) \hookrightarrow \pi^{-1}_{\phi}(V)$, which in turn induces a map $\cT(U) \hookrightarrow \cT(V)$ in the obvious way. It is easy to see that this is a map of chamber systems.  We extend $\cF$ to a functor on disjoint unions of elements of $\mathfrak{U}_{\phi}$ by requiring that it preserve coproducts.

Now suppose that $\cU = \{ U_{\alpha} \}_{\alpha}$ is an $\phi$-adapted open cover of $X$. Then we can define a pre-building $(\Bpre, \cF, \cC)(\cU)$ associated to this open cover in the following way. As a chamber system, $\Bpre$ is the coequalizer of the natural diagram:

\[
\xymatrix{
\cT(U \times_{X} U) \ar@<1ex>[r] \ar@<-1ex>[r]  & \cT(U)  \\
}
\]

The cubicles in $\Bpre$ are given by the images of the sets $\{P\} \times \hull (f_P(U))$ under the natural projection $\cT(U) \rightarrow \B$. 

\end{construction}

This construction relied, of course, on the following lemma:

\begin{lemma}
The forgetful functor from the category of generalized chamber systems to the category of sets creates all small colimits.
\end{lemma}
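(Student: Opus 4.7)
The plan is to verify the two standard conditions defining creation of small colimits: given a diagram $D\colon \cJ \to \text{GenChamb}$ with $\cJ$ small, and a colimiting cocone for the underlying diagram in $\text{Sets}$, there is a unique way to equip the apex with a generalized chamber structure lifting the cocone to $\text{GenChamb}$, and with that structure it is a colimit in $\text{GenChamb}$.

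Write $D(j) = (X_j, \cF_j)$ and let $(X, \{\iota_j\colon X_j \to X\}_{j \in \cJ})$ be a colimiting cocone in $\text{Sets}$. I would define the candidate family of filters on $X$ by
\[
\cF \;:=\; \bigcup_{j \in \cJ}\,\{\,(\iota_j)_\ast\sigma : \sigma \in \cF_j\,\}.
\]
By construction each $\iota_j\colon (X_j,\cF_j) \to (X,\cF)$ is a morphism, and cocone compatibility is inherited from $\text{Sets}$. The one substantive input required is that pushforward of filters is functorial, $(g \circ f)_\ast = g_\ast \circ f_\ast$, which follows by unwinding the defining formula $f_\ast\sigma = \{Q' : f(Q) \subset Q'\ \text{for some } Q \in \sigma\}$ together with the elementary observation that $f(Q) \subset P$ implies $g(f(Q)) \subset g(P)$.

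To verify the universal property, suppose $(Y, \cG)$ is a cocone in $\text{GenChamb}$ with components $f_j\colon (X_j,\cF_j) \to (Y,\cG)$. The universal property of $X$ in $\text{Sets}$ yields a unique set-map $f\colon X \to Y$ with $f \circ \iota_j = f_j$; to see that $f$ is a morphism, note that every $\tau \in \cF$ has the form $\tau = (\iota_j)_\ast\sigma$ for some $j$ and some $\sigma \in \cF_j$, so by functoriality $f_\ast\tau = (f \circ \iota_j)_\ast\sigma = (f_j)_\ast\sigma \in \cG$.

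Finally, for uniqueness of the chamber structure (which is what upgrades existence of a lift to genuine creation), suppose $\cF'$ is any family of filters on $X$ making $(X, \cF')$ together with the $\iota_j$ a colimit in $\text{GenChamb}$. Requiring each $\iota_j$ to be a morphism into $(X,\cF')$ forces $\cF \subseteq \cF'$. Conversely, applying the universal property of $(X, \cF')$ to the cocone $(X, \cF)$ produces a morphism $(X, \cF') \to (X, \cF)$ which, by uniqueness in $\text{Sets}$, must be $\mathrm{id}_X$; the morphism condition then says $\tau = \mathrm{id}_\ast\tau \in \cF$ for every $\tau \in \cF'$, giving $\cF' \subseteq \cF$. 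The only nontrivial ingredient anywhere in the argument is the functoriality of pushforward of filters; everything else is categorical bookkeeping, so I do not anticipate any genuine obstacle.
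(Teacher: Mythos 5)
Your construction and verification are correct, and they fill in the details the paper leaves to the reader (its ``proof'' is one line, by analogy with $\mathrm{Top} \to \mathrm{Set}$). The only thing worth flagging is a mismatch between what your opening paragraph promises and what your final paragraph delivers: you announce uniqueness of the \emph{lift of the cocone}, i.e.\ that $\cF$ is the unique family of filters on $X$ making each $\iota_j$ a morphism, which is MacLane's strict notion of creation; but that stronger statement is false, since any $\cF'$ with $\cF \subseteq \cF'$ also makes each $\iota_j$ a morphism (just as one can coarsen the final topology in the $\mathrm{Top}$ case). What your last paragraph actually establishes is the correct and weaker fact that $\cF$ is the unique family making $(X,\cF)$ together with the $\iota_j$ a \emph{colimiting} cocone. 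That, together with the existence argument, is exactly what the gluing construction in the paper requires (coequalizers in the category of generalized chamber systems exist and are computed on underlying sets), so the substance of your proof is right --- just bring the wording of the preamble into agreement with what the final step proves.
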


\begin{proof}
The proof is almost identical to the proof that the forgetful functor from topological spaces to sets creates small colimits. We leave the details to the reader.
\end{proof}

The key proposition of this section is the following:

\begin{proposition}\label{gluing-proposition}
Let $X$ be a Riemann surface, $\phi \in \oplus_{i=2}^{r}H^{0}(X, \omega_{X}^{\otimes i})$ be a point in the $\SL_r\cc$ Hitchin base, and $h: X \rightarrow \B$ be a harmonic $\phi$-map to a building $\B$.  Let $\{ U_{\alpha} \}$ be a $\phi$-adapted cover of $X$, and suppose that for each $\alpha$ there exists an apartment $A_{\alpha}$ such that $h(U_{\alpha}) \subset A_{\alpha}$. Then there exists a unique isometry of pre-buildings that makes the following diagram commute:
\[
\xymatrix{
X \ar[r] \ar[dr]_{h} & \Bpre(\cU) \ar[d] \\
& \B \\
}
\]

\end{proposition}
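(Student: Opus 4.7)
The plan is to build $\Phi$ locally and then glue using the coequalizer property. For each $\phi$-adapted $U \in \cU$, choose an apartment $A_U$ with $h(U) \subset A_U$, and observe that for each lift $P \in \pi^{-1}_\phi(U)$ there is a \emph{unique} affine chart $f_P: \aff \to A_U$ satisfying $f_P \circ \psi_P = h|_U$ (identifying $U$ with the component of $P$ in $\pi^{-1}_\phi(U)$). Indeed, the $\phi$-map condition forces any chart's pullback differentials $(\iota \circ h)^{\ast} dx_i$ to equal the unordered multiset $\{\re \phii_i\}$ over $U$; the lift $P$ selects an ordering of these, which pins down the chart up to translation, and matching any single basepoint fixes the translation. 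Extending $f_P$ from $\psi_P(U)$ to $\hull \psi_P(U)$ via its natural domain $\aff$, set
\[
\Phi_U: \cT(U) \to \B, \qquad (P, v) \mapsto f_P(v).
\]

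Next, I verify that $\Phi_U$ respects both equivalence relations defining $\cT(U)$. Since $\psi_{wP} = w \cdot \psi_P$ (via the $W$-action on the Cartan-valued form), the uniqueness of $f_P$ forces $f_{wP} = f_P \circ w^{-1}$, so $\Phi_U(wP, w^{-1}v) = \Phi_U(P, v)$; the basepoint-change relation holds because changing the integration basepoint within a component translates $\psi_P$ by a constant, and $f_P$ absorbs this via its translational part. For the coequalizer: on any $\phi$-adapted $W \subset U_\alpha \cap U_\beta$, axiom (B2) yields an apartment isometry $g: A_\alpha \to A_\beta$ fixing $A_\alpha \cap A_\beta \supset h(W)$ pointwise, and the uniqueness of $f_P$ then forces $f_P^\beta = g \circ f_P^\alpha$. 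Hence the local maps $\Phi_{U_\alpha}$ and $\Phi_{U_\beta}$ agree on $\cT(W)$, and the universal property of the coequalizer produces $\Phi: \Bpre(\cU) \to \B$ satisfying $\Phi \circ (X \to \Bpre(\cU)) = h$ by construction.

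To verify that $\Phi$ is an isometry of pre-buildings: every cubicle in $\Bpre(\cU)$ is the image of some $\{P\} \times \hull \psi_P(U)$, and the restriction of $\Phi$ there is the affine apartment chart $f_P$ applied to an enclosed subset of $\aff$, which is an isometric embedding into $A_U \subset \B$. For uniqueness, any candidate $\Phi'$ satisfies $\Phi' \circ q = h$ where $q: X \to \Bpre(\cU)$ is the canonical map, hence $\Phi'$ agrees with $\Phi$ on $\{P\} \times \psi_P(U)$ for every $U, P$; on each cubicle $\Phi'$ is an isometric embedding of a convex Euclidean subset into $\B$, therefore affine by Mazur--Ulam rigidity, and its values on $\psi_P(U)$ together with the Weyl and basepoint-change relations in $\cT(U)$ force it to equal $\Phi$ on all of $\Bpre(\cU)$.

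The main obstacle is the coherence check in the coequalizer step: one must ensure that the apartment isometry provided by (B2) is precisely the map compatible with the uniquely chosen charts $f_P^\alpha$ and $f_P^\beta$, across arbitrary chains of overlaps. This reduces to the strong uniqueness clause in the construction of $f_P$, which pins down the gluing data intrinsically even when $h(W)$ has strictly smaller affine dimension than the ambient apartments---the potentially delicate situation arising for $r \geq 4$, where the affine span of $\psi_P(U)$ may fail to exhaust the cubicle.
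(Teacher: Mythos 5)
Your proposal follows essentially the same approach as the paper's proof: pin down a unique affine chart $f_P$ for each lift via the $\phi$-map condition and integration, verify equivariance with respect to the Weyl action and basepoint-change relations defining $\cT(U)$, and invoke the coequalizer universal property to descend to $\Bpre(\cU)$. You spell out in slightly more detail the (B2)-based compatibility check on overlaps and the uniqueness argument, which the paper leaves terse or implicit, but the overall structure, the key objects, and the chain of reasoning are the same.
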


\begin{proof}
Let $U^{o}_{\alpha}$ denote the interior of $U_{\alpha}$. Let $f: \aff \rightarrow A_{\alpha}$ be a chart. Then, since $h$ is a $\phi$-map, and $U^{o}_{\alpha}$ does not intersect the ramification locus, on $U_{\alpha}^{o}$ we have $d(f^{-1} \circ h) = (\phi_1,...,\phi_r)$, where ``$\phi =
\{ \phi_1,...,\phi_r \}$''. More precisely, there exists a section $s$ of $\Sigma_{\phi}$ over $U^{o}_{\alpha}$ such that for any $P \in U^{o}_{\alpha}$ we have $d(f^{-1} \circ h) = d(\psi_P \circ s)$.

Now suppose that we are given a section $s$ and a point $x \in U^{o}_{\alpha}$. Integrating both sides of $d(f^{-1} \circ h) = d(\psi_P \circ s)$, we see that there is a unique chart $f: \aff \rightarrow A_{\alpha}$ such that the following diagram commutes:

\[
\xymatrix{
\pi_{\phi}^{-1}(U^{o}_{\alpha}) \ar[r]^{\psi_P}  & \aff \\
U^{0}_{\alpha} \ar[r]_{h} \ar[u] ^{s} & A \ar[u]_{f^{-1}} \\
}
\]

Thus we have an injective map associating to a pair $(s,P)$ the corresponding chart $f = f_{(s,P)}$. The map $f_{(s,P)}: \mathrm{im}(\psi_{P}(U^{o}_{\alpha})) \rightarrow A$ extends naturally to a map on $\hull(\psi_{P}(U^{o}_{\alpha}))$.

This correspondence is compatible with the action of the affine Weyl group. More precisely, for any $w$ in the sperical Weyl group, and any pair of points $P$, $Q$ in $U^{o}_{\alpha}$, we have that $f_{(ws, Q)}^{-1} \circ f_{(s,P)}$ is given by the action of the element $(w, \psi_{ws(P)}(Q)) \in W \ltimes \mathbb{R}^{r-1}$.  It follows that the collection of maps
\[
f_{(s,P)}: \hull(\psi_{P}(U^{o}_{\alpha})) \rightarrow A
\]

descend to a map 

\[
\cT(U_{\alpha}) \hookrightarrow A \hookrightarrow \B
\]

Note that this map is just the inclusion of an enclosure into the apartment. Furthermore, it is clear from the construction that the maps $ \cT(U_{\alpha}) \rightarrow \B$ coequalize the two morphisms

\[
\xymatrix{
\cT(U \times_{X} U) \ar@<1ex>[r] \ar@<-1ex>[r]  & \cT(U)  \\
}
\]

where $U = \coprod U_{\alpha}$. The proposition follows. 

\end{proof}

%%%%%%%%%%%%%%%%%%%%%%%%%%%%%%%%%%%%%%%%%%%%%%%%%%%%%%%%%%%%%%%%%

\subsection{Spectral networks from the universal building}\label{spectral-networks-buildings}

Spectral networks, which were introduced in \cite{GMN-Spectral-Networks}, are certain decorated graphs drawn on a Riemann surface that encode part of the geometry of a spectral cover. The goal of this subsection is to remind the reader about the notion of spectral networks, and to formulate Conjecture \ref{spectral-networks-Bu}, which relates the spectral network of a spectral cover $\phi$ to the singularities of the universal $\phi$-building.

  Assume that the spectral cover $\Sigma$ is smooth and let $P \in X$.   

\begin{definition}\label{BPS-network-definition}
  We say that $P \in X$ \textnormal{supports an open BPS-state of phase $\theta$} if there exist lifts $P_0\neq P_1$ of $P$ to $\Sigma$ and a path $\gamma:[0,1]\rightarrow \Sigma$ such that:
  \begin{itemize}
  	\item $\gamma(0)=P_0, \gamma(1)=P_1$
	\item $\int_\gamma \lambda \in e^{i\theta} \mathbb R_+$
	\item $\pi_{\phi *}(\partial_{\gamma(1)})=-\pi_{\phi *}(\partial_{\gamma(0)})$
	\item $\langle\lambda(\gamma(1)), \partial_{\gamma(1)} \rangle + \langle\lambda(\gamma(0)),\partial_{\gamma(0)}\rangle \in e^{i\theta} \mathbb R_+$
  \end{itemize} 
		The \emph{spectral network of phase} $\theta$, $W_\theta$, is the union of all $P\in C$ such that $P$ supports an open BPS-state of phase $\theta$.
	\end{definition}
  
  Let $P$ be a branch point of $\Sigma \rightarrow X$. Then the spectral network near $P$ looks as in Figure \ref{SpecBranch}: for every angle $\theta$ there are three rays starting at the branch point, because of the $\mathbb Z/2$ monodromy of $\lambda$ around a branch point.
  	\begin{figure}[h]
	\centering
	\includegraphics[width=.5\textwidth]{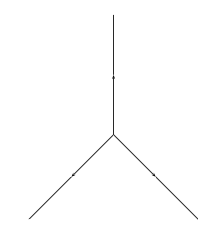}
	\caption{Spectral Network near a branch point}
	\label{SpecBranch}
	\end{figure}
  
The spectral network curves emerging from different branch points $P_{1,2}\in X$ may intersect. Let $s_{1,2}$ be spectral network lines starting at $P_{1,2}$ with label $(ij)$ and $(jk)$ respectively. Here $i$, $j$ and $k$ are labels for the sheets of the spectral cover. Assume that $s_1$ and $s_2$ intersect at a point $Q$, as in Figure \ref{Collision}. In this situation there is a \textit{collision line} $s_3$ starting at $Q$: integrate along the green line in Figure $\ref{Collision}$

    	\begin{figure}[h]
	\centering
	\includegraphics[width=.7\textwidth]{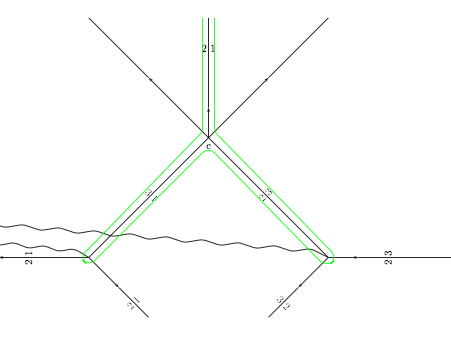}
	\caption{Colliding Spectral Network lines}
	\label{Collision}
	\end{figure}

\begin{conjecture}\label{spectral-networks-Bu}
Let $X$ be a Riemann surface, and let $\phi$ be a spectral cover. Let $\cW$ be the spectral network associated to $\phi$. Then the support of $\tilde{\cW}$ coincides with the inverse image of the singular set of $\Bu$ under the universal $\phi$ map $\hu: \tilde{X} \rightarrow \Bu$. See Caveat \ref{spec-net-caveat}
\end{conjecture}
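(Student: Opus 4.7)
The first order of business is to make precise what is meant by the singular set of $\Bu$. In the sequel I will take $\mathrm{Sing}(\Bu) \subset \Bu$ to be the union of all walls (closures of codimension-one facets of any apartment), together with the branch locus where more than two apartments meet along a common half-apartment. The conjecture then decomposes into two inclusions, and I would attack each separately using the tools assembled in $\S$\ref{phi-map-properties} and $\S$\ref{gluing-construction}.

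For the inclusion $\mathrm{supp}(\tilde\cW) \supseteq (\hu)^{-1}(\mathrm{Sing}(\Bu))$, I would argue by contrapositive. Fix $\tilde P \in \tilde X$ off $\tilde\cW$. By Proposition \ref{phi-map-singularities} the only singularities of $\hu$ sit over the ramification locus of $\Sigma_{\phi}$, and branch-point fibers already lie in $\tilde\cW$ (Figure \ref{SpecBranch}); so $\tilde P$ is a regular point of $\hu$. Using Lemma \ref{local-non-crit} I would enclose $\tilde P$ in a small disk $D$ bounded by two $\phi$-non-critical paths; Corollary \ref{non-crit-regions-to-apts} then places $\hu(D)$ inside a single apartment $A$. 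Inside $A$, the pullback of any wall $H$ associated to a pair $(i,j)$ is, by the $\phi$-map condition, the vanishing locus of $\re \int (\phii_i - \phii_j)$. The failure of $\tilde P$ to support any BPS state of any phase (by assumption) is precisely what rules out $\hu(\tilde P)$ lying on such a pullback wall, because the BPS matching and phase conditions of Definition \ref{BPS-network-definition} are exactly the conditions that translate, under the $\phi$-map, into $\hu(\tilde P)$ meeting a wall in an apartment.

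For the reverse inclusion $\mathrm{supp}(\tilde\cW) \subseteq (\hu)^{-1}(\mathrm{Sing}(\Bu))$, the strategy is more delicate and splits by the type of network point. Near a branch point of $\pi_\phi$, Remark \ref{Bu-in-simple-cases} identifies $\Bu$ locally with a trivalent vertex times a lower-rank apartment, and the three BPS rays emerging from the branch point are manifestly the $\hu$-preimages of the three walls of this vertex; this handles the ramification locus. Away from ramification, for $\tilde P$ on a BPS curve $\gamma$ with lifts $P_0,P_1$ to $\Sigma$, the matching condition forces two non-coincident sectors $S_0, S_1$ based at $\hu(\tilde P)$, indexed by the two lifts, whose positions are determined by pieces of $\int \phii$ along $\gamma$; the universal property of $\hu$ then requires that $S_0$ and $S_1$ both survive in $\Bu$ and be joined along a wall, forcing $\hu(\tilde P) \in \mathrm{Sing}(\Bu)$. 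At a collision point (as in Figure \ref{Collision}), the argument invokes the gluing construction of $\S$\ref{gluing-construction}: the two incoming network lines carry two incompatible apartment identifications at the collision, and the minimality of $\Bpre$ shows that a new apartment (hence a new wall, the outgoing collision line) must be glued in, again forcing $\hu(Q) \in \mathrm{Sing}(\Bu)$ and propagating the singularity along the outgoing collision ray.

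The most serious obstacle is twofold. First, a clean proof presupposes an explicit construction of $\Bu$, which is itself only conjectural in general (Conjecture \ref{Bu-exists}); in practice, I expect one must prove the two statements \emph{in tandem}, constructing $\Bu$ by iterating the gluing construction of $\S$\ref{gluing-construction} over an open cover whose combinatorics is controlled precisely by $\cW$, and then reading off the singular locus from the construction itself. Second, the BPS phase/matching condition is subtler than the bare reflection-hyperplane equation $\re(\phi_i - \phi_j) = 0$: it distinguishes oriented BPS rays from the full foliation, and the corresponding distinction on the building side requires a careful bookkeeping of sector orientations and vector-valued distances (\S\ref{vector-distance-section}). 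I would expect the verification for collisions to proceed by induction on the depth of the network (branch-rays, then $k$-th generation collisions), with the BNR analysis of $\S$\ref{BNR-example} furnishing the base case.
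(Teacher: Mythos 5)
This is labelled \emph{Conjecture}~\ref{spectral-networks-Bu} in the paper, and the paper does \emph{not} prove it --- it is offered as an open conjecture, with evidence only in the BNR example (Proposition~\ref{Bu-sing-BNR}). More to the point, Caveat~\ref{spec-net-caveat} immediately following the statement says explicitly that \emph{the conjecture cannot be true exactly as stated}: a collision of spectral network curves produces a BPS ray only in the forward direction, while the singular locus of a building is insensitive to orientation, so there is a piece of $(\hu)^{-1}(\mathrm{Sing}(\Bu))$ --- the backward collision segment --- that is not in $\mathrm{supp}(\tilde\cW)$. Any sketch that aims to prove the equality as literally written, as yours does, must therefore fail in the ``easy'' direction $\mathrm{supp}(\tilde\cW) \supseteq (\hu)^{-1}(\mathrm{Sing}(\Bu))$; your closing remarks on ``orienting sectors'' gesture at a related issue but do not identify this obstruction, and your contrapositive argument does not account for it.

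Even setting that aside, the first-inclusion argument has a genuine logical gap. You conclude that $\hu(\tilde P)$ is nonsingular from the fact that a small disk $D$ around $\tilde P$ in $\tilde X$ satisfies $\hu(D) \subset A$ for some apartment $A$. But by the paper's definition a point of $\Bu$ is singular when no neighborhood \emph{of that point in $\Bu$} is contained in a single apartment; showing that the image of a neighborhood in $\tilde X$ lies in $A$ does not exhibit a neighborhood of $\hu(\tilde P)$ in $\Bu$ lying in $A$. Already for $r\ge4$ the map $\hu$ cannot be locally open for dimension reasons, so this cannot be repaired by general nonsense. Relatedly, the assertion that the BPS matching/phase condition is ``exactly'' the condition for $\hu(\tilde P)$ to meet a wall in an apartment conflates two different things: the $\hu$-preimage of the family of $(ij)$-walls $\{x_i - x_j = c\}$ inside a single apartment is the full $(ij)$-foliation, whose leaves pass through \emph{every} point of $X\setminus Z$, whereas the spectral network is the much smaller locus of leaves along which the building actually branches. ``Being on a wall of some apartment'' is automatic and says nothing about singularity; what one needs is that distinct apartments diverge there, and the estimates you cite (Lemma~\ref{local-non-crit}, Corollary~\ref{non-crit-regions-to-apts}) do not address that. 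The honest state of affairs is that you have, correctly, identified that one must build $\Bu$ in tandem with the proof (iterating Construction~\ref{gluing-pre-building} over a cover adapted to $\cW$) --- this matches the paper's strategy in $\S$\ref{BNR-example} --- but a general argument is not known, the statement needs the modification described in Caveat~\ref{spec-net-caveat}, and the passage from ``image in an apartment'' to ``nonsingular point of $\Bu$'' is an unsolved step that your sketch elides.
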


\begin{caveat}\label{spec-net-caveat}
In the statement of Conjecture \ref{spectral-networks-Bu}, the singular set of the building is the set of all singular points. A point in a building is singular if no neighborhood of the point is contained in a single apartment. It is not hard to see that with this definition, the conjecture cannot be true exactly as stated. The reason for this is that our current definition of spectral networks generates collision lines going off in \emph{only one direction} from the collision point, whereas the singular set of the building does not distinguish between the forward and backward directions (see for example Proposition \ref{Bu-sing-BNR}). There are potentially two ways to change our definitions to make the conjecture true:

\begin{itemize}

\item Modify the definition of spectral networks to include the ``backward collision lines''. Proposition \ref{Bu-sing-BNR} gives evidence that the conjecture is then true.

\item Modify the definition of singular set of the building to somehow distinguish ``forward and backward directions''. It is not clear at this point how to do this. 

\end{itemize}
\end{caveat}

%%%%%%%%%%%%%%%%%%%%%%%%%%%%%%%%%%%%%%%%%%%%%%%%%%%%%%%%%%%%%%%
%%%%%%%%%%%%%%%%%%%%%%%%%%%%%%%%%%%%%%%%%%%%%%%%%%%%%%%%%%%%%%%

\section{Singular Perturbation Theory}\label{WKB}

%%%%%%%%%%%%%%%%%%%%%%%%%%%%%%%%%%%%%%%%%%%%%%%%%%%%%%%%%%%%%%

\subsection{The Riemann-Hilbert WKB problem}\label{Riemann-Hilbert-Hitchin}

A finite dimensional complex representation of a groupoid $\Gamma$ is a functor $\rho_{(\relbar)}: \Gamma \rightarrow \Vectc$, where $\Vectc$ is the category of finite dimensional $\mathbb{C}$-vector spaces. A representation is of rank $r$ if $\rho_x$ is an $r$-dimensional vector space for all objects $x$ in $\Gamma$. Let $\Rep(\Gamma, r)$ denote the set of representations of $\Gamma$ of rank $r$. By a hermitian metric on a representation $\rho$, we mean simply a hermitian metric on the vector space $\rho_x$ for every object $x$ in $\Gamma$. 

Recall that  a family $\{\mu_{t}\}_{t \in \mathbb{R}}$ of scale factors is a function $\mu: \mathbb{R} \rightarrow \mathbb{R}$ with the property that $\mu_{t} \rightarrow \infty$ as $t \rightarrow \infty$. Let $\{\mu_{t}\}$ be a family of scale facts. A function $f: \mathbb{R} \rightarrow \mathbb{R}$ is of \emph{exponential growth at infinity with respect to} $\{\mu_{t}\}$ if there exist constants $C$ and $\eta$ such that $|f(t)| \leq C \exp (\eta \mu_{t})$ for $t \gg 0$. The set of functions of exponential growth with respect to $\{ \mu_t \}$ form a ring with valuation. For a function $f$, the valuation $\nu = \nu(\mu)$ is the infimum of the set of real numbers $\eta$ for which there exists a constant $C$ such that $f(t) \leq C \exp (\eta \mu_t)$ for $t \gg 0$:

\[
\nu(f) := \limsup_{t \to \infty} \frac{1}{\mu_{t}} \log |f(t)|
\]

We will use this valuation to measure the rate of growth of the size of a representation varying in a family depending on a large parameter. First we need to say how we will measure the size of a representation, and which families of representations are admissible:

\begin{definition}\label{exp-type}
Let $\Gamma$ be a groupoid, and let $\rho: \mathbb{R} \rightarrow \Rep( \Gamma, r)$ be a family of representations. For each $t \in \mathbb{R}$, let $h(t)$ be a  hermitian metric on $\rho(t)$. Denote by $\|\relbar\|_{t}$ the operator norm on $\Hom(\rho_x(t), \rho_y(t))$ associated to $h(t)$. Let $\{\mu_{t}\}_{t \in \mathbb{R}}$ be a family of scale factors. Then $\rho$ is \emph{of exponential type} with respect to $(\{h(\relbar)\}, \{\mu_{t}\})$ if for each arrow $\gamma$ in $\Gamma$, the function $t \mapsto \|\rho_{\gamma}(t)\|$ is of exponential growth. 
\end{definition}

\begin{definition}\label{dilation-spectrum}
Let $\Gamma$ be a groupoid, and let $\rho: \mathbb{R}  \rightarrow \Rep(\Gamma, r)$, be a family of representations that is of exponential growth with respect to a family of metrics $h(t)$ and a family of scale factors $\mu_{t}$. Let $\gamma$ be an arrow in $\Gamma$. The \emph{exponent} $\nu(\gamma)$ of $\gamma$ with respect to $(\rho, h, \mu)$ is the number
\[
\nu(\gamma):= \limsup_{t \to \infty}\frac{1}{\mu_{t}} \log \| \rho_{\gamma}(t) \|_{t}
\] 
The \emph{dilation spectrum} of $\gamma$ is the vector $\nuu (\gamma) = (\nu_1(\gamma),...,\nu_r(\gamma))$ uniquely determined by the condition that $\sum_{i=1}^{k} \nu_i(\gamma)$ is the WKB exponent of $\gamma$ with respect to $(\bigwedge^{k} \rho, h, \mu)$ for each $k$ with $1 \leq k \leq r$.   

\end{definition}

One of the main goals of this paper is to give a geometric interpretation of the WKB dilation spectrum in situations where the groupoid and the family of representations are of geometric origin. The rest of this subsection is devoted to formulating two such ``WKB problems''. 

Let $X$ be a topological space. A basic invariant of the homotopy type of $X$ is its 1-truncation: the fundamental groupoid $\pi_{\leq 1}(X)$. When $X$ admits some additional geometric structure, representations of $\pi_{\leq 1}(X)$ often admit geometric interpretations. For instance, on a smooth manifold, the category of representations of the fundamental groupoid is equivalent, via the Riemann-Hilbert correspondence to the category of vector bundles equipped with a flat connection. 

\begin{definition}\label{WKB-dilation-spectrum}
Let $X$ be a complex manifold, and let $\{(\cE_{t}, \nabla_{t})\}_{t \in \mathbb{R}}$ be a family of holomorphic vector bundles on $X$ equipped with integrable connections. Fix a family of scale factors $\{\mu_{t}\}$ and a hermitian metric $h_{t}$ on $\cE_{t}$ for each $t$. 

\begin{enumerate}

\item The family $\{(\cE_{t}, \nabla_{t})\}_{t \in \mathbb{R}}$ is said to be of \emph{exponential type} with respect to $(h, \mu)$ if the associated monodromy representation $\trans_{t}: \pi_{\leq 1}(X) \rightarrow \Vectc$ is of exponential type in the sense of Definition \ref{exp-type}. 

\item Suppse  $\{(\cE_{t}, \nabla_{t})\}_{t \in \mathbb{R}}$ is of exponential type, and let $\gamma$ be a path in $X$.  Then the \emph{WKB dilation spectrum} of $\gamma$ with respect to   $\{(\cE_{t}, \nabla_{t}), h_{t}, \mu_{t}\}_{t \in \mathbb{R}}$ is defined to be the dilation spectrum of $\gamma$ with respect to $\{ \trans_{t}, h, \mu \}_{t \in \mathbb{R}}$ in the sense of Definition \ref{dilation-spectrum}.

\end{enumerate}

\end{definition}

The ``\emph{Riemann-Hilbert WKB problem}'' is the problem of determining the WKB dilation spectrum of a family $(\cE_{t}, \nabla_{t})$ of holomorphic vector bundles with flat holomorphic connection. This problem has been treated in great depth in monograph \cite{Carlos-book}, and in the paper \cite{Carlos-infinity}. The main novelty of the treatment here is the geometric interpretation of WKB exponents in terms of harmonic maps to buildings.

For simplicity, this paper will only treat the case where $\cE = \cE_{t}$ is independent of $t$. The space of holomorphic connections on $\cE$ is a torsor for $\Shend(\cE)$ valued 1-forms, so we have $\varphi_{t}:= \nabla_{t} - \nabla_{0} \in H^{0}(X, \Shend(\cE) \otimes \Omega^{1}_{X})$. The flatness of the connections implies the integrability of $\varphi_{t}$ in the sense of Definition \ref{spectral-cover-Higgs}. So to specify the family of connections is equivalent to specifying an initial connection $\nabla_0$, and a family of Higgs bundles $(\cE, \varphi_{t})$. Note that if there exists a constant $C$ such that $\| \varphi_{t} \| \leq C \mu_{t}$ for $t \gg 0$, then the associated family $( \cE, \nabla_{t})$ is of exponential type. For instance, we may take $\mu_{t} = t$ for all $t$, and let $\varphi_{t} = t \varphi$ for some fixed Higgs field $\varphi$. All of the interesting features of the theory are already visible in this situation.

%%%%%%%%%%%%%%%%%%%%%%%%%%%%%%%%%%%%%%%%%%%%%%%%%%

\subsection{The local WKB approximation}\label{local-WKB-section}

This subsection treats the ``local Riemann-Hilbert WKB approximation problem'', i.e., the problem of determining the growth rates of the transport operators for paths of ``sufficiently short length''. This result is a version of the ``classical WKB approximation''. As we could not find a reference in the literature with a statement and proof of this result that is tailored to our current needs, we include a self-contained treatment of the statement we need here. For a detailed discussion of ``local WKB type problems'' the reader is referred to \cite{Wasow}.

Suppose we have an $r\times r$ matrix of functions $a_{ij}(x)$ defined for $x\in (-c,c)\subset \rr$. 
Suppose $t>0$ is a large real number, and $\epsilon$ and $C$ are constants. Suppose we have the estimates
$$
\re a_{11}(x) > \epsilon t + \re a_{ii}(x)
$$
for $i=2,\ldots , r$, 
and
$$
|a_{ij}(x)| < C t^{1/2}
$$
for $i\neq j$. 

Suppose for the moment that $a_{11}(x)=0$ for all $x$. The first condition then says
$\re a_{ii}(x)\leq -\epsilon t$. We will remove this assumption later by tensoring with a rank one system. 

For a function $f(x)$ let $f'(x)$ denote its derivative with respect to $x$. 
Consider the differential equation
$$
f'_i(x)= \sum _{j=1}^r a_{ij} f_j(x).
$$
Let $F_i(x)$ denote a solution with initial conditions $F_1(0)=1$, $F_i(0)=0$ for $i\geq 2$.

For $2\leq i\leq r$ put $G_i(x):= F_i(x)/F_1(x)$. Then $G_i(x)$ satisfy 
$$
G'_i(x) = \frac{F_1(x) F'_i(x) - F'_1(x) F_i(x)}{F_1(x)^2}
$$
hence
$$
G'_i(x) = a_{i1}(x) + \sum _{j=2}^r a_{ij}(x)G_j(x) - a_{1j}(x) G_j(x)G_i(x) .
$$

Put $M(x):= \sum _{i=2}^r |G_i(x)|^2$. We have
$$
M'(x) =  \re \sum _{i=2}^r G'_i(x) \overline{G}_i(x)  
$$
$$
= \re \sum_i  a_{i1}(x)\overline{G}_i(x) + \sum _{i=2}^r \sum _{j=2}^r a_{ij}(x)G_j(x)\overline{G}_i(x) - a_{1j}(x) G_j(x)G_i(x)\overline{G}_i(x) 
$$
$$
= \re \sum _i a_{i1}(x)\overline{G}_i(x) + a_{ii} |G_i(x)|^2 + \re \left(  \sum _{i\neq j} a_{ij}(x)G_j(x)\overline{G}_i(x) - \sum _{i,j} a_{1j}(x) G_j(x)|G_i(x)|^2 \right) .
$$ 
We now use $\re a_{ii} \leq -\epsilon t$ and $|a_{ij}(x)|\leq C t^{1/2}$ for $i\neq j$. Furthermore,
$$
|G_j(x)\overline{G}_i(x)| \leq M(x)
$$
and 
$$
|G_j(x)|G_i(x)|^2| \leq M(x)^{3/2}. 
$$

Thus, we get, after increasing the constant, 
$$
M'(x) \leq  -\epsilon t M(x) + Ct^{1/2} (M(x)^{1/2} + M(x) +  M(x) ^{3/2}). 
$$

Suppose $x$ is a point such that $1/2\geq M(x)\leq 1$. Then, if $t$ is big enough that 
$$
\epsilon t / 2 > 3 Ct^{1/2}
$$
which is equivalent to $t>  (4C/\epsilon ) ^2$, we conclude that 
$$
\epsilon t M(x) >   Ct^{1/2} (M(x)^{1/2} + M(x) +  M(x) ^{3/2}).
$$
Therefore, at any such point $x$, we have $M'(x) <0$. As $M(0)=0$, it follows that $M(x)\leq 1/2$ for all $x$. 

Now, since $M(x)\leq 1/2$ we get $M(x)^{3/2} + M(x)  \leq 2 M(x)^{1/2}$ and our estimate becomes 
$$
M'(x) \leq -\epsilon t M(x) + Ct^{1/2} M(x)^{1/2}. 
$$

(Notice that our estimate becomes singular at $M(x)=0$, corresponding to the fact that the initial condition $M(0)=0$ 
doesn't persist.)

Suppose $\alpha \leq M(x)\leq 2 \alpha$. Then 
$$
-\epsilon t M(x) + Ct^{1/2} M(x)^{1/2} < 0
$$
if we have $\epsilon t \alpha > Ct^{1/2}\alpha ^{1/2}$ (increasing $C$ here). 
Thus, this equation tells us that $M(x)\leq \alpha$ for all $x$. 
We can solve for $\alpha$: whenever 
$$
\alpha > (Ct^{-1/2} /\epsilon )^2 ,
$$
or in other words $\alpha > C_1 t^{-1}$, we get this condition. 

Therefore, we may finish by concluding that
$$
M(x) \leq C_1 t^{-1}
$$
or 
$$
| G_{\cdot}(x)|\leq C_2 t^{-1/2}
$$
for all $x$. Translating back in terms of $F_i$ this says that
$$
|F_i(x)|\leq C_2 t^{-1/2} |F_1(x)|
$$
for $2\leq i\leq r$. 

Next we note that
$$
F'_1(x) = \sum _{j\geq 2} a_{1j}(x) F_j(x),
$$
so using the previous estimate and $|a_{1j}(x)|\leq Ct^{1/2}$ for $j\neq 1$, we have
$$
|F'_1(x)| \leq C_3 |F_1(x)|.
$$
We get
$$
\frac{d}{dx} |F_1(x)|^2 = \re F'_1(x) \overline{F}_1(x).
$$
Hence
$$
-C_3 |F_1(x)|^2 \leq \frac{d}{dx} |F_1(x)|^2 \leq C_3 |F_1(x)|^2.
$$
Setting $h(x):= \log |F_1(x)|^2$
this gives
$$
-C_3 \leq h'(x) \leq C_3.
$$
Therefore, for a small enough choice of $c$ depending on our constants up to now but independent of $t$,
we have
$$
-\ln (2)/2 \leq h(x) \leq \ln (2)/2
$$
for all $x\in [0,c]$. This gives
$$
1/2\leq |F_1(x)| \leq 2
$$
for $x\in [0,c]$. 

We are now ready to apply this discussion. In the following statement, which is our local WKB approximation, 
we remove the restriction $a_{11}=0$. 

\begin{proposition}\label{local-WKB}
Suppose $a^v_{ij}(x)$ is a matrix depending on some parameter $v\in V$ as well as $x\in [0,1]$, and suppose we have a positive real valued function $v\mapsto t^v$.
Suppose $\epsilon$, $C$ and $t_0$ are constants such that for all $v\in V$ with $t^v\geq t_0$, we have
$$
\re a^v_{ii}(x)\leq \re a^v_{11}(x)-\epsilon t^v \;\;\;\; i\geq 2,
$$
and 
$$
|a^v_{ij}(x)| \leq C(t^v)^{1/2}\;\;\;\; i\neq j.
$$
Let $T^v(x)$ be the fundamental solution matrix for transport from $0$ to $x$. Set 
$$
\alpha ^v(x):= \int _0^x a_{11}(x) dx.
$$
Then there is a small constant $c$ depending on $\epsilon$ and $C$, and a value $t_1$, such that
for any $v\in V$ with $t^v\geq t_1$ and any $x\in [0,c]$ we have 
$$
\frac{1}{2} e^{\alpha ^v(x)t^v} \leq \| T^v(x) \| \leq C_r e^{\alpha ^v(x)t^v} 
$$
and more precisely the same bound also holds for the upper left coefficient $T^v_{11}(x)$ of the transport matrix. 
Here $C_r$ is a constant depending only on $r$. 
\end{proposition}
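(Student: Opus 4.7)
The plan is to reduce to the already-treated case $a^v_{11} \equiv 0$ by a rank-one gauge change, and then to upgrade the first-column bound produced by the preamble analysis to a bound on the full operator norm.

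First I would perform the twist $\tilde f(x) := e^{-\alpha^v(x)} f(x)$, equivalent to tensoring with the rank-one connection $-a^v_{11}(x)\,dx$. The new ODE has coefficient matrix $\tilde a^v_{ij} = a^v_{ij} - \delta_{ij} a^v_{11}$, so that the hypotheses transfer cleanly: $\tilde a^v_{11} \equiv 0$, $\re \tilde a^v_{ii} \leq -\epsilon t^v$ for $i \geq 2$, and the off-diagonal entries are unchanged and still satisfy $|\tilde a^v_{ij}| \leq C(t^v)^{1/2}$. Applying the preamble computation verbatim to the twisted transport matrix $\tilde T^v(x) = e^{-\alpha^v(x)} T^v(x)$ yields, for a constant $c > 0$ depending only on $\epsilon$ and $C$ and for $t^v$ large,
\[
\tfrac{1}{2} \leq |\tilde T^v_{11}(x)| \leq 2, \qquad |\tilde T^v_{i1}(x)| \leq C_2 (t^v)^{-1/2} \quad (i \geq 2), \qquad x \in [0,c].
\]
Untwisting, the first-column bounds on $T^v$ follow, which gives at once the two-sided bound for $T^v_{11}(x)$ and, by $\|T^v\| \geq |T^v_{11}|$, the lower bound on the operator norm.

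The main new input required is the operator-norm upper bound $\|\tilde T^v(x)\| \leq C_r$ on $[0,c]$, uniform in $v$. I would argue via the logarithmic norm: for any unit vector $u$ the function $w(x) := \tilde T^v(x) u$ satisfies
\[
\tfrac{d}{dx}|w|^2 \;=\; 2\,\re\langle w, \tilde A^v w\rangle \;=\; 2\langle w, H^v w\rangle \;\leq\; 2\,\lambda_{\max}(H^v)\,|w|^2,
\]
where $H^v := \tfrac{1}{2}(\tilde A^v + (\tilde A^v)^*)$. The diagonal of $H^v$ is $(0, \re \tilde a^v_{22}, \dots, \re \tilde a^v_{rr})$, with the leading eigenvalue $0$ separated from the rest by a gap $\geq \epsilon t^v$, whereas the off-diagonal perturbation has norm at most $(r-1)C(t^v)^{1/2}$, which is much smaller than the gap once $t^v$ is large. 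Second-order spectral perturbation theory then gives
\[
\lambda_{\max}(H^v) \;\leq\; \sum_{j\geq 2}\frac{|H^v_{1j}|^2}{-\re \tilde a^v_{jj}} + O((t^v)^{-1}) \;\leq\; \frac{(r-1)C^2}{\epsilon} + O((t^v)^{-1}) \;=:\; C'_r.
\]
Gr\"onwall yields $\|\tilde T^v(x)\| \leq e^{C'_r x} \leq C_r$ on $[0,c]$, possibly after shrinking $c$, and untwisting supplies the upper bound $\|T^v(x)\| \leq C_r |e^{\alpha^v(x)}|$ claimed in the proposition.

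The principal obstacle is this final operator-norm step: the preamble's device of studying $M = \sum_{i\geq 2}|G_i|^2$ with $G_i = F_i/F_1$ is tailored to the initial condition $F(0) = e_1$ and carries no information about the remaining columns of the transport matrix. The perturbation-theoretic estimate above circumvents this by exploiting that, although individual off-diagonal entries of $H^v$ are of size $(t^v)^{1/2}$, they couple only to directions whose diagonal entry lies a distance $\epsilon t^v$ below the top, so the induced shift in $\lambda_{\max}$ is of order $(t^v)^{1/2}\cdot(t^v)^{1/2}/(\epsilon t^v) = O(1)$. The rest is bookkeeping: ensuring that $c$, $C_r$ and the threshold $t_1$ depend only on $\epsilon$, $C$ and $r$, which amounts to nesting the largeness assumptions on $t^v$ that appear in the preamble analysis with the perturbation-theoretic ones.
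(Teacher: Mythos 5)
Your proof is correct, and it shares the paper's overall structure: the rank-one gauge twist to reduce to $a_{11}\equiv 0$, and the use of the preamble's $F_1,G_i$ analysis for the lower bound on $T^v_{11}$ and hence on $\|T^v\|$. Where you diverge from the paper is the operator-norm upper bound. The paper re-runs the preamble's nonlinear argument with the Riccati-type variables $g_i=f_i/f_1$ and $m=\sum_{i\geq 2}|g_i|^2$, but now starting from $f_1(0)=1$ and $|f_i(0)|\leq 1$; it applies this to the sum of the first column with each other column, concludes $m(x)\leq r$, and from there that every column of $\tilde T^v$ is uniformly bounded (along the way it handles the potential vanishing of $f_1$ by a determinant argument). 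You instead bypass the division by $f_1$ entirely and argue linearly: the logarithmic norm of the twisted coefficient matrix is $\lambda_{\max}$ of its Hermitian part $H^v$, which has an $O(1)$ top eigenvalue because the off-diagonal perturbation of size $O((t^v)^{1/2})$ sits against a spectral gap of size $\epsilon t^v$, and then Gr\"onwall closes the argument. Your route is arguably cleaner, since it sidesteps the need to justify division by $f_1$ and to control the integral of $\sqrt{m}$ (which the paper's proof leaves rather implicit); the paper's route has the virtue of reusing the same mechanism already developed in the preamble. One small slip in your write-up: the error term in the second-order eigenvalue perturbation is of order $\|V\|^3/\mathrm{gap}^2 = O((t^v)^{-1/2})$, not $O((t^v)^{-1})$, but this does not affect the conclusion; if you prefer to avoid perturbation theory altogether, a one-line Schur-complement bound ($\lambda = b^*(\lambda I - D - E)^{-1}b \leq 2|b|^2/(\epsilon t^v)$ whenever $\lambda\geq 0$) gives the same $O(1)$ bound rigorously.
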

\begin{proof}
By multiplying the solution by $e^{-\alpha ^v(x)t^v}$ it suffices to treat the case $a_{11}(x)=0$ which we now assume. 

Suppose $f_i(x)$ is a solution with $f_1(0)=1$ and $|f_i(0)| \leq 1$.  Proceed as before, introducing $g_i(x):= f_i(x)/f_1(x)$. 
We again get
$$
g'_i(x) = a_{i1}(x) + \sum _{j=2}^r a_{ij}(x)g_j(x) - a_{1j}(x) g_j(x)g_i(x) .
$$
Again, put
$m(x):= \sum _{i=2}^r |g_i(x)|^2$. We have
$$
m'(x) =  \re \sum _{i=2}^r g'_i(x) \overline{g}_i(x)  
$$
$$
= \re \sum _i a_{i1}(x)\overline{g}_i(x) + a_{ii} |g_i(x)|^2 + \re \left(  \sum _{i\neq j} a_{ij}(x)g_j(x)\overline{g}_i(x) - \sum _{i,j} a_{1j}(x) g_j(x)|g_i(x)|^2 \right) .
$$ 
Using as before $\re a_{ii} \leq -\epsilon t$ and $|a_{ij}(x)|\leq C t^{1/2}$ for $i\neq j$, together with
$$
|g_j(x)\overline{g}_i(x)| \leq m(x)
$$
and 
$$
|g_j(x)|g_i(x)|^2|\leq m(x)^{3/2},
$$
we get after increasing the constant that 
$$
m'(x) \leq  -\epsilon t m(x) + Ct^{1/2} (m(x)^{1/2} + m(x) +  m(x) ^{3/2}). 
$$
We have $m(0)\leq r$. Consider a point $x$ with $r\leq m(x)\leq 2r$.
Then for $t$ big enough, the second term will have smaller size than the first term and we get $m'(x)\leq 0$.
Therefore, $m(x)\leq r$ for all $x$.

Notice, in particular, that if $f_1(x)$ becomes very small then all of the $f_i(x)$ become small. They can't all go to zero,
by considering the corresponding differential equation on the determinant bundle which constrains $\det T(x)$ to satisfy
a linear equation. This justifies the division by $f_1(x)$ so the $g_i(x)$ are well defined for all $x$. 

Our discussion applies to the sum of the first and any other column of the transport matrix. This gives a bound on the 
sizes of the other columns of the transport matrix, since we have treated the first column previously. We conclude
$$
\| T^v(x) \| \leq C_r 
$$
which gives back the required estimate of the form
$$
\| T^v(x) \| \leq C_r e^{\alpha ^v(x)t^v} 
$$
in the case $a_{11}$ arbitrary. 
The previous discussion provided the estimate
$$
\frac{1}{2} e^{\alpha ^v(x)t^v} \leq T^v_{11}(x)
$$
for $x\in [0,c]$. This completes the proof. 
\end{proof}

%%%%%%%%%%%%%%%%%%%%%%%%%%%%%%%%%%%%%%%%%%%%%%%%%%%

\subsection{The asymptotic cone computes the dilation spectrum}\label{cone-computes-spectrum}

For a vector space $V$, equipped with a trivialization $\bigwedge^rV \simeq \cc$, let $\Met(V)$ denote the space of hermitian metrics on $V$ for which the induced metric on $\bigwedge^r V$ agrees with the standard metric on $\cc$. $\Met$ defines a functor from the groupoid of finite dimensional vector spaces to the  groupoid $\mathrm{Man}^{iso}$ of manifolds. A choice of an isomorphism $V \simeq \cc^r$ determines an isomorphism $\Met(V) \simeq \SL_r\cc/SU_r$.

Let $\rho(t): \Gamma \rightarrow \Vectc$, $t \in \mathbb{R}$ be a family of representations taking values in $r$-dimensional vector spaces, and equipped with a trivialization of $\bigwedge^r \rho$. Composing with $\Met$, we have an induced family of functors
\[
\Met \circ \rho_t: \Gamma \rightarrow \mathrm{Man}^{iso}
\]

The key observation behind the main theorem of this subsection is the following simple observation

\begin{lemma}\label{distance-to-exponent}
Let $\Gamma$ be a groupoid and $\rho: \mathbb{R} \rightarrow \Rep(\Gamma,r)$ be a family of representations with trivialized determinant, and let $h$ be a metric on this family. Let $\gamma$ be a morphism from $x$ to $y$ in $\Gamma$ .  Let $\|A\|_t$ denote the operator norm of a morphism $A: \rho_x \rightarrow \rho_y$ with respect to the metrics  $h_x(t)$ and $h_y(t)$. Define $\alpha_{t}(\gamma) := (\alpha_{t,1}(\gamma),...,\alpha_{t,r}(\gamma))$ recursively by the formula $\sum_{i=1}^k \alpha_{t,k}(\gamma) = \log \| \bigwedge^{k} \rho_{\gamma}(t) \|$. Then we have that $\vecd(\rho_{\gamma}(t)_{\ast} (h_{x}(t)), h_{y}(t)) = \alpha_{t}(\gamma)$. 
\end{lemma}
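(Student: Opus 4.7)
The claim is essentially a linear algebra identity, and the plan is to reduce it to the singular value decomposition (SVD) relative to two hermitian metrics. Fixing $t$ and suppressing it from the notation, view $T := \rho_\gamma(t)$ as a linear isomorphism between the inner product spaces $(\rho_x, h_x(t))$ and $(\rho_y, h_y(t))$. The SVD relative to these two metrics provides orthonormal bases $\{e_i\}$ of $\rho_x$ and $\{f_i\}$ of $\rho_y$ together with positive reals $\sigma_1 \geq \cdots \geq \sigma_r > 0$ such that $Te_i = \sigma_i f_i$. Since the determinant is trivialized, $\prod \sigma_i = 1$ and $\sum \log \sigma_i = 0$, so $(\log \sigma_1, \ldots, \log \sigma_r)$ lies in the fundamental Weyl chamber of the standard apartment for $\SL_r\cc$.

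Next I would compute the vector distance. The $f_i$ form a simultaneously orthogonal basis for $h_y(t)$ and for the pushforward metric $T_\ast h_x(t)$: indeed $\|f_i\|_{T_\ast h_x(t)} = \|T^{-1} f_i\|_{h_x(t)} = \sigma_i^{-1}$, while $\|f_i\|_{h_y(t)} = 1$, so the ratios are $\|f_i\|_{h_y(t)} / \|f_i\|_{T_\ast h_x(t)} = \sigma_i$. By the definition of vector distance recalled in the introduction,
\[
\vecd\bigl(T_\ast h_x(t),\ h_y(t)\bigr) = (\log \sigma_1, \ldots, \log \sigma_r).
\]

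Finally I would match the $\sigma_i$ with the quantities $\alpha_{t,k}$ via the compatibility of SVD with exterior powers. The singular values of $\bigwedge^k T$, with respect to the induced metrics on the exterior powers, are the products $\sigma_{i_1}\cdots \sigma_{i_k}$ for $1 \leq i_1 < \cdots < i_k \leq r$; since the $\sigma_i$ are non-increasing, the largest such product is $\sigma_1 \cdots \sigma_k$, so $\log \|\bigwedge^k T\| = \sum_{i=1}^k \log \sigma_i$. Comparing with the recursive defining relation $\sum_{i=1}^k \alpha_{t,i}(\gamma) = \log \|\bigwedge^k \rho_\gamma(t)\|$ yields $\alpha_{t,i}(\gamma) = \log \sigma_i$, completing the identification.

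There is no genuine obstacle here beyond bookkeeping: the content is the equivalence between the vector distance on $\SL_r\cc/\SU_r$ and the SVD, together with the elementary identity $\sigma_i(\bigwedge^k T) = \sigma_{i_1}\cdots \sigma_{i_k}$. The only subtle point is to keep the direction of the pushforward metric straight so that the ratios come out right rather than inverted; once this is done correctly, the three ingredients assemble immediately into the desired equality.
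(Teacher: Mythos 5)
Your proof is correct and follows essentially the same route as the paper's: both reduce the claim to the relative singular value decomposition (the paper phrases it as a simultaneous spectral decomposition of $h_y(t)$ and $T_\ast h_x(t)$ on $\rho_y$, which is the same data as your pair of orthonormal bases $\{e_i\}$, $\{f_i\}$) and then use the multiplicativity of singular values under exterior powers. If anything you are a touch more careful than the published argument about the order of arguments in $\vecd$ and the direction of the pushforward, which is exactly the bookkeeping point you flagged.
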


\begin{proof}
Recall the notion of vector distance (Definition \ref{vector-distance}). Let $h$ and $k$ be hermitian metrics on an $r$-dimensional vector space, which we think of as points in the symmetric space $\Met(V)$. Choose a basis $\{ e_i \}$ of $\rho_y$ that is orthonormal for both $h$ and $k$ -- such a basis exists by the spectral theorem. Let $\|e_i\|_k = e^{\alpha_i} \| e_i \|_{h}$. Reorder the vectors if necessary, so that $\alpha_1 \geq \alpha_2 \geq ... \geq \alpha_r$. Then the vector distance $\vecd(h,k) =  (\alpha_1,...,\alpha_r)$. 

Now let $A: W \rightarrow V$ be any linear operator, and suppose that $k = A_{\ast} h'$ for some hermitian metric $h'$ on $W$. Then by definition, the operator norm of $\|A \|$ with respect to $h'$ and $h$  is 

\[
e^{\alpha_1} = \sup_{\| w \|_{h'} = 1} \|Aw\|_{h}
\]

Since the eigenvalues of $\| \bigwedge^k A \|$ are computed from the $k \times k$ minors of $A$, we see that 

\[
e^{\sum_{i=1}^{k} \alpha_i} =  \sup_{\| w \|_{h'} = 1} \|\bigwedge^k Aw\|_{h}
\]

In particular, we see that $\sum_{i=1}^{k} \alpha_i = \log \| \bigwedge^k A\|$. The lemma follows by applying this discussion to the case where $W = \rho_x$, $h' = h_x(t)$, $h = h_y(t)$, $V = \rho_y$ and $A = \rho_{\gamma}(t)$. 
\end{proof}

Now let $\{\mu_t\}$ be a family of scale factors, i.e., a family of real numbers going to $\infty$. Fix an ultrafilter $\omega$ on $\mathbb{R}$ whose support is a countable set.  Then we can pass to the asymptotic cone levelwise, with the basepoints in each fiber $\Met(\rho_x)$ being given by the family of points $h_x(t)$. This produces a  family of functors 
\[
\Cone_{\omega, \mu_t} \circ \Met \circ \rho(t): \Gamma \rightarrow \cM
\]

where $\cM$ is the category of metric spaces and isometries. By the Kleiner-Leeb theorem \cite{Kleiner-Leeb}, this functor actually factors through the subcategory of affine buildings and isometries. We can now appeal to \cite[Proposition 4.4]{Parreau-compactification}, which tells us that

\[
\vecd_{\Cone_{\omega}}([\rho_{\gamma}(t)]_{\ast} [h_x(t)], [h_y(t)]) = \lim_{\omega} \frac{1}{\mu_t} \vecd(\rho_{\gamma}(t)_{\ast}h_x(t), h_y(t))
\]

where $[h_x(t)]$ denotes the point in the asymptotic cone corresponding to the family of points $h_x(t) \in \Met(\rho_x(t))$.

\begin{definition}\label{ultrafilter-exponent}
Fix an ultrafilter $\omega$ on $\mathbb{R}$ whose support is a countable set, and a family of scale factors $\{\mu_t\}$.  
\begin{enumerate}
\item Let $\rho: \mathbb{R} \rightarrow \Rep(\Gamma, r)$ be a family of representations, and let $\gamma$ be a morphism in $\Gamma$. The \emph{ultrafilter exponent} of $\gamma$, denoted $\nu_{\gamma}^{\omega}$ is defined by the formula

\[
\nu_{\gamma}^{\omega} = \lim_{\omega} \frac{1}{\mu_t} \log \| \rho_{\gamma}(t) \|
\]

\item Let $X$ be a Riemann surface, and $(\cE, \nabla_0 + t \varphi)$ be a family of  integrable connections on a fixed holomorphic vector bundle $\cE$ (with trivialized determinant connection). The \emph{WKB ultrafilter exponent} of this family with respect to the ultrafilter $\omega$ is the ultrafilter exponent of the family of monodromy representations $T: \mathbb{R} \rightarrow \Rep(\pi_{\leq 1}(X), \SL_r\cc)$. One similarly defines the ultrafilter WKB dilation spectrum $\nuu^{\omega}$. 

\end{enumerate}
\end{definition}

What we have just described is the argument due to Parreau \cite{Parreau-compactification} that the ultrafilter exponents of a family  $\rho: \mathbb{R} \rightarrow \Rep(\Gamma, r)$ of representations (with determinant trivialized) are computed by the ``\emph{translation vectors}  $\vecd_{\Cone_{\omega}}([\rho_{\gamma}(t)]_{\ast} [h_x(t)], [h_y(t)])$. In this paper, we are primarily interested in the ``geometric situation'', where these representations are representations of the fundamental group arising from a family of integrable connections. In this case, the translation vector can be interpreted as the distance between two points under a harmonic map. The rest of this subsection is devoted to explaining this statement. The main theorem is the following:

Let $(\cE, \varphi)$ be a rank $r$ Higgs bundle on a Riemann surface $X$, and let $\nabla_0$ be an integrable connection on $\cE$. Fix a hermitian metric on $\cE$, and a base point $P \in X$. Then $\tilde{X}$ is identified with homotopy classes of paths ending at $P\in X$, and we have a family of maps 
\[
h_t: \tilde{X} \rightarrow \Met(\cE_P)
\]

for $t \in \mathbb{R}$, sending a homotopy class of paths $\gamma$ to the metric $T_{\gamma}(t)_{\ast}(h_{\gamma(0)})$. Choosing a basis of $\cE_P$ gives an identification $\Met(\cE_P) \simeq \SL_r\cc/SU_r$.

\begin{theorem}\label{cone-wkb}
Let $(\cE, \nabla_0 + t \varphi)$  and $h_t$ be as in the previous paragraph. Let $\phi = \Char \varphi$ be the corresponding point in the Hitchin base. Fix some set of scale factors $\{ \mu_t \}$. Then 
\begin{enumerate}
\item for each path $\gamma \in \pi_{\leq 1}(X)$ there is an ultrafilter $\omega$ on $\mathbb{R}$, whose support is a countable set, such that $\nuu^{\omega}_{\gamma} = \nuu_{\gamma}$ (i.e., the ultrafilter WKB exponent coincides with the WKB exponent).

\item if $\gamma$ is a $\phi$-non-critical path and $\mu _t=t$, then for any ultrafilter $\omega$ we have $\nuu^{\omega}_{\gamma} = \nuu_{\gamma}$. In particular
\[
 \nuu_{\gamma} = \vecd_{\Cone_{\omega}}(h(P), h(Q))
\]
where $\Cone_{\omega}$ is the asymptotic cone of $\Met(\cE_P) \simeq \SL_r\cc/\SU_r$ with respect to $(\omega, \{\mu_t\})$, and $P = \tilde{\gamma}(0)$, $Q = \tilde{\gamma}(1)$ for any lift $\tilde{\gamma}$ of $\gamma$ to the universal cover $\tilde{X}$. 

\item if $\hw : \tilde{X}\rightarrow \Cone_{\omega}$ denotes the limiting map in the previous item, then $\hw$ maps $\gamma ([0,1])$ into
the Finsler convex hull of $\{ P,Q\}$, in particular into any apartment containing $P$ and $Q$.  
\end{enumerate}
\end{theorem}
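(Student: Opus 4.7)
The plan is to reduce each of the three assertions to the combination of three building blocks already in place: Lemma \ref{distance-to-exponent}, which rewrites the vector of exterior-power operator norms of the transport matrix as a vector distance in the symmetric space $\Met(\cE_P)$; Proposition \ref{limit-vector-distance} of Parreau, which says that vector distances are well-behaved under passage to the asymptotic cone; and the classical WKB approximation of Theorem \ref{classical-WKb}, which converts a non-critical path into an actual limit of rescaled transport exponents. Throughout, I would work componentwise via the exterior powers: controlling each partial sum $\sum_{i=1}^{k}\nu_i(\gamma)$ amounts to controlling $\frac{1}{\mu_t}\log\|\bigwedge^{k} T_\gamma(t)\|$.

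For part (1), by definition $\sum_{i=1}^{k}\nu_i(\gamma) = \limsup_{t\to\infty}\frac{1}{\mu_t}\log\|\bigwedge^{k} T_\gamma(t)\|$ for each $k = 1,\ldots,r$. A standard diagonal extraction produces a single increasing sequence $\{t_n\}$ with $t_n\to\infty$ along which the $\limsup$ is simultaneously attained for all $k$. Choosing any non-principal ultrafilter $\omega$ on $\mathbb{R}$ whose support contains $\{t_n\}$ and which is subordinate to the tail filter of this sequence, Proposition \ref{ultralimit-properties} gives $\lim_\omega \frac{1}{\mu_t}\log\|\bigwedge^{k} T_\gamma(t)\| = \sum_{i=1}^{k}\nu_i(\gamma)$ for every $k$. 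Combining Lemma \ref{distance-to-exponent} with Proposition \ref{limit-vector-distance} then upgrades this scalar identity, componentwise, to the vector identity $\nuu^{\omega}_{\gamma} = \nuu_{\gamma}$.

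For part (2), under the non-critical hypothesis and with $\mu_t = t$, Theorem \ref{classical-WKb} supplies an honest limit
\[
\lim_{t\to\infty}\tfrac{1}{t}\,\vecd\bigl(h_t(\gamma(0)), h_t(\gamma(1))\bigr) = (\alpha_1,\ldots,\alpha_r),\qquad \alpha_i = \int_0^1 \gamma^*\re\phi_i.
\]
Because the limit exists, Proposition \ref{ultralimit-properties}(4) forces $\lim_\omega = \lim = \limsup$ for every non-principal ultrafilter $\omega$, so Lemma \ref{distance-to-exponent} and Proposition \ref{limit-vector-distance} give $\nuu^{\omega}_{\gamma} = \nuu_{\gamma}$ for all $\omega$; the identification of this common vector with $\vecd_{\Cone_\omega}(\hw(P), \hw(Q))$ is Parreau's formula. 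Part (3) is then immediate from Corollary \ref{localWKBlongrange}, which asserts that $\hw$ sends a non-critical path into a single apartment with the endpoints lying in opposite chambers centered at any interior image point; such an image lies by definition in the Finsler convex hull of $\{\hw(P), \hw(Q)\}$, and hence in every apartment that contains both $P$ and $Q$.

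The main technical delicacy, as I see it, lies in part (1): one must ensure that choosing $\omega$ to realize the $\limsup$ of the first exterior-power exponent does not spoil any of the higher ones. The diagonal construction above bypasses this at the cost of the (mild) fact that every countable subset of $\mathbb{R}$ is contained in the support of some non-principal ultrafilter. Parts (2) and (3) are essentially formal consequences of the earlier sections once one observes that the non-critical hypothesis promotes a $\limsup$ to a genuine limit.
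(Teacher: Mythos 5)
Your overall architecture—reduce to exterior powers, convert to vector distance via Lemma \ref{distance-to-exponent}, and push through the asymptotic cone via Parreau's Proposition \ref{limit-vector-distance}—is the same as the paper's. But there are two substantive problems.

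First, for part (1), the claimed ``standard diagonal extraction producing a single increasing sequence $\{t_n\}$ along which the $\limsup$ is simultaneously attained for all $k$'' is not a valid move. Diagonal extraction can give convergence of each $f_k$ along a nested subsequence, but the limit of $f_k$ restricted to the $(k-1)$st subsequence need not be $\limsup_{t\to\infty} f_k$. Indeed, for two bounded functions such as $\sin t$ and $\cos t$ there is \emph{no} sequence $t_n\to\infty$ along which both $\limsup$s are realized. You correctly identify this as the delicate point and then paper over it. (In fairness, the paper's own sketch addresses only the scalar exponent $\nu_\gamma$ and is silent on the vector $\nuu_\gamma$; but noting the difficulty and then asserting it away is not a fix.)

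Second, and more seriously, your treatment of parts (2) and (3) is circular. You cite Theorem \ref{classical-WKb} and Corollary \ref{localWKBlongrange} as black boxes that supply the long-range statement directly. But those are statements in the introduction whose proofs are precisely what the present theorem supplies: the genuinely proved ingredient is Proposition \ref{local-WKB}, which only gives the estimate on a sufficiently short interval $[0,c]$. The passage from ``short noncritical segments land in a single apartment with vector distance given by integrals'' to ``the whole noncritical path lands in a single apartment'' is the content of the inductive argument the paper carries out inside this proof—subdividing $[0,1]$, applying Proposition \ref{local-WKB} on overlapping triples, and using opposedness of germs together with property (CO) of Proposition \ref{properties} to merge apartments, in the same spirit as Proposition \ref{phi-maps-non-crit}. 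Without that induction your proof of (2) and (3) assumes the conclusion. The repair is to replace the appeal to Corollary \ref{localWKBlongrange} by: (i) the short-path statement from Proposition \ref{local-WKB} applied to $\bigwedge^k\cE$, combined with Lemma \ref{distance-to-exponent} and Proposition \ref{limit-vector-distance}; (ii) the inductive apartment-merging argument; (iii) additivity of vector distance along a noncritical path (Lemma \ref{non-crit-convexity}) to conclude both the Finsler convex hull statement (3) and the integral formula (2).
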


\begin{proof}[Sketch of proof.]
Fix $\gamma  \in \pi_{\leq 1}(X)$. For the first statement, not that there exists a sequence of points $\{t_n\}$ in $\mathbb{R}$ such that

\[
\lim_{n \to \infty} \frac{1}{\mu_{t_n}} \log \| T_{\gamma}(t) \| = \limsup \frac{1}{\mu_t} \log \| T_{\gamma}(t) \|
\]
Then it is clear that any ultrafilter $\omega$ supported on $\{t_{n}\}$ has the required property.

The second and third parts are true for sufficiently short non-critical paths by the local WKB theorem (Proposition \ref{local-WKB}), applied to exterior powers as described in Lemma \ref{distance-to-exponent}. 
One now argues in a similar manner to Proposition \ref{phi-maps-non-crit} to conclude that theorem is true for arbitrary non-critical paths: the idea is that any non-critical path admits a finite open cover consisting of segments to which local WKB applies, and one can use this to extend the result to longer paths.  More precisely, one proves this by induction. We assume that it is covered by open intervals each of
which go into a single apartment. (as a noncritical path in that apartment).
Choose a sequence of points $0=t_1$, ... , $t_n=1$ along the path, so that each triple is in a single neighborhood. Then we show that the path from $t_1$ to $t_i$ is in a single apartment $A$. Suppose we have done it up to $t_{i-1}$. Then there is also an apartment $A'$ containing
$f([t_{i-2}, t_i])$ . Furthermore, the path here is noncritical. Now, take the sector $S$ in $A$ which is based at $x= f(t_{i-1})$ and contains the path up to there. Then, take the sector $T$ in $A'$ which is based at $x= f(t_{i-1})$ and contains the segment $f([t_{i-1},t_i)$. The claim is that these two sectors have germs at $x$ which are opposite. Therefore, $S\cup T$ is in a single apartment $A^{\flat}$,  and now this one contains
$f([t_1, t_{i}])$ completing the induction the induction step. This shows part three. 

The vector distance is now the sum of the vector distances between the successive points.
These sums of vector distances are also sums of vectors of integrals, because the noncritical condition insures that the order of sheets of $\phi$ by decreasing
real part is
the same all along the path. 
Therefore, the vector distance is the same as the vector of integrals, giving the second part. 
\end{proof}

The asymptotic cone occuring in Theorem \ref{cone-wkb} is a very complicated object to contemplate: it is a thick $\mathbb{R}$-building, and its singularities form a dense set. While Theorem \ref{cone-wkb} gives an abstract characterization of the WKB dilation spectrum as the distance between two points in a certain metric space, it  provides neither an effective procedure for computing, nor does it furnish an interpretation of the dilation spectrum in terms of the geometry of the spectral covering $\pi: \Sigma \rightarrow X$. 

The universal building (Definition \ref{Bu-def}) serves to remedy this situation, since conjecturally it can be constructed from the geometry of the spectral networks determined by the spectral cover. The following proposition reduces the calculation of the WKB dilation spectrum to a question about $\Bu$:

\begin{proposition}\label{cone-map-is-phi-map}
Let the notation be as in Theorem \ref{cone-wkb}, and let $\phi$ denote the spectral cover of the Higgs bundle $(\cE, \varphi)$. Then the  map $\hw: \tilde{X} \rightarrow \Bc$ of Theorem \ref{cone-wkb} is a $\phi$-map regular outside of the branch locus of $\phi$.  In particular it is harmonic. 

We may conclude that if a universal $\phi$-building exists, then there is a unique folding map of buildings $g: \Bu \rightarrow \Bc$ that makes the following diagram commute:
\[
\xymatrix{
\tilde{X} \ar[r]^{\hu} \ar[dr]_{\hw} & \Bu \ar@{-->}[d]^{g} \\
& \Bc \\
}
\]
\end{proposition}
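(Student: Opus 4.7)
The plan is to verify that $\hw$ satisfies the defining properties of a harmonic $\phi$-map as in Definition \ref{phi-map}. Concretely, I need to establish: (i) $\pi_1(X,x_0)$-equivariance, which is immediate from the construction of $\hw$ as the ultralimit of the equivariant maps $h_t$; (ii) regularity of $\hw$ on $\tilde X \setminus \pi^{-1}(Z)$, where $Z$ is the branch locus of $\phi$; (iii) the identity $\hw^*(\xi_k) = \pi^*\phi_k$ at regular points; and (iv) harmonicity of $\hw$. Once these are established, the folding map $g : \Bu \to \Bc$ is produced by the universal property of $\Bu$ (Definition \ref{Bu-def}), since Theorem \ref{cone-is-building} guarantees that $\Bc$ is a building with a complete system of apartments.

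For the regularity step (ii), I would fix $R \in \tilde X$ projecting outside $Z$ and apply Lemma \ref{local-non-crit} to produce a disc $D$ containing (the projection of) $R$ bounded by two $\phi$-non-critical paths $\gamma, \gamma'$ in $X_{\mathrm{reg}}$ running from $P$ to $Q$. By Theorem \ref{cone-wkb}(3), each of $\hw(\gamma)$ and $\hw(\gamma')$ lies in the Finsler convex hull $[\hw(P), \hw(Q)]_{\mathrm{Fins}}$; choosing any apartment $A \subset \Bc$ containing $\{\hw(P), \hw(Q)\}$ (which exists by axiom (B1)) yields $\hw(\partial D) \subset A$. To push this inclusion to the interior $D$, I would ideally invoke Lemma \ref{Isihara}, but that lemma requires harmonicity, which is precisely what we are in the process of establishing. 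The clean replacement is a direct argument in the spirit of Proposition \ref{phi-maps-non-crit}: foliate $D$ by non-critical paths, observe that by Theorem \ref{cone-wkb}(3) each such path maps into some apartment, and use the axioms (CO) and (SC) from Proposition \ref{properties}, together with the vector distance identity of Theorem \ref{cone-wkb}(2), to glue these apartments into the single apartment $A$.

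For (iii), working in a regular neighborhood $D$ with $\hw(D) \subset A$ and a chart $f : \aff \to A$, let $\tilde h := f^{-1} \circ \hw$ and let $(x_1, \ldots, x_r)$ be standard linear coordinates on $\aff$. For any short non-critical path $\gamma$ through a point in $D$, Theorem \ref{cone-wkb}(2) identifies $\vecd(\hw(\gamma(0)), \hw(\gamma(1)))$ with the sorted vector $\left( \int_\gamma \re \lambda_{\sigma(i)} \right)_i$, where $\lambda_1, \ldots, \lambda_r$ are the local eigenforms of $\varphi$ (so $\phi_k = \sigma_k(\lambda_1, \ldots, \lambda_r)$). Varying $\gamma$ and absorbing the spherical Weyl reordering into the choice of $f$, this gives $\tilde h^*(dx_i) = \re \lambda_i$, and hence $\hw^*(\xi_k) = \sigma_k(\re \lambda_1, \ldots, \re \lambda_r)$ whose $(k,0)$-part is a constant multiple of $\pi^*\phi_k$, as required by the normalization built into the definition of $\xi_k$. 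Harmonicity (iv) is now automatic: in a regular neighborhood, the coordinates $x_i \circ \tilde h$ are integrals of the harmonic 1-forms $\re \lambda_i$, so $\tilde h$ is a harmonic map to Euclidean space, and $\hw$ is harmonic on $\tilde X \setminus \pi^{-1}(Z)$; since $Z$ is discrete (real codimension two), removable singularities for Korevaar--Schoen harmonic maps to CAT(0) spaces extend harmonicity across $Z$.

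The main obstacle is the interior-extension in step (ii), namely upgrading $\hw(\partial D) \subset A$ to $\hw(D) \subset A$ without the \emph{a priori} use of Lemma \ref{Isihara}. The apartment gluing must be carried out purely from the building axioms and the non-critical vector distance formulas, mirroring the inductive proof of Proposition \ref{phi-maps-non-crit}; once this is done, everything else follows by routine calculations. The remaining assertion, existence and uniqueness of the folding map $g : \Bu \to \Bc$, is a direct application of the universal property of $\Bu$ once we know $\hw$ is a harmonic $\phi$-map and $\Bc$ has a complete system of apartments.
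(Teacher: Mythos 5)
Your approach is essentially the same as the paper's. Both proofs argue ``as in'' Lemma \ref{local-non-crit} (the paper explicitly notes, and you should too, that the requirement that $\gamma,\gamma'$ lie in $X_{\mathrm{reg}}$ must be dropped here, since $X_{\mathrm{reg}}$ is defined relative to a harmonic $\phi$-map and no such map is available yet for $\hw$), sweep out a disc by a homotopy through non-critical paths with fixed endpoints $P,Q$, and then invoke Theorem \ref{cone-wkb} in place of Proposition \ref{phi-maps-non-crit}/Lemma \ref{Isihara}. Once the disc is in a single apartment, the identity $d(f^{-1}\circ\hw)=(\re\phi_1,\dots,\re\phi_r)$ up to a Weyl reordering follows from the vector-distance--as--integral identity, and harmonicity is automatic exactly as you describe.

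One simplification worth recording for your step (ii): the ``main obstacle'' you flag dissolves once you read Theorem \ref{cone-wkb}(3) in full. It says that each non-critical path from $P$ to $Q$ is carried by $\hw$ into the Finsler convex hull $[\hw(P),\hw(Q)]_{\mathrm{Fins}}$, which lies inside \emph{any} apartment $A$ containing $\hw(P)$ and $\hw(Q)$. Since your homotopy fixes the endpoints, every leaf of the foliation therefore lands in the same apartment $A$; no further (CO)/(SC) gluing across apartments is required at this stage (those axioms were already used to prove Theorem \ref{cone-wkb}(3)).
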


\begin{proof}[Sketch of proof.]
The second statement clearly follows from the first and the definition of a $\phi$-map. To prove the first statement, we first claim that $\hw$ is regular away from the ramification locus of $\phi$. For this we argue as in Lemma \ref{local-non-crit}. Let $R$ be a point that is not on the ramification divisor. Then once can find points $P$ and $Q$ and non-critical paths $\gamma$ and $\gamma'$ as in Lemma \ref{local-non-crit}. Since we do not require that $\gamma$ and $\gamma'$ take values in the regular locus of any harmonic $\phi$-map, we can in fact arrange a homotopy through non-critical paths from $\gamma$ to $\gamma'$ that sweeps out a disc containing $R$. Now arguing as in the proof of Corollary \ref{non-crit-regions-to-apts}, and using Theorem \ref{cone-wkb}, we see that $D$ is mapped by $\hw$ into a single apartment. Then, the map on this disc $D$, into a single apartment, is given by the integrals of the real parts of the $1$-forms, because the integrals calculate the vector distance. In particular, our map is a $\phi$-map. 
\end{proof}

%%%%%%%%%%%%%%%%%%%%%%%%%%%%%%%%%%%%%%%%%%%%%%%%%%%%%%%%%%
%%%%%%%%%%%%%%%%%%%%%%%%%%%%%%%%%%%%%%%%%%%%%%%%%%%%%%%%%

\section{The Berk-Nevins-Roberts example}\label{BNR-example}

In the seminal paper \cite{BNR} the authors introduced the notion of a ``new Stokes line'' emanating from the intersection of two (ordinary) Stokes lines, and demonstrated the significance of these new Stokes lines to study of the WKB asymptotics associated with a certain third order differential equation defined on  $X = \mathbb{A}^{1}_{\mathbb{C}}$. These new Stokes lines, and the equation studied in \emph{loc. cit}, have since been studied by various mathematicians under the names ``virtual Stokes lines'', ``Stokes geometry'' and ``virtual turning point theory'' (see \cite{Aoki-fresh, Honda} and the references therein). These virtual Stokes lines are identical with the ``collision curves'' of the spectral networks that have introduced in the physics literature (\cite{GMN-Spectral-Networks}, see also Definition \ref{BPS-network-definition}). In this section, we revisit the BNR-example from the fresh perspective afforded by this paper.

Let $x$ be a coordinate on $\mathbb{A}^{1}_{\mathbb{C}}$ and let $p$ denote the coordinate along the fiber of the cotangent bundle $T^{\vee}_{X} \simeq \mathbb{A}^{2}_{\mathbb{C}}$. The spectral cover associated to the differential equation studied in \emph{loc. cit.} is the subscheme  $\Sigma \hookrightarrow T^{\vee}_{X}$ given by the equation:

\[
p^{3} - 3p + x = 0 \\
\]

The main goal of this section is to construct the universal building $\Bu$ associated to this spectral cover, and to show that the WKB dilation spectrum for any Riemann-Hilbert WKB problem with spectral curve $\Sigma$ is computed by a vector distance in this universal building. Furthermore, we will show that the spectral network associated to $\Sigma$ can be recovered from the singularities of $\Bu$ and the map $\hu: \mathbb{A}^{1}_{\mathbb{C}} \rightarrow \Bu$.  The spectral network is shown in Figure \ref{BNR-spectral-network}.

To obtain a differential equation whose spectral curve is $\Sigma$, we ``quantize'' the spectral curve, by replacing the $p$ with the differential operator $d/dx$. That is, we deform $\Sym(T_{X})$ to $\cD_{X}$ via the Rees construction, and simultaneously deform the Higgs bundle (= $\Sym(T_{X})$-module) corresponding to the trivial line on $\Sigma$ to the $\cD_{X}$-module $\cD_{X}/P_{1}\cD_{X}$ where $P_{t}$ is the family of differential operators given by:

\[
P_{t} := \frac{1}{t^3}\frac{d^{3}}{dx^3} - \frac{3}{t}\frac{d}{dx} + x \\
\]

Let $f$ be a holomorphic function on $\mathbb{C}$, and let $g = f'$ and $h = g'$, and let $s \in H^0(X, \cO^{3}_{X})$ be the section given by $s = (f,g,h)$. Then the family of differential equation $P_{t}f = 0$ is equivalent to the family of first order ODEs given by $\nabla_{t}s = 0$, where $\nabla_{t}$ is the family of (automatically integrable) connections on $\cE \simeq \cO^3_{X}$ given by 

\[
\nabla_{t} = d - t \left( \begin{array}{ccc}
0 & dx & 0 \\
0 & 0 & dx \\
-xdx & 3dx & 0 \\ \end{array} \right)
\]

\begin{remark}
The differential equation studied in \cite{BNR} is slightly different from the one above. Instead of replacing $p$ by $d/dx$ in the equation of the spectral curve as above, the authors quantize the spectral curve by replacing $p$ by $id/dx$, as is customary in Quantum Mechanics.
\end{remark}

The main goal of this section is to construct the universal $\phi$-building, and to show that it computes the WKB dilation spectrum of the family of connections defined above. The construction of this building and the proof of its universality is divided over the next three subsections. Before diving into details of the construction/proof, here is an outline of the strategy which may help orient the reader:

\begin{outline}\label{BNR-outline} The construction of $\B$ consists of the following steps:

\begin{itemize}

\item[\emph{Step 1.}] Maximal abelian regions in $X$ are the maximal regions where the local WKB approximation holds. We use heuristic reasoning to identify these regions, and then construct a pre-building $\Bpre$, by ``gluing in an apartment for every maximal abelian region''. The pre-building $\Bpre$ comes equipped with a natural map $\hpre: X \rightarrow \Bpre$. There is unique vertex $\{ o \}$ in $\Bpre$ that lies in the 0-dimensional stratum of its singular locus. This step is carried out in $\S$ \ref{MAR-section}.

\item[\emph{Step 2.}] We construct a building $\Bu$ and an isometry $i: \Bpre \rightarrow \Bu$. Recall that a map of pre-buildings is an isometry if it restricts to an isometry on each apartment. The building is constructed as a cone over a certain spherical building. This spherical building is in turn constructed as the ``free spherical building'' generated by the link of the vertex $\{ o \}$ in $\Bpre$. This step is the content of $\S$ \ref{octogon}.

\item[\emph{Step 3.}] We show that $i: \Bpre \rightarrow \Bu$ has the following universal property: given any building $\B$ and an isometric embedding $j: \Bpre \hookrightarrow \B$ there exists a unique folding map of buildings $\psi: \Bu \rightarrow \B$ such that $\psi \circ i = j$. 

\item[\emph{Step 4.}] Finally, we show that for any $\phi$-map $f: X \rightarrow \Bu$ there exists a unique isometric embedding $j: \Bpre \rightarrow \B$ such that $j \circ \hpre = f$. Combining this with the previous steps, it follows immediately that $\Bu$ is the universal $\phi$-building, and furthermore that the WKB exponents for any $\phi$ WKB problem are computed by $\Bu$. Steps 3. and 4. are carried out in $\S$ \ref{BNR-WKB}. 

\end{itemize}
\end{outline}

\subsection{Maximal abelian regions: constructing the pre-building}\label{MAR-section}

This section is devoted to completing \emph{Step 1} of the program outlined above: the construction of the pre-building $\Bpre$. Let $\waff = W \ltimes \mathbb{R}^2$ be the affine Weyl group whose spherical part $W$ is the $A_2$ Weyl group, and let $(\aff, \waff)$ be the corresponding standard apartment. 

Figures \ref{MAR1} - \ref{MAR10} show a copy of $X \simeq \aff^{1}_{\cc}$ on the left hand side, and a copy of $\aff \simeq \mathbb{R}^{2}$ on the right hand side. The shaded area in $X$ is called a maximal abelian region (MAR for short). The reason for this terminology will be explained below. Each connected component of the complement of the spectral network has been assigned a different color.

Fix an MAR -- say MAR1. Over the interior of the MAR, we choose a section of the cameral cover. Pulling back the tautological forms on the cameral cover to the MAR, we get three $1$-forms $\phi_1$, $\phi_2$, $\phi_3$. Choose a basepoint $P$ in the MAR.  Then we get a well-defined map from the MAR, taking values in $\mathbb{R}^2$ defined by

\[
Q \rightarrow ( \int_P^Q \re \phi_1,  \int_P^Q \re \phi_2)
\]

The shaded regions in the right hand side represent the images of the corresponding regions in $X$ under the map above. Recall that $\sum_i \phi = 0$. Thus the data contained in these pictures and the calculations used to produce them is equivalent to the data contained in the natural map from the MAR to $\aff$ defined by 

\[
Q \rightarrow ( \int_P^Q \re \phi_1, \int_P^Q \re \phi_2, \int_P^Q \re \phi_3)
\]

The idea of the construction is to ``glue in a copy of $\aff \simeq \mathbb{R}^2$'' for each of the MARs shown in Figures \ref{MAR1} -- \ref{MAR10}. The apartments are glued by identifying points if they come from the same point on $X$. This construction was described in a more precise and formal setting in Construction \ref{gluing-pre-building}. 

\begin{definition}\label{bnr-pre-building}
The BNR-pre-building $\Bpre$ and the map $\hpre: X \rightarrow \Bpre$ are obtained by applying Construction \ref{gluing-pre-building} to the admissible cover $\cU$ consisting of the 10 regions $\{\mathrm{MAR1}, ..., \mathrm{MAR10} \}$ shown in Figures \ref{MAR1} - \ref{MAR10}. 

\end{definition}

The rest of this section is devoted to explaining the heuristics underlying the construction of the 10 regions MAR1 -- MAR10, which has as its basis ``WKB considerations''. 

	Let $P,Q \in X$, let $\gamma:[0,1]\rightarrow X$ be a path connecting $P$ and $Q$ and let $\cW_{\pi/2}$ be the imaginary-spectral network. For any point $\gamma(t_k) \in \gamma([0,1])\cap \cW_{\pi/2}$, there is a corresponding \textit{detour integral} 
	\[ D_k=\re\left(\int_P^{\gamma(t_k)}\lambda_1 + \int_{\gamma(t_k)}^Q \lambda_2\right)  \:, \]
	with $\lambda_1\left(\gamma\left(t_k\right)\right)=\lambda\left(\gamma'\left(1\right)\right), \lambda_2\left(\gamma\left(t_k\right)\right)=\lambda\left(\gamma'\left(0\right)\right)$, $\gamma'$ being the path from the Definition \ref{BPS-network-definition}.

%	\int_\gamma \lambda + \int_{p(t_k)}^b \lambda_2 \:.\]
%	 (\cite{Gaiotto:2012rg,Simpson}), where $\gamma$ is a path as in Definition $\ref{SpecNet}$ [Spectral Network Definition] and $\lambda_1\left(p\left(t_k\right)\right)=\lambda\left(\gamma\left(1\right)\right), \lambda_2\left(p\left(t_k\right)\right)=\lambda\left(\gamma\left(0\right)\right)$.
	
	  We say that the detour $D_k$ \textit{dominates along $\gamma$} if $\re (D_k)\geq  \re I_{1,2}=\re \int_{\gamma}\lambda_{1,2}$.
	We say $P$ and $Q$ are \textit{not simply WKB related by $\gamma$} if there exists a detour $D_k$ that dominates along $\gamma$. Otherwise we say that $P$ and $Q$ are \textit{simply WKB-related by $\gamma$}.\\
	Furthermore, we say that a connected subset $U\subset X$ is an \textit{abelian region} if for any path $\gamma$ contained in $U$, the endpoints are simply WKB-related by $\gamma$. An abelian region $M$, maximal with respect to inclusion is called a \textit{Maximal Abelian Region} (MAR).

	 \begin{remark}
	 	The detours defined above are related to the detours in \cite{GMN-Spectral-Networks} as follows: $D_k$ can be rewritten as
		\begin{equation}\label{Detour}
		 D_k = \re\left( \int_P^{\gamma(t_k)} \lambda_1 + \int_{\gamma'} \lambda + \int_{\gamma(t_k)}^Q \lambda_2   \right) \:,
		 \end{equation}
		where $\gamma'$ is a path as in Definition \ref{BPS-network-definition}. Note that (\ref{Detour}) makes sense for all spectral networks $\mathcal W_\theta$, thus we obtain $D_k^\theta$ . This looks qualitatively as in Figure (\ref{Dtheta}). 
			\begin{figure}[h]
	\centering
	\includegraphics[width=.5\textwidth]{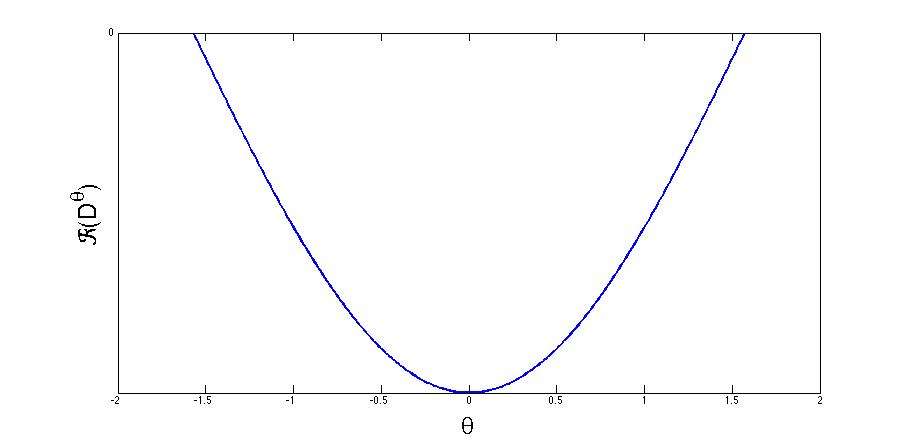}
	\caption{Detour integral}
	\label{Dtheta}
	\end{figure}
	On the other hand side, the spectral networks $\mathcal W_{\pi/2}$ and $\mathcal W_{-\pi/2}$ are the same up to orientation. This leads to\\
	 \end{remark}
	 
	 \begin{hproposition}\label{Sweeping}
	 Let $\gamma:[0,1]\rightarrow X$ be a path. Then no detour dominates along $\gamma$ if and only if $\gamma$ is homotopy equivalent to a path that does not intersect two lines $s_{1}\neq s_2 \subset \mathcal W_{\pi/2}$ such that $s_1$ is rotated into $s_2$ as $\theta$ goes from $\pi/2$ to $-\pi/2$.\
	 \end{hproposition}
	 
	 \begin{remark}
	 	Some care has to be taken when a spectral network line $s$ sweeps across a branch point. See Example \ref{ExMar}.
	 \end{remark}
	 	\begin{corollary}
		Let $M$ be an MAR. Then $M$ is a union of connected components of the complement of the imaginary spectral network.
	\end{corollary}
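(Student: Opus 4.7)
The plan is to reduce the corollary to a maximality argument. Concretely, I will show that whenever an abelian region $M$ meets a connected component $C$ of $X \setminus \mathcal{W}_{\pi/2}$, the enlargement $M \cup C$ is again abelian. Given an MAR $M$, maximality then forces $C \subset M$ for every such $C$, which says exactly that $M$ is a union of connected components of the complement of the imaginary spectral network.

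The first ingredient is that every connected component $C$ of $X \setminus \mathcal{W}_{\pi/2}$ is itself an abelian region. This is immediate from Heuristic Proposition \ref{Sweeping}: any path $\gamma \subset C$ avoids the entire spectral network $\mathcal{W}_{\pi/2}$, and \emph{a fortiori} avoids every pair $s_1 \neq s_2$ of rotating lines. Hence no detour dominates along $\gamma$, so its endpoints are simply WKB-related by $\gamma$. The main step is then to verify the extension property. Given any path $\gamma: [0,1] \to M \cup C$, the Heuristic Proposition reduces the task to producing a path $\tilde{\gamma}$ homotopic to $\gamma$ in $X$ (rel endpoints) that fails to intersect any rotating pair in $\mathcal{W}_{\pi/2}$. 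I construct $\tilde{\gamma}$ by decomposing $\gamma^{-1}(C)$ into connected subintervals $(a_i, b_i)$; on each such excursion $\gamma(a_i), \gamma(b_i) \in \partial C \subset \mathcal{W}_{\pi/2}$, and $\gamma|_{[a_i, b_i]}$ may be freely homotoped within $\overline{C}$ since the component $C$ is simply connected (which one verifies by inspection of Figure \ref{BNR-spectral-network} for each of the ten regions). Using the hypothesis $M \cap C \neq \emptyset$, the boundary piece $\partial C \cap \overline{M}$ is nonempty, so each excursion may be pushed across this boundary into $M$. The resulting concatenation $\tilde{\gamma}$ lies in $M$, and since $M$ is abelian, no detour dominates along $\tilde{\gamma}$.

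The main obstacle will be the topological pushing argument: one needs the homotopies to be carried out coherently and globally, and some care is required near branch points of the spectral cover where several network lines emanate from a single point (cf. Figure \ref{SpecBranch} and the remark preceding the proposition on sweeping across branch points, and Example \ref{ExMar}). However, for the specific configuration in Figure \ref{BNR-spectral-network} the simple-connectedness of each $C$ and the existence of a boundary piece shared with $\overline{M}$ can be read off directly from the picture, so the construction goes through. Once the extension lemma is established, the corollary follows: for any MAR $M$ and any component $C$ meeting $M$, $M \cup C$ is abelian and contains $M$, whence $M \cup C = M$ by maximality, i.e., $C \subset M$.
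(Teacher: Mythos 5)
The paper states this as an unproved corollary of the Heuristic Proposition on sweeping, so there is no explicit argument to compare against; your high-level strategy (show that an abelian region $M$ meeting a component $C$ of $X \setminus \mathcal{W}_{\pi/2}$ extends to the abelian region $M \cup C$, then invoke maximality) is the natural one, and your first observation that every component $C$ is itself abelian is correct. However, the excursion-pushing construction in your extension step has two genuine gaps. First, when $\gamma(0)$ or $\gamma(1)$ lies in $C \setminus M$ the initial or terminal excursion interval has an endpoint that is not on $\partial C$ and cannot be moved (the homotopy is rel endpoints), so the excursion cannot be pushed out of $C$; this case is not addressed. Second, even for interior excursions the claim that the homotoped path lands in $M$ requires $\gamma(a_i), \gamma(b_i) \in \partial C \subset \mathcal{W}_{\pi/2}$ to already lie in $M$, which you have not established; without this, the assertion that the concatenation $\tilde{\gamma}$ lies entirely in $M$ fails, and with it the appeal to abelianness of $M$.

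Both gaps vanish if you replace the excursion argument by the following concatenation observation, which is likely the implicit reasoning behind the corollary. Since $X = \mathbb{A}^1_{\mathbb{C}}$ is simply connected, the Heuristic Proposition makes "no detour dominates" a condition on the endpoints only, equivalent to the existence of \emph{some} connecting path that misses every rotating pair $s_1 \neq s_2$. Fix $R \in M \cap C$. For $P, Q \in M \cup C$ there are three cases: if $P, Q \in M$, use abelianness of $M$; if $P, Q \in C$, connect them inside $C$, missing $\mathcal{W}_{\pi/2}$ entirely; if (say) $P \in M$ and $Q \in C$, take a path $\alpha$ from $P$ to $R$ that avoids rotating pairs (abelianness of $M$) and a path $\beta$ from $R$ to $Q$ inside $C$ (which misses $\mathcal{W}_{\pi/2}$ altogether). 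The concatenation $\alpha \cdot \beta$ meets $\mathcal{W}_{\pi/2}$ only along $\alpha$, hence misses every rotating pair, so $P$ and $Q$ are simply WKB-related. This proves $M \cup C$ is abelian directly, without any need to discuss simple connectedness of $C$, excursion intervals, or boundary behavior.
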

	
	\begin{example}[MAR without collision point]
		Consider the situation of Figure(\ref{SL2Mar}). We use the following convention for spectral network lines: if a line is labeled (ij), one letter on each side of the line, this means that $\re \int_{P'}^{Q'} (\lambda_j-\lambda_i)>0$ if $P'$ lies on the side containing $i$ and $Q'$ lies on the side containing $j$. The curly line represents a branch cut.\\
		In this case, $P$ and $Q$ are not simply WKB related by the red path since the detour dominates: the detour is given by
		\begin{equation}\label{Detour2}
			\re \int_P^R \lambda_1+ \re \int_R^Q\lambda_2 = \re\int_P^S\lambda_1 + \re \int_S^Q \lambda_2\:.
		\end{equation}
		The equality comes from $\re \int_R ^S\lambda_1 = \re \int_R^S \lambda_2$ which follows from the definition of spectral networks. \\
		We have to compare (\ref{Detour2}) with both $\int_P^Q \lambda_{1,2}$:
		\[ 
           \re \left(\int_P^R \lambda_1 + \int_R^Q\lambda_2\right) - \re \int_P^Q \lambda_1
		=\re\int_S^Q(\lambda_2-\lambda_1)>0
		\]
		Similarly,
		\[\re \left(\int_P^R \lambda_1 + \int_R^Q \lambda_2\right)-\re \int_P^S \lambda_2 =\re \int_P^R(\lambda_1-\lambda_2)>0\:.\]
		
		On the other hand, $R$ and $S$ (and in fact every pair of points on the red path lying between $R$ and $S$) are simply WKB-related. \\
			
	\begin{figure}[h]
	\centering
	\includegraphics[width=.3\textwidth]{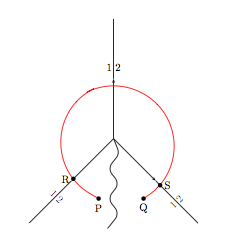}
	\caption{Estimating the WKB exponents using spectral networks}
	\label{SL2Mar}
	\end{figure}
	
	This can also be easily seen from the criterion in Remark $\ref{Sweeping}$.
	
	\end{example}
	
	\begin{example}[Example with collision points]\label{ExMar}
	Consider the situation of Figure \ref{SL3Mar}: we are interested in comparing the detour $D=\int_P^R\lambda_1 + \int_R^Q \lambda_2$ with $I=\int_P^Q \lambda_1$. Firstly, note that (with $\lambda_{12}=\lambda_1-\lambda_2$)
	\[ \re \int_R^S \lambda_{12}=0 \:,\]
	since it can be deformed to the green contour which moves only along spectral networks: along I, IV and V, the integral vanishes; the contributions from II and III cancel each other.
	Now,
	\[\re (D-I) =\re \int_R^Q \lambda_{12}=\re \int_S^Q\lambda_{12}>0 \:.\]
	Similar computations can be made for the other detours.\\
	The conclusion is that $P$ and $Q$ are not simply WKB related via the red path (and in fact are not simply WKB related at all).
	
	\begin{figure}[h]
	\centering
	\includegraphics[width=.8\textwidth]{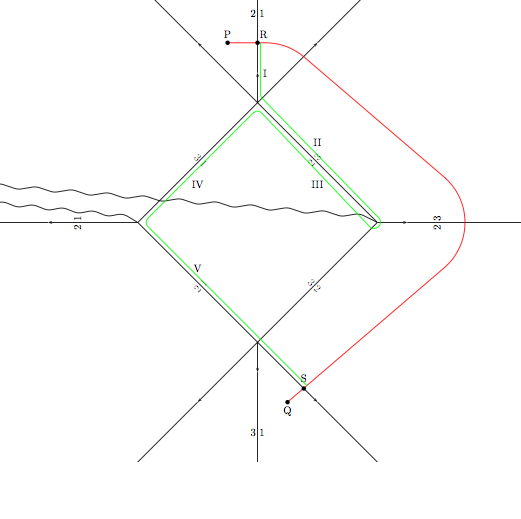}
	\caption{Reconstructing Maximal Abelian Regions (MARs) from spectral networks}
	\label{SL3Mar}
	\end{figure}

	This can also be seen from Proposition \ref{Sweeping}: the spectral network line containing $c$ is rotated into the spectral network line containing $d$, as we let $\theta$ vary from $\pi/2$ to $-\pi/2$ (see Figure \ref{Wmovie}). Note that the collision line rotates into the red line in the last picture. 
	
	\begin{figure}[h]
	\centering
	\includegraphics[width=.8\textwidth]{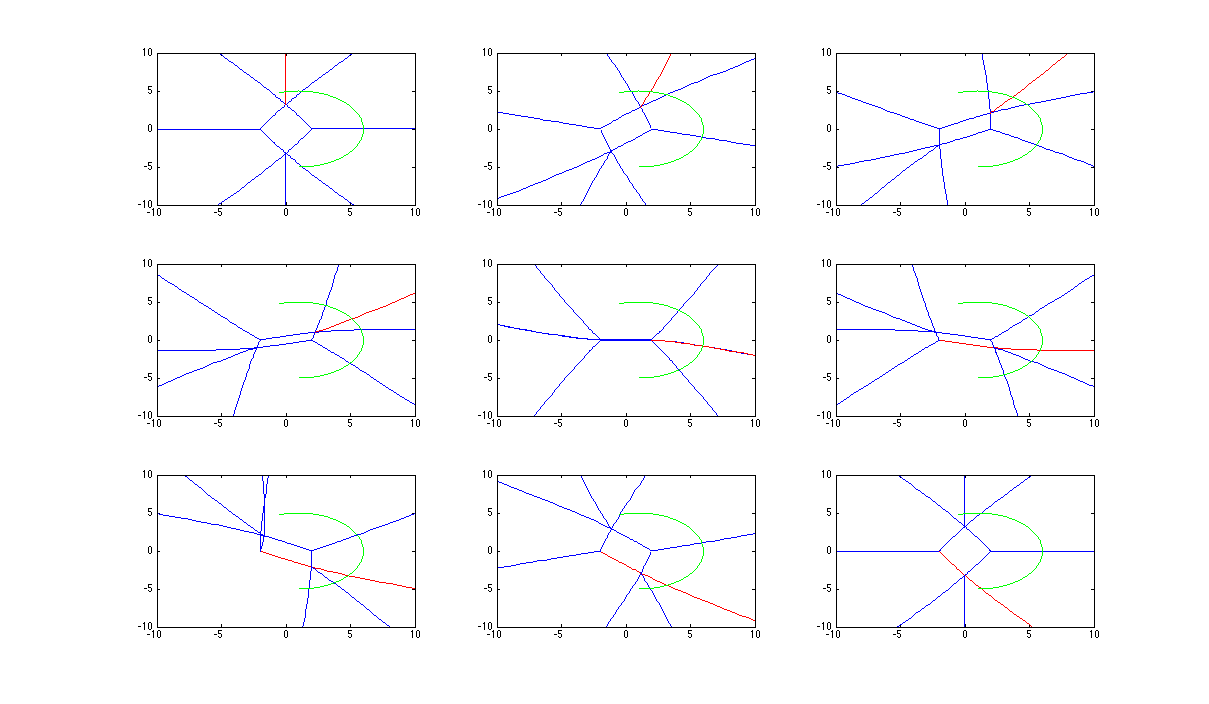}
	\caption{Spectral networks for varying values of $\theta$}
	\label{Wmovie}
	\end{figure}

	\end{example}

\begin{figure}[h]
\centering
\includegraphics[width=.95\textwidth]{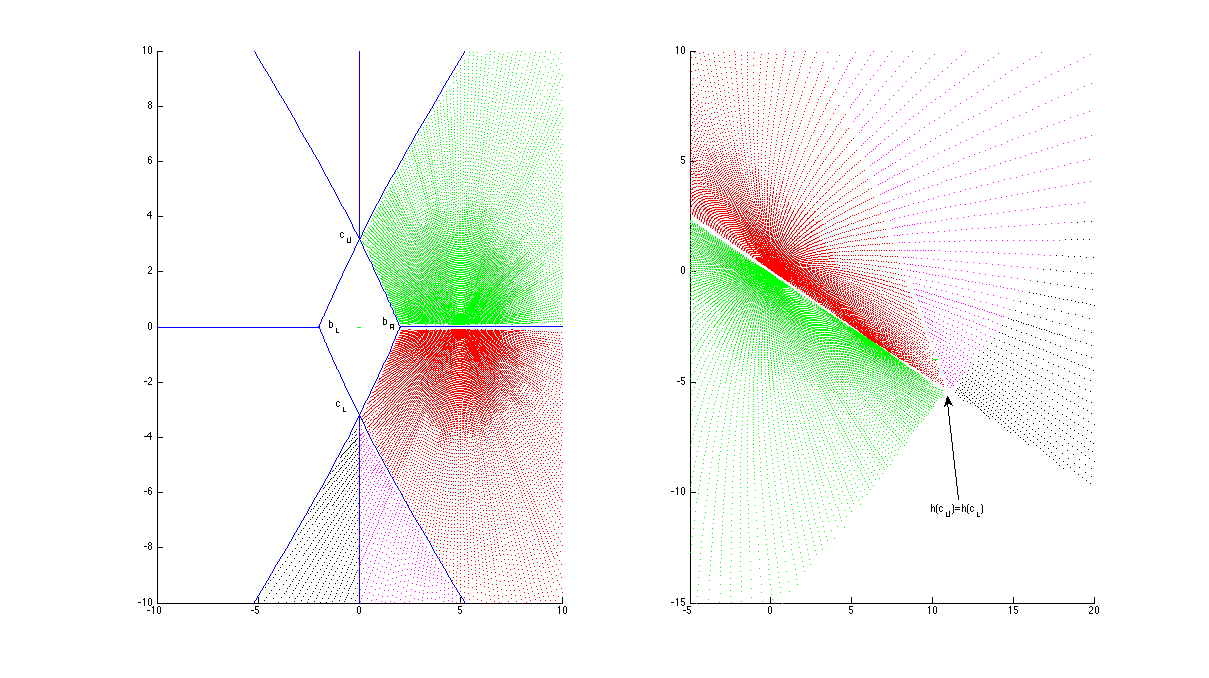}
\caption{MAR1}
\label{MAR1}
\end{figure}

\begin{figure}[h]
\centering
\includegraphics[width=.5\textwidth]{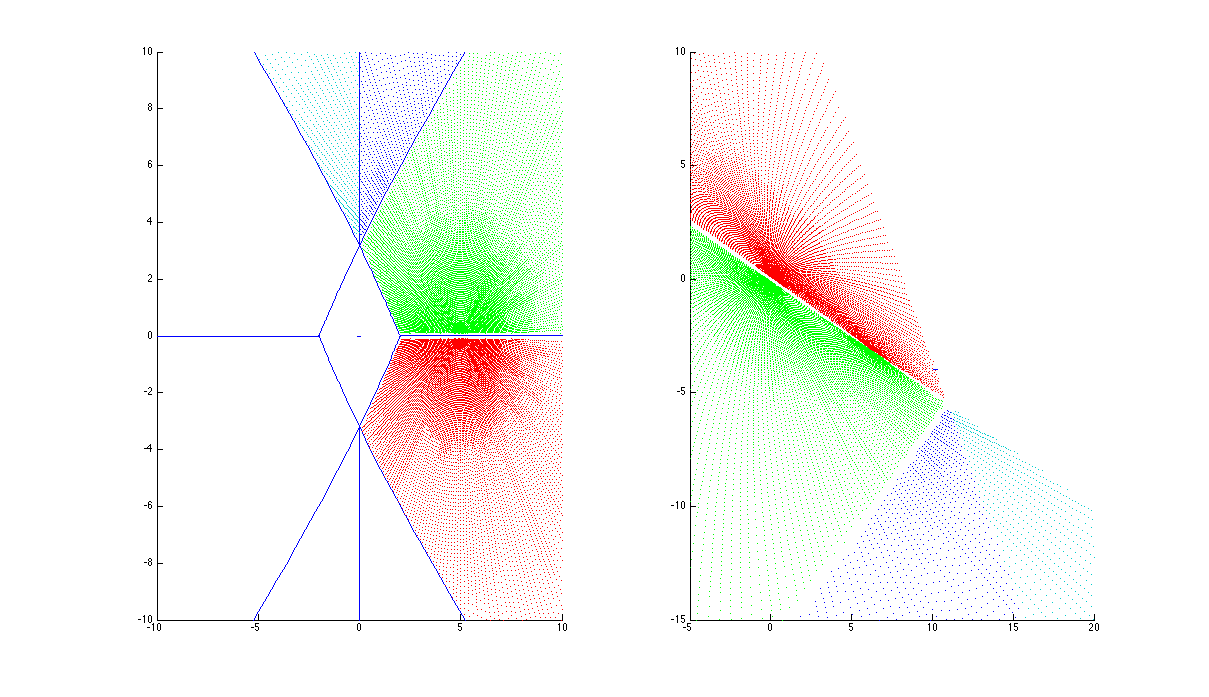}
\caption{MAR2}
\label{MAR2}
\end{figure}

\begin{figure}[h]
\centering
\includegraphics[width=.5\textwidth]{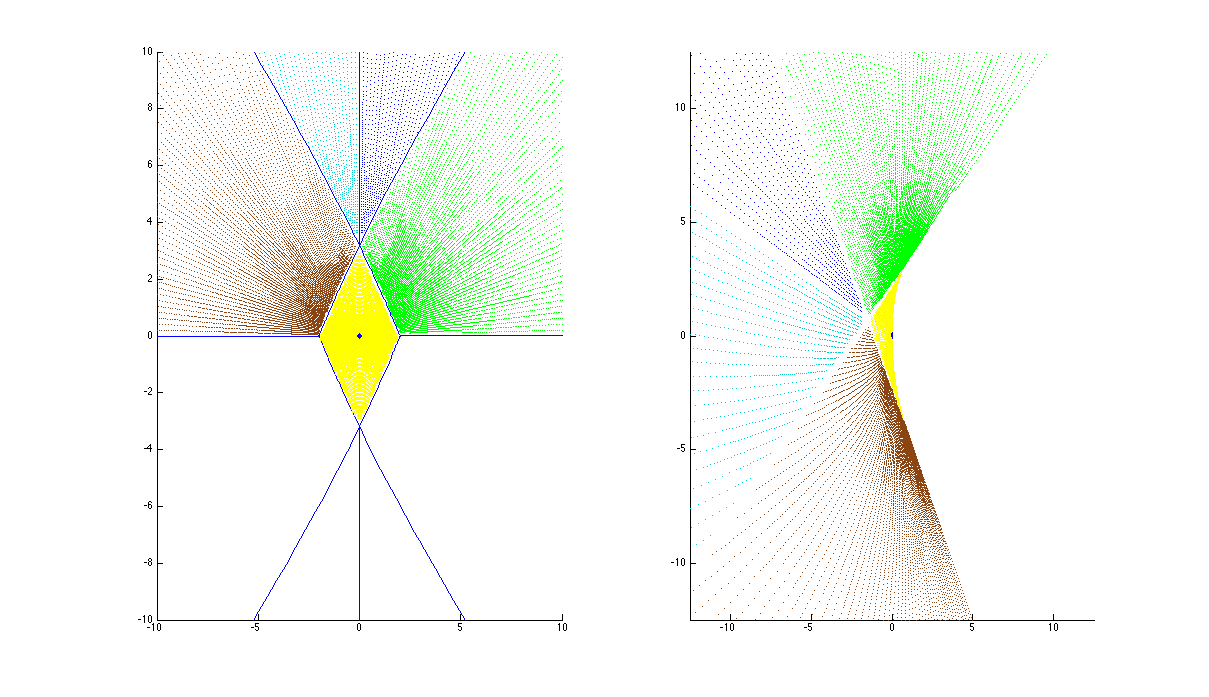}
\caption{MAR3}
\label{MAR3}
\end{figure}

\begin{figure}[h]
\centering
\includegraphics[width=.5\textwidth]{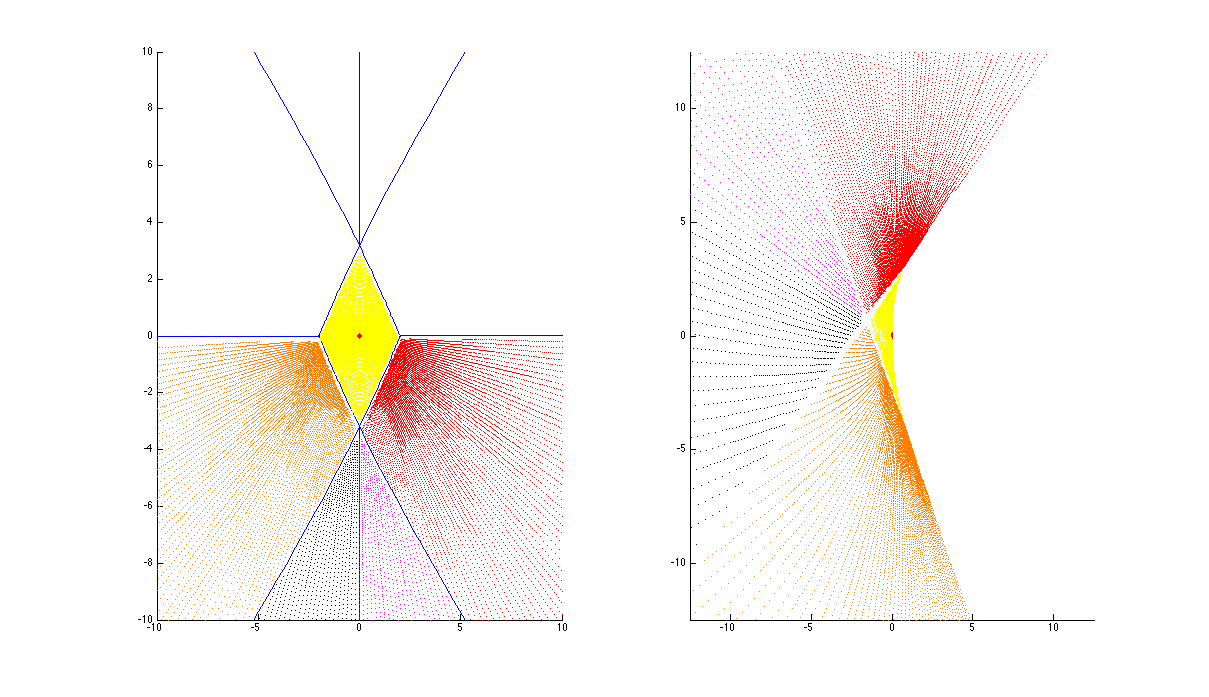}
\caption{MAR4}
\label{MAR4}
\end{figure}\begin{figure}[h]

\centering
\includegraphics[width=.5\textwidth]{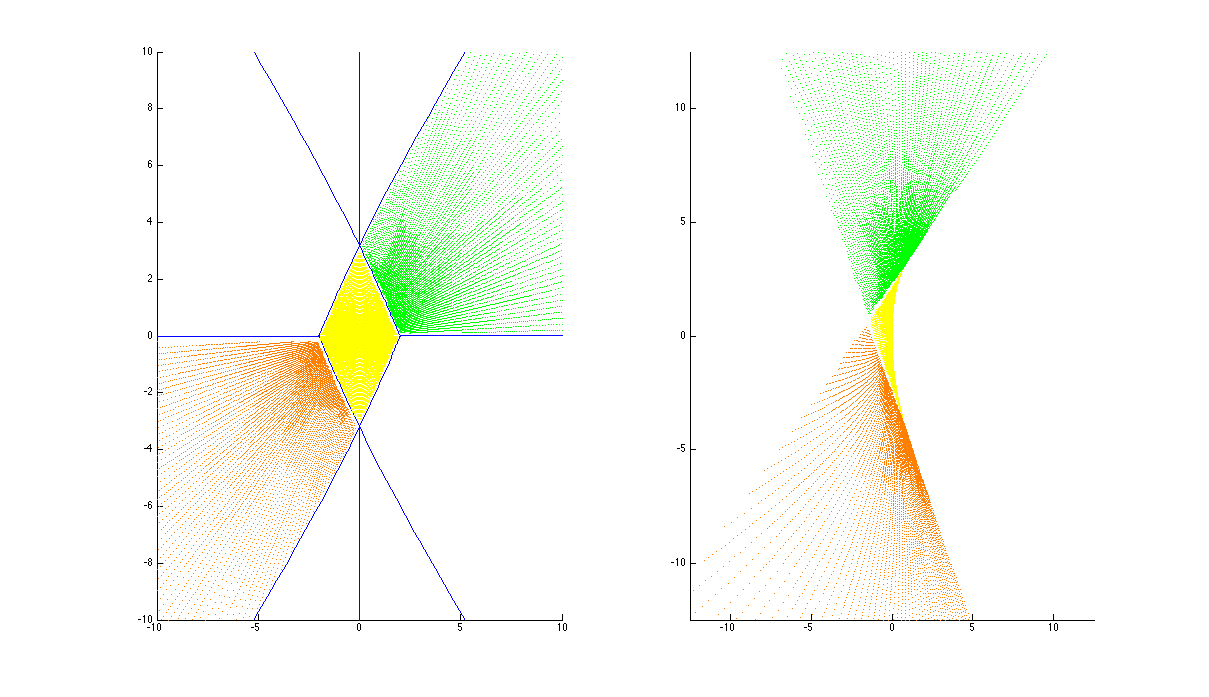}
\caption{MAR5}
\label{MAR5}
\end{figure}

\begin{figure}[h]
\centering
\includegraphics[width=.5\textwidth]{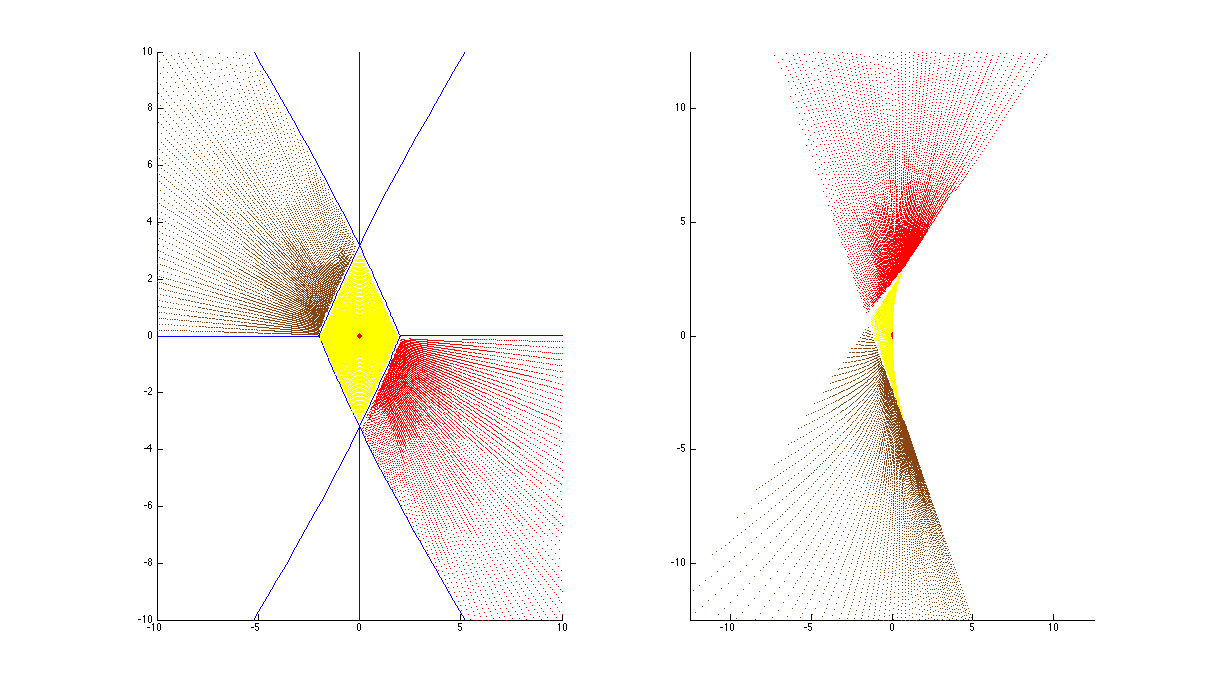}
\caption{MAR6}
\label{MAR6}
\end{figure}

\begin{figure}[h]
\centering
\includegraphics[width=.5\textwidth]{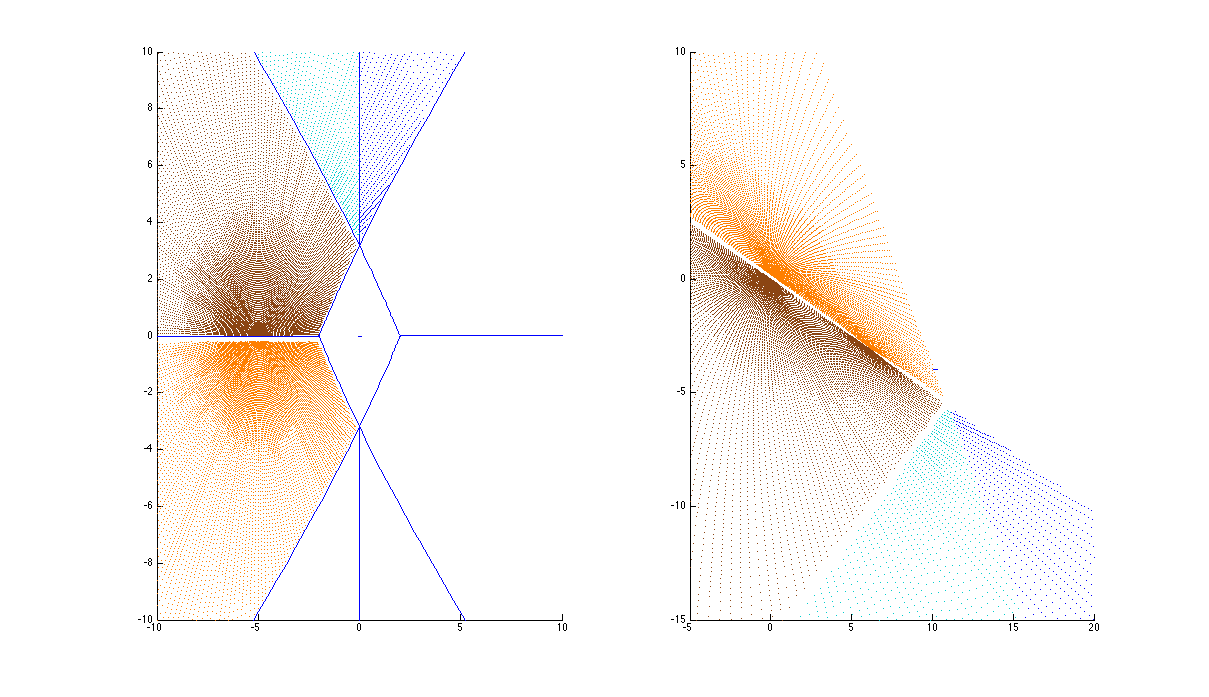}
\caption{MAR7}
\label{MAR7}
\end{figure}

\begin{figure}[h]
\centering
\includegraphics[width=.5\textwidth]{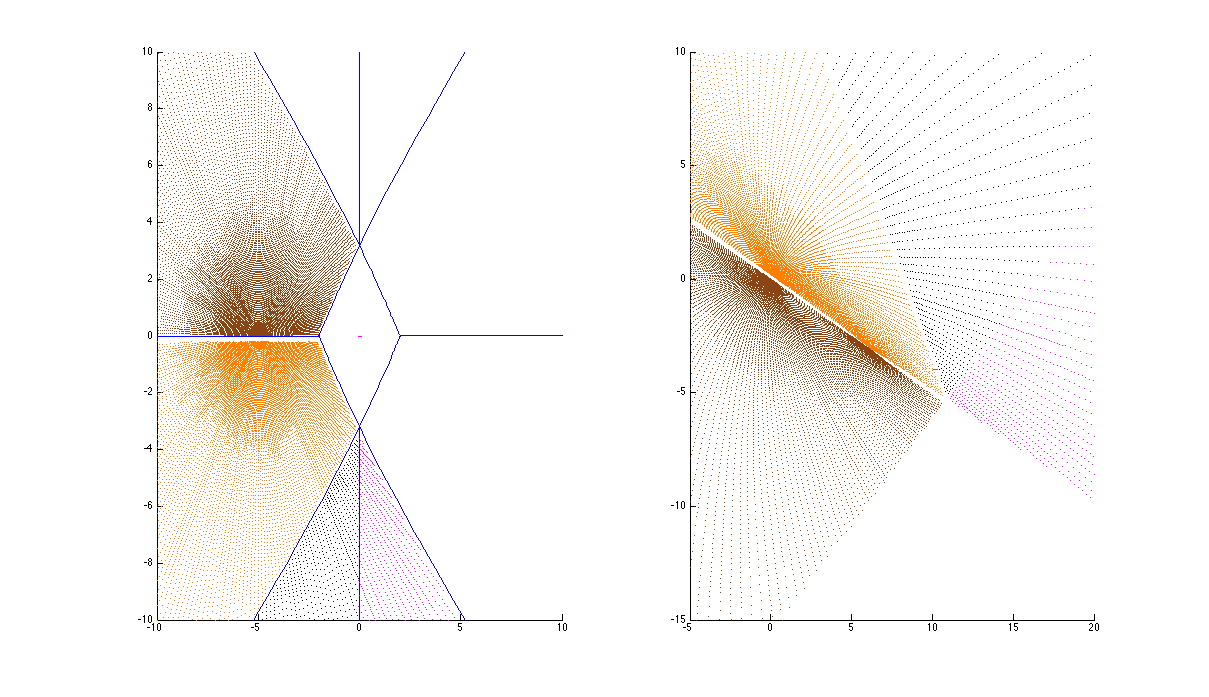}
\caption{MAR8}
\label{MAR8}
\end{figure}

\begin{figure}[h]
\centering
\includegraphics[width=.5\textwidth]{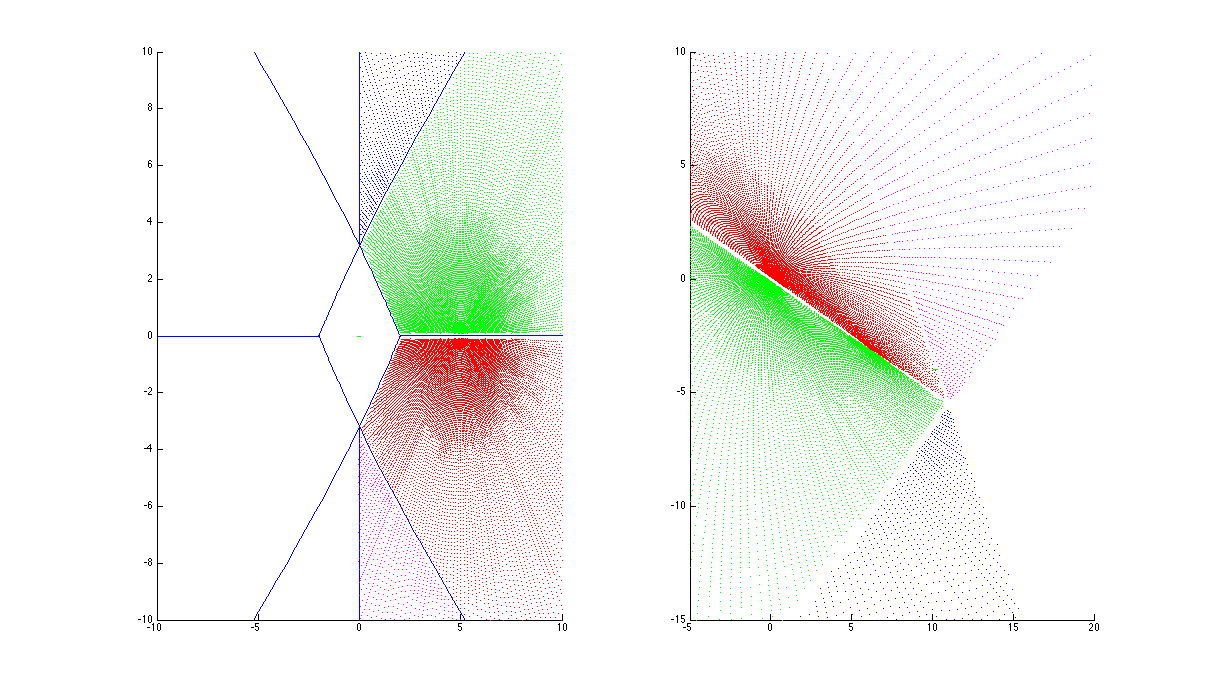}
\caption{MAR9}
\label{MAR9}
\end{figure}

\begin{figure}[h]
\centering
\includegraphics[width=.5\textwidth]{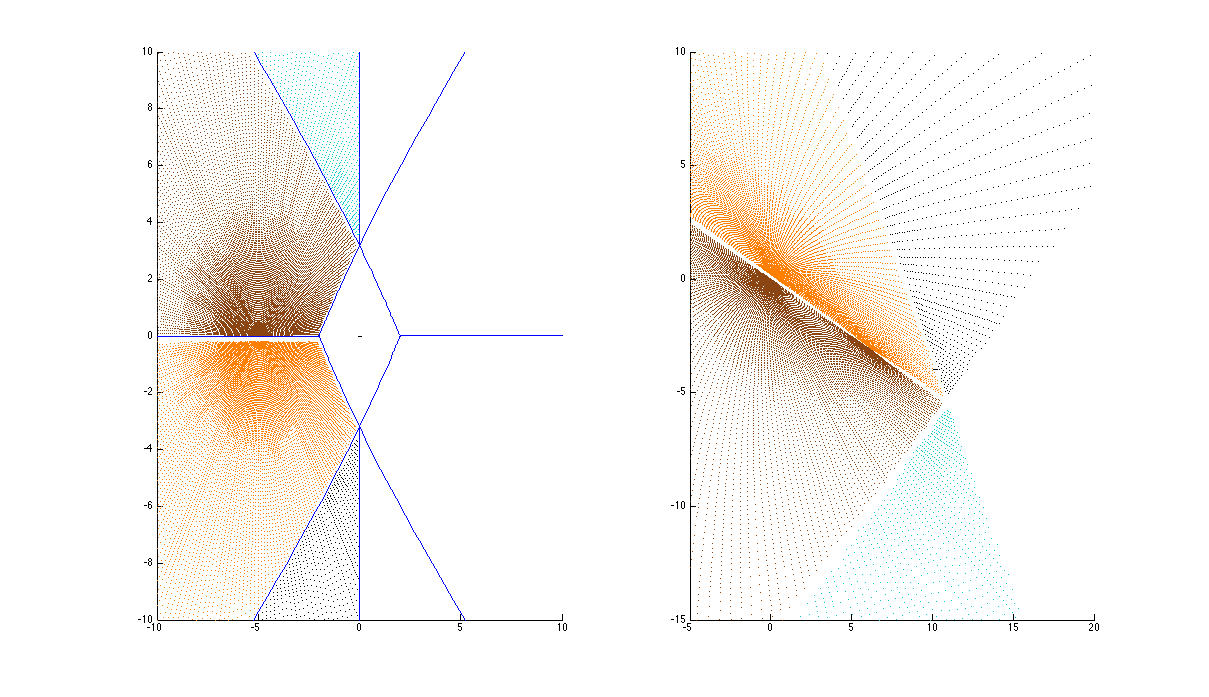}
\caption{MAR10}
\label{MAR10}
\end{figure}

\FloatBarrier

\subsection{The universal building as a cone}\label{octogon}

In this subsection, we will complete the pre-building $\Bpre$ constructed in the previous subsection to a building, following the strategy outlined in the introduction. The main proposition of this subsection is Proposition \ref{completion-universal-property}. First, we need some preliminary lemmas, which will be used in this subsection as well as the next. 

\begin{lemma}\label{attaching-sectors-prelim}
		Let $A_+$ be a closed half apartment in $B$ bounded by a wall $H$, and let $S$ be a sector with vertex $x$ such that $S\cap H=P$ is a panel of $S$ (in particular $x \in H$) ,
and such that the germ $\Delta _xS$ is not contained in $A_+$. Then $S$ is opposite to some sector $S^-$ of $A_+$.
	\end{lemma}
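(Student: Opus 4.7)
The plan is to find an apartment $A^{\natural}$ simultaneously containing $A_+$ and $S$; inside such an apartment the sector $S^{-}:= -S$ obtained by reflecting $S$ through the vertex $x$ is tautologically opposite to $S$, and we will verify that it lies in $A_+$.

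First extend $A_+$ to a full apartment $A_1 = A_+ \cup A_-$, where $A_-$ is the complementary closed half-apartment. Two cases arise depending on the position of $\Delta_x S$ relative to $A_1$. If $\Delta_x S \subset A_-$, then by the uniqueness of a sector in an apartment determined by its vertex and chamber germ, $S$ is the unique sector in $A_-$ with vertex $x$ and germ $\Delta_x S$; in particular $S \subset A_- \subset A_1$, and we take $A^{\natural} := A_1$. Otherwise $\Delta_x S$ is contained neither in $A_+$ nor in $A_-$, so $S$ leaves $A_1$ at $P$ and, by a standard argument using $S \cap H = P$ together with the convexity of $S \cap A_1$ inside any apartment containing $S$, one obtains $A_1 \cap S = P$. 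Axiom $(SC)$ of Proposition \ref{properties}, applied to $A_1$, $S$, and $H$, then produces two apartments $A_2 \neq A_3$, each containing $H \cup S$, with $A_1 \cap A_j$ a half-apartment bounded by $H$ for $j = 2, 3$. The only half-apartments of $A_1$ bounded by $H$ are $A_+$ and $A_-$; the two apartments $A_2$, $A_3$ realize the two possible extensions of $A_1$ across $H$ that contain $S$ --- one preserving $A_+$ and the other preserving $A_-$ --- so for exactly one of them, call it $A^{\natural}$, we have $A_1 \cap A^{\natural} = A_+$.

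With $A^{\natural}$ in hand, the conclusion follows. Inside $A^{\natural}$ the wall $H$ separates the apartment into two closed half-apartments, one of which is $A_+$. The sector $S$ is based at $x \in H$, has the panel $P$ on $H$, and its germ at $x$ is not contained in $A_+$; hence $S$ lies in the half of $A^{\natural}$ opposite to $A_+$. Its reflection $S^{-}$ through $x$ therefore lies entirely on the $A_+$-side of $H$ inside $A^{\natural}$, whence $S^{-} \subset A_+$. Since the germs $\Delta_x S^{-}$ and $\Delta_x S$ are opposite in $A^{\natural}$ by construction, $S^{-}$ is opposite to $S$, as required.

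The main obstacle I expect is the assertion in the second case that exactly one of the two apartments produced by $(SC)$ meets $A_1$ in $A_+$ rather than in $A_-$. Axiom $(SC)$ alone only says that $A_1 \cap A_j$ is \emph{some} half-apartment bounded by $H$, so ruling out the degenerate possibility that both intersections equal $A_-$ requires a brief supplementary argument --- in effect a uniqueness statement for the extension of $A_1$ across $H$ on each of the two sides. If this bookkeeping proves awkward, a conceptually cleaner alternative is to descend to the spherical building at $x$: the hypothesis translates to saying that $\Delta_x S$ is a chamber possessing a panel on the wall $\Delta_x H$ but not lying in the half-apartment $\Delta_x A_+$, and the standard ``half-apartment axiom'' for spherical buildings then directly furnishes a spherical apartment containing both $\Delta_x S$ and $\Delta_x A_+$; the chamber opposite to $\Delta_x S$ inside this apartment is the germ of the desired $S^{-}$, which lies in $A_+$ by the uniqueness of sectors with prescribed vertex and germ.
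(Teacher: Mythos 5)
Your approach differs from the paper's. You aim to produce an apartment $A^{\natural}$ containing $A_+\cup S$ and then reflect $S$ through $x$ inside it, whereas the paper works directly in the link of $\B$ at $x$: there $\Delta_x A_+$ is half of a hexagon, $\Delta_x S$ is a chamber with one panel on the wall vertex, and the paper identifies the candidate $S^-$ with $\Delta_x S^-$ lying in $\Delta_x A_+$ and observes that $\Delta_x S$ and $\Delta_x S^-$ must be opposite, since otherwise the link would contain a cycle of length below six. Your alternative sketch at the end --- descending to the spherical building and invoking the half-apartment axiom --- is essentially this argument and is cleaner than your primary route.

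The primary route has two gaps. In Case 1 you assert that $\Delta_x S\subset A_-$ forces $S\subset A_-$ ``by uniqueness of a sector determined by its vertex and chamber germ.'' That uniqueness holds only within a fixed apartment; a sector in a building is \emph{not} determined by its vertex and germ (two rays in a tree sharing an initial direction may later diverge). The conclusion is nevertheless reachable without this claim: take the sector $S_1\subset A_-$ of $A_1$ with germ $\Delta_x S$, let $S^-$ be its opposite in $A_1$, which lies in $A_+$, and note that $\Delta_x S^-$ is opposite to $\Delta_x S_1=\Delta_x S$. But the step as written is wrong, and the detour through $A^{\natural}$ is unnecessary in this case. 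In Case 2 you correctly flag the gap yourself: (SC) as stated does not say the two half-apartments $A_1\cap A_2$ and $A_1\cap A_3$ are different. One way to close it is to note that no proper enclosed subset of an apartment can contain both the half-apartment $A_-$ and a sector escaping to infinity across $H$; hence the enclosure of $A_-\cup S$ in any apartment containing it is the whole apartment, so at most one apartment contains $A_-\cup S$, and $A_2\ne A_3$ forces $A_1\cap A_2\neq A_1\cap A_3$. By the time you have assembled the case split, (SC), and this hull argument, you have used noticeably more machinery than the paper's single girth-six observation, which produces $S^-$ directly and never needs an apartment containing all of $A_+\cup S$.
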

	\begin{proof}
	Let $A$ be an apartment containing $A_+$. Consider the link $G$ of $B$ at $x$. This is a spherical building. Then (keeping only $A$ and $S$) the situation is as in Figure \ref{link-attachingsectors}, where $S^-\subset A_+$. From this, it is clear that $S$ has to be opposite to $S^-$, otherwise the girth would be less than six.
	
	\begin{figure}[h]
	\centering
	\includegraphics[width=.5\textwidth]{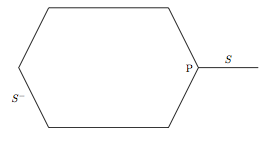}
	\caption{Part of link near $x$}
	\label{link-attachingsectors}
	\end{figure}
	
	\end{proof}	

  	\begin{lemma}\label{attaching-sectors}
Let $A_+$, $H$,$P$ and $S$ be as in  Lemma \ref{attaching-sectors-prelim}. Then there exists
an apartment containing $A_+\cup S$. 
\end{lemma}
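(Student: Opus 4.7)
My plan is to apply axiom (SC) from Proposition \ref{properties} to fuse $A_+$ with the sector $S$. Fix an apartment $A \supseteq A_+$ and decompose $A = A_+ \cup A_-$, where $A_-$ is the opposite closed half-apartment bounded by $H$. First I would dispose of the easy case $S \subseteq A$: since $S \cap H = P$ confines $S$ to one side of $H$, and $\Delta_x S \not\subseteq A_+$ by hypothesis, we must have $S \subseteq A_-$, and then $A$ itself contains $A_+ \cup S$.

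In the remaining case $S \not\subseteq A$, the key preliminary step is to verify that $A \cap S = P$. For this, I would assume that some $y \in (A \cap S) \setminus P$ exists and derive a contradiction. Because apartments and sectors are convex subsets of the CAT(0) space $\B$, the geodesic segment $[x,y]$ lies in both $A$ and $S$; near $x$ it enters either the interior of $S$ or a face of $S$ distinct from $P$, so a neighborhood of $x$ in $S$ is realized inside $A$. This would give $\Delta_x S \subseteq A$. Since a sector inside an apartment is determined by its vertex and germ, the whole sector $S$ would then lie in $A$, contradicting the assumption. Hence $A \cap S = P$ is a panel of $A$.

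Now (SC) applies with $A_1 = A$, producing apartments $A_2 \neq A_3$ each containing $H \cup S$, with $A \cap A_j$ a half-apartment for $j = 2, 3$. Since $H \subseteq A \cap A_j$, each such half-apartment is bounded by $H$ and equals either $A_+$ or $A_-$. The crux is to rule out the possibility that the two half-apartments coincide; once this is done, exactly one of $A_2, A_3$ intersects $A$ in $A_+$, and that apartment contains $A_+ \cup S$. Suppose the half-apartments both equal $A_+$. Because we are in the non-trivial case $S \not\subseteq A$, in particular $\Delta_x S \not\subseteq A_+$, so Lemma \ref{attaching-sectors-prelim} furnishes a sector $S^- \subseteq A_+$ opposite to $S$ at $x$; then $A_2$ and $A_3$ both contain $S \cup S^-$, contradicting the uniqueness axiom (CO). The symmetric case in which both half-apartments equal $A_-$ is handled identically: the main-case assumption also gives $\Delta_x S \not\subseteq A_-$, so Lemma \ref{attaching-sectors-prelim} applied on the $A_-$ side produces an opposite sector there, and (CO) yields the same contradiction.

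The main obstacle I anticipate is the equality $A \cap S = P$ needed to activate (SC). This rests on the rigidity statement that a sector inside an apartment is uniquely determined by its vertex and germ, together with a careful convexity argument to propagate a single point $y \in A \cap S$ outside $P$ back into a germ-level inclusion $\Delta_x S \subseteq A$. Once this is granted, the remainder is a clean combination of (SC), Lemma \ref{attaching-sectors-prelim}, and (CO).
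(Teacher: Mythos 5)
Your route is genuinely different from the paper's: you try to reduce to axiom (SC), whereas the paper uses only the preliminary lemma together with (CO) and Finsler convexity. The paper's proof is very short: by Lemma \ref{attaching-sectors-prelim} there is a sector $S^- \subset A_+$ opposite to $S$ at $x$; by (CO) there is a (unique) apartment $A'$ containing $S\cup S^-$; then $P\cup S^- \subset A\cap A'$ (where $A\supset A_+$ is any apartment), and since intersections of apartments are enclosed and $\hull(P\cup S^-)=A_+$, one gets $A_+\subset A'$, so $A'$ works.

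There is a genuine gap in your argument, in the step that is supposed to justify $A\cap S = P$. You assume $y\in (A\cap S)\setminus P$, deduce $\Delta_x S\subseteq A$, and then assert that ``since a sector inside an apartment is determined by its vertex and germ, the whole sector $S$ would then lie in $A$.'' That implication is false in a thick building: the uniqueness of a sector with a given vertex and germ holds \emph{inside a fixed apartment}, but two sectors of $\B$ can share a germ at $x$ and diverge farther out. So from $\Delta_x S\subseteq A$ you may only conclude that some sector $S_A\subset A$ has $\Delta_x S_A=\Delta_x S$, not that $S=S_A$. Concretely, the case $\Delta_x S\subseteq A_-$ (forced once $\Delta_x S\subseteq A$, since $\Delta_x S\not\subseteq A_+$) together with $S\not\subseteq A$ is perfectly possible, and in that case $A\cap S$ is a proper enclosed subset of $S$ strictly larger than $P$, so (SC) simply does not apply and your contradiction never materializes. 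Your case split is therefore really a trichotomy --- $S\subseteq A$, or $\Delta_x S\subseteq A$ but $S\not\subseteq A$, or $\Delta_x S\not\subseteq A$ --- and the middle case is left unhandled by your method. (A smaller remark: your justification that ``the main-case assumption gives $\Delta_x S\not\subseteq A_-$'' should be phrased as following from $A\cap S = P$, not from $S\not\subseteq A$; but that is a wording issue subsidiary to the gap above.) The paper's approach sidesteps all of this because the opposite sector $S^-$ and (CO) work uniformly, regardless of whether $\Delta_x S$ lies in $A$, and the only convexity input is the standard fact that an intersection of two apartments is enclosed.
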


	\begin{proof}
		Let $A$ be an apartment containing $A_+$ as a half-apartment. By Lemma \ref{attaching-sectors-prelim}, we can use (CO) from (\ref{properties})  to conclude that $S$ and $S^-$ are contained in a common apartment $A'$. Then $P\cup S^- \subset A\cap A'$, thus $A_+ \subset A\cap A'$, by convexity.
	\end{proof}

\begin{corollary}\label{four-sectors}
In the $A_2$ case i.e. for $SL_3$ buildings, suppose $S_1,S_2,S_3,S_4$ are sectors based at a single point $x$, such that
$S_i$ and $S_{i+1}$ share a common face for $i=1,2,3$. Suppose that these successive common faces are distinct.
Then $S_1,S_2,S_3,S_4$ are contained in a common apartment.
\end{corollary}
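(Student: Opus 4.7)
The plan is to construct an apartment containing $S_1 \cup S_2 \cup S_3 \cup S_4$ by three iterated applications of Lemma \ref{attaching-sectors}, attaching the sectors $S_2$, $S_3$, $S_4$ one at a time to an apartment containing the sectors already attached.

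The first step will be to start with any apartment $A_0$ containing $S_1$. The shared face $P_{12}$ is a panel of $S_1$ lying on a wall $H_{12}$ through $x$ in $A_0$; let $A_0^+$ be the closed half-apartment of $A_0$ bounded by $H_{12}$ that contains $S_1$. Since $S_1$ and $S_2$ are distinct sectors at $x$ sharing precisely the face $P_{12}$, the germ $\Delta_x S_2$ lies on the opposite side of the wall-trace of $H_{12}$ in the link of $x$, so $\Delta_x S_2 \not\subset A_0^+$ and $S_2 \cap H_{12} = P_{12}$. Lemma \ref{attaching-sectors} then yields an apartment $A_1 \supset A_0^+ \cup S_2 \supset S_1 \cup S_2$.

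For the second step, inside $A_1$ I would consider the wall $H_{23}$ containing $P_{23}$. Since $P_{12}$ and $P_{23}$ are distinct boundary rays of the rank-$2$ sector $S_2$, they lie on different walls through $x$, so $H_{23} \neq H_{12}$; consequently, the half-apartment $A_1^+$ of $A_1$ bounded by $H_{23}$ and containing $S_2$ also contains $S_1$, which is adjacent to $S_2$ across $H_{12}$ rather than $H_{23}$. Lemma \ref{attaching-sectors} applied to $(A_1^+, H_{23}, S_3)$ then produces an apartment $A_2 \supset S_1 \cup S_2 \cup S_3$. The third step will repeat this pattern with $(A_2^+, H_{34}, S_4)$, where $H_{34}$ is the wall of $A_2$ containing $P_{34}$ and $A_2^+$ is the half-apartment of $A_2$ bounded by $H_{34}$ on the side of $S_3$.

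The subtlety I expect to be the main obstacle is verifying that this half-apartment $A_2^+$ contains all three of $S_1, S_2, S_3$. The walls $H_{12}, H_{23}, H_{34}$ are pairwise distinct (each consecutive pair consists of the two boundary rays of a single sector), and in the $A_2$ case the apartment $A_2$ has exactly three walls through $x$, so these three walls exhaust them. In the resulting hexagonal structure at $x$, one checks directly that $S_1, S_2, S_3$ occupy three consecutive wedges and that the half-apartment bounded by $H_{34}$ on the side of $S_3$ consists of exactly those three wedges; this is where the rank-two $A_2$ hypothesis enters essentially. Once this is verified, the third application of Lemma \ref{attaching-sectors} to $(A_2^+, H_{34}, S_4)$ will produce an apartment containing all four sectors, completing the proof.
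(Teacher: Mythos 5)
Your proof is correct and takes essentially the same approach as the paper's: the paper phrases it as an induction on the number of sectors attached, while you unroll the induction into three explicit applications of Lemma \ref{attaching-sectors}. Both arguments hinge on the identical key observation — that in the $A_2$ case the half-apartment bounded by the new wall on the side of the last-attached sector already contains all previously attached sectors, because a half-apartment about $x$ comprises exactly three consecutive sectors.
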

\begin{proof}
We show this by induction on $i$. For $i=1$ it is easy. Suppose $i\leq 4$ and we have shown it up to $i-1$, that is to
say we have an apartment $A'$ containing $S_1,\ldots , S_{i-1}$. These sectors satisfy the same adjacency condition within $A'$,
from which it follows that they are successive sectors aranged around the vertex $x$. Now, $R:= S_i\cap S_{i-1}$ is the
face of $S_{i-1}$ (ray) which is different from $S_{i-2}\cap S_{i-1}$. Let $H$ be the half-apartment of $A'$ whose boundary
contains $R$, and which contains $S_{i-1}$. Then, $H$ contains $S_1,\ldots , S_{i-1}$. Indeed, if some previous $S_j$ were
not in $H$ then its boundary would have to contain $R$. (Here is where we use $i-1\leq 3$, to say that a previous $S_j$ cannot
leave $H$ along the other ray in $\partial H$.) Now apply the previous lemma: we get an apartment $A$ containing $H\cup S_i$,
so $A$ contains $S_1,\ldots , S_i$. This completes the inductive step.
\end{proof}

We now turn to the construction of the universal building. Consider the pe-building constructed in the previous subsection by gluing MARs. Note that it has a special vertex $\{o\}$, namely, the image of both the collision points. Now consider the link $G$ of the pre-building $\Bpre$ at the vertex $\{o\}$. This is shown in Figures \ref{octagon-picture} and \ref{adding-sectors-octagon}

\begin{construction}\label{BNR-Bu-construction}
Let $G$ be the link at the vertex $\{o\}$ of the building. We can view $G$ as a bipartite graph, by coloring its alternate vertices black and white. Write $bwbwbw..bw$ for a sequence of black and white vertices connected by edges. Now define a bipartite graph inductively as follows. Let $G = G_0$. Having defined $G_{n-1}$, we define $G_n$ by gluing in a pair of edges $bwb$ (resp. $wbw$) to every sequence of $4$ edges $wbwbw$ (resp. $bwbwb$) that is not contained in a hexagon. See Figure \ref{add-to-octagon}. The idea is that the link of the affine building that we want to construct should be a spherical building of type $A_2$, and the apartments in a spherical building of type $A_2$ are hexagons. The first stages of this process are shown in Figure \ref{adding-sectors-octagon}. Define
\[
\B^{8}: = \cup_{n=0}^{\infty} G_n
\]

\end{construction}

	\begin{figure}[h]
	\centering
	\includegraphics[width=.5\textwidth]{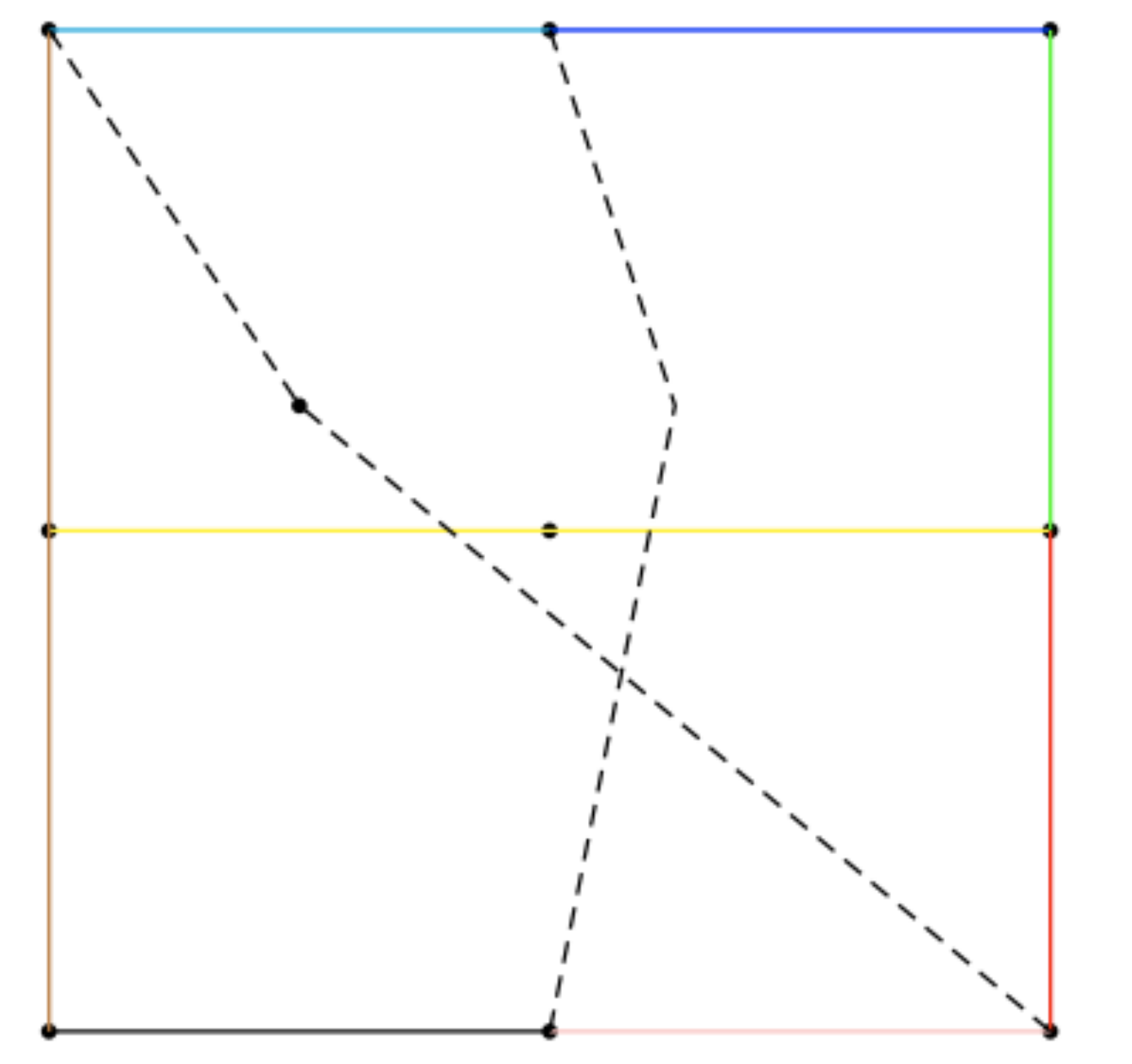}
	\caption{Adding sectors to the link of the BNR pre-building}
	\label{adding-sectors-octagon}
	\end{figure}

	\begin{figure}[h]
	\centering
	\includegraphics[width=.5\textwidth]{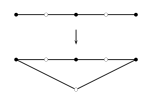}
	\caption{Completing hexagons in the bipartite graph}
	\label{add-to-octagon}
	\end{figure}

Then we have the following proposition

\begin{proposition}
There is a natural structure of a spherical building of type $A_2$ on $\B^8$.
\end{proposition}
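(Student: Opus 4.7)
The plan is to verify the three axioms defining an $A_2$ spherical building as recalled in the text preceding the construction: the graph $\B^8$ is bipartite with girth at least $6$, any two edges lie in a common hexagon, and the diameter is at most $3$. Bipartiteness is immediate from the construction, since the completion step places a new vertex between two existing vertices of opposite color; the apartments will be identified with the $6$-cycles of $\B^8$.

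First, I would verify by induction on $n$ that each $G_n$ has girth $\geq 6$, the base case $G_0$ being an octagon of girth $8$. At stage $n$, one adjoins a fresh vertex $w$ and a pair of edges $v_0 w$, $w v_4$ completing a $4$-path $P = v_0 v_1 v_2 v_3 v_4$ of $G_{n-1}$ which is not contained in any hexagon. Any new cycle of $G_n$ must pass through $w$; since $w$ has degree exactly $2$, such a cycle uses both new edges and closes via a path in $G_{n-1}$ from $v_4$ to $v_0$. By bipartiteness this closing path has even length, and a length-$2$ closing path would produce a common neighbor of $v_0, v_4$ and hence a hexagon containing $P$, contradicting the choice of $P$. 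Performing the completions one at a time ensures the inductive hypothesis propagates; this also handles the ``at most one pair of edges between any two vertices'' claim, since a second parallel completion of another $4$-path between $v_0$ and $v_4$ would be redundant (after the first completion, $v_0$ and $v_4$ already have $w$ as a common neighbor).

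Next, the apartment axiom and diameter bound can be verified together. By construction, every $4$-path in $\B^8$ is eventually contained in a hexagon, so any two same-color vertices $u, v$ at graph distance $4$ acquire a common neighbor, giving them distance at most $2$ in $\B^8$; combined with bipartiteness this bounds the diameter by $3$. For two edges $e_1, e_2$ to lie in a common hexagon it suffices to find a $4$-path containing both. When $e_1, e_2$ share a vertex they form a $2$-path, which can be extended to a $4$-path because every vertex of $\B^8$ has degree $\geq 2$ (starting from $G_0$ and only increasing under completions), and girth $\geq 6$ prevents the extended path from closing into a short cycle. When $e_1, e_2$ are disjoint, the diameter bound gives a short connecting path which one joins to $e_1$ and $e_2$ to produce a $4$-path. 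The resulting $4$-path lies in a hexagon by construction.

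The main obstacle is the careful combinatorial bookkeeping required to push the apartment axiom through all pairs of edges of $\B^8$: one must verify that the case analysis above covers every configuration without introducing cycles shorter than $6$ in the extension step, and that edges introduced at very different stages of the construction can always be jointly housed in a hexagon at some later finite stage. The cleanest implementation is to strengthen the inductive hypothesis at each $G_n$ to track both girth and a finitary form of the apartment axiom, and then pass to the colimit $\B^8 = \bigcup_n G_n$.
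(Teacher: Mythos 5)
Your approach is a direct, hands-on verification of the defining properties, whereas the paper's proof is a one-liner: it cites \cite[Proposition 4.44]{Abramenko-Brown}, which characterizes rank-$2$ buildings (generalized $m$-gons) as connected bipartite graphs with every vertex of degree at least $2$, diameter $m$, and girth $2m$, and then asserts that $\B^8$ satisfies these with $m=3$. Your proof therefore does the work that the paper's citation leaves implicit, and the two are equivalent in content; in particular, once you have bipartiteness, connectedness, $\deg \geq 2$, girth $\geq 6$ and diameter $\leq 3$, the ``every two edges lie in a common hexagon'' condition is automatic, so your third step is redundant if one is willing to invoke the generalized-polygon criterion rather than the informal axioms quoted earlier in the section.

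On the substance, the girth induction is correct, and your observation that completions should be performed one at a time (or with bookkeeping ensuring one does not simultaneously glue two completions between the same endpoint pair) is a genuine refinement of the paper's loosely phrased construction: gluing all completions for a given $G_{n-1}$ at once could create $4$-cycles, exactly as you note. The one place that needs tightening is the diameter bound: you argue that same-color vertices at distance $4$ acquire a common neighbor, but you do not address pairs at distance $6, 8, \dots$ in some $G_n$. The fix is a short descent: if $d_{\B^8}(u,v) = k \geq 4$, a geodesic realizing this distance lies in some $G_n$ (the distance stabilizes since $\B^8$ is an increasing union), its initial $4$-subpath $v_0 \cdots v_4$ is eventually completed to a hexagon, whence $d_{\B^8}(v_0,v_4) \leq 2$, contradicting that the path was geodesic. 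With that descent in place, and with connectedness and $\deg \geq 2$ recorded (both clear from $G_0$ and the fact that completions only increase degree), your argument establishes the hypotheses of the Abramenko--Brown criterion and the proposition follows.
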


\begin{corollary}
There  is a natural structure of a building with Weyl group $W \rtimes \mathbb{R}^2$ on $\Bu : = \Cone(\B^8)$, the cone over the spherical building $\B^8$. Here $W$ is the Weyl group of $\SL_3\cc$. 
\end{corollary}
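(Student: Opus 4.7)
The plan is to use the standard Euclidean cone construction, which turns any spherical building into an affine building with the same spherical Weyl group (see, e.g., \cite{Kleiner-Leeb}, \cite{Parreau-thesis}). First I would define $\Bu = \Cone({\B}^8)$ as the set of equivalence classes $[x,t]$ with $x \in {\B}^8$, $t \in \mathbb{R}_{\geq 0}$, under the identification $[x,0] \sim [y,0]$; write $o$ for the common cone point. Equip $\Bu$ with the Euclidean cone metric
$$d([x,t], [y,s])^2 \;=\; t^2 + s^2 - 2 t s \cos \bigl( \min(d_{{\B}^8}(x,y), \pi) \bigr).$$
Since ${\B}^8$ is CAT$(1)$ of diameter at most $\pi$ (as a spherical $A_2$ building), this cone metric is CAT$(0)$.

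Next, I would specify the apartment atlas $\cA$ and facet collection $\cF$. For each hexagonal apartment $H \subset {\B}^8$, the subset $\Cone(H) \subset \Bu$ is isometric to the Euclidean plane $\euc = \{ v \in \mathbb{R}^3 : v_1 + v_2 + v_3 = 0 \}$ in such a way that the six spherical chambers of $H$ cone to the six $A_2$ Weyl sectors based at $o$. I view $\euc$ as an affine apartment for the affine Coxeter complex $(\aff, \waff)$ with $\waff = W \ltimes \mathbb{R}^2$, where the translation subgroup is the full dense $\mathbb{R}^2$, and install the associated filter-theoretic facet structure of Definition \ref{apartment}. Take $\cA$ to be the collection of all such $\Cone(H)$, and $\cF$ the induced collection of facets; axiom (B0) then holds by construction.

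For (B1) and (B2), I would transfer the axioms from the spherical building ${\B}^8$. Two facets of $\Bu$ are supported on cones of germs of chambers based at $o$, which correspond to closures of chambers of ${\B}^8$; by the spherical (B1), these chambers lie in a common hexagonal apartment $H$, so $\overline{F} \cup \overline{F'} \subset \Cone(H) \in \cA$. For (B2), I would use the identity $\Cone(H_1) \cap \Cone(H_2) = \Cone(H_1 \cap H_2)$: the spherical apartment isometry $H_1 \to H_2$ fixing $H_1 \cap H_2$ extends radially to an apartment isometry $\Cone(H_1) \to \Cone(H_2)$ with the required fixed-point property, and $H_1 \cap H_2$, being a union of spherical chambers, cones to a union of Weyl sectors, hence a union of enclosed facets in the sense of Definition \ref{apartment}.

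The main (mild) obstacle is the bookkeeping forced by the dense translation subgroup $\mathbb{R}^2 \subset \waff$: the set of walls in each apartment is not locally finite, so facets away from $o$ must be described as filters in the sense of Definition \ref{filter}. However, the only genuine reflection hyperplanes of $\Bu$ pass through $o$, and every facet at a point $x \neq o$ is the germ of a vectorial facet lifted from ${\B}^8$, so the filter-theoretic verification of (B1) and (B2) reduces to the spherical data, as in the references cited above. This yields the desired building structure on $\Bu$ with Weyl group $\waff \simeq W \ltimes \mathbb{R}^2$.
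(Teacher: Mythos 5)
Your proof is correct and follows essentially the same route the paper implicitly takes: the paper gives no explicit argument for this corollary, treating it as an immediate consequence of the preceding proposition (that $\B^8$ is a spherical $A_2$ building) together with the standard fact that the Euclidean cone over a spherical building is a non-discrete affine $\mathbb{R}$-building whose affine Weyl group has full translation part. Your write-up just supplies the routine filter-theoretic verification of axioms (B0)--(B2) for the cone that the paper leaves to the reader.
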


The proposition follows immediately from the following \cite[Proposition 4.44]{Abramenko-Brown}, which says that a connected biparitite graph in which every vertex is the face of at least two edges is a building if and only if it has diameter $m$ and girth $2m$ for some $m$ with $2 \leq m \leq \infty$, in which case it is a building of type.  Ours is the case $m = 3$.

\begin{proposition}\label{completion-universal-property}
The isometry of pre-buildings $i: \Bpre \rightarrow \Bu$ has the following universal property: given any building $\B$ and an isometric embedding $j: \Bpre \hookrightarrow \B$ there exists a folding map of buildings $\theta: \Bu \rightarrow \B$ such that the following diagram commutes:
\[
\xymatrix{
\Bpre \ar[r]^{i} \ar[dr]_{j} & \Bu \ar@{.>}[d]^{ \theta}\\
& \B
}
\]
\end{proposition}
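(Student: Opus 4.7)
The plan is to build $\theta$ inductively along the filtration $G = G_0 \subset G_1 \subset G_2 \subset \cdots$ whose union is $\B^8$, and then to take the cone. Since $\Bu = \Cone(\B^8)$ and the isometric embedding $j$ already sends the cone point $o \in \Bpre$ to a vertex $j(o) \in \B$, it suffices to build a compatible sequence of maps $\theta_n \colon G_n \to \mathrm{Link}_{j(o)}(\B)$ such that each hexagonal subgraph of $G_n$ maps isometrically onto a spherical apartment in the link; coning at $j(o)$ then produces the desired map $\theta \colon \Bu \to \B$.

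For the base case, $j$ restricts on the octagon $G_0 \subset \Bpre$ to an isometric embedding into $\mathrm{Link}_{j(o)}(\B)$, giving $\theta_0$. Since $G_0$ is an $8$-cycle without chords, it contains no hexagonal subgraph, so the inductive hypothesis is vacuously satisfied at this stage. Now suppose $\theta_{n-1}$ has been defined, and let $C$ be a length-four chain in $G_{n-1}$ not contained in any hexagon of $G_{n-1}$; its image $\theta_{n-1}(C)$ is a sequence of four sectors $S_1, S_2, S_3, S_4$ based at $j(o)$, with each consecutive pair sharing a common panel. By Corollary \ref{four-sectors} these four sectors lie in a common apartment $A_C$ of $\B$. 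Inside $A_C$ the end sectors $S_1$ and $S_4$ occupy opposite positions of the hexagonal arrangement around $j(o)$, so property (CO) of Proposition \ref{properties} applied to the opposed pair $(S_1, S_4)$ forces $A_C$ to be the unique apartment of $\B$ containing $S_1 \cup S_4$, and hence the unique apartment containing all four sectors. I extend $\theta_{n-1}$ to $\theta_n$ by sending the new vertex added to close $C$ to the unique sixth panel of the spherical apartment $A_C \cap \mathrm{Link}_{j(o)}(\B)$ that completes $\theta_{n-1}(C)$ into a hexagon.

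Next I would verify that the family $\{ \theta_n \}$ is well defined. If two length-four chains $C$ and $C'$ are closed at the same stage into hexagons that share the newly added vertex, then $\theta_{n-1}(C)$ and $\theta_{n-1}(C')$ share at least four consecutive sectors; applying the uniqueness clause above shows $A_C = A_{C'}$, so the value assigned to the shared vertex is unambiguous. Taking $\theta := \varinjlim \theta_n$ and coning off, one obtains $\theta \colon \Bu \to \B$ with $\theta \circ i = j$ by construction, and uniqueness on $\mathrm{im}(i)$ is automatic because $\theta$ is determined there by $j$. For the folding map property, observe that each apartment of $\Bu$ is the cone of a hexagon $H \subset \B^8$, and by construction $\theta$ sends $H$ isometrically onto a spherical apartment of $\mathrm{Link}_{j(o)}(\B)$; coning, this promotes to an isometry of the corresponding Euclidean apartments, so the hyperplane collection in Definition \ref{building-maps} may be taken empty and $\theta$ is in fact a strong morphism of buildings.

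The main obstacle I anticipate is precisely the uniqueness of $A_C$ at each inductive step: it relies on the $A_2$-specific feature that four consecutive chambers in a spherical hexagon contain an opposite pair, so that Corollary \ref{four-sectors} combined with property (CO) pins down a unique ambient apartment rather than merely some ambient apartment. A secondary delicate point is that many length-four chains may be closed simultaneously in passing from $G_{n-1}$ to $G_n$, but because each extension is determined independently by the unique apartment of $\B$ containing the image of the chain in question, the various extensions automatically agree wherever they overlap, and the inductive construction proceeds without incident.
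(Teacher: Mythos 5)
Your overall strategy---extend $j$ inductively along the filtration $G_0 \subset G_1 \subset \cdots$ of $\B^8$, using Corollary~\ref{four-sectors} (plus property (CO)) to push a chain of four consecutive sectors into a uniquely determined apartment, then cone off at $j(o)$---is exactly the idea behind the paper's one-sentence proof, which simply declares that the result ``follows easily from Corollary \ref{four-sectors}.'' So the approach matches, and your elaboration of the inductive mechanism is genuinely useful. But there are two concrete problems with the details.

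First, your base case is too small. You take $G_0$ to be the octagon $G \subset \Bpre$, but the link of $\Bpre$ at $o$ is strictly larger than the octagon: $\Bpre$ is glued out of all ten MARs, and some of these contain the two interior ``yellow'' regions. Hence the link of $\Bpre$ at $o$ already contains two additional edges --- the two yellow sectors --- together with the panel between them (the image of the caustic), and $j$ already prescribes the value of $\theta$ there. This matters because your well-definedness argument breaks precisely at this first stage: the two length-four chains that traverse the two arcs of the octagon from $v_1$ to $v_5$ share only their two endpoints, not four consecutive sectors, yet they must be closed by the same new panel (else the link would contain a $4$-cycle $v_1 u v_5 u' v_1$, violating girth six). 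The agreement of the two candidate values is forced here not by Corollary~\ref{four-sectors} but by $j$, because that closing panel is already part of $\Bpre$. Starting the induction from the full link of $\Bpre$ at $o$ sidesteps this. For subsequent stages the difficulty dissolves for a different reason which you should state: once a chain $C$ has been closed, its ``complementary'' chain $C'$ with the same endpoints lies automatically in a hexagon and is never processed again.

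Second, your closing claim that the hyperplane collection in Definition~\ref{building-maps} ``may be taken empty'' --- i.e.\ that $\theta$ is a \emph{strong} morphism --- overshoots. That would require \emph{every} hexagon of $\B^8$ to map onto an apartment of $\B$. But for a hexagon $H'$ that is never explicitly closed in the construction (because its constituent chain was already in a hexagon by the time it came up), your construction fixes the images of its six edges piecemeal, by apartments associated to other hexagons; there is no reason the resulting six sectors in $\B$ lie in a common apartment. In that case $\theta|_{\Cone(H')}$ is a folded map rather than an isometry, and the non-empty hyperplane collection consisting of the walls of $\Cone(H')$ is needed. This is presumably why the proposition asserts only a folding map rather than a strong morphism. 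If you believe the stronger statement you would need a further argument forcing the ``sixth sector'' data of adjacent hexagons to agree, which does not seem to follow from Corollary~\ref{four-sectors} alone.
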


\begin{proof}
The edges in the graph (spherical building) correspond to sectors in the affine building $\Bu$. Therefore, the proposition follows easily from Corollary \ref{four-sectors}. 
\end{proof}

\subsection{The universal property and the WKB spectrum}\label{BNR-WKB}

The main theorem of this section is the following:

\begin{theorem}\label{BNR-theorem}
Let $h: X \rightarrow \B$ be a harmonic $\phi$-map to a building. Then there exists a folding map of buildings $\psi: \Bu \rightarrow \B$ that restricts to an isometry of pre-buildings on $\Bpre$, and is such that the following diagram commutes:
\[
\xymatrix{
X \ar[r]^{\hu} \ar[dr]_{h} & \Bu \ar@{.>}[d]^{\psi} \\
& \B
}
\]
Furthermore, if $\psi': \Bu \rightarrow \B$ is another folding map such that $\psi \circ \hu = h$, then $\psi'_{|\Bpre} = \psi_{|\Bpre}$. 
\end{theorem}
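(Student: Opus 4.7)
The plan is to follow Steps 3 and 4 of Outline \ref{BNR-outline}, which together yield the theorem. First I would show that $h$ induces an isometric embedding $j : \Bpre \hookrightarrow \B$ with $j\circ \hpre = h$, and then apply (a mild sharpening of) Proposition \ref{completion-universal-property} to extend $j$ to the folding map $\psi : \Bu \to \B$.

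For the first step, the key input is that each of the ten MARs (Figures \ref{MAR1}--\ref{MAR10}) is mapped by $h$ into a single apartment of $\B$. To see this, observe that the boundary arcs of every MAR are pieces of the imaginary spectral network, and any path contained in an MAR can be homotoped (within the MAR) to a $\phi$-non-critical path: the defining feature of an MAR is precisely the absence of dominating detours, which is equivalent via Heuristic Proposition \ref{Sweeping} to the existence of a non-critical representative. Applying Corollary \ref{non-crit-regions-to-apts} to any two points $P,Q$ in a fixed MAR $U_\alpha$ then forces $h(U_\alpha)$ to lie in the Finsler convex hull of $h(P)$ and $h(Q)$, and in particular in a common apartment $A_\alpha$. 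Once this is established, Proposition \ref{gluing-proposition} (with the $\phi$-adapted cover $\{U_\alpha\}$ of Definition \ref{bnr-pre-building}) produces a unique isometry of pre-buildings $j : \Bpre \to \B$ factoring $h$ through $\hpre$. This already handles uniqueness of $\psi$ on $\Bpre$: any $\psi'$ with $\psi' \circ \hu = h$ satisfies $\psi' \circ i \circ \hpre = h$, and is therefore equal to $j$ on $\Bpre$ by the uniqueness clause of Proposition \ref{gluing-proposition}.

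For the second step, I need to extend $j$ to a folding map $\Bu \to \B$. Recall from \S \ref{MAR-section} that $\Bpre$ has a distinguished vertex $\{o\}$ (the common image of the two collision points; the fact that they collapse was noted in the Lemma preceding Corollary on the single-vertex sectors), and that the link $G$ of $\Bpre$ at $\{o\}$ is an octagon whose eight edges correspond to the eight exterior regions, together with the two ``yellow'' sectors into which the interior yellow MARs map. Since $j$ sends $\{o\}$ to a single vertex $x_0 \in \B$, it sends $G$ isometrically into the link of $\B$ at $x_0$, which is a spherical $A_2$ building. Construction \ref{BNR-Bu-construction} exhibits $\B^8$ as the iterated completion of $G$ obtained by successively closing up every $4$-edge path not already contained in a hexagon; the extension of $j$ to each such newly-added pair of edges $bwb$ (respectively $wbw$) is furnished, on the affine side, by Corollary \ref{four-sectors}: any four successively adjacent sectors based at $x_0$ lie in a common apartment of $\B$, so the completion to a hexagon in the link of $\B$ exists, and by the uniqueness part of axiom (CO) in Proposition \ref{properties}, it is unique. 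Iterating this completion inductively over the stages $G = G_0 \subset G_1 \subset \cdots$ produces a well-defined morphism of spherical buildings $\B^8 \to \mathrm{Lk}_{x_0}\B$, which, upon passage to cones, gives the desired folding map $\psi : \Bu = \Cone(\B^8) \to \B$.

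The main obstacle, and the only delicate point, is to check that the extension at each inductive stage really produces a folding map rather than merely a local isometry on apartments: one must verify that whenever two of the completed hexagons in $\B^8$ overlap in more than a chain of four edges, their images in $\mathrm{Lk}_{x_0}\B$ agree. This is where the girth-$6$/diameter-$3$ characterization of $A_2$ spherical buildings (from \cite[Prop.~4.44]{Abramenko-Brown}, invoked to prove Proposition \ref{completion-universal-property}) is used: any two hexagons in $\mathrm{Lk}_{x_0}\B$ containing a common path of length $\geq 4$ are equal, so the extension is forced and consistent. Once this is verified, the isometry property on $\Bpre$ is automatic from $j$, the commutativity $\psi \circ \hu = h$ follows by chasing through $\hu = i \circ \hpre$, and uniqueness on $\Bpre$ was noted above. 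This completes the proof sketch.
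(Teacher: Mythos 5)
Your overall plan is correct and matches the paper's architecture: Theorem~\ref{BNR-theorem} is deduced by combining a universal property of $\hpre : X \to \Bpre$ (the paper's Proposition~\ref{Bpre-universal-property}) with the universal property of the completion $i: \Bpre \to \Bu$ (Proposition~\ref{completion-universal-property}). Your treatment of the second step --- propagating a path of length~$4$ to a hexagon using Corollary~\ref{four-sectors} and the girth-$6$/diameter-$3$ characterization of $A_2$ spherical buildings --- is sound and essentially reproduces the paper's argument.

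The gap is in your first step. You assert that each MAR is carried into a single apartment because ``any path contained in an MAR can be homotoped (within the MAR) to a $\phi$-non-critical path,'' deducing this from Heuristic Proposition~\ref{Sweeping}. This inference does not hold, for two independent reasons. First, Heuristic Proposition~\ref{Sweeping} is explicitly labeled as heuristic and is used in the paper only to \emph{identify} the MARs and thus \emph{construct} $\Bpre$ (Step~1), not to prove that $h$ factors through $\Bpre$. Second, and more seriously, the condition in \ref{Sweeping} (``not crossing two rotating spectral-network lines'') concerns which regions the path visits, while $\phi$-non-criticality (Definition~\ref{non-critical}) concerns the \emph{tangent direction} of the path relative to the foliations $\re(\lambda_i - \lambda_j)=0$; these are not the same thing, and MARs contain the branch points $b_l, b_r$ and the caustic line where no non-critical path passes. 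Even granting a non-critical representative between two points $P,Q$, Corollary~\ref{non-crit-regions-to-apts} only places $h(\Omega_{PQ})$ in the Finsler convex hull of $h(P),h(Q)$, and $\Omega_{PQ}$ is not a priori the whole MAR.

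The paper's actual argument for the first step (proof of Proposition~\ref{Bpre-universal-property}) is more structured: it first shows each connected component $R_i$ of the complement of the spectral network maps to a single \emph{sector} (Lemma~\ref{caustic-argument} for the central region $R_0$ --- itself requiring a limiting argument with $P_\epsilon=-2+\epsilon$, $Q_\epsilon=2-\epsilon$ since the branch points are ramification points --- and Lemma~\ref{regions-sectors} for the others), then shows adjacent regions go to adjacent sectors, and applies Corollary~\ref{four-sectors} to chains of at most four regions. For MAR3 and MAR4, which contain $R_0$ and hence the caustic, a further argument with $P_\epsilon = -2+i\epsilon$, $Q_\epsilon = 2+i\epsilon$ and a limit $\epsilon \to 0$ is needed. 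You should replace your reliance on \ref{Sweeping} with this chain of lemmas; once that is done the rest of your sketch goes through.
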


\begin{proof}
The theorem follows immediately from the universal property of $i: \Bpre \rightarrow \Bu$ (Proposition \ref{completion-universal-property}), and the universal property of the map $\hpre: X \rightarrow \Bpre$ described in Proposition \ref{Bpre-universal-property} below. 
\end{proof}

\begin{corollary}
The map $\hu: X \rightarrow \Bu$ computes the dilation spectrum for any WKB problem with spectral cover $\phi$.
\end{corollary}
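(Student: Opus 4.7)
The plan is to derive the corollary from Theorem \ref{BNR-theorem} together with the general results of \S \ref{cone-computes-spectrum} on how the asymptotic cone of the symmetric space computes the WKB dilation spectrum. Fix a family of connections $\nabla_t = \nabla_0 + t\varphi$ on a holomorphic vector bundle $\cE \to X$ whose associated spectral cover is the BNR cover, and fix a pair of points $P, Q \in \tilde{X}$. The strategy is to realize $\vecd_{\Bu}(\hu(P), \hu(Q))$ as an invariant that dominates $\nuu_{PQ}$ on one side (via the universal property), and is dominated by it on the other side (via the explicit construction of $\Bu$ out of integrals of $\re \phi_i$ on MARs).

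First, by Theorem \ref{cone-wkb}(1), there exists an ultrafilter $\omega$ on $\mathbb{R}$ (with countable support) such that the ultrafilter dilation spectrum $\nuu^{\omega}_{PQ}$ equals the honest WKB spectrum $\nuu_{PQ}$. The Parreau--Kleiner--Leeb asymptotic cone $\Bc$ of $(\SL_3\cc/SU_3, \tfrac{1}{t}d)$ with respect to this $\omega$ is an affine building, and the rescaled harmonic maps $h_t$ converge to an equivariant map $\hw: \tilde{X} \to \Bc$ which, by Proposition \ref{cone-map-is-phi-map}, is a harmonic $\phi$-map. Moreover, by Lemma \ref{distance-to-exponent} and Proposition \ref{limit-vector-distance}, one has
\[
\nuu_{PQ} \;=\; \nuu^{\omega}_{PQ} \;=\; \vecd_{\Bc}(\hw(P), \hw(Q)).
\]

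Next I would apply Theorem \ref{BNR-theorem} to the harmonic $\phi$-map $\hw: \tilde{X} \to \Bc$. The theorem produces a folding map of buildings $\psi: \Bu \to \Bc$ such that $\psi \circ \hu = \hw$, and such that $\psi$ restricts to an \emph{isometry of pre-buildings} on the image $\Bpre \subset \Bu$. Since by construction $\hu$ factors through $\Bpre$, both $\hu(P)$ and $\hu(Q)$ lie in $\Bpre$; because $\psi_{|\Bpre}$ is an isometry (on each cubicle, in particular on any apartment of $\Bu$ containing $\hu(P)$ and $\hu(Q)$), it preserves the vector valued distance between $\hu(P)$ and $\hu(Q)$. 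Thus
\[
\vecd_{\Bu}(\hu(P), \hu(Q)) \;=\; \vecd_{\Bc}(\psi(\hu(P)), \psi(\hu(Q))) \;=\; \vecd_{\Bc}(\hw(P), \hw(Q)) \;=\; \nuu_{PQ},
\]
which is the desired identification.

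The only subtle point, and the one I would emphasize, is the isometry clause of Theorem \ref{BNR-theorem}: the folding map $\psi$ need not be an isometry on all of $\Bu$, but it must be an isometry when restricted to $\Bpre$. This is exactly the content of the construction of $\Bpre$ via gluing in an apartment for each MAR, together with Proposition \ref{gluing-proposition}, which guarantees that any $\phi$-map extending the MAR data lands isometrically; equivalently, the images $\hu(P), \hu(Q) \in \Bpre$ lie in a common cubicle whose image under $\psi$ is an isometrically embedded enclosure of $\Bc$. Once this is noted, the corollary is essentially a formal consequence of the theorem. No further analytic input is needed beyond the local WKB approximation already used to establish Theorem \ref{cone-wkb}.
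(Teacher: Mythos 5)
Your high-level strategy matches the paper's: the paper's proof consists of the single sentence ``This follows immediately from Theorem~\ref{cone-wkb}, Proposition~\ref{cone-map-is-phi-map} and Theorem~\ref{BNR-theorem},'' and you correctly unpack the chain --- fix $P,Q$, pick an ultrafilter $\omega$ with $\nuu^{\omega}_{PQ}=\nuu_{PQ}$, note $\hw:\tilde X\to\Bc$ is a harmonic $\phi$-map, apply Theorem~\ref{BNR-theorem} to get the folding map $\psi:\Bu\to\Bc$, and conclude $\vecd_{\Bu}(\hu(P),\hu(Q))=\vecd_{\Bc}(\hw(P),\hw(Q))=\nuu_{PQ}$.

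However, there is a gap exactly where you flagged the subtlety, and the patch you offer does not work. You write that ``the images $\hu(P),\hu(Q)\in\Bpre$ lie in a common cubicle,'' and in the parenthetical you treat ``any apartment of $\Bu$ containing $\hu(P)$ and $\hu(Q)$'' as something on which the pre-building isometry automatically acts. Neither is correct in the interesting case. The cubicles of $\Bpre$ are the (hulls of the) images of single MARs, each consisting of at most four adjacent sectors plus possibly the central yellow sectors. When $P$ and $Q$ lie in opposite outer sectors across the collision line --- precisely the case that makes the BNR example nontrivial --- there is no MAR, hence no cubicle, containing both images; the unique apartment of $\Bu$ through $\hu(P)$ and $\hu(Q)$ is the cone over the ``twisted hexagon,'' which is not a cubicle, and its preimage in $X$ is disconnected. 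So ``isometry of pre-buildings on $\Bpre$,'' which by Definition~\ref{building-maps} only means distance-preservation on each cubicle separately, does not by itself give $\vecd_{\Bu}(\hu(P),\hu(Q))=\vecd_{\Bc}(\psi\hu(P),\psi\hu(Q))$.

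The missing step is to verify that $\psi$ does not collapse the two opposing sectors. One correct route: $\hu(P)$ and $\hu(Q)$ each lie in a sector based at the distinguished vertex $o$, and each of these sectors sits inside a cubicle together with $o$, so $\vecd(o,\cdot)$ is preserved by $\psi$; it then remains to show that the germs at $\psi(o)$ of the two image sectors are still opposed, after which $\vecd(\hu(P),\hu(Q))=\vecd(o,\hu(P))+\vecd(o,\hu(Q))$ passes through. Opposition is preserved because $\psi$ restricted to the octagon $G$ in the link of $o$ is injective and edge-preserving (this is the cubicle isometry), and a bipartite graph in an $A_2$ spherical building has girth at least $6$, so two octagon vertices at graph-distance $4$ cannot be at spherical distance $1$; being at odd distance $\leq 3$ and nonadjacent, they are opposite. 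This is precisely the content hidden in the phrase ``on the Finsler secant subset of the image of $X$, $\psi$ is an isometry'' in the introductory statement of Theorem~1.16, which is strictly stronger than the cubicle-by-cubicle isometry asserted in Theorem~\ref{BNR-theorem}. In short: your derivation is correct in outline and matches the paper, but the one nontrivial step is asserted rather than proved, and the assertion you make (``common cubicle'') is false for the pairs of points that the collision phenomenon is about.
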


\begin{proof}
This follows immediately from the Theorem \ref{cone-wkb}, Proposition \ref{cone-map-is-phi-map} and Theorem \ref{BNR-theorem}.
\end{proof}

We now turn to the universal property of $\hpre: X \rightarrow \Bpre$. The proof of the following proposition will occupy the rest of this section:

\begin{proposition}\label{Bpre-universal-property}
Let $\B$ be a building, and let $h: X \rightarrow \B$ be any harmonic $\phi$-map. Then there exists a unique isometry of pre-buildings $\psi: \Bpre \rightarrow \B$ such that the following diagram commutes:
\[
\xymatrix{
X \ar[r]^{\hpre} \ar[dr]_{h} & \Bpre \ar[d]^{\exists ! \psi} \\
& \B \\
}
\]
\end{proposition}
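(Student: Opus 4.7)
The plan is to reduce the proposition to an application of Proposition \ref{gluing-proposition}. Recall from Definition \ref{bnr-pre-building} that $\Bpre$ was defined as $\Bpre(\cU)$ for the specific $\phi$-adapted cover $\cU = \{\mathrm{MAR1}, \ldots, \mathrm{MAR10}\}$ of $X$, and the map $\hpre$ is the canonical map $X \to \Bpre(\cU)$ coming from the gluing construction. The hypotheses of Proposition \ref{gluing-proposition} are twofold: that $\cU$ is $\phi$-adapted in the sense of Definition \ref{adapted}, and that $h(U_\alpha)$ is contained in a single apartment of $\B$ for each $\alpha$. The uniqueness clause in Proposition \ref{gluing-proposition} will give uniqueness of $\psi$, so everything reduces to verifying these two conditions.

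That $\cU$ is $\phi$-adapted is essentially built into the construction of the MARs: each MAR is simply-connected (visible in Figures \ref{MAR1}--\ref{MAR10}), pairwise intersections are disjoint unions of simply-connected pieces, the branch points $b_l = -2$ and $b_r = 2$ are not interior to any MAR, and the cameral cover $\Sigma_\phi \to X$ decomposes over each MAR since we have chosen a global labelling of sheets on each MAR in the construction of $\Bpre$. So the real content lies in showing that $h(U_\alpha)$ lies in a single apartment of $\B$ for each $\alpha$.

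For the eight outer MARs (MAR2 through MAR9), any two points can be joined by a $\phi$-non-critical path in $X_{\mathrm{reg}}$ contained in the MAR, because the outer MARs lie in single connected components of the complement of $\cW_{\pi/2}$ and avoid the branch points. Thus Corollary \ref{non-crit-regions-to-apts} applies directly and exhibits an apartment containing $h(U_\alpha)$. For the two inner ``yellow'' MARs — which lie in the interior square bounded by the branch points and the collision points — the same argument shows that the boundary of each such region maps into a single apartment, because the boundary is a concatenation of non-critical arcs; Lemma \ref{Isihara} (a convexity/subharmonicity argument) then propagates this from the boundary to the interior. The remaining case, MAR1, which is the large region wrapping around outside everything, is handled by subdividing it into non-critical subregions and inductively gluing the apartments that contain each subregion, in the same spirit as the proof of Proposition \ref{phi-maps-non-crit}: one uses the opposed-sectors criterion (Lemma \ref{opposedness}), the axioms (CO), (SC), (CG) from Proposition \ref{properties}, and the sector-attaching lemmas \ref{attaching-sectors} and \ref{four-sectors} to enlarge the apartment step by step until it encloses the image of the whole MAR.

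The main obstacle is showing, for the subtler MARs, that the defining property of a MAR (no detour integral dominates any internal path) really forces the image under $h$ to lie in a single apartment. The heuristic content is that detour-domination is precisely what corresponds in the building to leaving a given apartment, so the MAR condition matches the single-apartment condition. Converting this heuristic into proof is just the non-critical-path machinery of \S \ref{phi-map-properties} combined with the flat-disc convexity of Lemma \ref{Isihara}; once each MAR has been verified to map to an apartment, Proposition \ref{gluing-proposition} applies verbatim and delivers the unique isometry $\psi: \Bpre \to \B$ with $\psi \circ \hpre = h$, completing the proof.
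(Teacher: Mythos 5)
Your high-level plan is the same as the paper's: reduce to Proposition \ref{gluing-proposition} by proving the claim that each MAR is mapped into a single apartment, and delegate both existence and uniqueness of $\psi$ to that proposition. That part is correct.

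However, your execution of the claim has a genuine gap. You treat the outer MARs (which you identify as MAR2--MAR9) as if each one were a single connected component of the complement of the spectral network, so that Corollary \ref{non-crit-regions-to-apts} applies directly. In fact the MARs are \emph{unions} of up to four of the regions $R_0,\dots,R_9$, separated by spectral network curves; there is no a priori reason that two arbitrary points in such a union are joined by a $\phi$-non-critical path whose image lies in the Finsler convex hull, and the paper does not attempt such a direct argument. Instead the paper first proves Lemma \ref{regions-sectors}: each individual region $R_i$ maps into a single \emph{sector} (via carefully chosen points $p,r$ on spectral network curves and the observation that $\Omega_{pr}$ fills the region), and adjacent regions map to sectors sharing a common panel. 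The crucial step you are missing is Corollary \ref{four-sectors}, which says that in the $A_2$ case any four successively adjacent sectors based at a common vertex lie in a common apartment; this is what lets the paper pass from regions-in-sectors to MARs-in-apartments for the MARs consisting of at most four regions. You mention the sector-attaching lemmas only in passing for what you call ``MAR1,'' whereas they are the central mechanism for all of the outer MARs. The special case the paper singles out is MAR3 and MAR4 (those containing the central region $R_0$), handled by the $\Omega_{P_\epsilon Q_\epsilon}$ argument with $P_\epsilon, Q_\epsilon$ just above the branch points and a limiting argument; your treatment of the ``two inner yellow MARs'' via the boundary plus Lemma \ref{Isihara} is roughly in the spirit of Lemma \ref{caustic-argument} (which concerns the single region $R_0$), but is not how the proposition itself is finished. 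In short: the reduction to Proposition \ref{gluing-proposition} is right, but the regions-to-sectors-to-apartments argument is essential and is not supplied.
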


 The strategy of the proof of the proposition is as follows. We will first argue that every connected component of the complement of the spectral network is mapped into a single sector by any $\phi$-map from $X$ to a building. We will then observe that in any building with vectorial Weyl group of type $A_{2}$, any four adjacent sectors are contained in a single apartment. Combining these observations, it follows easily that each of the ten regions shown in Figures \ref{MAR1} -- \ref{MAR10} is mapped into a single apartment by any $\phi$-map. Using this, together with the fact that $\Bpre$ is constructed by gluing in an apartment for each of the ten regions MAR1 - MAR10, it is straightforward to show that there exists a unique factorization $\psi$ as claimed in the proposition. We now turn to the details of this argument:

\begin{lemma}\label{caustic-argument}
Let $R_{0}$ denote the closure of the connected component of the complement of the BNR spectral network that is at the center of the diagram (see Figure \ref{MAR1}). Let $h: X \rightarrow \B$ be a harmonic $\phi$-map to a building $\B$. Then there is a sector $S$ in $\B$ such that $h(U) \subseteq S$.
\end{lemma}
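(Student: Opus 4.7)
My plan is to exploit the structure of $R_0$ as two non-critical sub-regions glued along the interior caustic segment, and then use building axioms to force the images of the two halves to coincide as a single sector.

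Let $\Gamma$ denote the caustic, i.e.\ the interior segment of the vertical collision line joining the two branch points $b_\ell$ and $b_r$. It separates $R_0$ into two closed sub-regions $R_0^+$ and $R_0^-$ sharing $\Gamma$ as a common edge; the four vertices of $R_0$ are the two branch points and the two collision points. By Proposition \ref{phi-map-singularities}, the interior of each $R_0^\pm$ minus the branch points lies in $X_{\mathrm{reg}}$. Moreover, on each open sub-region the three real forms $\re\phi_1, \re\phi_2, \re\phi_3$ have a constant and distinct linear order, because the bounding curves of $R_0^\pm$ are precisely spectral network curves along which two of the $\re\phi_i$ coincide. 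Hence any two interior points of $R_0^\pm$ are joined by a $\phi$-non-critical path inside $R_0^\pm$, and Corollary \ref{non-crit-regions-to-apts} yields an apartment $A^\pm$ and indeed a single Weyl sector $S^\pm \subset A^\pm$ with $h(R_0^\pm) \subset S^\pm$.

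Next, invoking the earlier lemma that the two collision points map to a common vertex $o \in \B$ (proved via contour integration using that $R_0$ goes into a single apartment, which is established on each half by the previous step), both $S^+$ and $S^-$ are based at $o$.

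The heart of the proof is to show $S^+ = S^-$. Label the sheets so that $\Gamma = \{ \re(\phi_2 - \phi_3) = 0\}$, with $\re \phi_1$ the unique largest of the three on all of $R_0$ and with $\re\phi_2 > \re\phi_3$ on $R_0^+$ but $\re\phi_3 > \re\phi_2$ on $R_0^-$. In each apartment $A^\pm$, the sector $S^\pm$ is then the Weyl chamber based at $o$ with wall $H^\pm := \{ x_2 = x_3\}$ and extremal edge pointing along the ``$x_1$ largest'' ray. Because $h$ is a $\phi$-map, $H^\pm$ pulls back to $\Gamma$ in either chart, so $h(\Gamma) \subset H^+ \cap H^-$ inside $\B$. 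Using the local WKB theorem (Theorem \ref{localWKb}) applied on both sides of $\Gamma$, the vector distance $\vecd(o, h(\,\cdot\,))$ extends continuously across $\Gamma$ and is given from either side by the integral of $\re\phi$ along the appropriate chain of sheets; this forces $S^+$ and $S^-$ to lie on the \emph{same} side of the common wall $h(\Gamma)$, since otherwise the vector distance would reflect across the wall and produce a discontinuity inconsistent with harmonicity of $h$. Two sectors in an $A_2$ building based at $o$, sharing a wall and lying on the same side of it, must coincide. Hence $S^+ = S^- =: S$, and $h(R_0) \subset S$ as desired.

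The main obstacle is the final comparison step: making precise the ``folding on the same side'' statement. The subtlety is that \emph{a priori} $S^+$ and $S^-$ are sectors in possibly different apartments $A^\pm$, and we only know that $h(\Gamma) \subset \bar{S^+} \cap \bar{S^-}$. The argument must use that the harmonic map cannot be ``unfolded'' across $\Gamma$ without violating continuity of the vector distance to $o$ computed by the WKB integrals; equivalently, the $\phi$-map property pins down on which side of $H^\pm$ the sub-sectors must sit, because the differential $\re\phi_1$ (the largest sheet) is continuous across $\Gamma$ and selects a consistent extremal direction in $\B$.
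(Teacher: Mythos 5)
Your proof takes a genuinely different route from the paper's, and the route has a gap exactly where you flag it.

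The paper's argument is more direct: it chooses $P_\epsilon = -2+\epsilon$ and $Q_\epsilon = 2-\epsilon$ on the caustic segment itself, observes that the set $\Omega_\epsilon = \Omega_{P_\epsilon Q_\epsilon}$ of Corollary~\ref{non-crit-regions-to-apts} is swept out by $\phi$-non-critical paths from $P_\epsilon$ to $Q_\epsilon$, and notes that such paths pass \emph{both above and below} the caustic. Hence $\Omega_\epsilon$ straddles $\Gamma$, and as $\epsilon\to 0$ these sets exhaust $R_0$, so the whole of $R_0$ lands in a single apartment at once. The $\phi$-map property then confines the image to the intersection of two sectors based at $h(b_\ell)$ and $h(b_r)$. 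There is no gluing step at all. Your approach, by contrast, deliberately cuts along $\Gamma$, applies the corollary to each half separately, and then tries to reassemble. This is more work and it loses the leverage that makes the paper's argument go through, namely that non-critical paths can cross the caustic (they are only forbidden from crossing it perpendicularly).

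The gap is the reassembly. Your argument that $S^+$ and $S^-$ coincide has two problems. First, you appeal to ``the earlier lemma'' that the collision points have a common image $o$; in the paper that is Lemma~\ref{collision-points-equalize}, whose proof \emph{begins} with ``We know that the central yellow region maps to a single apartment,'' i.e.\ it presupposes the statement you are proving. Your parenthetical fix --- that it suffices to know each half goes into an apartment --- does not repair this: the contour from $c_u$ to $c_l$ crosses $\Gamma$, so the two pieces of the integral live in possibly different apartments $A^+$ and $A^-$, and vector distances computed in different apartments do not add. Second, the claim that $S^+$ and $S^-$ on ``opposite sides'' of the wall $h(\Gamma)$ would ``produce a discontinuity inconsistent with harmonicity'' is not correct as stated: a $\phi$-map can perfectly well be a local homeomorphism across $\Gamma$ without violating harmonicity, since the $\phi$-map condition only pins down the Weyl-invariant symmetric functions $\xi_k$, leaving a sign ambiguity on $h^*(x_2-x_3)$. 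That the map actually \emph{does} fold along $\Gamma$ (the content of the remark after Lemma~\ref{collision-points-equalize}) is a consequence of already knowing $R_0\subset A$ plus the vector distance computation --- it is an output, not an input. You acknowledge this is ``the main obstacle,'' and it is in fact where the proof breaks. A smaller point: you describe $\Gamma$ as ``the interior segment of the vertical collision line joining the two branch points,'' but the collision line is the vertical segment joining the two \emph{collision} points; the caustic/fold-line is the \emph{horizontal} segment $(-2,2)\times\{0\}$ joining the branch points, and the spectral network actually divides $R_0$ along the collision line, not along $\Gamma$.

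To repair the proof along the paper's lines, drop the split-and-glue strategy and instead choose the basepoints for Corollary~\ref{non-crit-regions-to-apts} on the caustic itself so that the non-critical regions cover both halves simultaneously.
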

\begin{proof}
Let $\epsilon$ be a small positive real number, let $P_{\epsilon} = -2 + \epsilon$ and $Q_{\epsilon} = 2 - \epsilon$, and let $\Omega_{\epsilon} := \Omega_{P_{\epsilon} Q_{\epsilon}}$ be the region defined in Corollary \ref{non-crit-regions-to-apts} (see Figure \ref{caustic}).  Then by  Corollary \ref{non-crit-regions-to-apts}, each of the regions $\Omega_{\epsilon}$ is mapped into a single apartment. Since we can make $\epsilon$ arbitrarily small, it follows that $R_0$ is mapped into a single apartment $A$.

Since $h$ is a $\phi$-map, away from the ramification point we can choose coordinates on the apartment $A$ such that $dh = (\phi_1,..., \phi_r)$ where ``$\phi = (\phi_1,...,\phi_r)$ upto some permutation''. It follows that the foliation lines are the pullbacks of the hyperplanes defining the apartment. From this one sees easily that $R_0$ maps into the intersection of sectors $S_l$ and $S_r$ in $A$ based at $h(b_l)$ and $h(b_r)$ respectively. Here $b_l$ is the branch point $-2$ and $b_r$ is the branch point $2$. Furthermore, the segments of the spectral network lines that consitute the boundary of $R_0$ map to the boundaries of these sectors. 
\end{proof}

	\begin{figure}[h]
	\centering
	\includegraphics[width=.5\textwidth]{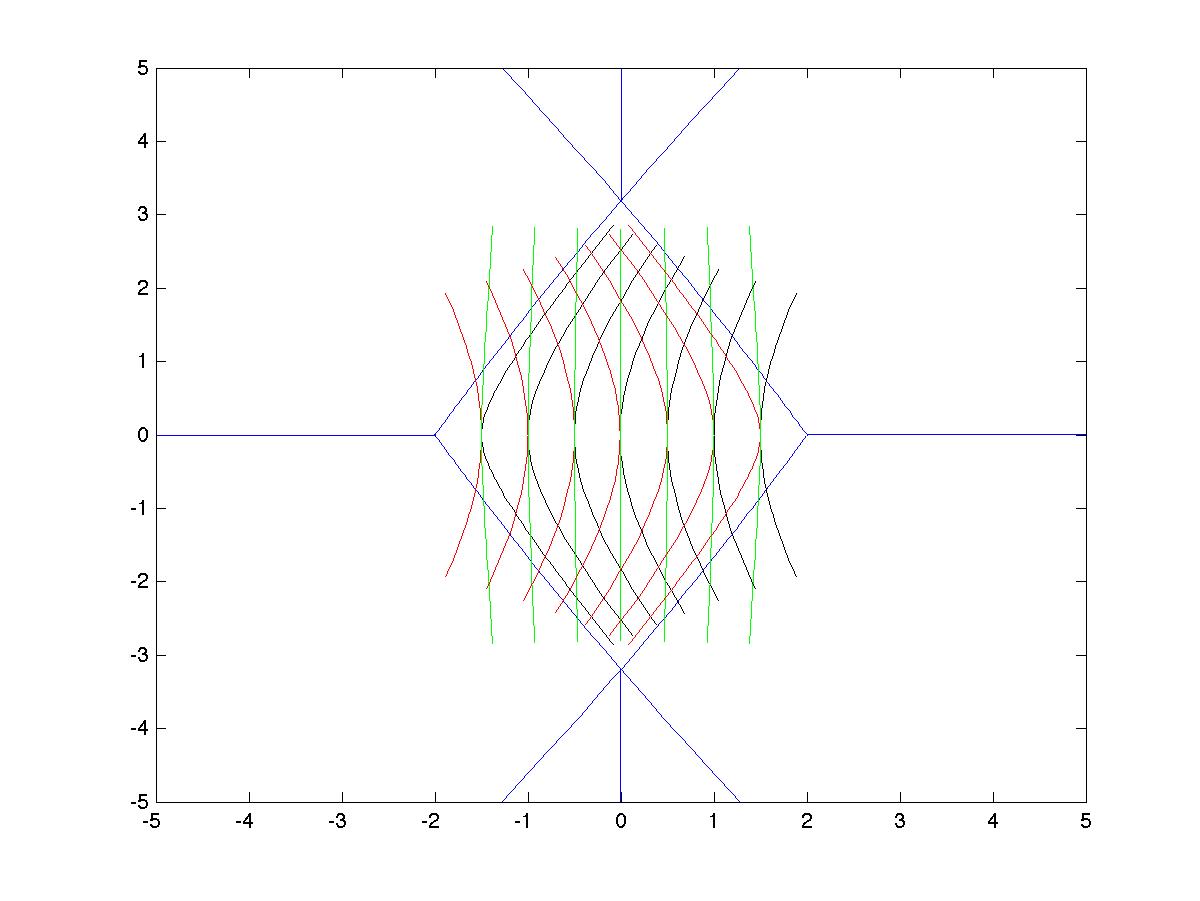}
	\caption{The Caustic Line}
	\label{caustic}
	\end{figure}

\begin{lemma}\label{collision-points-equalize}
Let $h: X \rightarrow \cB$ be a harmonic $\phi$-map to a building. Let $c_u$ and $c_l$ denote the collision points of the BNR spectral network. Then $h(c_u) = h(c_l)$.  
\end{lemma}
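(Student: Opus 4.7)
By Lemma~\ref{caustic-argument}, the interior region $R_0$ is mapped by $h$ into a single apartment $A\subset\mathcal{B}$; consequently both $h(c_u)$ and $h(c_l)$ lie in $A$, so the displacement vector $h(c_l)-h(c_u)$ in the affine space underlying $A$ makes sense and may be computed as $\int_\gamma dh$ along any path $\gamma$ inside $R_0$ joining $c_u$ to $c_l$. The plan is to choose such a contour strategically, so that each component (with respect to the coordinates $(x_1,x_2,x_3)$, $\sum x_i = 0$, on $A$) can be recognized as an integral of a 1-form that vanishes tangentially, forcing the displacement to be zero.

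A natural choice is to take $\gamma$ to be the vertical collision-line segment from $c_u$ to $c_l$, split at the point where it crosses the caustic (which is the fold line of $h|_{R_0}$ inside $A$) into an upper half $\gamma_+\subset\overline{R_0^+}$ and a lower half $\gamma_-\subset\overline{R_0^-}$. On each half the map $h$ is smooth, and its differential is an ordering $(\re\phi_{\sigma(1)},\re\phi_{\sigma(2)},\re\phi_{\sigma(3)})$ of the three branches of $\phi$; the orderings on $\gamma_+$ and $\gamma_-$ differ by precisely the simple transposition corresponding to the Weyl reflection across the caustic. Since $\gamma$ lies on the collision spectral network line, the spectral network condition (Definition~\ref{BPS-network-definition}) gives $\re\phi_{ij}=0$ tangentially along $\gamma$ for the appropriate labels $(ij)$ on each half.

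Combining these tangential vanishings with the Weyl identification across the caustic, one finds that each of the three components of $h(c_l)-h(c_u)$ collapses to a sum of spectral network integrals with vanishing integrands, yielding $h(c_u)=h(c_l)$. The main obstacle will be the bookkeeping of sheet labels: one has to verify that the local orderings of $\phi_1,\phi_2,\phi_3$ on $\gamma_\pm$, the spectral network labels $(ij)$ attached to the two halves of the collision line, and the Weyl reflection across the caustic all match up consistently. Once this bookkeeping is carried out, the argument reduces to the elementary observation that integrals along spectral network lines of phase $\pi/2$ are pure imaginary.
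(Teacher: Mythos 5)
Your setup is right --- Lemma~\ref{caustic-argument} puts $R_0$ in a single apartment, so $h(c_l)-h(c_u)$ is computed by integrating the real parts of the cameral $1$-forms along any contour in $R_0$ --- but the mechanism you propose for killing the integral has a genuine flaw. You claim that the sheet orderings on $\gamma_+$ and $\gamma_-$ "differ by the simple transposition corresponding to the Weyl reflection across the caustic". They do not. The BNR spectral curve $p^3-3p+x=0$ has real coefficients, and on the real interval $(-2,2)$ all three roots are real and distinct; hence the section of the cameral cover over $R_0$ extends through the caustic without any sheet permutation, and in fact $p_i(\overline{x})=\overline{p_i(x)}$ with the \emph{identity} permutation. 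Consequently each real $1$-form $\re\phi_i$ is invariant under complex conjugation of $x$, so the fold at the caustic is an analytic rank-drop fold ($h(\overline{x})=h(x)$), not a Weyl-chamber-crossing fold. With a consistent labeling across the caustic, the collision line gives you only the single tangential vanishing $\re\phi_{13}=0$; together with $\sum_i\phi_i=0$ that is two relations, and you cannot conclude all three components vanish. This is exactly the bookkeeping obstacle you flagged, and as stated it does not resolve.

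There are two ways to close the argument. If you insist on the collision-line contour, the resolution is the conjugation symmetry itself: since $c_l=\overline{c_u}$ and $\re\phi_i$ is conjugation-invariant, the integrand of $\int_{c_u}^{c_l}\re\phi_i$ along the imaginary axis is an odd function of $\im x$, so each component integrates to zero directly (this also yields the stronger statement $h(\overline{x})=h(x)$ on $R_0$, which is what the Remark following the Lemma records). The paper's own argument, however, appears to use a different contour, and it is worth knowing it because it generalizes without appealing to any anti-holomorphic symmetry: integrate along the \emph{boundary} of the interior square. The left boundary from $c_u$ through $b_l$ to $c_l$ consists of $(23)$-spectral network segments, giving $\re\int\phi_2=\re\int\phi_3$; the right boundary through $b_r$ consists of $(12)$-segments, giving $\re\int\phi_1=\re\int\phi_2$; since $R_0$ sits in a single apartment, both contours compute the same displacement, so $\re\int\phi_1=\re\int\phi_2=\re\int\phi_3$, and $\sum_i\phi_i=0$ forces all three to vanish. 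This is what "follows from the definition of spectral networks and $\sum_i\phi_i=0$" means in the paper's proof, and it avoids the caustic bookkeeping entirely.
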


\begin{proof}
We know that the central yellow region maps to a single apartment. So to prove the lemma it suffice to compute the integrals of the $1$-forms along a contour (that stays within the central region) from one collision point to the other. More precisely, we can choose a section of the cameral cover over $R_0$, and use this to write $\phi = (\phi_1, \phi_2, \phi_3)$, and then integrate the three $1$-forms $\phi_1$, $\phi_2$, $\phi_3$ along a contour joining the collision points. One checks that this integral is $(0,0,0)$ (This follows easily from the definition of spectral networks, and the fact that $\sum_i \phi = 0$). Thus, the vector distance between $h(c_u)$ and $h(c_l)$ is zero. The lemma follows. 
\end{proof}

\begin{remark}
The proof of the lemma actually shows that $h(x) = h(\overline{x})$ for all $x$ in $R_0$, where $\overline{x}$ denotes the complex conjugate of $x$. Thus, the segment of the $[-2,2] \times \{0\}$ is a ``fold-line'' or caustic for any $\phi$-map. 
\end{remark}

\begin{lemma}\label{regions-sectors}
Let $R_{0}$,...,$R_{9}$ denote the closures of connected components of the complement of the BNR spectral network, and let $h: X \rightarrow \cB$ be a $\phi$-map to a building. Then there exists apartments $A_{1}$,...,$A_{10}$, and sectors $S_{i} \subseteq A_{i}$ such that $h(R_i) \subset S_i$. Furthermore, if two regions $R_i$ and $R_j$ are adjacent (in the sense that their intersection contains an open segment in a spectral network curve), then the corresponding sectors $S_i$ and $S_j$ are adjacent in $\B$, i.e., they share a panel. 

\end{lemma}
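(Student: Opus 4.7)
The plan is to handle the ten regions in two batches: the eight outer regions, where the spectral network gives a clean ordering of the local eigenforms, and the two interior ``yellow'' regions separated by the caustic, which will be treated via a fold along the caustic as in Lemma \ref{caustic-argument} and Lemma \ref{collision-points-equalize}.

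For an outer region $R_i$, the key observation is that, by definition of the spectral network, a connected component of the complement consists precisely of points where the real parts of the sheets $\re \phi_1, \re \phi_2, \re \phi_3$ of $\phi$ are pairwise distinct and in a fixed linear order. Since $R_i$ avoids the ramification locus, Proposition \ref{phi-map-singularities} tells us $R_i \subset X_{\mathrm{reg}}$. For any $P,Q \in R_i$ we can then join them by a $\phi$-non-critical path lying in $R_i$; by Corollary \ref{non-crit-regions-to-apts} the region $\Omega_{PQ}$ of all such paths is carried by $h$ into the Finsler convex hull of $h(P), h(Q)$ in some apartment. Exhausting $R_i$ by an increasing sequence of pairs $(P_n, Q_n)$ produces a single apartment $A_i$ with $h(R_i) \subset A_i$. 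Within $A_i$ one may choose coordinates so that the pullback of $h$ has differential $(\re \phi_1, \re \phi_2, \re \phi_3)$; since the ordering is constant throughout $R_i$, the push-forward of $TR_i$ lies in a single open vectorial Weyl chamber of $A_i$. Integrating from the image of a boundary vertex of $R_i$ (a branch point or the collision vertex $o$), we conclude $h(R_i)$ is contained in a sector $S_i \subset A_i$ based at that vertex.

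For the two interior yellow regions, Lemma \ref{caustic-argument} already maps their union (as an ``$\Omega_\epsilon$''-exhaustion of the open square) into a single apartment, with the caustic acting as a fold line. Lemma \ref{collision-points-equalize} collapses $c_u$ and $c_l$ to a common vertex $o$. One then checks, as in the proof of Lemma \ref{caustic-argument}, that the two yellow regions land in two sectors of this apartment that are reflections of each other across the caustic, both based at $o$ (or based at $h(b_l), h(b_r)$, depending on which piece of the boundary one walks from).

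Finally, the adjacency claim is a consequence of the wall equations. If $R_i$ and $R_j$ share an open segment of a spectral network curve labeled $(kl)$, then $\re \phi_{kl} = 0$ on that segment, which is exactly the equation of a reflection wall $H$ in any apartment containing either sector. Hence the shared segment is carried into $H$; moreover the derivative of $\re \phi_{kl}$ flips sign across the curve, so $S_i$ and $S_j$ lie on opposite sides of $H$ in their respective apartments. Using axiom (B2) for the building, $A_i$ and $A_j$ may be identified along a common half-apartment containing $H$, and in this common picture $S_i$ and $S_j$ share a boundary face, namely a ray in $H$ emanating from the common base vertex. That is precisely the notion of two sectors sharing a panel in the $A_2$ setting.

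The main obstacle I expect is the compatibility of base vertices: the statement ``$S_i$ and $S_j$ share a panel'' is only meaningful once one knows they share a vertex. This is where Lemma \ref{collision-points-equalize} (identifying the two collision points) and the local analysis of Proposition \ref{properties} (specifically the sector-attachment axiom (SC) and the convexity axiom (CO)) are essential: together they force all eight outer sectors to be based at the single image vertex $o = h(c_u) = h(c_l)$, and the two inner sectors to be attached to $o$ through the branch-point vertices $h(b_l), h(b_r)$ by the $\mathbb{Z}/2$-monodromy of $\phi$ around the branch points (cf.\ Figure \ref{SpecBranch}). Once this common vertex structure is in place, the panel-sharing assertion is automatic from the wall equation argument above.
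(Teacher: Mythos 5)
Your overall scaffolding — exhausting each region via Corollary \ref{non-crit-regions-to-apts}, then using spectral-network boundaries mapping into walls for the panel claim — parallels the paper's approach, and the adjacency argument via wall equations is essentially what the paper intends. However, there is a genuine gap in the treatment of the outer regions. You assert that a connected component of the complement of the spectral network ``consists precisely of points where the real parts of the sheets $\re\phi_1,\re\phi_2,\re\phi_3$ are pairwise distinct and in a fixed linear order,'' and deduce that the push-forward of $TR_i$ lies in a single open vectorial Weyl chamber. This is false: the spectral network of Definition \ref{BPS-network-definition} is only the proper subset of the foliation locus that can be traced back to a branch point or a collision point, and the regions $R_1,\ldots,R_8$ contain many ordinary foliation curves $\{\re\phi_{ij}=0\}$ that are not part of the network (these are exactly the thick red and black foliation lines visible in Figure \ref{region2} and used as boundaries for $U_{pr}$). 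So the ordering of the sheets is not constant throughout $R_i$, and the conclusion that the image sits in a single Weyl chamber does not follow from your argument.

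The paper avoids this problem. It picks $P,Q$ on the two bounding spectral-network curves of $R_i$ and verifies directly that $\Omega_{PQ}$ is the specific sub-region $U_{pr}$ cut out by those spectral-network curves on one side and by ordinary foliation lines on the other; these sub-regions exhaust $R_i$, and then completeness of the apartment system (Definition \ref{complete-system}) yields one apartment $A_i$ with $h(R_i)\subset A_i$. The sector conclusion is then obtained from the boundary behavior, exactly as in Lemma \ref{caustic-argument}: since $h$ is a $\phi$-map, the spectral-network curves bounding $R_i$, which emanate from the collision point, pull back from walls of $A_i$ meeting at the single vertex $h(c_u)$. It is this boundary-in-walls mechanism — not constancy of the sheet ordering in the interior of $R_i$ — that confines $h(R_i)$ to a sector.
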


\begin{proof}
We have already proven the lemma for $R_0$ (Lemma \ref{caustic-argument}). We will describe the proof for one of the regions $R_1$ through $R_8$; the other cases are similar. Consider the region $R_1$ which contains the point $q$ shown in Figure \ref{region2}, and consider the points $p$ and $r$ shown in the figure (they lie on spectral network lines). 

Apply Corollary \ref{non-crit-regions-to-apts} with $P = p$ and $Q = r$. The interior of the region $U_{pr}$ bounded by the two spectral network lines containing $p$ and $r$ on the one side, and the thick red and thick black foliation lines on the other, does not contain any singularities of $h$, by Proposition \ref{phi-map-singularities}, since it does not contain any ramification point. Using this, it is easy to see that $\Omega_{PQ}$, in the notation of Corollary \ref{non-crit-regions-to-apts}, equals $U_{pr}$. It follows that this entire region is mapped into a single apartment. Since the inverse images of apartments are closed, the closure of this region also maps to a single apartment. Thus, we see that the region $R_1$ can be exhausted by a family of compact sets, each of which maps to a single apartment. Since we require our buildings to have a complete set of apartments (Definition \ref{complete-system}), it follows that the entire region $R_1$ is mapped into a single apartment $A$. Arguing exactly as in Lemma \ref{caustic-argument}, we see that $R_1$ must in fact be mapped to a single sector $S_1$ with vertex at $h(c_u)$, the image of the upper collision point. 

Furthermore,  the spectral network lines emanating from $c_u$ (resp. $c_l$)  map to panels in the building based at $h(c_u)$. This immediately implies the last statement of the lemma. 
\end{proof}

	\begin{figure}[h]
	\centering
	\includegraphics[width=.5\textwidth]{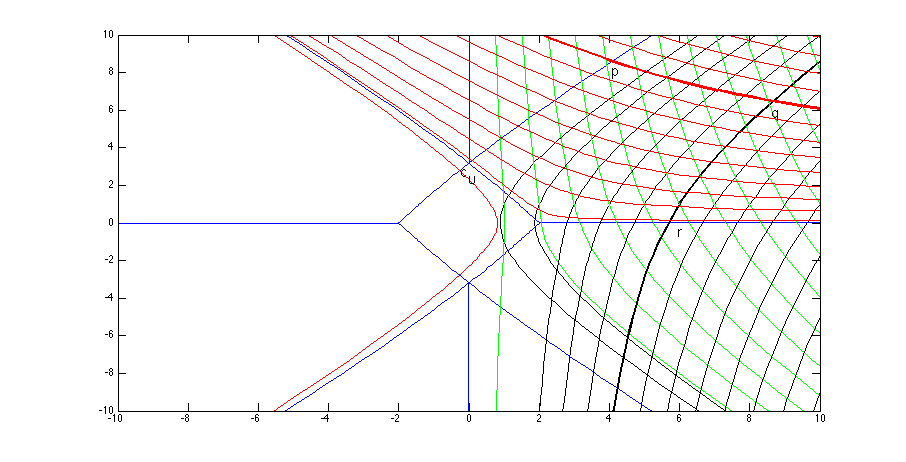}
	\caption{The foliation lines in a region}
	\label{region2}
	\end{figure}

	\begin{figure}[h]
	\centering
	\includegraphics[width=.5\textwidth]{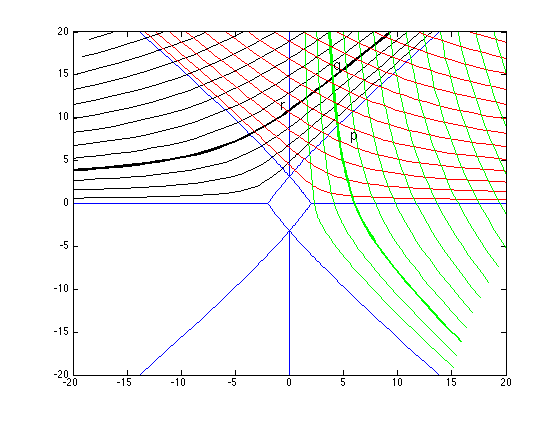}
	\caption{The foliation lines in another region}
	\label{region3}
	\end{figure}

\begin{proof}[Proof of Proposition \ref{Bpre-universal-property}]
Let $h: X \rightarrow \B$ be a $\phi$-map. We claim that for each MAR $M$ in $X$, there exists an apartment $A$ in $\B$ such that $h(M) \subset A$. The proposition follows immediately from this claim and Proposition \ref{gluing-proposition}. 

We now prove the claim. From Lemma \ref{regions-sectors} and \ref{caustic-argument} we see that every sequence of ``four adjacent regions'' on $X$ is mapped to a sequence of four adjacent sectors in the building under any $\phi$-map. By Corollary \ref{four-sectors} every such four sectors are contained in a single apartment. Thus, we have shown that any $\phi$-map carries each of the MARs that consists of at most 4 of the regions $R_0$ through $R_9$ into a single apartment. 

It remains to prove the claim for MAR3 and MAR4. Note that both of these MARs contain the yellow region $R_0$ at the center of the picture. Let us consider one of these MARs, say MAR3. The argument for the other MAR is identical. The argument of the previous paragraph shows that there is an apartment $A$ such that the complement of $R_0$ in MAR3 is mapped to the union of four adjacent sectors in $A$. 

Let $\epsilon$ be a small positive number and let $P_{\epsilon} = -2 + i \epsilon$ be a point just above the branch point $b_l$ and let $Q_{\epsilon} = 2 + i  \epsilon$ be a point just above the branch point $b_r$. Apply Corollary \ref{non-crit-regions-to-apts} with $P = P_{\epsilon}$ and $Q = Q_{\epsilon}$. Then the region $\Omega_{P_{\epsilon}Q_{\epsilon}}$ (in the notation of Corollary \ref{non-crit-regions-to-apts}) intersects $R_0$ in a the shaded region shown in Figure \ref{R0argument}. By Corollary \ref{non-crit-regions-to-apts}, every point in this region is in the Finsler convex hull of $h(P_{\epsilon})$ and $h(Q_{\epsilon})$. Since $h(P_{\epsilon})$ and $h(Q_{\epsilon})$ are contained in $A$, it follows that the entire shaded region is mapped into $A$. Since $\epsilon$ can be made arbitrarily small, and the inverse images of apartments under continuous maps are closed we see that all of $R_0$ is mapped into $A$. Thus, the entire maximal abelian region MAR3 is mapped into $A$. This completes the proof.\end{proof}

	\begin{figure}[h]
	\centering
	\includegraphics[width=.5\textwidth]{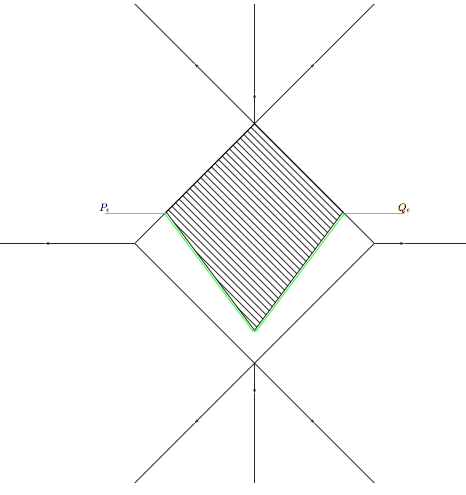}
	\caption{Showing that MAR3 is mapped to a single apartment}
	\label{R0argument}
	\end{figure}

One can immediately see from the construction of $\Bu$ that an appropriate version (see Caveat \ref{spec-net-caveat}) of Conjecture \ref{spectral-networks-Bu} holds in the BNR example:

\begin{proposition}\label{Bu-sing-BNR}
Let $\cW_{\bnr}$ denote the BNR spectral network, and let $\cW^{\mathrm{ext}}_{\bnr}$ be the extended spectral network, obtained as the union of the BNR spectral network with the ``backward collision line'' (the vertical foliation line joining the collision points). Let $\hu: X \rightarrow \Bu$ be the universal harmonic map to a building. Then we have the following:

\begin{enumerate}
\item The inverse image under $\hu$ of the singular set of $\Bu$ contains the spectral network $\cW_{\bnr}$.

\item The inverse image under $\hu$ of the singular set of $\Bu$ equals the extended spectral network $\cW^{\mathrm{ext}}_{\bnr}$. 

\end{enumerate}

\end{proposition}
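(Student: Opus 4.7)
The plan is to check the claim stratum by stratum on $\cW^{\mathrm{ext}}_{\bnr}$, leveraging the explicit description of $\Bu$ in Construction \ref{BNR-Bu-construction} together with the region-to-sector dictionary developed in Lemma \ref{regions-sectors} and Lemma \ref{caustic-argument}.

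For part (1), I would take $p \in \cW_{\bnr}$ and split into three cases. If $p$ is a branch point, then Remark \ref{Bu-in-simple-cases} identifies a neighborhood of $\hu(p)$ in $\Bu$ with the product of a trivalent vertex and a one-dimensional apartment, and $\hu(p)$ sits at the branching vertex, which is not contained in any single apartment. If $p$ is a collision point, then $\hu(p) = o$ by Lemma \ref{collision-points-equalize}, and $o$ is the apex where all the octagon sectors, and all further sectors adjoined in Construction \ref{BNR-Bu-construction}, meet. If $p$ lies in the relative interior of a spectral network edge, then $p$ is on the common boundary of two components $R_i, R_j$ of $X \setminus \cW_{\bnr}$, which by Lemma \ref{regions-sectors} are sent to adjacent sectors $S_i, S_j$ of $\Bu$ based at $o$ and sharing a common panel $P$, and $\hu(p)$ lies in the relative interior of $P$. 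In the link at $o$, $P$ corresponds to a vertex of the octagon, and Construction \ref{BNR-Bu-construction} attaches further edges at this vertex in the hexagon-completion process, producing at least a third sector through $P$; equivalently, at least three half-apartments of $\Bu$ meet along $P$, so no neighborhood of $\hu(p)$ is contained in a single apartment.

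For part (2), the inclusion $\hu^{-1}(\mathrm{sing}(\Bu)) \subseteq \cW^{\mathrm{ext}}_{\bnr}$ is direct: the complement $X \setminus \cW^{\mathrm{ext}}_{\bnr}$ is a disjoint union of the interiors of the eight outside components of $X \setminus \cW_{\bnr}$ together with the interiors of the two subregions $R_0^+$ and $R_0^-$ into which the backward collision segment divides the central region $R_0$; each of these is mapped by $\hu$ into the interior of a single sector of $\Bu$ (by Lemma \ref{regions-sectors} and Lemma \ref{caustic-argument}), so any such $\hu(p)$ has a neighborhood inside that sector's apartment and is non-singular. For the reverse inclusion, it remains, after part (1), only to show that a point $p$ on the interior of the backward collision segment has singular image. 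Such a $p$ lies on the boundary between $R_0^+$ and $R_0^-$, and a $\mathrm{Re}\,\phi_i$-integration exactly parallel to the proof of Lemma \ref{collision-points-equalize} identifies $\hu(R_0^+)$ and $\hu(R_0^-)$ as two of the interior sectors adjoined in the first stage of Construction \ref{BNR-Bu-construction}, sharing a panel through which $\hu(p)$ passes; the same hexagon-completion argument as in part (1) produces a third sector along that panel, and hence at least three half-apartments meet there.

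The main obstacle I expect is a careful bookkeeping verification that every panel arising as the $\hu$-image of a spectral network edge or of the backward collision segment has, in $\Bu$, at least three distinct half-apartments meeting it --- i.e., that Construction \ref{BNR-Bu-construction} genuinely enriches the local structure at each such panel beyond the two sectors obvious from the $X$-side. Once this is established for panels attached to the apex $o$, the corresponding statement near the images of the branch points follows from Remark \ref{Bu-in-simple-cases}, and the proof is essentially complete.
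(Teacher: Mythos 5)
The paper itself gives no proof of this proposition; it only offers the one-line remark that "one can immediately see [it] from the construction of $\Bu$." Your stratum-by-stratum strategy --- branch points via Remark~\ref{Bu-in-simple-cases}, collision points via Lemma~\ref{collision-points-equalize}, interior edge points via the octagon/panel dictionary of Lemma~\ref{regions-sectors}, and the complement via the region-to-sector correspondence --- is precisely the natural way to unpack that assertion, and the decomposition into cases is correct.

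That said, the "bookkeeping verification" you flag at the end is a genuine gap and is more delicate than your phrasing suggests. You assert that the hexagon-completion "produces at least a third sector" at every panel touched by the spectral network (and, for part~(2), at the panel carrying the backward collision segment), but the completion does not "attach further edges at this vertex" uniformly. Concretely, consider the link graph $G_0$ of $\Bpre$ at $o$: it is the octagon with one extra vertex $F$ (the panel shared by the two yellow sectors, i.e.\ the direction of the backward collision segment) and two extra edges $F$--$E$, $F$--$A$ joining opposite octagon vertices $A,E$. Every $4$-chain issuing from $F$ already closes into one of the two hexagons of $G_0$, and this persists through the first round $G_1$ of additions; so the most naive reading of "the completion adds edges" would leave $\deg F=2$, contradicting part~(2). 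What actually saves the claim is that the first-round additions raise the diameter of the graph, creating at the second stage $G_2$ a $4$-chain from $F$ (e.g.\ through a newly added vertex between $C$ and $I$) that is \emph{not} contained in a hexagon and hence forces a new edge at $F$. In other words, the third chamber at $F$ only appears two stages into Construction~\ref{BNR-Bu-construction}, and establishing it requires tracking distances in the intermediate graphs, not merely invoking the completion procedure in broad strokes. Your proof as written gestures at this but does not actually do it, and the phrase "the same hexagon-completion argument as in part~(1)" masks the fact that the argument for $F$ is structurally different from the argument for the octagon vertices (which do get new edges already at $G_1$). If you carry out this verification --- which does go through --- the rest of your argument is sound.
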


Recall that a point in a building is singular if no neighborhood of the point is contained in a single apartment.

\section{An example with BPS states}

  In this section we carry out \textit{Step 1} from the BNR example for the spectral cover considered in Section 6 of  \cite{GMN-Snakes}. Let $x$ be a coordinate on $X=\mathbb A^1_{\mathbb C}$ and let $p$ be the coordinate along the fiber of the cotangent bundle $T_X^{\vee}\simeq \mathbb A^2_{\mathbb C}$. The spectral cover $\Lambda$ is given by the equation
  \[ p^3+4 x p +1 = 0 \:\]
  and thus $\Sigma \simeq (\mathbb A^1_\mathbb{C})^*$
  As in Section 5, we quantize the spectral curve to obtain a family of differential operators 
  \[ H_t = \frac{1}{t^3}\frac{d^3}{dx^3}-\frac{4}{t} x \frac{d}{dx}+ 1  \:.\]
  This, in turn, can be recast into a system of first order ODEs. \\
  The spectral network for $\Lambda \rightarrow X$ was constructed in \cite{GMN-Snakes}, see Figures \ref{SpecNetGMN}, \ref{SpecNetZoomedGMN}. The MARs were constructed as in the BNR example (see Figures \ref{MAR1GMN}, \ref{MAR2GMN}, \ref{MAR3GMN}, \ref{MAR4GMN} for some examples). % and the image of $X$ in $\Bu$ is sketched in Figure\textcolor{red}{\textbf{[Refernece])}}. \\
  	\begin{figure}[h]
	\centering
	\includegraphics[width=1\textwidth]{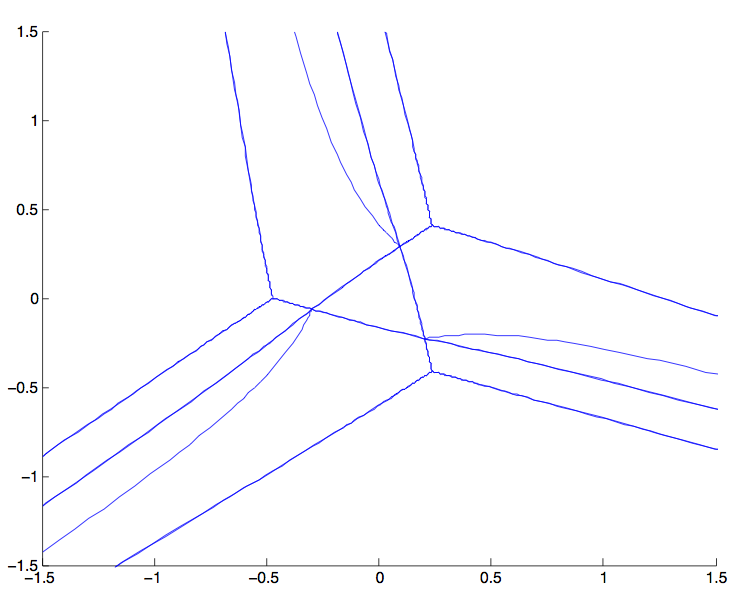}
	\caption{Spectral Network for $\theta=\pi/3$}
	\label{SpecNetGMN}
	\end{figure} 
	
	 \begin{figure}[h]
	\centering
	\includegraphics[width=1\textwidth]{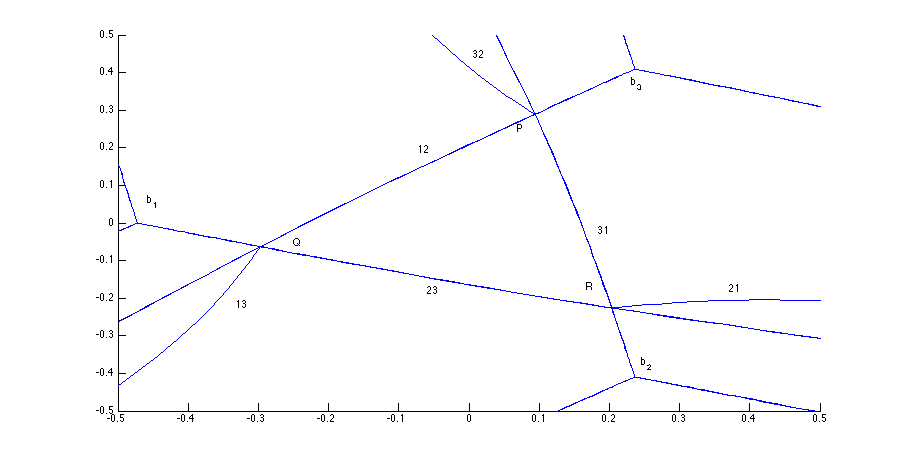}
	\caption{Spectral Network for $\theta=\pi/3$}
	\label{SpecNetZoomedGMN}
	\end{figure}  
	
	\begin{figure}[h]
	\centering
	\includegraphics[width=1\textwidth]{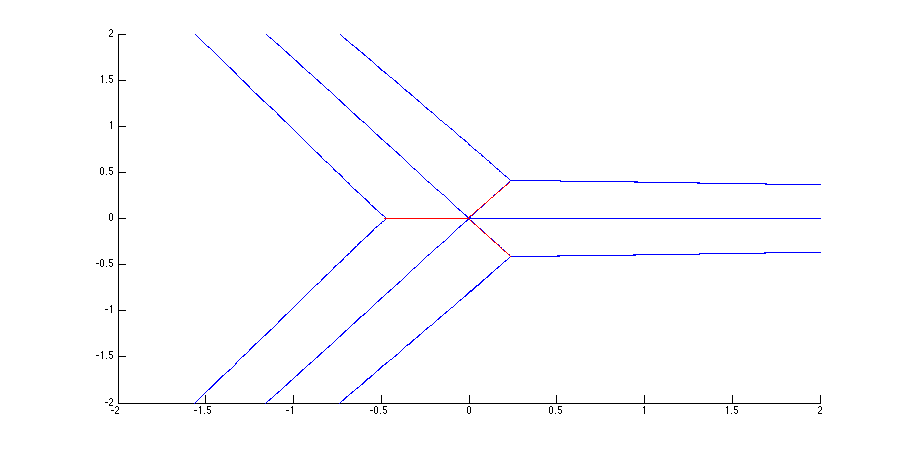}
	\caption{Spectral Network and BPS state for $\theta=\pi/2$}
	\label{BPS}
	\end{figure}  
	
	\begin{figure}[h]
	\centering
	\includegraphics[width=.8\textwidth]{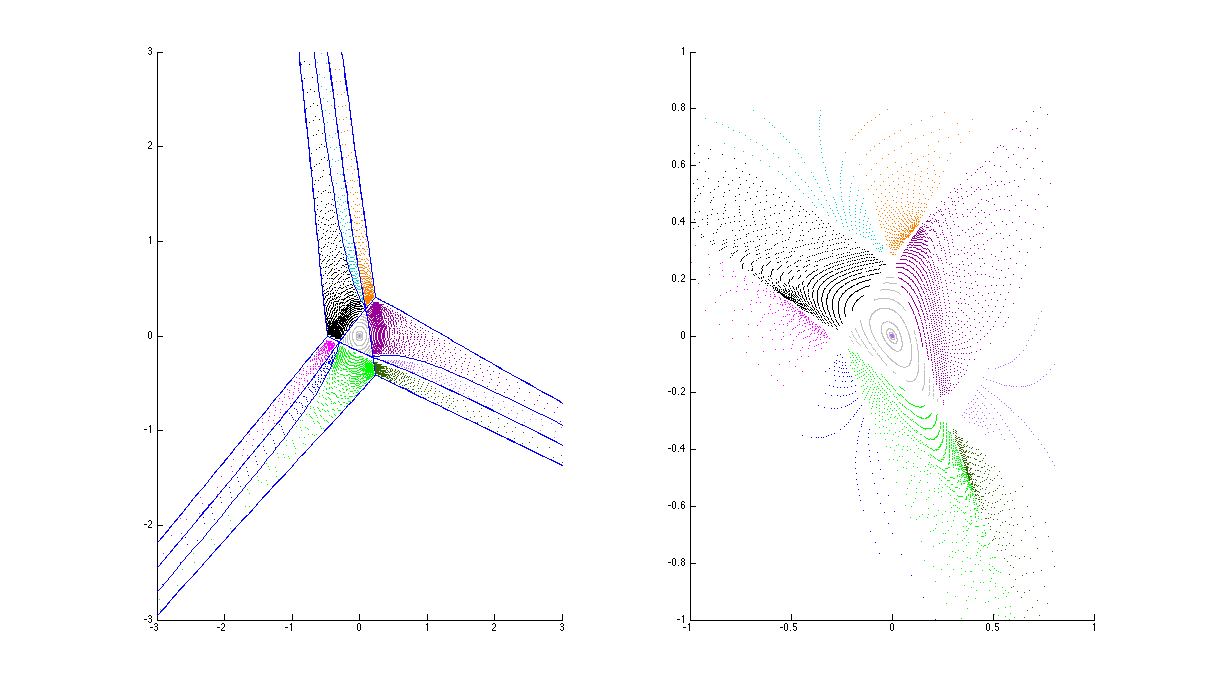}
	\caption{MAR1}
	\label{MAR1GMN}
	\end{figure}  
	
        \begin{figure}[h]
	\centering
	\includegraphics[width=.8\textwidth]{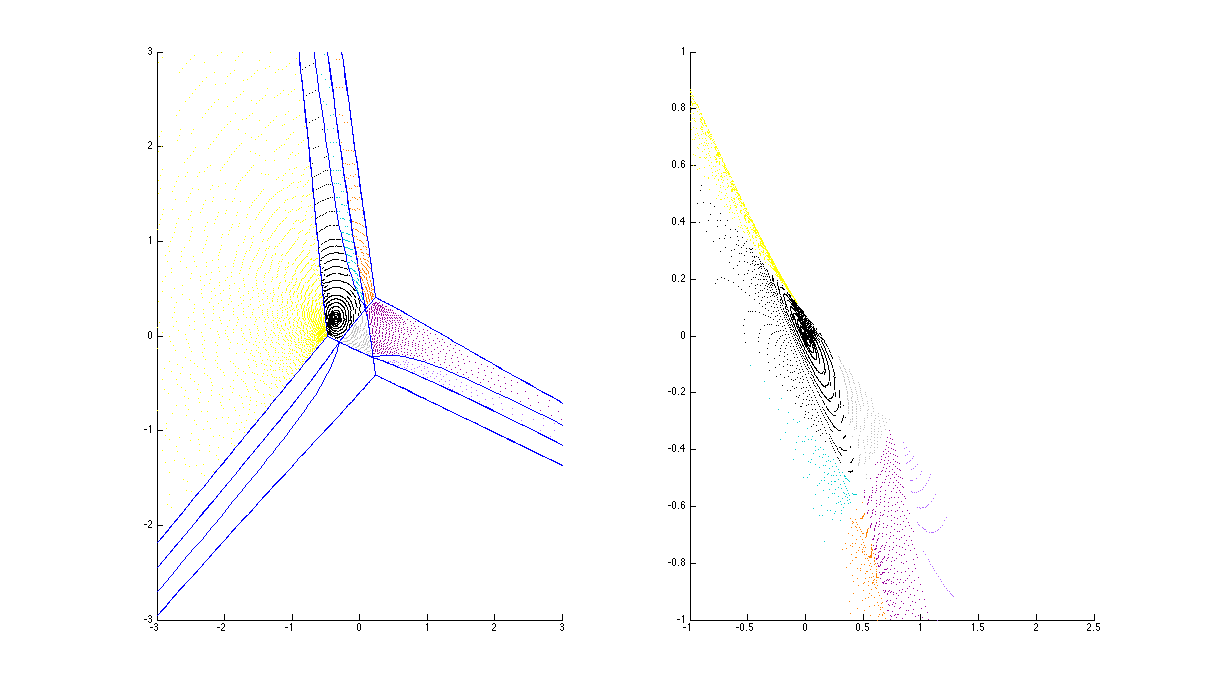}
	\caption{MAR2}
	\label{MAR2GMN}
	\end{figure}  
	
			\begin{figure}[h]
	\centering
	\includegraphics[width=.8\textwidth]{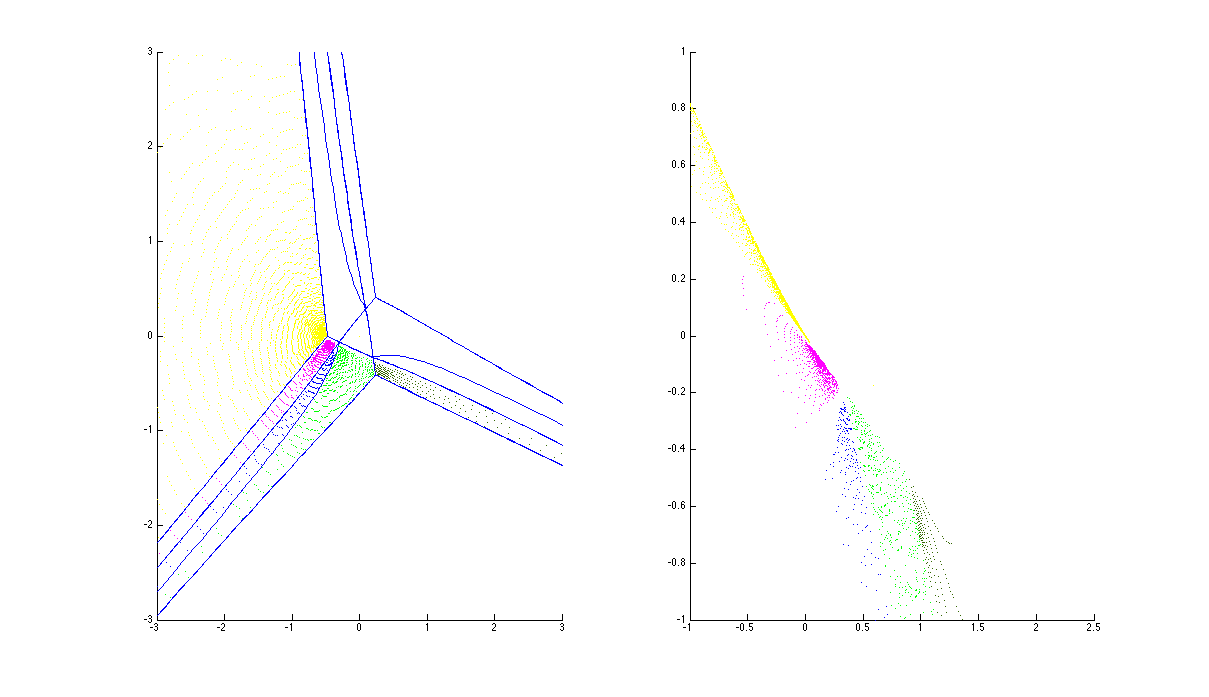}
	\caption{MAR3}
	\label{MAR3GMN}
	\end{figure}  

			\begin{figure}[h]
	\centering
	\includegraphics[width=.8\textwidth]{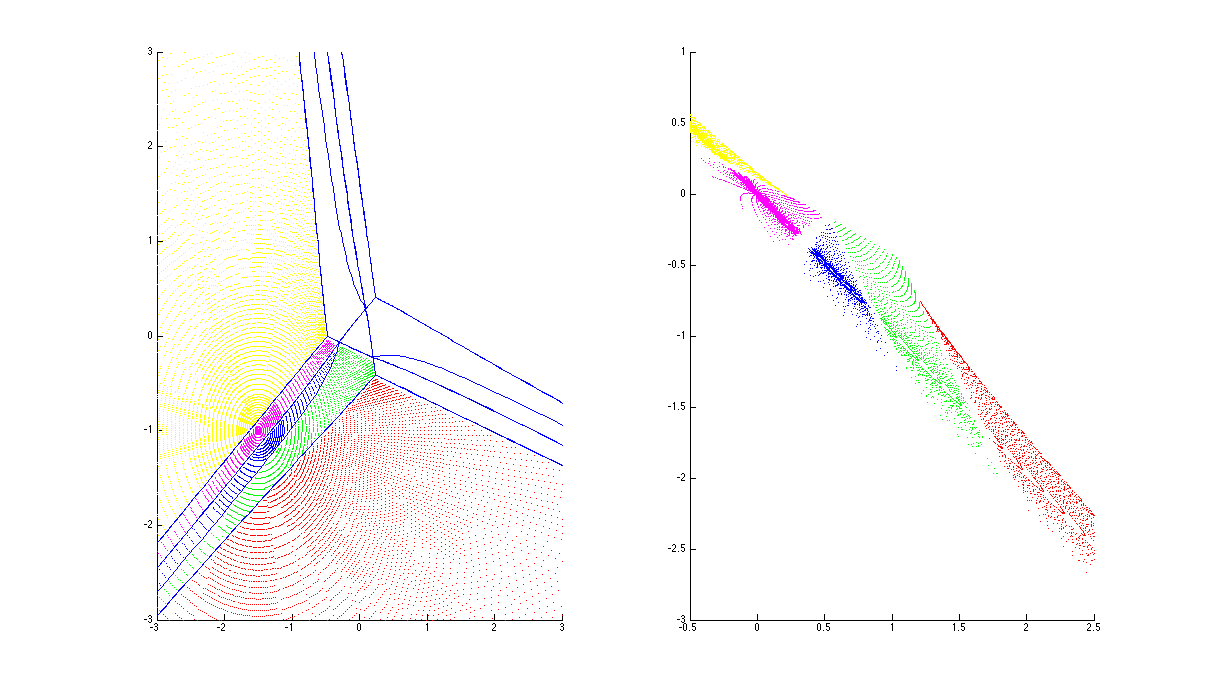}
	\caption{MAR4}
	\label{MAR4GMN}
	\end{figure}

  In this example, there is a closed BPS state $\overline{\gamma}$  (also described in described in \cite{GMN-Snakes}): the red \emph{BPS web} in Figure \ref{BPS} can be lifted to a closed cycle $[\overline{\gamma}]Ê\in H_1(\Lambda,\mathbb Z)$. We can see $\overline{\gamma}$ in the building as follows:

  \begin{definition}
	A simplex $S$ of maximal dimension in $\Bu$ whose codimension one facets are reflection hyperplanes is called a \textnormal{BPS simplex} if every vertex of $S$ is contained in the intersection of two singular walls.
  \end{definition}
  We can see that the building $\Bu$ contains a BPS simplex $S$, namely the grey triangle in Figure \ref{MAR1GMN}. It is interesting to note that $\re \int_{\overline{\gamma}} \lambda$ determines $S$ as follows: 
  From the definition of spectral networks, it follows that
  \[ \int_P^Q \lambda_{13}=\int_P^Q \lambda_{23}=\int_P^R \lambda_{23} =\int_P^R \lambda_{21}=l\:.\]
  Similarly,
  \[ \int_Q^P \lambda_{13}=\int_Q^R\lambda_{13}=\int_Q^R\lambda_{12}=l  \:.\]
  Using coordinates $(x,y)=(\int \lambda_1,\int \lambda_2)$, we obtain the following relations on the side lengths of $S$
  \[ \Delta^{PQ}_x = \Delta^{PQ}_y   \quad \quad \Delta^{QR}_x=\Delta^{QR}_y+l\quad\quad \Delta^{PR}_x=\Delta^{PR}_y+l\:.\]
  Now, $S$ is contained in a single apartment and is bounded by reflection hyperplanes.  

In fact, from these relations it follows that the volume $V$ of $S$ is equal to $l^2$. On the other hand side:

  \begin{equation}\label{BPScharge}
   l =\int_P^Q \lambda_{23}= \left(\int_P^Q+\int_Q^{b_1} +\int_{R}^Q\right) \lambda_{23}+\left(\int_R^{b_2} + \int_R^P \right)\lambda_{13} \:,\end{equation}
  since
  \begin{equation}\label{BPSref}
  \int_P^Q \lambda_{23}= \int_P^Q\lambda_2 + \int_Q^P\lambda_3 =\int_P^Q\lambda_2+\left(\int_Q^R +\int_R^P\right) \lambda_3=\int_P^Q\lambda_2+\int_Q^R\lambda_3+\int_R^P\lambda_1\:.
  \end{equation}
  Adding up all terms in (\ref{BPScharge}) and using (\ref{BPSref}) we get
  \[l=\re \int_{\overline{\gamma}} \lambda \:.\]\\

Thus, we obtain 
\[ V=\vert \re Z(\overline{\gamma}) \vert^2 \:. \]

  Note that as we vary the angle in $\mathcal W_\theta$, the volume of the triangle changes. Precisely at the angle for which there exists the closed BPS state $\overline{\gamma}$ (see Figure \ref{BPS}), the volume of the triangle vanishes. This situation is analogous to the case of trees and should lead to a notion of flips for buildings.
  
  We conjecture that the relation between stability conditions and buildings should always work similarly. In the case that $h(X) \subset \Bu$ contains finitely many BPS simplices, we propose a construction for the heart of the bounded t-structure in the $SL(3,\mathbb C)$ case as a quiver $Q$:
  \begin{itemize}
  	\item associate a vertex to every BPS simplex $S$
	\item associate an arrow to every edge in $S\cap S'$, where the direction of the arrow is determined by the orientation of $X$
  \end{itemize}
  By the Ginzburg construction, this determines a Calabi-Yau triangulated Category $\mathcal C(\Bu)$.\\
  We expect that one can also construct a heart of a bounded t-structure  from the building $\Bu$ if there are infinitely many BPS simplices.
  \begin{conjecture}
  	$\mathcal C(\Bu)$ is independent of the element $\left(\varphi_2, \varphi_3\right) \in H^0(X,\omega_X^2)\oplus H^0(X, \omega_X^3)=B$. \\
	Furthermore, a connected component of the moduli space of stability conditions can be identified with the Hitchin base $B$.
  \end{conjecture}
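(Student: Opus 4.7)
The plan is to attack both assertions via the machinery of mutations of quivers with potential and Keller's derived equivalences for Ginzburg dg-algebras, together with Bridgeland's deformation theorem for stability conditions. The heuristic is that the Hitchin base $B$ should be stratified according to the combinatorial type of the configuration of BPS simplices inside $h(X) \subset \Bu$, and that crossing a wall between strata should induce a mutation of the quiver $Q$ (and hence a derived equivalence of $\mathcal{C}(\Bu)$), while the Hitchin periods $Z(\bar\gamma)=\int_{\bar\gamma}\lambda$ give the central charge.

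For the first assertion, I would begin by defining a stratification of $B$. The generic (open dense) stratum consists of those $(\varphi_2,\varphi_3)$ whose spectral network and associated pre-building $\Bpre$ have a fixed combinatorial type of BPS-simplex configuration; on this stratum the quiver $Q$ is locally constant, so $\mathcal{C}(\Bu)$ is locally constant by construction. The walls to be analyzed are of three kinds: (i) a BPS simplex collapses, i.e. $\re Z(\bar\gamma)\to 0$, corresponding to the vanishing-volume formula $V=|\re Z(\bar\gamma)|^2$ established in the example; (ii) new BPS simplices are born or die through higher-order Stokes/collision events of the spectral network; (iii) the combinatorial adjacency of simplices changes without any volume going to zero. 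For walls of type (i) I would argue, following the $SL_2$ picture where collapse of a triangle is a flip, that crossing the wall produces the mutation of $Q$ at the vertex $v_S$ labelling the collapsing simplex $S$; Keller's theorem then supplies the required equivalence of Ginzburg categories. For walls of type (ii) I would use the Kontsevich--Soibelman wall-crossing formula applied to the motivic/BPS spectrum coming from $\Bu$ to identify the change in $Q$ with a composition of mutations. Walls of type (iii) should in fact reduce to combinations of (i) and (ii) after refinement.

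For the second assertion, the construction of the map $\Psi:B\to\mathrm{Stab}(\mathcal{C}(\Bu))$ would proceed as follows. Use the BPS simplices $S_i$ and the corresponding cycles $\bar\gamma_i\in H_1(\Sigma,\mathbb{Z})$ to define a central charge by $Z([S_i]):=\int_{\bar\gamma_i}\lambda$. The heart provided by the Ginzburg construction gives a candidate $t$-structure, and one defines $\Psi(\phi)=(Z,\cA)$. To check this is a genuine Bridgeland stability condition one needs the Harder--Narasimhan property and the support property; these should follow from Proposition \ref{phi-maps-non-crit} and Corollary \ref{non-crit-regions-to-apts}, which imply that any Finsler-geodesic segment inside $\Bu$ admits a finite decomposition into pieces with well-defined phases, together with a compactness argument on the sphere at infinity of the Hitchin base (in the spirit of Kontsevich--Soibelman, \cite{Kontsevich-Soibelman-WCS}). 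Bridgeland's deformation theorem would then upgrade $\Psi$ to a local homeomorphism onto an open subset of $\mathrm{Stab}(\mathcal{C}(\Bu))$, and the connectedness of $B$ combined with the mutation-compatibility proved above would identify its image with a whole connected component.

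The hardest step, and the one I would expect to be the main obstacle, is the analysis of walls of type (ii): one has to prove that every birth/death of a BPS simplex in $\Bu$, including those caused by higher-order (collision-type) Stokes phenomena of the kind highlighted by the BNR example, can be written as an explicit finite composition of quiver mutations compatible with the Kontsevich--Soibelman wall-crossing identity. A closely related, and perhaps even more serious, difficulty is the case where $h(X)\subset\Bu$ contains infinitely many BPS simplices, since then $\mathcal{C}(\Bu)$ is not directly defined by a finite quiver; one would need a limiting/Ind-completion construction, and to prove that the support property survives the limit. These are precisely the points where the present paper remains conjectural, and any attempt at a full proof would have to face them head-on.
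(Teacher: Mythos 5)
This statement is explicitly labelled a \emph{conjecture} in the paper, and the authors offer no proof of it; they only sketch the construction of $Q$ and $\cC(\Bu)$ from finitely many BPS simplices and observe the volume formula $V=\vert\re Z(\overline{\gamma})\vert^2$ in one example. So there is no ``paper's own proof'' to compare your attempt against; you were asked to prove something the authors leave open. Your write-up is best read as a research program, and in fact you are candid about that. Still, let me flag where the gaps are more serious than you indicate.

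First, your argument quietly relies on the existence of the universal building $\Bu$ for every $(\varphi_2,\varphi_3)\in B$, but that existence is itself only Conjecture \ref{Bu-exists} in the paper, proved only for the single BNR spectral curve and, trivially, in rank one. Without it, neither the stratification of $B$ by combinatorial type of BPS-simplex configuration nor the quiver $Q$ is even defined, so the very objects your walls are meant to compare are not available off of a measure-zero set of known cases. Second, the quiver construction in the paper is explicitly $\SL_3$-specific and only offered when $h(X)\subset\Bu$ has finitely many BPS simplices; the conjecture you are proving is stated for $B=H^0(X,\omega_X^2)\oplus H^0(X,\omega_X^3)$ in general, where the authors themselves say they ``expect'' but do not know how to extract a heart when the simplices are infinite in number. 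Your suggestion of an Ind-completion is a reasonable guess, but absent a definition of $\cC(\Bu)$ across all of $B$, the claim ``$\cC(\Bu)$ is independent of $(\varphi_2,\varphi_3)$'' is not yet a well-posed statement, let alone a provable one.

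Third, your wall analysis asserts, by analogy with the $\SL_2$ flip picture, that crossing a type (i) wall induces the Ginzburg mutation at the vertex of the collapsing simplex, and that type (iii) walls reduce to (i) and (ii). Neither assertion is justified here, and the second is particularly delicate: in the building the adjacency of simplices is governed by the (CO), (SC), (CG) axioms and the spherical link, and there is no \emph{a priori} reason a change of adjacency must pass through a volume-zero or Stokes event. Fourth, the Harder--Narasimhan and support properties are the technical heart of any identification of $B$ with a chamber in $\mathrm{Stab}(\cC(\Bu))$; citing Proposition \ref{phi-maps-non-crit} and Corollary \ref{non-crit-regions-to-apts} gives decompositions of paths in $X$ into noncritical pieces, but translating that into the HN filtration of an \emph{object} of $\cC(\Bu)$ requires a dictionary between stable objects and a class of finite webs or geodesic chains in $\Bu$, which you have not set up. Until those four points are addressed --- existence of $\Bu$, definition of $\cC(\Bu)$ on all of $B$, identification of wall-crossings with mutations, and the support property --- the proposal, while a sensible and honest roadmap in the Bridgeland--Smith/Kontsevich--Soibelman spirit, does not constitute a proof and should not be presented as one.
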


\section{Outlook}

Over the last several years, a variety of new categorical structures
have been discovered by physicists.  Furthermore, it has become
transparently evident that the higher categorical language is
beautifully suited to describing cornerstone concepts in modern
theoretical physics.

 In a striking series of papers
Gaiotto-Moore-Neitzke \cite{GMN-WKB, GMN-Spectral-Networks, GMN-Snakes} and Bridgeland-Smith \cite{Bridgeland-Smith, Smith} have established a
connection between Teichm\"uller theory and the theory of stability
conditions on triangulated categories.  An analogy between the
Teichm\"uller geodesic flow and the wall crossing on the space of
stability conditions had been noticed previously in the works of
Kontsevich and Soibelman \cite{Kontsevich-Soibelman-WCS, Kontsevich-Soibelman-summary, Kontsevich-Soibelman-Coh-Hall}.

These discoveries were taken further by Dimitrov, Haiden, Katzarkov and Konstevich
in \cite{DHKK}. They have defined  and studied
entropy in the context of triangulated and $A_\infty$-categories.
 Dynamical entropy typically arises as a
measure of the complexity of a dynamical system.  This notion exists
in a variety of flavors, e.g. the Kolmogorov-Sinai measure-theoretic
entropy, the topological entropy of Thurston and Gromov and algebraic
entropy.

The following categorical versions  were proven in \cite{DHKK}.

\begin{theorem} In the saturated case, the entropy of an
endofunctor may be computed as a limit of Poincar\'e polynomials of
Ext-groups.
\end{theorem}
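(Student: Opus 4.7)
The plan is to unpack the definition of categorical entropy and exhibit a two-sided comparison between the complexity functional $\delta_t$ and the Poincaré polynomial $P_t(G, F^n G) := \sum_i \dim \mathrm{Ext}^i(G, F^n G)\, e^{it}$, valid for a split generator $G$ of a saturated triangulated category $\mathcal{C}$. Recall that the entropy is defined by
\[
h_t(F) = \lim_{n\to\infty} \tfrac{1}{n}\log \delta_t(G, F^n G),
\]
where $\delta_t(E,E')$ is the infimum over filtrations $0 = A_0 \to A_1 \to \cdots \to A_k = E' \oplus E''$ with cones $E[n_i]$ of $\sum_i e^{n_i t}$. The existence of the limit and its independence from the choice of generator $G$ are standard sub-multiplicativity arguments, which I would recall briefly at the outset.

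For the upper bound, I would first observe that every object $X \in \mathcal{C}$ admits a tautological filtration built from its graded Ext-module over the dg-algebra $\mathrm{End}^*(G)$: taking a minimal semifree resolution of the $\mathrm{End}^*(G)$-module $\mathrm{Hom}^*(G,X)$, one obtains a filtration of $X$ by shifts of $G$ whose weights are exactly the degrees $i$ appearing with multiplicity $\dim \mathrm{Ext}^i(G,X)$. Applied to $X = F^n G$, this yields
\[
\delta_t(G, F^n G) \,\leq\, C(G)\cdot P_t(G, F^n G),
\]
where the constant $C(G)$ arises from comparing $G$ itself to the chosen split-generator used in measuring $\delta_t$ (if one prefers the ``pure'' complexity with generator $G$, then $C(G)=1$). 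I would formulate this as a lemma about the complexity of an object in terms of its Ext-data, following the style of the generation-time arguments of Rouquier.

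For the lower bound I use the Serre functor $S$ provided by saturation. Any filtration $0 = A_0 \to A_1 \to \cdots \to A_k = F^n G \oplus (\cdots)$ with cones $G[n_i]$ induces, by applying $\mathrm{Hom}^*(G,-)$ and iterating the long exact sequences, the inequality
\[
\dim \mathrm{Ext}^j(G, F^n G) \,\leq\, \sum_{i : n_i = j} \dim \mathrm{End}^0(G) \cdot \mu_i,
\]
so that $P_t(G, F^n G) \leq P_t(G,G) \cdot \sum_i e^{n_i t}$. Taking the infimum over filtrations gives $P_t(G, F^n G) \leq P_t(G,G)\cdot \delta_t(G, F^n G)$. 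Saturation enters precisely here, ensuring that $P_t(G,G)$ is finite and that the Ext-groups faithfully measure the contribution of $G$ in any filtration (via Serre duality, which rules out pathological cancellations).

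Combining both bounds,
\[
\tfrac{1}{C(G)} P_t(G, F^n G) \,\leq\, \delta_t(G, F^n G) \,\leq\, P_t(G,G)\cdot P_t(G, F^n G),
\]
and taking $\tfrac{1}{n}\log$ as $n\to\infty$ makes the bounding constants disappear, yielding the desired formula
\[
h_t(F) = \lim_{n\to\infty} \tfrac{1}{n}\log P_t(G, F^n G).
\]
The main obstacle I anticipate is the upper bound lemma: constructing an explicit, efficient filtration of $F^n G$ by shifts of $G$ whose total weight is controlled by $P_t(G, F^n G)$ requires a careful minimal model argument for dg-modules, together with convergence of the associated spectral sequence in the saturated setting. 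The lower bound is essentially formal once Serre duality is available, and the passage from pointwise bounds to the logarithmic limit is routine.
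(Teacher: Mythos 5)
This theorem is not actually proved in the paper you are reading: it appears in the ``Outlook'' section (Section 7), where the authors simply recall it, writing ``The following categorical versions were proven in \cite{DHKK}.'' There is therefore no in-paper proof to compare against; the result is due to Dimitrov--Haiden--Katzarkov--Kontsevich, and the statement here functions purely as motivation for the authors' conjectural programme relating buildings, spectral networks, and stability conditions.

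That said, your proposed argument does reproduce the essential structure of the DHKK proof: one compares the complexity function $\delta_t(G,F^nG)$ with the weighted Poincar\'e series $\delta'_t(G,F^nG):=\sum_m \dim\mathrm{Hom}(G,F^nG[m])e^{-mt}$ by a two-sided inequality with $n$-independent constants, and these constants disappear in the $\tfrac{1}{n}\log$ limit. The lower bound is the elementary direction; the upper bound is where smoothness (hence saturation) is genuinely used, via perfection of the diagonal bimodule or, equivalently, the kind of finite semifree resolution argument you sketch. Two points to tighten, though. First, your displayed intermediate inequality for the lower bound is not correct as written: applying $\mathrm{Hom}^*(G,-)$ to the filtration and taking the long exact sequences gives
\[
\dim\mathrm{Ext}^j(G,F^nG)\ \leq\ \sum_{i}\dim\mathrm{Ext}^{\,j-n_i}(G,G),
\]
summing over \emph{all} filtration steps, not just those with $n_i=j$; only after multiplying by $e^{jt}$ and summing over $j$ do you obtain the clean factorization $P_t(G,F^nG)\leq P_t(G,G)\cdot\sum_i e^{n_i t}$, which is what you in fact use. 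Second, the role of saturation in the lower bound is just to guarantee $P_t(G,G)<\infty$ (properness plus finite generation time); Serre duality is not what makes the long exact sequence estimate work, and the remark about ``ruling out pathological cancellations'' is not needed -- the inequality holds unconditionally from exactness. Serre duality is more germane to the companion theorem on Hochschild homology and spectral radius, which the paper also cites from \cite{DHKK}.
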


\noindent
This result is connected to classical dynamical systems:

\begin{theorem}
  In the saturated case (under a certain generic technical condition),
  there is a lower bound on the entropy given by the logarithm of the
  spectral radius of the induced action on Hochschild homology.
\end{theorem}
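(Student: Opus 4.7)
The plan is to deduce the lower bound on the categorical entropy $h_0(F)$ via a Lefschetz-type trace formula controlling the (super)trace of $F^n$ on Hochschild homology in terms of the complexity function $\delta_0(G, F^n G)$ used to define the entropy in the previous theorem.

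First, I would invoke the noncommutative Hirzebruch--Riemann--Roch / Cardy identity available in the smooth and proper (saturated) setting. Writing an endofunctor $F$ as convolution with a bimodule kernel $K_F \in \cC \otimes_k \cC^{\mathrm{op}}$, this identity expresses the supertrace of the induced map $F_* : HH_*(\cC) \to HH_*(\cC)$ as an Euler characteristic,
\[
\mathrm{str}\bigl(F_* \mid HH_*(\cC)\bigr) \;=\; \chi\bigl(\mathrm{RHom}_{\cC \otimes \cC^{\mathrm{op}}}(\Delta_\cC,\, K_F)\bigr),
\]
where $\Delta_\cC$ is the diagonal bimodule. Using a fixed split generator $G$, Morita invariance rewrites the right-hand side in terms of $\mathrm{Ext}$-groups between $G$ and $F(G)$ (up to a twist by the dual $G^\vee$), yielding an estimate
\[
\bigl|\mathrm{str}(F_*^n)\bigr| \;\leq\; C \cdot \delta_0(G, F^n G)
\]
for a constant $C$ independent of $n$, where $\delta_0$ is the Poincar\'e-polynomial complexity evaluated at the origin that appears in the preceding theorem.

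Second, I would take logarithms, divide by $n$, and pass to the limit, obtaining
\[
\limsup_{n \to \infty} \frac{1}{n} \log \bigl|\mathrm{str}(F_*^n)\bigr| \;\leq\; h_0(F).
\]
To convert the left-hand side into $\log \rho(F_*)$, note that $\mathrm{str}(F_*^n) = \sum_i (-1)^{i} \sum_j \lambda_{i,j}^n$, where $\{\lambda_{i,j}\}$ are the eigenvalues of $F_*$ on $HH_i(\cC)$. Under a generic hypothesis ensuring that the eigenvalues of maximal modulus do not cancel between even and odd Hochschild degrees, one has $\limsup_n |\mathrm{str}(F_*^n)|^{1/n} = \rho(F_*)$, whence $\log \rho(F_*) \leq h_0(F)$.

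The main obstacle is exactly this last point. The trace formula controls only $\mathrm{str}(F_*^n)$, which is a signed sum of eigenvalue powers, so without a hypothesis ruling out systematic cancellation one cannot a priori recover the full spectral radius from the traces. Identifying a natural and genuinely generic technical condition -- for instance, that the eigenvalue of maximal modulus is simple, real, and concentrated in a single parity of $HH_*$, or that $F_*$ is quasi-unipotent on its top eigenspace -- and verifying genericity on a suitable parameter space of endofunctors is the delicate step. The earlier ingredients, namely the Riemann--Roch identity and the Poincar\'e-polynomial upper bound on Euler characteristics, follow essentially formally from the saturated hypothesis combined with the previous theorem.
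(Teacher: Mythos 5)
The theorem you were asked to prove is not actually proven in this paper: it appears in the Outlook section (\S 7) as a statement quoted from Dimitrov--Haiden--Katzarkov--Kontsevich \cite{DHKK}, with no argument supplied here, so there is no in-paper proof to compare against. Your reconstruction does, however, follow the strategy of \cite{DHKK}: a noncommutative Lefschetz / Hirzebruch--Riemann--Roch identity (available in the saturated case) identifies $\mathrm{str}(F^n_*\mid HH_*(\cC))$ with an Euler characteristic of Ext-groups between a split generator and its image under $F^n$ (twisted by a dual generator), and this Euler characteristic is bounded in absolute value by the complexity $\delta_0(G, F^n G)$ appearing in the preceding Poincar\'e-polynomial theorem; taking $\frac{1}{n}\log$ and $\limsup$ gives $\limsup_n \frac{1}{n}\log|\mathrm{str}(F^n_*)| \le h_0(F)$. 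You have also correctly isolated the role of the ``generic technical condition'': it exists exactly to convert the signed sum $\mathrm{str}(F^n_*) = \sum_i (-1)^i \sum_j \lambda_{i,j}^n$ back into $\log\rho(F_*)$, by forbidding systematic cancellation of maximal-modulus eigenvalues between even and odd Hochschild degrees; this is the genuinely delicate step in \cite{DHKK}, and it is to your credit that you flagged it rather than glossing over it.
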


\noindent
The following basic correspondences with classical theory of dynamical systems 
have emerged:

\begin{enumerate}
\item  geodesics $\leftrightarrow$ stable objects.

\item  compactifications of  Teichm\"uller spaces
 $\leftrightarrow$ stability
conditions.

\item  classical entropy of pseudo-Anosov transformations
  $\leftrightarrow$ 
  categorical entropy.

\item categories $\leftrightarrow$ differential equations.
\end{enumerate}
\noindent

We record our findings in the following table:

{\small
%\begin{sidewaystable}
%\resizebox{2cm}{!} {
  \begin{tabular}{| c | c | c | c | c | }
  \hline
   \begin{tabular}{l}  \\
                      Category \\ \\ 
\end{tabular}    &  Stable objects  &    Stab. cond.            
& \begin{tabular}{l} Density  \\
                      of phases \end{tabular}  &   Diff. eq. \\ \hline
 $A_n$        &   \begin{tabular}{l}  \\
                       \includegraphics[width=3cm, height=1cm]{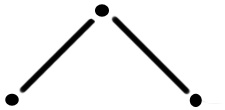}  \\ \\ \end{tabular}                   &  $e^{P(z)} (dz)^2$   &  NO         & $\left ( \left (\frac{d}{dz} \right )^2 +
                                                                                                           e^{P(z)} \right )f=0$\\ \hline
   $\hat{ A_n }$        &   \begin{tabular}{l}  \\
                    \includegraphics[width=3cm, height=1cm]{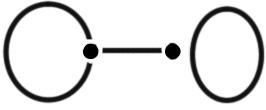} \\  \end{tabular}               &   \begin{tabular}{l}  \\
                     $q(z) (dz)^2$  \\ \\ \end{tabular}         &  YES        &   Schr\"odinger   eq. \\ \hline
   \begin{math}\tot(\xymatrix{A_{r-1} \ar[d] \\ C} )\end{math}       &    \begin{tabular}{l}  \\
                      Spectral networks  \\ \\ \end{tabular}                               & $\oplus_{k=2}^{r} H^{0}(X, \omega_{X}^{\otimes k}) $ &  YES        &     Lax pair        \\ \hline
  \end{tabular}
}

\vspace{5.mm}

\noindent

In this paper we  have initiated a study of these  correpondences in the case recorded in the third row of the table above -- this corresponds to the where the category  $\cC$ arises as the global sections of a  constructible sheaf of two dimensional CY-categories over a Lagrangian skeleton - see \cite{DHKK},

We have established initial evidence for the conjectures described below. In this paper, for simplicity, we have considered only the case of 2 dimensional buildings connected with $\SL_3 \cc$-Higgs bundles. However, we expect these conjectures to be true for buildings associated to arbitrary semisimple Lie groups.

\begin{conjecture}
  
Let $G$ be a semisimple complex Lie group, let $X$ be a Riemann surface, and let $\phi \in \oplus_{i = 1}^{r} H^{0}(X, \omega_{X}^{\otimes d_i})$ be a point in the corresponding Hitchin base. The singularities of the harmonic map to the universal building associated to $\phi$ determine a category 3d CY-category
$\cC$ .
\end{conjecture}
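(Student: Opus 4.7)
The plan is to bootstrap from the two concrete constructions already sketched in the paper: the description of BPS simplices in the $\SL_3$ Snakes example, and the Ginzburg $3$-CY algebra attached to a quiver with potential. First, assume Conjecture \ref{Bu-exists} in the generality of an arbitrary semisimple $G$, so that we have a universal harmonic $\phi$-map $\hu\colon \widetilde{X}\to \Bu$ with a well-defined singular locus $\Sing(\Bu)\subset \Bu$ stratified by the intersection pattern of reflection walls. The strategy is then in three steps: (i) extract a combinatorial gadget from $\Sing(\Bu)$; (ii) promote it to a quiver with potential $(Q,W)$; (iii) feed $(Q,W)$ into the Ginzburg construction to obtain $\cC := \cD^b(\Gamma_3(Q,W))$, the bounded derived category of the $3$-CY completion.

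For step (i), I would define a \emph{BPS cell} of $\Bu$ to be a top-dimensional face of the stratification of $\hu(\widetilde X)\cap \Sing(\Bu)$ each of whose codimension-one walls is a reflection wall of an apartment containing it, and each of whose vertices lies at the intersection of at least two singular walls; this is the natural generalization of the BPS simplex from Section 6. The first real task is to show that in the generic situation these cells are finite in number modulo the action of $\pi_1(X)$ on $\widetilde X$, and that their incidence relations define a locally finite oriented graph: the orientation should be induced by the complex structure on $X$ via the Finsler geometry of $\Bu$, exactly as in the $\SL_3$ case where one uses the orientation of $X$ to orient the arrows across shared edges of adjacent BPS simplices. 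Once this is done, (ii) consists in defining the arrows of $Q$ to be the shared codimension-one faces between adjacent BPS cells, and reading off the potential $W$ as the formal sum over minimal oriented cycles in $Q$ coming from vertices of $\Bu$ around which the cells cyclically close up. For $G=\SL_r$ with $r\geq 4$ the cells need no longer be simplices, so one must allow non-quadratic relations, and the Ginzburg construction should be replaced by the Koszul-dual CY completion of the corresponding pre-CY $A_\infty$-algebra in the sense of Kontsevich--Vlassopoulos; this is a technical but essentially formal generalization.

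Step (iii) is then automatic once the pre-CY data is in place: the Ginzburg dg-algebra $\Gamma_3(Q,W)$ is $3$-Calabi--Yau by construction, and $\cC:=\mathrm{Perf}(\Gamma_3(Q,W))$ is the sought-for $3$-CY triangulated category. To connect this with the WKB picture and justify the name, one should check two compatibilities: first, that the natural map from the Hitchin base to $\mathrm{Stab}(\cC)$ given by sending $\phi$ to the central charge $Z_\phi(\gamma) = \int_\gamma \lambda$ on a BPS cycle $\gamma$ lands in a connected component of the space of stability conditions; second, that the stable objects of $Z_\phi$ are in bijection with BPS states in the sense of Definition \ref{BPS-network-definition}. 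Both should follow from the WKB dictionary established in Theorem \ref{cone-wkb} together with the dictionary in the table at the end of the paper.

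The main obstacle, and by far the deepest one, is showing that $\cC$ is independent of $\phi$, i.e.\ that wall-crossing in the Hitchin base induces mutations (in the sense of Keller--Yang) between the quivers with potential attached to different chambers, and that these mutations give derived equivalences. Concretely, when $\phi$ varies so that a BPS cell of $\Bu$ collapses to a lower-dimensional face (as happens in the Snakes example when $V=|\re Z(\overline\gamma)|^2$ goes to zero), the combinatorial data of $(Q,W)$ changes, and one must prove that it changes by a Keller--Yang mutation at the collapsing vertex. This is the analogue, for buildings, of the wall-crossing formula of Kontsevich--Soibelman \cite{Kontsevich-Soibelman-WCS}, and a proof will likely require a precise understanding of the local model of $\Bu$ near a degenerating BPS cell, which in turn presupposes a much better grasp of the singular strata of universal buildings than the present paper provides; the BNR example controls only the codimension-zero and codimension-one pieces. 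The remaining difficulties --- proving that the resulting pre-CY algebra is strictly $3$-CY (not just approximately), and handling the case where infinitely many BPS cells appear --- are expected to be tractable by standard dg-categorical techniques once the wall-crossing behavior is under control.
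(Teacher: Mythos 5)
The statement you are addressing is a \emph{conjecture}, and the paper contains no proof of it: the only indication of how $\cC$ might be constructed is the $\SL_3$ sketch in the section on the BPS-state example (vertices from BPS simplices, arrows from shared edges oriented by $X$, followed by the Ginzburg construction). Your proposal correctly identifies this as the intended construction and elaborates it, but it is a research program with acknowledged gaps rather than a proof --- and the paper itself leaves the independence of $\cC$ from $\phi$ as a further, separate conjecture.

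Two of the steps you describe as ``essentially formal'' are in fact substantial. The passage from $\SL_3$ simplices to general BPS cells for $\SL_r$, $r\ge 4$, via a pre-CY or Koszul-dual completion presupposes a recipe for extracting a pre-CY $A_\infty$-structure from a polyhedral decomposition of a higher-rank building, and no such recipe exists at present --- indeed even the existence of $\Bu$ is open beyond the examples treated (Conjecture~\ref{Bu-exists}). The local finiteness of BPS cells modulo $\pi_1(X)$ is likewise asserted without argument: the paper establishes it only in one example, and the asymptotic-cone buildings to which $\Bu$ is supposed to map have dense singular sets, so tameness of the singular locus of $\Bu$ itself needs justification. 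You are right that the deepest remaining gap is the wall-crossing/mutation statement needed for $\phi$-independence, which requires a local structure theorem near degenerating BPS cells that the paper does not supply. As it stands the conjecture is genuinely open, and your proposal --- like the paper's own discussion --- should be read as evidence and a blueprint, not a proof.
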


The study of the singularities in \cite{Gromov-Schoen} and
in \cite{Daskalopoulos-Mese}
suggest that the combinatorics of the buiding and
singularities of the harmonic map strongly affect the category.
We also establish a possibility of studying stability conditions on $\cC$:

\begin{conjecture}
  
There exists a component of the moduli space of stability conditions of $\cC$ 
which contains the Hitchin base $\oplus_{i=1}^{r} H^{0}(X, \omega_{X}^{\otimes d_i})$

\end{conjecture}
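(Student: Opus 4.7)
\bigskip

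\noindent\textbf{Proof proposal.} The plan is to construct a map from (an open subset of) the Hitchin base $B = \oplus_{i=2}^{r}H^{0}(X,\omega_{X}^{\otimes i})$ into $\mathrm{Stab}(\cC)$ and to show it is a local homeomorphism onto its image, so that its image extends to a full connected component by the Bridgeland deformation theorem. First, I would fix a generic $\phi \in B$ and invoke the previous conjecture to produce the 3d CY category $\cC = \cC(\phi)$ together with its heart $\cA_{\phi}$, built as in the ``example with BPS states'' section via the quiver-with-potential whose vertices are BPS simplices of $\hu:\tilde X \to \Bu$ and whose arrows record shared facets oriented by the complex structure of $X$. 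The key input I would need to stabilize is that this heart is independent of $\phi$ along any simply connected subset of the locus where the combinatorial type of the BPS collection is constant, so that the assignment $\phi \mapsto \cA_{\phi}$ defines a locally constant sheaf of hearts over an open dense chamber $B^{\circ} \subset B$.

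Next I would define the central charge $Z_{\phi}: K_{0}(\cC) \to \cc$. On the generators corresponding to BPS simplices, set
\[
Z_{\phi}(\overline{\gamma}) \;:=\; \int_{\overline{\gamma}} \phii,
\]
where $\overline\gamma \in H_{1}(\Sigma_{\phi},\ZZ)^{-}$ is the Weyl-antisymmetric cycle canonically lifting the BPS web, exactly as in the computation in Section~7 showing that the volume of a BPS triangle equals $|\re Z(\overline{\gamma})|^{2}$. The holomorphic dependence of $Z_{\phi}$ on $\phi$ is automatic from the variation of Hodge structure on $H^{1}(\Sigma_{\phi})$, and on the chamber $B^{\circ}$ the positivity $\im Z_{\phi}(E) > 0$ for every simple $E$ of $\cA_{\phi}$ follows from the construction of BPS simplices as Finsler-positive cells in $\Bu$. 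Together $(\cA_{\phi}, Z_{\phi})$ is a pre-stability condition, and I would verify the support property using the fact that the masses $|Z(\overline{\gamma})|$ are bounded below by the Finsler edge lengths of the corresponding simplices in $\Bu$, which are uniformly bounded away from zero on compact subsets of $B^{\circ}$.

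With a stability condition constructed for every $\phi \in B^{\circ}$, the map $\Psi: B^{\circ} \to \mathrm{Stab}(\cC)$ is holomorphic, and local injectivity is immediate because the periods $\int_{\overline\gamma}\phii$ recover $\phi$ via the Hitchin integrable system (the periods generate a full rank lattice in $H^{0}(X,\omega_{X}^{\otimes \bullet})$ by genericity of the spectral cover). The Bridgeland deformation theorem then shows that $\Psi$ is a local homeomorphism and its image extends to a full connected component $\mathrm{Stab}^{\dagger}(\cC)$; crossing a wall in $B$ corresponds exactly to a BPS simplex degenerating in $\Bu$, which on the categorical side is the flip/tilt of the heart predicted in the outlook. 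The analysis of walls of the second kind, and of the completeness of $\Psi$ on this component, would use the Kontsevich-Soibelman wall-crossing formula applied to the DT invariants counting BPS simplices.

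The hardest step will be the local constancy of the heart across BPS-combinatorial chambers, together with the verification of the support property; these amount to showing that the categorification of the universal building is rigid enough to carry a genuine Bridgeland structure rather than merely a polynomial or pre-stability condition. In the rank two case handled by Bridgeland-Smith this reduces to the theory of quadratic differentials and trees, and the reduction of the present setup to that case via Corollary~\ref{localWKBlongrange} and the caustic arguments of Section~\ref{BNR-WKB} is the natural model; extending it beyond the BNR example will require first establishing Conjecture~\ref{Bu-exists} in sufficient generality so that ``BPS simplex'' is a well-defined global notion on $\Bu$.
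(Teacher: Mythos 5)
This statement is one of the paper's open conjectures — the authors do not supply a proof, only a heuristic discussion and a table of analogies in the Outlook section. So there is no ``paper proof'' to compare against, and your write-up should be judged as a research strategy rather than as a candidate proof.

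As a strategy, your outline is the natural Bridgeland--Smith route and it matches the spirit of what the authors are gesturing at. But as a proof it has several genuine, and in places circular, gaps. First, the category $\cC$ itself is only conjectural: its construction depends on the previous conjecture (singularities of $\hu$ determine a 3d CY category), which in turn depends on Conjecture \ref{Bu-exists} (existence of $\Bu$), which the paper only establishes in one rank-two example. Your argument treats $\cC$, its heart $\cA_{\phi}$, and the BPS-simplex quiver as given, so before anything in your sketch can get off the ground you would need to prove two of the paper's other main conjectures in full generality. Second, the ``local constancy of the heart over chambers'' and the ``BPS simplices are Finsler-positive cells, hence $\im Z_{\phi}>0$'' steps are precisely the substance of the conjecture; asserting them is not an argument. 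The paper's own Section 7 computation only shows $V=\lvert \re Z(\overline{\gamma})\rvert^{2}$ in a single $\SL_3$ example with a specific BPS web — it does not establish positivity, the support property, or the existence of a bounded $t$-structure when there are infinitely many BPS simplices (a case the paper explicitly flags as open). Third, you invoke ``flip/tilt of the heart'' and ``walls of the second kind'' as known phenomena, but the paper only speculates that degenerating BPS simplices ``should lead to a notion of flips for buildings''; there is no such notion to cite. You do correctly identify where the real difficulty lies (local constancy and the support property), and the reduction to Bridgeland--Smith in rank two via trees is the right sanity check — but the honest conclusion is that your proposal repackages the conjecture as a to-do list of strictly harder open problems rather than proving it.
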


In general we expect this componnent to contain the base of the Hitchin system.
The connection with differential equations, initiated in \cite{Witten}, goes even further.

\begin{conjecture}
  
The behavior of the Stokes factors and resurgence phenomenon for thr Lax pair of the Hitchin system determine the wall-crossing phenomena on the 
component of the moduli space of stability conditions of $\cC$ 
containing the Hitchin base $\oplus_{i=1}^{r} H^{0}(X, \omega_{X}^{\otimes d_i})$. 

\end{conjecture}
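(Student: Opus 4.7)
The plan is to transport both sides of the conjectural equivalence into a common geometric arena --- the universal building $\Bu$ equipped with its harmonic $\phi$-map --- and then identify wall-crossing on the stability side with Stokes transitions on the Lax side by matching central charges with periods of the tautological differential $\lambda$ along BPS cycles in the spectral cover. Concretely, I would fix $\phi \in \oplus_{i=2}^r H^0(X,\omega_X^{\otimes i})$ and invoke the two preceding conjectures: assume the 3d Calabi--Yau category $\cC$ has been constructed from the singular stratification of $\hu:\widetilde X\to \Bu$, and assume that a neighborhood of $\phi$ in the Hitchin base has been embedded as an open patch of a component $\mathrm{Stab}^\circ(\cC)$. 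Under this identification, the central charge of an object supported on a BPS simplex $S\subset\Bu$ with associated cycle $\overline\gamma \in H_1(\Sigma,\ZZ)$ --- of the kind glimpsed in the BPS example of Section 6 --- should be $Z_\phi(\overline\gamma) = \int_{\overline\gamma}\lambda$. The walls in $\mathrm{Stab}^\circ(\cC)$ would then be the real-codimension-one loci where two such periods become phase-aligned, and the jump in the set of stable objects is governed by the Kontsevich--Soibelman formula.

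Next I would analyze the Lax-pair side through the family $\nabla_t := \nabla_0 + t\varphi$ with $t = |t|e^{i\theta}$. As $\theta$ varies, the spectral network $\cW_\theta$ undergoes Stokes-type degenerations precisely when a closed BPS web appears, which is to say when $e^{i\theta}$ aligns with the phase of some period $\int_{\overline\gamma}\lambda$. At each such critical $\theta$, the asymptotic behavior of the transport matrices $T_{PQ}(t)$ --- interpreted via the Borel--Laplace resurgence of the formal WKB series --- is modified by a Stokes factor $\mathsf S_\theta$ whose logarithm is a sum over newly phase-aligned BPS cycles weighted by their DT invariants. Combining Theorem \ref{cone-wkb} with the construction of $\Bu$ in the BNR example, the leading asymptotics of $T_{PQ}(t)$ are already encoded by the vector-distance function $\vecd_{\Bu}(\hu(P),\hu(Q))$, so a variation of $\phi$ across a critical direction in the Hitchin base would produce a ``flip'' of a BPS simplex of the type suggested by the volume formula $V = |\re Z(\overline\gamma)|^2$ in Section 6.

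The heart of the matter, and the main obstacle, is to prove that this ``flip'' --- the combinatorial change in the simplicial structure of $\Bu_\phi$ across a codimension-one wall in the Hitchin base, together with the associated jump in the multiset of BPS simplices --- is intertwined with the Kontsevich--Soibelman automorphism by exactly the Stokes factor $\mathsf S_\theta$ extracted from the resurgent analysis of the Lax system. This would proceed in three steps: (i) a \emph{functorial} construction of $\cC$ depending on $\phi$ only through $\Bu_\phi$ and its singular stratification, so that flips of $\Bu_\phi$ canonically induce derived autoequivalences of $\cC$; (ii) a comparison theorem matching Borel singularities of the transport functions ${\mathcal L}T_{PQ}(\zeta)$ with counts $\Omega(\overline\gamma)$ of BPS simplices via period integrals, thereby computing $\log\mathsf S_\theta$ as a KS-type product; (iii) a consistency check at codimension-two strata (where several BPS rays collide), where the pentagon and hexagon identities in $\mathrm{Stab}^\circ(\cC)$ must be matched against Ecalle's resurgent bridge equations for the multi-Stokes data of the Hitchin Lax pair. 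Step (ii) is where genuinely new analysis is required; the other two steps are structural and should follow from an appropriate categorification of the gluing construction of $\S\ref{gluing-construction}$.
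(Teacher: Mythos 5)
The statement you are trying to prove is explicitly labeled a \emph{conjecture} in the paper's Outlook section, and the paper offers no proof of it; indeed, the paper's own phrasing (``We conclude by listing\dots'', ``We expect\dots'') makes clear that it is an open problem, not a result. So there is no ``paper's proof'' against which to compare your attempt.

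More importantly, what you have written is not a proof but a program, and you say as much: step (ii) of your plan --- matching Borel singularities of ${\mathcal L}T_{PQ}(\zeta)$ with BPS counts $\Omega(\overline\gamma)$ and thereby extracting $\log \mathsf{S}_\theta$ as a Kontsevich--Soibelman product --- is described by you as requiring ``genuinely new analysis.'' That is the heart of the conjecture, not an auxiliary lemma. Several other load-bearing ingredients are also unavailable: you invoke the two preceding conjectures (existence of the 3d CY category $\cC$ built from the singularities of $\hu$, and an embedding of the Hitchin base into a component of $\mathrm{Stab}(\cC)$) as if they were theorems, but in the paper the construction of $\cC$ is only sketched, for $\SL_3$, under the additional hypothesis of finitely many BPS simplices, via a quiver-and-Ginzburg proposal whose well-definedness and independence of $\phi$ are themselves conjectural. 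Likewise, Conjecture \ref{Bu-exists} (existence of the universal building $\Bu_\phi$ for general $\phi$) is only verified in the rank-one case and in the BNR example, so the ``flip of $\Bu_\phi$ across a codimension-one wall'' you appeal to is not yet a defined operation in the generality you need. Your proposal is a sensible and well-informed roadmap, consistent with the paper's stated expectations, but it should not be presented as a proof: each of steps (i)--(iii) is an open problem on its own, and step (ii) in particular is a substantial analytic claim for which no argument is given.
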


This is just the tip of the iceberg. We expect deeper structures to come out of these considerations. In particular, we expect that we can embed our spectral curve in germs of differentials and consider, in such a way, a map of maximal dimension. This should give a fibration of tori, and Strebel type of behavior in higher dimensions.

We conclude by listing two natural questions that arise from this work. The following question was suggested to us by A. Goncharov:

\begin{question}
Is there a natural connection between ``the harmonic maps to buildings approach'' to constructing categories, on the one hand, and the approach of Goncharov and Shen \cite{Goncharov-Shen}, which is based on ``surface affine Grassmannians'' and ideal $\SL_m$ webs, on the other?
\end{question}

In their recent work \cite{Dyckerhoff-Kapranov-Segal}, Dyckerhoff and Kapranov have introduced the notion of a $2$-Segal space, which axiomatizes the properties of the Waldhausen $S$-construction, and is the structure underlying Hall algebras (``algebras of BPS states''). These $2$-Segal spaces are ubiquitous in mathematics, and have been used by Dyckerhoff and Kapranov to construct the Fukaya category of a surface \cite{Dyckerhoff-Kapranov-Fukaya}, implementing an instance of Kontsevich's program of localizing Fukaya categories along Lagrangian skeleta. In this context, it is intriguing to note that one of the examples of a $2$-Segal space is a Bruhat-Tits building (\cite[Example 3.1.5]{Dyckerhoff-Kapranov-Segal}). This naturally leads to the following:

\begin{question}
Is there a $2$-Segal space naturally associated to the universal building, and how is it related to the Hall algebra of the 3d CY-category $\cC$, and to spectral networks?
\end{question}

\newpage

\listoffigures

\bibliographystyle{amsalpha}
\bibliography{wkb-paper.bbl}

\providecommand{\bysame}{\leavevmode\hbox to3em{\hrulefill}\thinspace}
\providecommand{\MR}{\relax\ifhmode\unskip\space\fi MR }
% \MRhref is called by the amsart/book/proc definition of \MR.
\providecommand{\MRhref}[2]{%
  \href{http://www.ams.org/mathscinet-getitem?mr=#1}{#2}
}
\providecommand{\href}[2]{#2}
\begin{thebibliography}{{Kom}13}

\bibitem[AB08]{Abramenko-Brown}
P.~Abramenko and K.~S. Brown, \emph{Buildings}, Graduate Texts in Mathematics,
  vol. 248, Springer, New York, 2008, Theory and applications.

\bibitem[Aok05]{Aoki-fresh}
T.~Aoki, \emph{A fresh glimpse into the {S}tokes geometry of the
  {B}erk-{N}evins-{R}oberts equation through a singular coordinate
  transformation}, Technical report, Kyoto University, Research Institute for
  Mathematical Sciences, 2005.

\bibitem[BB{\'S}82]{Birula-Swiecicka-quotients}
A.~Bia{\l}ynicki-Birula and J.~{\'S}wi{\polhk{e}}cicka, \emph{Complete
  quotients by algebraic torus actions}, Group actions and vector fields
  ({V}ancouver, {B}.{C}., 1981), Lecture Notes in Math., vol. 956, Springer,
  Berlin, 1982, pp.~10--22.

\bibitem[BB{\'S}87]{Birula-Swiecicka-moment}
\bysame, \emph{Generalized moment functions and orbit spaces}, Amer. J. Math.
  \textbf{109} (1987), no.~2, 229--238.

\bibitem[BNR82]{BNR}
H.~L. Berk, W.~Nevins, and K.~V. Roberts, \emph{New {S}tokes' line in {WKB}
  theory}, J. Math. Phys. \textbf{23} (1982), no.~6.

\bibitem[BNR89]{BNR-spectral}
A.~Beauville, M.~S. Narasimhan, and S.~Ramanan, \emph{Spectral curves and the
  generalised theta divisor}, J. Reine Angew. Math. \textbf{398} (1989),
  169--179.

\bibitem[BS13]{Bridgeland-Smith}
T.~{Bridgeland} and I.~{Smith}, \emph{{Quadratic differentials as stability
  conditions}}, ArXiv e-prints (2013), 1302.7030 [math.AG].

\bibitem[BSS10]{Bennett-Schwer}
C.~D. {Bennett}, P.~N. {Schwer}, and K.~{Struyve}, \emph{{On axiomatic
  definitions of non-discrete affine buildings}}, ArXiv e-prints (2010),
  1003.3050 [math.MG].

\bibitem[Com77]{Comfort}
W.~W. Comfort, \emph{Ultrafilters: some old and some new results}, Bull. Amer.
  Math. Soc. \textbf{83} (1977), no.~4, 417--455.

\bibitem[dCHM12]{Hausel-et-al-PW}
M.~A. de~Cataldo, T.~Hausel, and L.~Migliorini, \emph{Topology of {H}itchin
  systems and {H}odge theory of character varieties: the case {$A\sb 1$}}, Ann.
  of Math. (2) \textbf{175} (2012), no.~3, 1329--1407.

\bibitem[DDW00]{Daskalopoulos-Dostolgou-Wentworth}
G.~Daskalopoulos, S.~Dostoglou, and R.~Wentworth, \emph{On the
  {M}organ-{S}halen compactification of the {${\rm SL}(2,{\bf C})$} character
  varieties of surface groups}, Duke Math. J. \textbf{101} (2000), no.~2,
  189--207.

\bibitem[DHKK13]{DHKK}
G.~{Dimitrov}, F.~{Haiden}, L.~{Katzarkov}, and M.~{Kontsevich},
  \emph{{Dynamical systems and categories}}, ArXiv e-prints (2013), 1307.8418
  [math.CT].

\bibitem[DK12]{Dyckerhoff-Kapranov-Segal}
T.~{Dyckerhoff} and M.~{Kapranov}, \emph{{Higher Segal spaces I}}, ArXiv
  e-prints (2012), 1212.3563 [math.AT].

\bibitem[DK13]{Dyckerhoff-Kapranov-Fukaya}
\bysame, \emph{{Triangulated surfaces in triangulated categories}}, ArXiv
  e-prints (2013), 1306.2545 [math.AG].

\bibitem[DM]{Daskalopoulos-Mese}
G.~{Daskalopoulos} and C.~{Mese}, \emph{On the singular set of harmonic maps
  into {D}{M}-complexes}, Mem. Amer. Math. Soc., To appear.

\bibitem[Don95]{Donagi-spectral}
R.~Donagi, \emph{Spectral covers}, Current topics in complex algebraic geometry
  ({B}erkeley, {CA}, 1992/93), Math. Sci. Res. Inst. Publ., vol.~28, Cambridge
  Univ. Press, Cambridge, 1995, pp.~65--86.

\bibitem[DW07]{Daskalopoulos-Wentworth}
G.~{Daskalopoulos} and R.~{Wentworth}, \emph{Harmonic maps and {T}eichm\"uller
  theory}, Handbook of {T}eichm\"uller theory. {V}ol. {I}, IRMA Lect. Math.
  Theor. Phys., vol.~11, Eur. Math. Soc., Z\"urich, 2007, pp.~33--109.

\bibitem[EKPR12]{EKPR}
P.~Eyssidieux, L.~Katzarkov, T.~Pantev, and M.~Ramachandran, \emph{Linear
  {S}hafarevich conjecture}, Ann. of Math. (2) \textbf{176} (2012), no.~3,
  1545--1581.

\bibitem[{Eve}12]{Everitt}
B.~{Everitt}, \emph{{A (very short) introduction to buildings}}, ArXiv e-prints
  (2012), 1207.2266 [math.GR].

\bibitem[FW01]{Farb-Wolf}
B.~Farb and M.~Wolf, \emph{Harmonic splittings of surfaces}, Topology
  \textbf{40} (2001), no.~6, 1395--1414.

\bibitem[GMN12]{GMN-Snakes}
D.~{Gaiotto}, G.~W. {Moore}, and A.~{Neitzke}, \emph{{Spectral Networks and
  Snakes}}, ArXiv e-prints (2012), 1209.0866 [hep-th].

\bibitem[GMN13a]{GMN-Spectral-Networks}
D.~Gaiotto, G.~W. Moore, and A.~Neitzke, \emph{Spectral {N}etworks}, Ann. Henri
  Poincar\'e \textbf{14} (2013), no.~7, 1643--1731.

\bibitem[GMN13b]{GMN-WKB}
\bysame, \emph{Wall-crossing, {H}itchin systems, and the {WKB} approximation},
  Adv. Math. \textbf{234} (2013), 239--403.

\bibitem[Gro81]{Gromov-metric-structures}
M.~Gromov, \emph{Structures m\'etriques pour les vari\'et\'es riemanniennes},
  Textes Math\'ematiques [Mathematical Texts], vol.~1, CEDIC, Paris, 1981,
  Edited by J. Lafontaine and P. Pansu.

\bibitem[Gro07]{Gromov-metric-structures-revised}
\bysame, \emph{Metric structures for {R}iemannian and non-{R}iemannian spaces},
  english ed., Modern Birkh\"auser Classics, Birkh\"auser Boston Inc., Boston,
  MA, 2007, Based on the 1981 French original, With appendices by M. Katz, P.
  Pansu and S. Semmes, Translated from the French by Sean Michael Bates.

\bibitem[GS92]{Gromov-Schoen}
M.~Gromov and R.~Schoen, \emph{Harmonic maps into singular spaces and
  {$p$}-adic superrigidity for lattices in groups of rank one}, Inst. Hautes
  \'Etudes Sci. Publ. Math. (1992), no.~76, 165--246.

\bibitem[GS13]{Goncharov-Shen}
A.~B. {Goncharov} and L.~{Shen}, \emph{{Geometry of canonical bases and mirror
  symmetry}}, ArXiv e-prints (2013), 1309.5922 [math.RT].

\bibitem[Hau98]{Hausel-compactification}
T.~Hausel, \emph{Compactification of moduli of {H}iggs bundles}, J. Reine
  Angew. Math. \textbf{503} (1998), 169--192.

\bibitem[Hel01]{Helgason}
S.~Helgason, \emph{Differential geometry, {L}ie groups, and symmetric spaces},
  Graduate Studies in Mathematics, vol.~34, American Mathematical Society,
  Providence, RI, 2001, Corrected reprint of the 1978 original.

\bibitem[Hon]{Honda}
N.~Honda, \emph{Degenerate {S}tokes geometry and some geometric structure
  underlying a virtual turning point}, Algebraic analysis and the exact {WKB}
  analysis for systems of differential equations, RIMS K\^oky\^uroku Bessatsu,
  B5, Res. Inst. Math. Sci. (RIMS), Kyoto, pp.~15--49.

\bibitem[{Kat}94]{Ludmil-factorization}
L.~{Katzarkov}, \emph{{Factorization theorems for the representations of the
  fundamental groups of quasiprojective varieties and some applications}},
  eprint arXiv:alg-geom/9402012, 1994.

\bibitem[Kat95]{Ludmil-thesis}
L.~Katzarkov, \emph{Factorization theorems for the representations of the
  fundamental groups of quasiprojective varieties and some applications},
  ProQuest LLC, Ann Arbor, MI, 1995, Thesis (Ph.D.)--University of
  Pennsylvania.

\bibitem[KL97]{Kleiner-Leeb}
B.~Kleiner and B.~Leeb, \emph{Rigidity of quasi-isometries for symmetric spaces
  and {E}uclidean buildings}, Inst. Hautes \'Etudes Sci. Publ. Math. (1997),
  no.~86, 115--197 (1998).

\bibitem[{Kom}13]{Komyo}
A.~{Komyo}, \emph{{On compactifications of character varieties of
  \$n\$-punctured projective line}}, ArXiv e-prints (2013), 1307.7880
  [math.AG].

\bibitem[KR98]{Ludmil-Ramachandran}
L.~Katzarkov and M.~Ramachandran, \emph{On the universal coverings of algebraic
  surfaces}, Ann. Sci. \'Ecole Norm. Sup. (4) \textbf{31} (1998), no.~4,
  525--535.

\bibitem[KS93]{Korevaar-Schoen}
N.~J. Korevaar and R.~M. Schoen, \emph{Sobolev spaces and harmonic maps for
  metric space targets}, Comm. Anal. Geom. \textbf{1} (1993), no.~3-4,
  561--659.

\bibitem[KS10]{Kontsevich-Soibelman-summary}
M.~Kontsevich and Y.~Soibelman, \emph{Motivic {D}onaldson-{T}homas invariants:
  summary of results}, Mirror symmetry and tropical geometry, Contemp. Math.,
  vol. 527, Amer. Math. Soc., Providence, RI, 2010, pp.~55--89.

\bibitem[KS11]{Kontsevich-Soibelman-Coh-Hall}
\bysame, \emph{Cohomological {H}all algebra, exponential {H}odge structures and
  motivic {D}onaldson-{T}homas invariants}, Commun. Number Theory Phys.
  \textbf{5} (2011), no.~2, 231--352.

\bibitem[KS13]{Kontsevich-Soibelman-WCS}
M.~{Kontsevich} and Y.~{Soibelman}, \emph{{Wall-crossing structures in
  Donaldson-Thomas invariants, integrable systems and Mirror Symmetry}}, ArXiv
  e-prints (2013), 1303.3253 [math.AG].

\bibitem[Lei13]{Leinster}
T.~Leinster, \emph{Codensity and the ultrafilter monad}, Theory Appl. Categ.
  \textbf{28} (2013), No. 13, 332--370.

\bibitem[LSS13]{LoraySaitoSimpson}
F.~{Loray}, M.-H. {Saito}, and C.~T. {Simpson}, \emph{{Foliations on the moduli
  space of rank two connections on the projective line minus four points}},
  Geometric and differential Galois theory (Luminy 2010), D. Bertrand, Ph.
  Boalch, J-M. Couveignes, P. D\`ebes, eds., S\'eminaires et Congr\`es,
  vol.~78, S.M.F., Paris, 2013, pp.~115--168.

\bibitem[Mau09]{Maubon}
J.~Maubon, \emph{Symmetric spaces of the non-compact type: differential
  geometry}, G\'eom\'etries \`a courbure n\'egative ou nulle, groupes discrets
  et rigidit\'es, S\'emin. Congr., vol.~18, Soc. Math. France, Paris, 2009,
  pp.~1--38.

\bibitem[Par00a]{Parreau-thesis}
A.~Parreau, \emph{D\'eg\'en\'erescences de sous-groupes discrets de groupes de
  lie semi-simples et actions de groupes sur des immeubles affines}, 2000,
  Thesis, Universit\'e Paris-Sud, Orsay.

\bibitem[Par00b]{Parreau-norms}
\bysame, \emph{Immeubles affines: construction par les normes et \'etude des
  isom\'etries}, Crystallographic groups and their generalizations ({K}ortrijk,
  1999), Contemp. Math., vol. 262, Amer. Math. Soc., Providence, RI, 2000,
  pp.~263--302.

\bibitem[Par12]{Parreau-compactification}
\bysame, \emph{Compactification d'espaces de repr\'esentations de groupes de
  type fini}, Math. Z. \textbf{272} (2012), no.~1-2, 51--86.

\bibitem[Ron09]{Ronan}
M.~Ronan, \emph{Lectures on buildings}, University of Chicago Press, Chicago,
  IL, 2009, Updated and revised.

\bibitem[Rou]{Rousseau}
G.~Rousseau, \emph{Euclidean buildings}, G\'eom\'etries \`a courbure n\'egative
  ou nulle, groupes discrets et rigidit\'es, S\'emin. Congr., vol.~18,
  pp.~77--116.

\bibitem[Sim91]{Carlos-book}
C.~Simpson, \emph{Asymptotic behavior of monodromy}, Lecture Notes in
  Mathematics, vol. 1502, Springer-Verlag, Berlin, 1991, Singularly perturbed
  differential equations on a Riemann surface.

\bibitem[Sim92]{Carlos-Higgs-Local}
C.~T. Simpson, \emph{Higgs bundles and local systems}, Inst. Hautes \'Etudes
  Sci. Publ. Math. (1992), no.~75, 5--95.

\bibitem[Sim04]{Carlos-infinity}
C.~Simpson, \emph{Asymptotics for general connections at infinity},
  Ast\'erisque (2004), no.~297, 189--231, Analyse complexe, syst{\`e}mes
  dynamiques, sommabilit{\'e} des s{\'e}ries divergentes et th{\'e}ories
  galoisiennes. II.

\bibitem[{Smi}13]{Smith}
I.~{Smith}, \emph{{Quiver algebras as Fukaya categories}}, ArXiv e-prints
  (2013), 1309.0452 [math.SG].

\bibitem[Ste06]{Stepanov-dual-complex}
D.~A. Stepanov, \emph{A remark on the dual complex of a resolution of
  singularities}, Uspekhi Mat. Nauk \textbf{61} (2006), no.~1(367), 185--186.

\bibitem[Ste08]{Stepanov-resolution}
\bysame, \emph{A note on resolution of rational and hypersurface
  singularities}, Proc. Amer. Math. Soc. \textbf{136} (2008), no.~8.

\bibitem[Sun03]{Sun}
X.~Sun, \emph{Regularity of harmonic maps to trees}, Amer. J. Math.
  \textbf{125} (2003), no.~4, 737--771.

\bibitem[Thu07]{Thuillier-toroidal}
A.~Thuillier, \emph{G\'eom\'etrie toro\"\i dale et g\'eom\'etrie analytique non
  archim\'edienne. {A}pplication au type d'homotopie de certains sch\'emas
  formels}, Manuscripta Math. \textbf{123} (2007), no.~4, 381--451.

\bibitem[Was85]{Wasow}
W.~Wasow, \emph{Linear turning point theory}, Applied Mathematical Sciences,
  vol.~54, Springer-Verlag, New York, 1985.

\bibitem[Wit11]{Witten}
E.~Witten, \emph{A new look at the path integral of quantum mechanics}, Surveys
  in differential geometry. {V}olume {XV}. {P}erspectives in mathematics and
  physics, Surv. Differ. Geom., vol.~15, Int. Press, Somerville, MA, 2011,
  pp.~345--419.

\end{thebibliography}

\end{document}